\newcommand*{\cfRigidityMultiplier}{\cite{rigid}*{Corollary~9.14 and Remark~9.15}\xspace}
\setlist[enumerate,1]{font=\normalfont}
\crefname{enumi}{}{} % no name for list items
\crefname{enumi}{}{} % no name for nested list items
\newcommand{\supp}{{\rm Supp}}
\newcommand{\CHx}{{\CH_{\crse X}}}
\newcommand{\CHy}{{\CH_{\crse Y}}}
\newcommand{\CHz}{{\CH_{\crse Z}}}
\newcommand*{\sep}[1]{\mathrel{\perp_{#1}}}
\newcommand{\C}{\mathbb{C}}
\def\appmap{\@ifnextchar[{\@withappmap}{\@withoutappmap}}
  \def\@withappmap[#1]#2#3#4{{{f}^{#1}_{#2, #3, #4}}}
  \def\@withoutappmap#1#2#3{{{f}_{#1, #2, #3}}}
\def\cappmap{\@ifnextchar[{\@withcappmap}{\@withoutcappmap}}
  \def\@withcappmap[#1]#2#3#4{{\crse{f}^{#1}_{#2, #3, #4}}}
  \def\@withoutcappmap#1#2#3{{\crse{f}_{#1, #2, #3}}}
\def\crseimg{\@ifnextchar[{\@withcrseimg}{\@withoutcrseimg}}
  \def\@withcrseimg[#1]#2#3{{I^{#1}_{#2, #3}}}
  \def\@withoutcrseimg#1#2{{I_{#1, #2}}}
\newcommand*{\cstar}{\texorpdfstring{$C^*$\nobreakdash-\hspace{0pt}}{*-}}
\newcommand*{\Star}{\(^*\)\nobreakdash-}
\newcommand*{\multiplieralg}[1]{\CM{\left(#1\right)}}
\newcommand*{\roelike}[1]{\CR_{}{\left[#1\right]}}
\newcommand*{\uroecstar}[1]{C^*_{\rm u}{\left(#1\right)}}
\newcommand*{\lccstar}[1]{C^*_{{\rm lc}}{\left(#1\right)}}
  \def\roeclike{\@ifnextchar[{\@withroeclike}{\@withoutroeclike}}
    \def\@withroeclike[#1]#2{\CR^*_{#1}{\left(#2\right)}}
    \def\@withoutroeclike#1{\CR^*{\left(#1\right)}}
  \def\roeclikeone{\@ifnextchar[{\@withroeclikeone}{\@withoutroeclikeone}}
    \def\@withroeclikeone[#1]#2{\CR^*_{#1}{\left(#2\right)}}
    \def\@withoutroeclikeone#1{\CR^*_1{\left(#1\right)}}
  \def\roecliketwo{\@ifnextchar[{\@withroecliketwo}{\@withoutroecliketwo}}
    \def\@withroecliketwo[#1]#2{\CR^*_{#1}{\left(#2\right)}}
    \def\@withoutroecliketwo#1{\CR^*_2{\left(#1\right)}}
  \def\roecstar{\@ifnextchar[{\@withroecstar}{\@withoutroecstar}}
    \def\@withroecstar[#1]#2{C^*_{#1,\rm Roe}{\left(#2\right)}}
    \def\@withoutroecstar#1{C^*_{\rm Roe}{\left(#1\right)}}
  \def\roestar{\@ifnextchar[{\@withroestar}{\@withoutroestar}}
    \def\@withroestar[#1]#2{\C_{#1,\rm Roe}{\left[#2\right]}}
    \def\@withoutroestar#1{\C_{\rm Roe}{\left[#1\right]}}
  \def\classicalroecstar{\@ifnextchar[{\@withclassicalroecstar}{\@withoutclassicalroecstar}}
    \def\@withclassicalroecstar[#1]#2{C^*{\left(#2\right)}}
    \def\@withoutclassicalroecstar#1{C^*{\left(#1\right)}}
  \def\cpcstar{\@ifnextchar[{\@withcpcstar}{\@withoutcpcstar}}
    \def\@withcpcstar[#1]#2{C^*_{#1,\rm cp}{\left(#2\right)}}
    \def\@withoutcpcstar#1{C^*_{\rm cp}{\left(#1\right)}}
  \def\cpstar{\@ifnextchar[{\@withcpstar}{\@withoutcpstar}}
    \def\@withcpstar[#1]#2{\C_{#1,\rm cp}{\left[#2\right]}}
    \def\@withoutcpstar#1{\C_{\rm cp}{\left[#1\right]}}
  \def\qlcstar{\@ifnextchar[{\@withqlcstar}{\@withoutqlcstar}}
    \def\@withqlcstar[#1]#2{C^*_{#1,\rm ql}{\left(#2\right)}}
    \def\@withoutqlcstar#1{C^*_{\rm ql}{\left(#1\right)}}
\newcommand*{\cartansubalgroe}[2]{\ell^{\infty}_{\text{Roe}} {\left(#1; #2\right)}}
\newcommand*{\cartansubalgcp}[2]{\ell^{\infty}_{\text{cp}} {\left(#1; #2\right)}}
\newcommand*{\cartansubalgql}[2]{\ell^{\infty}_{\text{ql}} {\left(#1; #2\right)}}
\newcommand*{\cartansubalgroelike}[2]{\ell^{\infty}_{\CR} {\left(#1; #2\right)}}
\newcommand*{\condexp}[1]{E^{#1}}
\newcommand*{\condexproelike}[1]{E^{#1}_{\CR}}
\newcommand*{\condexpql}[1]{\condexp{#1}_{\rm ql}}
\newcommand*{\condexpcp}[1]{\condexp{#1}_{\rm cp}}
\newcommand*{\condexproe}[1]{\condexp{#1}_{\rm Roe}}
\newcommand*{\discdec}{\fka}
\newcommand*{\coe}[1]{{\rm CE}{\left(#1\right)}}
\newcommand*{\cuni}[1]{{\rm CtrUni}{{\left(#1\right)}}}
\newcommand*{\gr}[1]{{\rm Graph}(#1)}
\newcommand*{\grop}[1]{\op{\gr{#1}}}
\newcommand*{\charfunc}[1]{\mathbbm{1}_{#1}}                        \newcommand*{\chf}[1]{\charfunc{#1}}
\newcommand*{\charfunccomp}[2]{\charfunc{#1 \smallsetminus #2}}     
\newcommand*{\charfunccompX}[1]{\charfunccomp{X}{#1}}               \newcommand*{\chfcX}[1]{\charfunccompX{#1}}
\newcommand{\gauge}{\ensuremath{\widetilde{E}}{}}
\newcommand{\gaugex}{\ensuremath{\widetilde{E}_{\crse X}}{}}
\newcommand{\gaugey}{\ensuremath{\widetilde{F}_{\crse Y}}{}}
\newcommand*{\matrixunit}[2]{e_{#1,#2}}
\definecolor{darkgreen}{rgb}{0.0, 0.5, 0.0}
\newcounter{fcomment}
\definecolor{brown}{rgb}{0.55, 0.3, 0.25}
\newcounter{dcomment}
\definecolor{newpurple}{rgb}{0.8, 0, 0.9}
\newcommand{\CCC}{\mathbb{C}}
		\newcommand{\NN}{\mathbb{N}}
		\newcommand{\RR}{\mathbb{R}}
		\newcommand{\ZZ}{\mathbb{Z}}
		\newcommand{\CB}{\mathcal{B}}
		\newcommand{\CD}{\mathcal{D}}
\newcommand{\CE}{\mathcal{E}}		\newcommand{\CF}{\mathcal{F}}
\newcommand{\CG}{\mathcal{G}}		\newcommand{\CH}{\mathcal{H}}
\newcommand{\CK}{\mathcal{K}}		
\newcommand{\CM}{\mathcal{M}}		
\newcommand{\CO}{\mathcal{O}}		\newcommand{\CP}{\mathcal{P}}
		\newcommand{\CR}{\mathcal{R}}
		\newcommand{\CX}{\mathcal{X}}
\newcommand{\fka}{\mathfrak{a}}
\newcommand{\fkA}{\mathfrak{A}}		\newcommand{\fkB}{\mathfrak{B}}
\newcommand{\fkI}{\mathfrak{I}}
\DeclarePairedDelimiter\abs{\lvert}{\rvert}		%absolute value
\DeclarePairedDelimiter\norm{\lVert}{\rVert}		%norm
\DeclarePairedDelimiter\angles{\langle}{\rangle}	% used in \scal
\DeclarePairedDelimiter\paren{(}{)}			%   These are to fix the spacing between
\DeclarePairedDelimiter\braces{\{}{\}}			%   Also, use with * to get variable size
	\newcommand{\bigparen}[1]{\paren[\big]{#1}}
\newcommand{\Bigpar}[1]{\paren[\Big]{#1}}	\newcommand{\Bigparen}[1]{\paren[\Big]{#1}}
	\newcommand{\bigbraces}[1]{\braces[\big]{#1}}
\newcommand{\bigmid}{\mathrel{\big|}}			%bigger \mid
\newcommand{\scal}[2]{\angles{#1,#2}}			%scalar product (#1,#2)
\DeclareMathOperator{\id}{id}				%identity
\DeclareMathOperator{\aut}{Aut}				%automorphisms
\DeclareMathOperator{\out}{Out}				%outer automorphisms
\DeclareMathOperator{\diam}{diam}			%diameter
\DeclareMathOperator{\image}{im}
\DeclareMathOperator{\Out}{Out}
\DeclareMathOperator{\rank}{rank}
\theoremstyle{plain}
\newtheorem{thm}{Theorem}[section]		\newtheorem{theorem}[thm]{Theorem}
		\newtheorem{proposition}[thm]{Proposition}
			\newtheorem{lemma}[thm]{Lemma}
		\newtheorem{corollary}[thm]{Corollary}
\newtheorem{warning}[thm]{Warning}
\newtheorem*{thm*}{Theorem}			\newtheorem*{theorem*}{Theorem}
\newtheorem*{prop*}{Proposition}		\newtheorem*{proposition*}{Proposition}
\newtheorem*{lem*}{Lemma}			\newtheorem*{lemma*}{Lemma}
\newtheorem*{cor*}{Corollary}			\newtheorem*{corollary*}{Corollary}
\newtheorem*{qu*}{Question}			\newtheorem*{question*}{Question}
\newtheorem*{conj*}{Conjecture}			\newtheorem*{conjecture*}{Question}
\newtheorem*{fact*}{Fact}
\newtheorem*{claim*}{Claim}
\newtheorem*{case*}{Case}
\newtheorem*{problem*}{Problem}
\numberwithin{equation}{section}
\theoremstyle{definition}
		\newtheorem{definition}[thm]{Definition}
\newtheorem{notation}[thm]{Notation}
		\newtheorem{convention}[thm]{Convention}
\newtheorem*{de*}{Definition}			\newtheorem{definition*}{Definition}
\newtheorem*{notation*}{Notation}
\newtheorem*{conv*}{Convention}			\newtheorem*{convention*}{Convention}
\theoremstyle{remark}
			\newtheorem{remark}[thm]{Remark}
\newtheorem{exmp}[thm]{Example}			\newtheorem{example}[thm]{Example}
\newtheorem*{remark*}{Remark}
\DeclareMathAlphabet{\mathbit}{OT1}{cmr}{bx}{it}
\crefname{thm}{Theorem}{Theorems}              \crefname{theorem}{Theorem}{Theorems}
\crefname{prop}{Proposition}{Propositions}     \crefname{proposition}{Proposition}{Propositions}
\crefname{lem}{Lemma}{Lemmas}                  \crefname{lemma}{Lemma}{Lemmas}
\crefname{rmk}{Remark}{Remarks}                \crefname{remark}{Remark}{Remarks}
\crefname{cor}{Corollary}{Corollaries}         \crefname{corollary}{Corollary}{Corollaries}
\crefname{qu}{Question}{Questions}             \crefname{question}{Question}{Questions}
\crefname{conj}{Conjecture}{Conjectures}       \crefname{conjecture}{Conjecture}{Conjectures}
\crefname{fact}{Fact}{Facts}
\crefname{claim}{Claim}{Claims}
\crefname{case}{Case}{Cases}
\crefname{alphthm}{Theorem}{Theorems}          \crefname{alphcor}{Corollary}{Corollaries}
\crefname{alphprop}{Proposition}{Propositions}
\mathchardef { __egreg_#1: } = \mathcode`#1 \scan_stop:
\NewDocumentCommand{\crse}{m}
 {\group_begin:
  \egreg_embolden:
  #1
  \group_end:}
\newcommand{\cid}{\mathbf{id}}
\DeclareMathOperator{\cim}{\mathbf{im}}
\DeclareMathOperator{\dom}{dom}
\DeclareMathOperator{\cdom}{\mathbf{dom}}
\DeclareMathOperator{\csupp}{\mathbf{Supp}}
\DeclareMathOperator{\esssupp}{ess\, Supp}
\DeclareMathOperator*{\esssup}{ess\,sup}
\newcommand*{\op}[1]{#1^{\scriptscriptstyle\rm T}}
\def\varcrs{\@ifnextchar[{\@withvarcrs}{\@withoutvarcrs}}
\def\@withvarcrs[#1]#2{\CE_{\rm #2}^{\rm #1}}
\def\@withoutvarcrs#1{\CE_{\rm #1}}
\def\varFcrs{\@ifnextchar[{\@withvarFcrs}{\@withoutvarFcrs}}
\def\@withvarFcrs[#1]#2{\CF_{\rm #2}^{\rm #1}}
\def\@withoutvarFcrs#1{\CF_{\rm #1}}
\newcommand*{\torel}[1]{\mathrel{{\mhyphen}\boxed{\vphantom{|}\,{\scriptstyle #1}\,}{\rightarrow}}}
\newcommand{\csub}{%\underaccent{\tilde}{\subset}}
                    \preccurlyeq}
\newcommand{\Ad}{{\rm Ad}}
\newcommand{\Cat}[1]{\ifmmode \text{\normalfont \textbf{#1}} \else {\normalfont \textbf{#1}}\fi}
\newcommand{\mhyphen}{\textnormal{-}}
\newcommand{\variable}{\,\mhyphen\,}
\begin{document}

%%%%%%%%%%%%%%%%%%%%%%%%%%%%%%%%%%%%%%%%%%%%%%%%%%%%%%%%%%%%%%%%%%%%%%%
% TITLE
%%%%%%%%%%%%%%%%%%%%%%%%%%%%%%%%%%%%%%%%%%%%%%%%%%%%%%%%%%%%%%%%%%%%%%%
\title[Roe algebras of coarse spaces]{Roe algebras of coarse spaces via coarse geometric modules}
\date{\today}

%%%%%%%%%%%%%%%%%%%%%%%%%%%%%%%%%%%%%%%%%%%%%%%%%%%%%%%%%%%%%%%%%%%%%%%
% AUTHORS
%%%%%%%%%%%%%%%%%%%%%%%%%%%%%%%%%%%%%%%%%%%%%%%%%%%%%%%%%%%%%%%%%%%%%%%
\author[Diego Mart\'{i}nez]{Diego Mart\'{i}nez $^{1}$}
\address{Mathematisches Institut, University of M\"{u}nster, Einsteinstr. 62, 48149 M\"{u}nster, Germany.}
\email{diego.martinez@uni-muenster.de}

\author[Federico Vigolo]{Federico Vigolo $^{2}$}
\address{Mathematisches Institut, Georg-August-Universit\"{a}t G\"{o}ttingen, Bunsenstr. 3-5, 37073 G\"{o}ttingen, Germany.}
\email{federico.vigolo@uni-goettingen.de}

%%%%%%%%%%%%%%%%%%%%%%%%%%%%%%%%%%%%%%%%%%%%%%%%%%%%%%%%%%%%%%%%%%%%%%%
% ABSTRACT
%%%%%%%%%%%%%%%%%%%%%%%%%%%%%%%%%%%%%%%%%%%%%%%%%%%%%%%%%%%%%%%%%%%%%%%
\begin{abstract}
  We provide a construction of Roe (\cstar{})algebras of general coarse spaces in terms of coarse geometric modules. This extends the classical theory of Roe algebras of metric spaces and gives a unified framework to deal with either uniform or non-uniform Roe algebras, algebras of operators of controlled propagation, and algebras of quasi-local operators; both in the metric and general coarse geometric setting.
  The key new definitions are that of coarse geometric module and coarse support of operators between coarse geometric modules. These let us construct natural bridges between coarse geometry and operator algebras.
  
  We then study the general structure of Roe-like algebras, and investigate several structural properties, such as admitting Cartan subalgebras or computing their intersection with the compact operators.
  Lastly, we prove that assigning to a coarse space the K-theory groups of its Roe algebra(s) is a natural functorial operation.
\end{abstract}

%%%%%%%%%%%%%%%%%%%%%%%%%%%%%%%%%%%%%%%%%%%%%%%%%%%%%%%%%%%%%%%%%%%%%%%
% MATH CLASSIFICATION AND KEYWORDS
%%%%%%%%%%%%%%%%%%%%%%%%%%%%%%%%%%%%%%%%%%%%%%%%%%%%%%%%%%%%%%%%%%%%%%%
\subjclass[2020]{46L80, 54E15, 46L85, 51K05, 51F30}
\keywords{Roe algebras; coarse geometry; geometric module}

%%%%%%%%%%%%%%%%%%%%%%%%%%%%%%%%%%%%%%%%%%%%%%%%%%%%%%%%%%%%%%%%%%%%%%%
% THANKS AND FUNDING
%%%%%%%%%%%%%%%%%%%%%%%%%%%%%%%%%%%%%%%%%%%%%%%%%%%%%%%%%%%%%%%%%%%%%%%
\thanks{{$^{1}$} Funded by the Deutsche Forschungsgemeinschaft (DFG, German Research Foundation) under Germany’s Excellence Strategy – EXC 2044 – 390685587, Mathematics Münster – Dynamics – Geometry – Structure; the Deutsche Forschungsgemeinschaft (DFG, German Research Foundation) – Project-ID 427320536 – SFB 1442, and ERC Advanced Grant 834267 - AMAREC}

\thanks{{$^{2}$} Funded by the Deutsche Forschungsgemeinschaft (DFG) -- GRK 2491:  Fourier Analysis and Spectral Theory -- Project-ID 398436923}

%%%%%%%%%%%%%%%%%%%%%%%%%%%%%%%%%%%%%%%%%%%%%%%%%%%%%%%%%%%%%%%%%%%%%%%
% TITLE
%%%%%%%%%%%%%%%%%%%%%%%%%%%%%%%%%%%%%%%%%%%%%%%%%%%%%%%%%%%%%%%%%%%%%%%
\maketitle

%%%%%%%%%%%%%%%%%%%%%%%%%%%%%%%%%%%%%%%%%%%%%%%%%%%%%%%%%%%%%%%%%%%%%%%
% INTRODUCTION
%%%%%%%%%%%%%%%%%%%%%%%%%%%%%%%%%%%%%%%%%%%%%%%%%%%%%%%%%%%%%%%%%%%%%%%
\section{Introduction}
Roe algebras originally arose in the context of smooth (complete Riemannian) manifolds, as their K-theory groups are ideal to define the (higher) index of pseudo-differential operators on non-compact manifolds, and thus extend classical index theory beyond the compact case~\cites{RoeIndexI,RoeIndexII,roe-1993-coarse-cohom,roe_index_1996}. 
It was soon realized that these \cstar{}algebras are objects of interest also beyond their applications to index theory, and they have been thoroughly investigated in their own right (see~\cites{roe_lectures_2003,braga_rigid_unif_roe_2022,li-khukhro-vigolo-zhang-2021,guentner-et-al-2012-geometric-complex,willett-2009-some-notes-prop-a,sako-2014-proper-a-oper-norm}, among many others).
Moreover, these Roe algebras were initially defined as concretely represented algebras of operators on $L^2$-spaces of a manifold $M$ generated by operators with bounded displacement.
One key feature of Roe algebras is that their isomorphism class only depends on the \emph{coarse geometry} of the space (more on this below). As a consequence, in the construction of Roe algebras one may replace a smooth manifold $M$ and the associated Hilbert space $L^2(M,{\rm Vol})$ with an appropriately chosen discrete metric space $X$ and a Hilbert space of the form $\ell^2(X;\CH)$, where $\CH$ is some fixed Hilbert space. Both points of view have been used extensively, as they present different technical advantages.

On one hand, the smooth framework is the most natural setting for topologists. Indeed, most of the ``geometrically meaningful'' operators arise as (pseudo) differential operators on spaces of functions, differential forms or, more generally, sections of bundles on manifolds.
One of the most impressive applications of Roe algebras in this setting is concerning the interplay between analytic and homotopical/topological invariants of manifolds. 
Namely, Higson and Roe defined the \emph{coarse assembly map}, connecting $K$-homology groups with $K$-theory of Roe algebras~\cites{higson-roe-1995-coarse-bc}. The properties of these homomorphisms have strong consequences on the topology of the space. For instance, proving injectivity of the coarse assembly maps is one powerful approach to prove the strong Novikov Conjecture~\cites{higson-et-al-1997-contr-top,yu-1998-novikov-groups,yu_coarse_2000}.
Inspiration for this subject comes from the {Baum--Connes Conjecture}~\cites{baum_classifying_1994,baum_geometric_2000,yu-1995-bc-conjecture}, which is one of the main open problems of non-commutative geometry and has strong analogies with the Farrell--Jones conjecture, see~\cites{land-nikolaus-2018-k-l-theories,Luck2005}.

On the other hand, the discrete framework is much more convenient when studying structural properties of these \cstar{}algebras and their relations with various coarse geometric properties of the space.
Some of the analytic properties of Roe algebras that admit natural coarse-geometric equivalents are being \emph{quasi-diagonal}~\cites{chyuan_chung_inv_sem_2022,li-willett-2018-low-dimen}, admitting \emph{traces}~\cites{ara-lledo-martinez-2020,lledo-martinez-2021}, or being \emph{nuclear}~\cites{brodzki_property_2007,brown_c*-algebras_2008,sako-2014-proper-a-oper-norm}.
It should be pointed out that the object of interest in this analytic study is not only the Roe algebra proper, but a whole suite of algebras that can be defined in a similar spirit (the \emph{uniform Roe algebra}, the \emph{algebra of quasi-local operators}, etc.). For the rest of this introduction we will refer to them rather informally as \emph{Roe-like \cstar{}algebras} (see \cref{notation:roe-like-algs} for a precise meaning).

One very convenient feature in the study of Roe-like \cstar{}algebras is the presence of a unified framework that encompasses both the smooth and the discrete worlds. Namely, the language of \emph{geometric modules} as developed by Higson and Roe (see \cites{higson-2000-analytic-k-hom,willett_higher_2020}). In this language $L^2(M,{\rm Vol})$ and $\ell^2(X;\CH)$ can be seen as different choices of geometric modules for the same locally compact metric space. Any choice of (ample) geometric module for a fixed locally compact metric space $X$ can be used to define an associated Roe algebra, and the crucial observation is that different choices yield, in fact, isomorphic \cstar{}algebras. This point of view enables one to move seamlessly between the smooth and discrete settings.

\smallskip

As already mentioned, an extremely important property of Roe algebras is that they only depend on the \emph{coarse geometry} of the locally compact metric space.
Coarse geometry is the paradigm of studying the large scale geometry of a metric space ignoring all the local information on the space itself. Namely, the coarse geometric features of a metric space are those properties that are preserved under uniformly bounded---but arbitrarily large---perturbations of the space.
Gromov is one of the pioneers of the modern approach to large scale geometry, and his seminal work \cite{gromov-1993-invariants} is one of the birth places of Geometric Group Theory (we refer to the books \cites{dructu2018geometric,nowak-yu-book,roe_lectures_2003} for entry points to this subject). This point of view gives a handle to tackle otherwise untractable problems, and it is hard to overstate the importance of its consequences.

One fundamental observation is then that the isomorphism type of the Roe algebra is one such coarse geometric property. It is a deep and surprising rigidity phenomenon that, in fact, the converse is often true. Namely, starting with \cite{spakula_rigidity_2013}, a sequence of results has shown in more and more general settings that `tame' metric spaces having isomorphic Roe-like algebras must be coarsely equivalent \cites{braga_rigid_unif_roe_2022,bbfvw_2023_embeddings_vna,braga_farah_rig_2021,braga_gelfand_duality_2022}. In fact, one of the collateral goals of the present work is to set up the groundwork that will be used in \cite{rigid} to prove that this rigidity phenomenon applies to all locally compact (extended) metric spaces.

\smallskip

In a more recent trend, more and more attention has been given to a more general kind of coarse geometry. In order to study the large scale features of a metric space $(X,d)$ it is not necessary to make full use of the metric structure of $X$: all that is really needed is to know when two arbitrary families of points are at uniformly bounded distance from one another (equivalently, what families of subsets of $X$ have uniformly bounded diameter).
A useful axiomatization of this notion was developed by Roe in \cite{roe_lectures_2003}\footnote{\,
A different but equivalent approach was simultaneously developed by the Ukrainian school \cite{protasov2003ball}.
}: a coarse structure $\CE$ on a set $X$ is a gadget that fulfills the role of describing a notion of uniform boundedness on $X$ (this axiomatization is akin to the classical notion of \emph{uniform structure}, used to generalize the notion of uniform convergence to topological spaces without metrics).
Naturally, a metric $d$ on $X$ defines a coarse structure $\CE_d$, and different metrics are coarsely equivalent if and only if they generate the same coarse structure.
In general, a \emph{coarse space} $(X,\CE)$ is any set equipped with a coarse structure.

The need to deal with general coarse spaces arises rather naturally. Roe-like algebras on coarse structures are used both in the smooth setting \cites{higson-et-al-1997-contr-top,wright2003c0,wright2005coarse}, where they are useful intermediate constructs, and in the discrete setting \cites{chen-wang-2004,bunke-et-al-2020-k-theory,bunke-engel-2020-ass-maps,braga_farah_rig_2021,winkel2021geometric}, where they are often studied in their own right or in connection with the coarse Baum--Connes conjecture.
Additionally, coarse structures arise naturally in the study of non-compactly generated topological groups: an attempt to generalize the techniques and results of geometric group theory to this setting will likely need to follow this route \cites{coarse_groups,rosendal2021coarse}.

However, up to this date a unified approach to the smooth and discrete settings in the general coarse geometric world is still lacking. This is because the original treatment in terms of geometric modules relies heavily on the topology of the locally compact metric space, while general coarse spaces do not come equipped with a topology. One of the main goals of the present paper is to fill this gap and provide a common framework for both flavors of Roe-like algebras via \emph{coarse geometric modules}. Partial work in this direction is implicit in \cite{roe_lectures_2003} and also in \cite{winkel2021geometric}, where a notion of `geometric property (T)' for coarse spaces is considered. The approach we provide here is however much more general. In fact, one of our main contributions is identifying a somewhat minimal set of conditions necessary for the classical theory of Roe-like algebras to extend to the general coarse setting.
It should also be pointed out that there is a completely different way of providing a unified treatment in terms of C*-categories \cite{bunke2020homotopy}. We refer to \cref{sec:differences with Bunke-Engel} for a comparison between the two approaches.
Another main goal of this work is to introduce a useful set of nomenclature and conventions that is particularly well adapted to studying Roe-like algebras of coarse spaces in terms of coarse geometric modules. This is used heavily in \cite{rigid}.
We also collect a few useful but not so well known facts about Roe like algebras and coarse spaces.

Below is an outline of the paper and the main results it contains.

\subsection{Coarse geometry}
After a few general preliminaries, we start the core of our discussion with an introduction to coarse geometry on general coarse spaces in \cref{subsec:coarse-spaces}.
Before entering into detail, we should quickly recall the notation and convention developed in \cite{coarse_groups}, as they will be used heavily henceforth.
A coarse space will be denoted by $\crse{X} = (X, \CE)$, where $X$ is the space itself and $\CE$ is the coarse structure $X$ is equipped with. Since coarse geometry is concerned with the study of those properties that are preserved under uniformly bounded perturbations (recall that uniform boundedness is defined using the coarse structure), the only maps between coarse spaces that makes sense to consider are those functions $f\colon X\to Y$ preserving uniform boundedness. Such functions are called \emph{controlled}. Examples of such controlled maps are Lipschitz functions of metric spaces, or functions that are Lipschitz up to an additive constant.

In the ``coarse philosophy'' everything is only defined up to uniformly bounded perturbations. In particular, if two functions $f,f'\colon X\to Y$ are \emph{close} (that is, $f(x)$ stays uniformly close to $f'(x)$ when $x \in X$ varies) then they should be considered as representing the same map. We make this into a hard definition. Indeed, a \emph{coarse map} $\crse{f\colon X\to Y}$ is the equivalence  class of a controlled function up to closeness.\footnote{\,
This nomenclature diverges from that of \cite{roe_lectures_2003}, where coarse maps are defined as \emph{proper} controlled functions.}
This is but a particular case of a general convention: bold symbols are used throughout to denote coarse properties and constructs (\emph{i.e.}\ notions that are stable under uniformly bounded perturbations). Normal-weight symbols, on the other hand, denote pointwise defined notions, or a choice of representative for a coarsely defined object. For instance, $\crse{f\colon X\to Y}$ is a coarse map, while $f\colon X\to Y$ is a controlled map that is a representative for $\crse f$ (\emph{i.e.}\ $\crse f=[f]$, see \cref{def:coarse-map}).
A \emph{coarse equivalence} is a coarse map $\crse{f\colon X\to Y}$ with a coarse inverse $\crse{g\colon Y\to X}$, that is, such that the compositions ${f\circ g}$ and ${g\circ f}$ are close to the corresponding identity functions. If a coarse equivalence exists, $\crse X$ and $\crse Y$ are \emph{coarsely equivalent}. A property is a \emph{coarse property} if it is preserved under coarse equivalences.

This setup can be given a rigorous categorical framework: the category \Cat{Coarse} has coarse spaces as objects and coarse maps as morphisms. Coarse equivalences are precisely the isomorphisms in the coarse category, and coarse properties are, precisely, those notions that can be defined within \Cat{Coarse}.

In this work we take one step further, in that instead of considering controlled functions $f\colon X\to Y$, we define coarse maps in terms of \emph{controlled relations} $R\subseteq Y\times X$ (cf.\ \cref{def:controlled_function}). Informally, such an $R$ can be thought of as a function such that the image $R(x)$ is only determined up to uniformly bounded error, and hence will be well defined in \Cat{Coarse}.
This point of view enables us to define \emph{(partial) coarse maps} as coarse subspaces of the product $\crse{R\subseteq Y\times X}$ (cf.\ \cref{def:coarse-map}) which, in turn, yields an equivalent description of the coarse category (see \cref{lemma:coarse-relation-yields-coarse-map}). The main advantage of defining coarse maps in terms of relations will become clear when discussing the \emph{coarse support} of operators.

In \cref{subsec:coarse-spaces} we give a precise definition for all the notions mentioned above.
In particular, we discuss coarse subspaces; partially defined coarse maps, with their domain and image; compositions of (partial) coarse maps; properness and coarse embeddings. We also review some properties of coarse spaces. Namely, a coarse space $\crse X$ is \emph{coarsely locally finite} if it is coarsely equivalent to a coarse space $\crse Y$ where all the bounded sets are finite (this happens \emph{e.g.}\ if the coarse structure on $X$ is induced by a proper metric). Likewise, $\crse X$ has \emph{bounded geometry} if it is coarsely equivalent to a coarse space $\crse Y$ where uniformly bounded sets are \emph{uniformly} finite. In these cases, the coarse space $\crse Y$ is a (uniformly) locally finite model for $\crse X$ (see \cref{def:locally finite space - bounded geo}).
It is worth pointing out that these properties of $\crse X$ can be rephrased in terms of the existence of a (uniformly) locally finite partition of $X$ (as is, in fact, done in \cref{def:controlled-partition}).

\subsection{Geometric modules}
Classically, a geometric module for a locally compact metric space $(X,d)$ is a non-degenerate \Star{}representation $\rho \colon C_0(X) \to \CB(\CH_X)$ for some Hilbert space $\CH_X$ (see \cref{subsec:classical-roe-alg}). As explained, the dependence of this definition on the topology of $X$ is a major issue when dealing with arbitrary coarse spaces.
However, what one often really needs is not much the representation $\rho \colon C_0(X) \to \CB(\CH_X)$, but its canonical extension $\bar \rho$ to the algebra of \emph{Borel} bounded functions of $X$. In turn, all that is needed to describe $\bar\rho$ are the projections $\bar\rho(\chf{A})$ associated with the indicator functions of the measurable subsets $A\subseteq X$.
This latter point of view can be extended, and this is precisely what we do in \cref{sec:modules}.

A \emph{coarse geometric module for $\crse X$} or, \emph{$\crse{X}$\=/module}, (see \cref{def:coarse-module}) is a Hilbert space $\CHx$, together with a \emph{non-degenerate} unital representation $\chf{\bullet} \colon \fkA \to \CB(\CHx)$, where $\fkA$ is some Boolean algebra of subsets of $X$. In particular, $\paren{\chf{A}}_{A \in \fkA}$ is a family of commuting projections with $\chf{\emptyset}=0$ and $\chf{X}=1_\CHx$.
The non-degeneracy condition is a necessary extra requirement, which implies that the module $\CHx$ can be reconstructed from the images $\chf{A}(\CHx)$ as $A$ ranges among---suitably large---uniformly bounded sets in $\fkA$.
In particular, note that the choice of $\fkA$ is part of the definition of the module. If $X$ is seen as a discrete set, it makes sense to have $\fkA =\CP(X)$ and consider modules of the form $\ell^2(X,\CH)$. If $X$ has a topology, it is natural to take $\fkA$ to be the Borel $\sigma$-algebra and take modules of the form $L^2(X,\mu)$. 
In view of the latter example, we call the elements of $\fkA$ \emph{measurable subsets}.

We observe that a classical geometric module $\CH_X$ for a locally compact metric space $(X,d)$ naturally gives rise to a coarse geometric module $\CHx =\CH_X$ for the coarse space $\crse X = (X,\CE_d)$, where $\CE_d$ is the coarse structure induced by the metric $d$.

Non-degeneracy is the bare minimum that is needed for a representation to interact meaningfully with the coarse space $\crse X$. However, in many instances more stringent conditions are needed. For instance, it makes sense to ask every subset of $X$ to have a \emph{controlled} measurable neighborhood (cf.\ \cref{def:coarse-containments}). Such a module is called \emph{admissible} (the approach taken in \cite{winkel2021geometric} always yields admissible modules).
Other useful structural properties are \emph{ampleness} and, most importantly, \emph{discreteness} (see \cref{def:ample,def:discrete} respectively).

As it turns out, a discrete coarse geometric module is in spirit quite analogous to a module of the form $\ell^2(X,\CH)$ (see \cref{rmk:discrete modules are discrete}), hence the name.
In particular, all the results concerning Roe-like algebras of coarse spaces realized as operator algebras on $\ell^2(X,\CH)$ can be phrased in terms of discrete modules.
It is important to observe that the vast majority of modules that one may need to consider are indeed discrete. In fact, we prove that every (locally) admissible module on a coarsely locally finite coarse space has a canonical extension to a discrete module (see \cref{prop:admissible is discrete}).

\subsection{Operators on modules}\label{ssec:intro:operators on modules}
A useful feature of (classical) geometric modules is that the topology may be used to define the support of a bounded operator $t\colon \CH_X\to\CH_Y$ as a minimal closed subset of $Y\times X$ ``supporting $t$''.
In the coarse geometric setting this does not make sense, \emph{i.e.}\ if $t\colon \CHx\to \CHy$ is an operator between coarse geometric modules it is not possible to canonically define a support for $t$. Nevertheless, what one can do is to define its \emph{coarse support} $\csupp(t)$ as a coarse subspace of $\crse{Y\times X}$ (see \cref{def:coarse support,prop:coarse_support_exists}). Many of the key properties of the support of an operator apply more generally to coarse supports.

Here is where the choice of describing (partial) coarse functions as coarse subspaces of $\crse{Y\times X}$ pays off. In fact, it is now very natural to compare maps with coarse supports of operators. In particular, we say that an operator $t\colon \CHx\to \CHy$ is \emph{controlled} (see \cref{def:controlled operator}) if its coarse support is a partial coarse map $\csupp(t)\colon\crse X\to\crse Y$ (\emph{i.e.}\ its representatives are controlled relations in $Y\times X$). Controlled operators will play a key role in the sequel, as they are those operators that have the strongest geometric meaning.

Consider now operators in $\CB(\CHx)$. Here there are a special kind of controlled operators, namely those that are supported on the coarse identity function $\cid_{\crse X}\colon\crse{X\to X}$. Equivalently, these are operators whose support is contained in a controlled neighborhood of the diagonal. These operators are said to have \emph{controlled propagation} (see \cref{def: controlled propagation operator}). 
Heuristically, this means that the operators are of \emph{bounded displacement}.
It follows readily from the properties of the coarse support that the set of operators of controlled propagation is a \Star{}algebra.

The useful classical notion of local compactness for operators $t\in\CB(\CH_X)$ extends naturally to the coarse geometric setting. Namely, an operator $t\colon \CHx\to\CHy$ is \emph{locally compact} if $\chf{B}t$ and $t\chf{A}$ are compact for any choice of bounded measurable $A\subseteq X$, $B\subseteq Y$ (cf.\ \cref{def:locally-compact-ope}).
This is just another instance of our standing convention that the `local' properties are those that hold on bounded subsets.

There is also a convenient weakening of the bounded-propagation property, namely \emph{quasi-locality} (see \cref{def:ql-ope}).
In the metric setting, an operator $t\in\CB(\CHx)$ is quasi-local if for every $\varepsilon > 0$ there there is a radius $r>0$ such that $\norm{\chf{A'}t\chf{A}}\leq \varepsilon$ whenever $A',A\subset X$ are $r$-separated measurable subsets. The definition in the general coarse setting simply replaces $r$ by a controlled entourage $E\in\CE$. It is worth pointing out that quasi-locality works best on admissible coarse geometric modules.

\subsection{Roe-like (\cstar{})algebras}
All the pieces are now in place to define the main characters of this play. For a fixed coarse geometric module $\CHx$, we first note that the set of locally compact operators is a \cstar{}algebra, which we denote by $\lccstar{\CHx}$. We now define:
\begin{itemize}
  \item $\cpstar{\CHx}$ to be the algebra of operators of controlled propagation;
  \item $\roestar{\CHx}\coloneqq \cpstar{\CHx}\cap\lccstar{\CHx}$ is the \emph{Roe \Star{}algebra} of $\CHx$.
\end{itemize}
We refer to the above as \emph{Roe-like \Star{}algebras} (\cref{def:C*cp,def:roestar}). Taking their (norm) completions, we obtain \cstar{}algebras, which we denote by $\cpcstar{\CHx}$ and $\roecstar{\CHx}$ respectively.
Moreover, if $\CHx$ is admissible, the set of quasi-local operators is also a \cstar{}algebra, which we denote by $\qlcstar\CHx$ (cf.\ \cref{def:C*ql-lc}). We refer to these three \cstar{}algebras as the \emph{Roe-like \cstar{}algebras} of the coarse geometric module $\CHx$.

By design, these definitions extend the classical analogue notions. Namely, if $\CHx$ is obtained from a classical geometric module $\CH_X$, then $\roestar{\CHx}$ and $\roecstar\CHx$ are precisely the Roe \Star{} and \cstar{}algebras as defined in, \emph{e.g.}\ \cite{willett_higher_2020}.
One example of classical interest is when $X$ is a locally finite metric space and $\CHx=\ell^2(X)$. In this case $\cpcstar{\CHx}=\roecstar{\CHx}$, which is known as the \emph{uniform Roe algebra} of $X$. This \cstar{}algebra has been thoroughly studied, because it is more manageable than the usual Roe algebra defined using ample modules.
More generally, if $\crse X$ is a coarse space one may always consider the \emph{uniform module} $\CH_{u,\crse X}\coloneqq\ell^2(X)$ and $\cpcstar{\CH_{u,\crse X}}$ is then the associated uniform Roe-algebra.\footnote{\,
If $\crse X$ is not locally finite, then this is no longer equal to $\roecstar{\CH_{u,\crse X}}$. 
We find that $\cpcstar{\CH_{u,\crse X}}$ has more affinity to the classical uniform Roe-algebras, 
for instance because they are both unital, while $\roecstar{\CH_{u,\crse X}}$ needs not be.
}

\smallskip

We can then study some general structural properties of Roe-like algebras.
In the classical metric setting, it is an important feature that Roe-like \cstar{}algebras always contain the set of compact operators. This is no longer the case for, \emph{e.g.}, \emph{extended} metric spaces, because operators of finite propagation or approximations thereof can never ``mix'' parts of the space that are at infinite distance from one another.
In coarse theoretic terms, this means that some extra care is required if the coarse space is not \emph{coarsely connected} (see \cref{def: coarsely connected}). In this setting, a coarse space $\crse X$ can be decomposed as a disjoint union $\bigsqcup_{i\in I}\crse X_i$ of coarsely connected components.
If $\CHx$ is then a coarse module for $\crse X$ so that the coarse components are measurable, $\CHx$ can be decomposed as $\bigoplus_{i\in I}\CH_{\crse X_i}$, where $\CH_{\crse X_i} = \chf{X_i}(\CHx)$.
In this case $\CH_{\crse X_i}$ is itself a coarse geometric module for $\crse X_i$, and one may realize the Roe-like (\cstar)algebras of $\CHx$ as subalgebras of the product of the Roe-like (\cstar)algebras of $\CH_{\crse X_i}$ (see \cref{cor:roe algs disconnected,cor:roe algs disconnected vs sum and prod}).
It is then a simple matter to show that the intersection of any Roe-like \cstar{}algebra of $\CHx$ with the compact operators is the sum $\bigoplus_{i\in I}\CK(\CH_{\crse X_i})$ (see \cref{cor: roe algs intersect compacts}).
This is important in applications, particularly in~\cite{rigid}.

\smallskip

Observe that, by its very definition, it is clear that $\roecstar\CHx$ is always contained in the intersection $\cpcstar\CHx\cap\lccstar{\CHx}$. It is a very useful and not at all clear fact that the converse containment is often true. Namely, we prove that if $\crse X$ is coarsely locally finite and $\CHx$ is discrete then 
\begin{equation}\label{eq:intro: roe as intersection}
  \roecstar\CHx = \cpcstar\CHx\cap\lccstar{\CHx}
\end{equation}
(see \cref{thm:roe-alg:intersection}). This result is not even well-known in the classical metric setting: \cite{braga_gelfand_duality_2022} contains a proof for it in the bounded geometry case, and we were not able to locate other references for it. In particular, this fact appears to be new in the context of locally compact metric spaces.
We also refer to the discussion following \cref{thm:roe-alg:intersection} for an application of this fact.
The proof of \eqref{eq:intro: roe as intersection} is based on the construction of an approximate unit of $\cpcstar\CHx\cap\lccstar{\CHx}$ that is actually contained in $\roestar{\CHx}$. Such an approximate unit is useful in other contexts as well \cites{braga_gelfand_duality_2022,rigid}. It is also interesting to note that if $\crse X$ is also \emph{coarsely countable} then the same net of operators is also an approximate unit for the much larger algebra $\lccstar{\CHx}$ (see \cref{prop: approx unit of lccstar}).

\smallskip

Finally, \cref{subsec:cartan subalgebras} constructs natural \emph{(non-commutative) Cartan subalgebras} in Roe-like algebras (see \cref{thm:cartan-subalgs}). 
Recall that a (non-commutative) Cartan subalgebra of a \cstar{}algebra $A$ is a \cstar{}sub-algebra $B \subseteq A$ that is in some sense \emph{maximal} in $A$, while $A$ itself is \emph{minimal} with respect to $B$ (see \cref{def:cartan-subalg,def:nc-cartan-subalg}).
It was proved in~\cites{Renault2008CartanSI,exel-nc-cartan-subalgs-2011} that Cartan pairs always admit ``groupoid'' models, which yield convenient and explicit descriptions of such algebras. These descriptions have been tremendously useful, for instance, in the \emph{classification program} (see~\cite{li-classification-2020} and references therein).

In the classical setting, Cartan subalgebras of Roe algebras of uniformly locally finite metric space case are very well understood (see \cites{white_cartan_2018,braga_gelfand_duality_2022} and references therein).
In \cref{subsec:cartan subalgebras} we extend the construction to arbitrary coarse spaces and deal with both $\roecstar\variable$ and $\cpcstar\variable$.
Our interest in the existence of these Cartan subalgebras stems from the models given in \cites{Renault2008CartanSI,exel-nc-cartan-subalgs-2011}, because these models may be described in geometric terms from the coarse space $\crse{X}$. This provides a tool to prove structural properties of our Roe-like algebras, such as nuclearity (see \cref{cor:roe-models}). 

\subsection{Mappings among Roe-like algebras}
We now return to the topic of controlled operators, as discussed in \cref{ssec:intro:operators on modules}.
One crucial property is that if $t \colon \CHx \to \CHy$ is a controlled operator, then the natural mapping $\Ad(t)\colon\CB(\CHx)\to\CB(\CHy)$ obtained by conjugation $\Ad(t)(x)\coloneqq txt^*$ preserves controlled propagation properties. Namely, it restricts to mappings $\cpstar\CHx\to\cpstar\CHy$ and $\cpcstar\CHx\to\cpcstar\CHy$.
If the modules are also admissible, quasi-locality is preserved as well, whence a mapping $\qlcstar\CHx\to\qlcstar\CHy$ is obtained.
Controlledness of $t$ is not enough to deduce that $\Ad(t)$ preserves local compactness, because (partial) coarse maps are allowed to collapse arbitrarily large parts of a space into bounded sets. On the other hand, if $\csupp(t)\colon\crse{X\to Y}$ is also \emph{proper} (in the sense of \cref{def:proper}) then $\Ad(t)$ does restrict to a mapping $\roestar\CHx\to\roestar\CHy$ and $\roecstar\CHx\to\roecstar\CHy$ as well. These facts are collected in \cref{cor:coarse functions preserve roe-like algebras}.

Of course, if $t$ is an isometry (\emph{i.e.}\ $t^*t=1_{\CHx}$) then $\Ad(t)$ is a \Star{}embedding. In other words, taking Roe-like algebras and $\Ad$ provides us with functors from the category of coarse geometric modules and (proper) controlled isometries to the category of \cstar{}algebras (see \cref{rmk:coarse functions preserve roe-like functorially}).

It is an essential part of the (classical) theory of Roe-like algebras that there is a multitude of controlled operators.\footnote{\,
In fact, sufficiently many for these algebras to be faithful descriptions of the coarse geometry of coarse spaces, in a sense made precise from the rigidity theorems for Roe-like algebras mentioned before.
}
We prove in \cref{sec:covering iso} that this phenomenon holds in the general coarse setting as well, at least when considering discrete ample coarse geometric modules.

Informally, a module $\CHx$ is \emph{$\kappa$-ample} (cf.\ \cref{def:ample}) if the Hilbert spaces $\chf A(\CHx)$ have rank at least $\kappa$ for every large enough measurable bounded $A\subseteq X$ ($\kappa$ will usually be $\aleph_0$, but when working with large coarse spaces it makes sense to consider larger cardinals as well).
This notion is an extension of ampleness for classical geometric modules, and it is a condition necessary to ensure that the modules are large enough to construct interesting isometries between them.
In fact, we can then prove the following.
\begin{theorem}[cf.\ \cref{cor:isometries cover,thm:covering isometries exist}]\label{thm:intro covering isom}
   If $\CHx$ and $\CHy$ are both discrete and $\kappa$-ample of rank $\kappa$, then for every coarse map $\crse{f\colon X\to Y}$ there is a controlled isometry $t\colon\CHx\to\CHy$ with $\csupp(t)=\crse f$.
   Moreover, if $\crse f$ is a coarse equivalence, $t$ may be chosen to be a unitary.
\end{theorem}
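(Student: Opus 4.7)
The plan is to construct $t$ by piecing together local isometries along a sufficiently fine controlled partition of $X$ that is compatible with a representative of $\crse f$. Discreteness reduces the problem to such a combinatorial construction, and the rank hypotheses guarantee that at each piece there is enough room in the target Hilbert space.

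First, I would fix a controlled function $f\colon X\to Y$ representing $\crse f$ and, by discreteness, pick controlled partitions $(X_i)_{i\in I}$ of $X$ and $(Y_j)_{j\in J}$ of $Y$ yielding decompositions $\CHx=\bigoplus_{i\in I}\chf{X_i}(\CHx)$ and $\CHy=\bigoplus_{j\in J}\chf{Y_j}(\CHy)$. Since $f$ is controlled, the family $\{f(X_i)\}_{i\in I}$ is uniformly bounded, so the refinement by preimages $X_i\cap f^{-1}(Y_j)$ is still a controlled partition of $X$. After discarding empty pieces and reindexing as $(X_k)_{k\in K}$, one obtains a map $\sigma\colon K\to J$ such that $f(X_k)\subseteq Y_{\sigma(k)}$ for every $k$. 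For each $j\in J$, setting $K_j\coloneqq\sigma^{-1}(j)$ and $V_j\coloneqq\bigoplus_{k\in K_j}\chf{X_k}(\CHx)\subseteq\CHx$, the rank-$\kappa$ hypothesis on $\CHx$ bounds $\dim V_j\leq\kappa$, while $\kappa$-ampleness of $\CHy$ gives $\dim\chf{Y_j}(\CHy)\geq\kappa$ for all $Y_j$ that are ``large enough''. Absorbing the pathological small pieces into larger neighbours by a further coarsening (which does not compromise controlledness), I can choose isometric embeddings $t_j\colon V_j\hookrightarrow\chf{Y_j}(\CHy)$ for every $j$, and set $t\coloneqq\bigoplus_j t_j$. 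By construction, the matrix coefficients of $t$ live in $\bigsqcup_k Y_{\sigma(k)}\times X_k$, a controlled relation sitting in a uniformly bounded neighbourhood of $\{(f(x),x)\mid x\in X\}$, so $\csupp(t)=\crse f$.

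For the unitary statement, I would exploit a coarse inverse $\crse g$ of $\crse f$. Fixing representatives $f,g$ such that $g\circ f$ is close to $\id_X$ and $f\circ g$ is close to $\id_Y$, iterating the refinement procedure on both sides produces matching controlled partitions $(X_k)_{k\in K}$ of $X$ and $(Y_k)_{k\in K}$ of $Y$ indexed by the same set, with $f(X_k)$ in a bounded neighbourhood of $Y_k$ and $g(Y_k)$ in a bounded neighbourhood of $X_k$. The combined rank hypotheses then force $\dim\chf{X_k}(\CHx)=\dim\chf{Y_k}(\CHy)=\kappa$ on the large pieces, so a unitary isomorphism $u_k$ between paired pieces can be chosen for each $k$, and $t\coloneqq\bigoplus_k u_k$ is the required unitary with $\csupp(t)=\crse f$.

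The main obstacle I anticipate is the combinatorial bookkeeping needed to simultaneously enforce: (i) that all partitions used remain controlled, so that $\csupp(t)$ lands exactly on $\crse f$; (ii) that the local rank matching is tight enough to supply isometries (and, for the unitary case, unitaries); and (iii) that the two refined partitions can be put in bijective correspondence realised by $\crse f$. The delicate point in each case is handling the small pieces on which $\kappa$-ampleness fails, since they must be merged into larger neighbours without disrupting any of these properties.
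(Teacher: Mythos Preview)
Your overall strategy is the right one and matches the paper's: pass to a discrete partition of $X$, use the rank hypotheses to build block isometries into suitably chosen pieces of $\CHy$, and assemble. The paper packages the verification that the resulting operator has the correct coarse support into \cref{lem:injective operators have large domain,lem:partial isometries defined on bases are controlled}, and the existence statement is \cref{thm:covering isometries exist}.

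There is, however, a genuine gap in your refinement step. The sets $X_i\cap f^{-1}(Y_j)$ need not lie in the Boolean algebra $\fkA$ defining the $\crse X$-module: $f$ is merely a controlled function with no measurability assumed, and condition~\ref{def:discrete:extension} of \cref{def:discrete} does not help, since you would first need each $f^{-1}(Y_j)\cap X_i$ to be measurable. Hence $\chf{X_k}$ may be undefined and the decomposition $\CHx=\bigoplus_k\chf{X_k}(\CHx)$ unavailable. The paper avoids this by \emph{not} refining the partition of $X$: for each piece $A_i$ of a fixed discrete partition it simply chooses a single target index $\phi(i)\in J$ with $f(A_i)$ close to $B_{\phi(i)}$ (possible because $f(A_i)$ is bounded and the $B_j$ are coarsely dense), and sends the whole of $\CH_{A_i}$ into $\CH_{B_{\phi(i)}}$. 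Since $\CHx$ has rank $\kappa$, the fibre $\bigoplus_{i\in\phi^{-1}(j)}\CH_{A_i}$ still has rank at most $\kappa$, so the embedding into the rank-$\kappa$ space $\CH_{B_j}$ goes through. For the unitary case the paper argues asymmetrically rather than via matched partitions: it uses coarse surjectivity and \cref{lem:discrete have nice partitions} to arrange that $\phi$ is onto, and then observes that under the two-sided rank-$\kappa$ hypothesis every Hilbert basis in sight has cardinality exactly $\kappa$, so the injections can be upgraded to bijections.
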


An isometry as in \cref{thm:intro covering isom} is said to \emph{cover} the coarse map $\crse f$ (see \cref{def:covering}). We refer to \cref{thm:covering isometries exist} for a more refined statement about the existence of covering (partial) isometries under sharper conditions.
It is however important to note that the construction of such $t$ is highly non-canonical (and \emph{not} functorial).
Nevertheless, the choices involved in the construction of the isometry $t$ in \cref{thm:intro covering isom} only differ by uniformly bounded error (cf.\ \cref{lem: operators with same coarse support are close}).
In particular, in the case where $\crse{X} = \crse{Y}$ and $t$ is a unitary covering a coarse equivalence, these choices are made canonical if one quotients out by the normal subgroup of unitary operators with controlled propagation (see \cref{thm: coarse equivalences iso to cuni}).
This behavior implies that, as long as $\CHx$ is ample enough, the group $\coe{\crse X}$ of coarse equivalences of $\crse{X}$ is naturally mapped into the groups of \emph{outer} automorphisms of the Roe-like \cstar{}algebras. Moreover, these homomorphisms are often embeddings (see \cref{cor:injection-from-ce-to-out}).

\begin{remark}\label{rmk:intro: local ampleness}
  If $\crse f$ is a \emph{proper} coarse map, then the ampleness requirements of \cref{thm:intro covering isom} can be weakened. Namely, to guarantee the existence of an isometry covering $\crse f$ it is enough to assume that $\CHx$ and $\CHy$ are $\kappa$-ample of \emph{local} rank $\kappa$ (see \cref{rem:covering isos}\cref{rem:covering isos:coe-covered-loc-fin}).
\end{remark}

\subsection{Roe-like algebras of coarse spaces}
Finally, we discuss Roe-like algebras of \emph{spaces}, as opposed to modules. Given the theory that we developed, this is now a simple matter. In fact, for a fixed cardinal $\kappa$ we define the \emph{rank-$\kappa$ Roe-like (\cstar)algebras} of a coarse space $\crse X$ to be the Roe-like (\cstar)algebras of any $\kappa$-ample discrete $\crse X$-module of rank $\kappa$, if such a module exists. It then follows from \cref{thm:intro covering isom} that this definition is well-posed, as it does not depend on the choice of rank-$\kappa$ module.

Here a remark is in order: in the classical theory one only works with separable modules. This can be done because Roe-like algebras are usually only defined for locally compact metric spaces, and such a space always admits ample separable modules. On the other hand, separable modules are not large enough to deal with arbitrary coarse spaces: in fact, we show that a coarse space $\crse X$ admits a discrete ample coarse module of rank $\kappa$ if and only if the \emph{coarse cardinality} (cf.\ \cref{def:coarse cardinality}) of $\crse X$ is at most $\kappa$ (see \cref{cor: roe-like algebras vs coarse cardinality}).
In view of \cref{rmk:intro: local ampleness}, if one only wishes to work with proper coarse maps, one may modify the definition of Roe-like \cstar{}algebras to include modules that are $\kappa$-ample of local rank $\kappa$.\footnote{\,
One example of such a module that one may wish to consider is an ample and locally separable geometric module for a non-separable locally compact extended metric space.}
In this work we preferred to keep the freedom to use non-proper coarse maps, as we find this to be a more natural coarse geometric setup.

\smallskip

Finally, in \cref{sec:k-theory} we prove that, for every fixed $\kappa$, assigning to a coarse space $\crse{X}$ of coarse cardinality at most $\kappa$ the $K$-theory $K_*(\roecstar[\kappa]{\CH})$ yields a natural functor (see \cref{thm:k-theory-functor}). In particular, this fixes the aforementioned lack of functoriality in the construction of covering isometries. To be precise, in \cref{thm:k-theory-functor} we show that for every controlled proper isometry $t$ the homomorphisms 
\[
  (\Ad(t))_\ast\colon K_*(\roecstar[\kappa]\CHx)\to K_*(\roecstar[\kappa]\CHy)
\]
only depend on $\csupp(t)\colon \crse{X\to Y}$. This shows that the assignment of homomorphisms in $K$-theory to proper coarse maps is canonical and defines a functor into the category of graded abelian groups.
This is, in fact, the main reason Roe introduced classical Roe algebras back in~\cites{roe-1993-coarse-cohom,roe_index_1996}, and thus completes this circle of ideas.
\begin{notation}\label{general notation}
  We use the following conventions. Bold symbols denote coarse notions, while the non-bold versions are a choice of representatives. Specifically, $\crse{X} = (X, \CE)$ and $\crse{Y} = (Y, \CF)$ denote coarse spaces.
  Likewise, $\CHx$ and $\CHy$ are coarse geometric modules associated to $\crse X$ and $\crse Y$ respectively.

  Controlled entourages are usually denoted as $E \in \CE$ and $F \in \CF$. Measurable sets will be denoted by $A \subseteq X$ and $B \subseteq Y$. These will often (but not always) be bounded as well.
  $R$ and $S$ will usually be binary relations (subsets of $Y\times X$ or $X\times X$).
  
  $\CB(\CH)$ denotes the algebra of bounded linear operators on $\CH$, and $\CK(\CH)$ denotes the compact ones.
  The \emph{rank} of a Hilbert space $\CH$ is the cardinality of an orthonormal basis.
\end{notation}

\subsection*{Acknowledgements}
We are grateful for several fruitful conversations about these topics with  Ilijas Farah, Ralf Meyer, Alessandro Vignati and Wilhelm Winter.

%%%%%%%%%%%%%%%%%%%%%%%%%%%%%%%%%%%%%%%%%%%%%%%%%%%%%%%%%%%%%%%%%%%%%%%
% PRELIMINARIES STANDARD
%%%%%%%%%%%%%%%%%%%%%%%%%%%%%%%%%%%%%%%%%%%%%%%%%%%%%%%%%%%%%%%%%%%%%%%
\section{General preliminaries} \label{sec:pre}
This section introduces necessary notation and background information useful in the rest of the text.

\subsection{Relations} \label{subsec:relations}
A \emph{relation from $X$ to $Y$} (short for ``binary relation'') is any subset $R\subseteq Y\times X$.\footnote{%
\, The choice of taking $Y\times X$ rather than $X\times Y$ is justified by the unfortunate habit of composing to the left successive functions.}
We write $yRx$ for $(y,x)\in R$ (in \cite{coarse_groups} the symbol $y\torel{R}x$ is used).
A relation \emph{on} $X$ is a relation from $X$ to $X$. For every $A\subseteq X$, we let $\Delta_A\coloneqq \{(a,a)\mid a\in A\}\subseteq X\times X$ denote the \emph{diagonal over $A$}.

Given $A\subseteq X$, we let $R(A) \coloneqq \{y\mid \exists a\in A, yRa\}$ be the \emph{$R$-image} of $A$.
If $A=\{x\}$ is a singleton, we may write $R_x$ or $R(x)$ instead of $R(\{x\})$. These are also called the \emph{sections} of $R$.
Given $A\subseteq X$ and $B\subseteq Y$, we say \emph{$B$ is $R$-separated from $A$} (denoted by $B\sep{R}A$) if $B\cap R(A)=\emptyset$. Note that this is a symmetric relation whenever $R$ is symmetric.

We denote by $\op{R}\coloneqq\braces{(x,y)\mid yRx}$ the \emph{transposition} of $R$.
The \emph{composition} of two relations $R_2\subseteq Z\times Y$ and $R_1\subseteq Y\times X$ is the relation
\[
  R_2\circ R_1\coloneqq\braces{(z,x)\mid\exists y\in Y \;\; \text{such that} \;\; zR_2yR_1x}.
\]
Observe that $(R_2\circ R_1)(A)=R_2(R_1(A))$ for every $A\subseteq X$.
If $f\colon X\to Y$ is a function, then the transposition of the graph $\gr{f}$ is a relation from $X$ to $Y$.
The composition rule yields $\grop{g\circ f}=\grop{g}\circ \grop{f}$.

\

Given $E\subseteq Y\times X$ and $E'\subseteq Y'\times X'$, we denote by
\[
 E\otimes E'\coloneqq \braces{\paren{(y,y'),(x,x')}\mid yEx \; \text{and} \; y'E'x'}
\]
the \emph{product relation from $X\times X'$ to $Y\times Y'$} (we use $\otimes$ to differentiate it from the Cartesian product of sets $E\times E'\subseteq Y\times X\times Y'\times X'$). Observe that $\grop{f\times f'}=\grop{f}\otimes\grop{f'}$.
Note the product is well behaved with compositions, in the sense that
\begin{equation}\label{eq:product relation composition}
 (E\otimes E')\circ(F\otimes F')=(E\circ F)\otimes(E'\circ F').
\end{equation}
Moreover, if $D\subseteq X\times X'$ is a relation on $X\times X'$, then its \emph{image under $E \otimes E'$} is a relation on $Y\times Y'$ which can also be expressed as
\begin{equation}\label{eq:product relation image}
 (E\otimes E')(D)=E\circ D \circ \op{(E')}.
\end{equation}
Both \cref{eq:product relation composition,eq:product relation image} follow directly from the definitions.

\subsection{Operators, boolean algebras and joins} \label{subsec:operator-boolalg}
By \emph{projection} we systematically mean \emph{self-adjoint idempotent}.

\smallskip

A \emph{Boolean algebra of subsets of $X$} is a family $\fkA \leq \CP(X)$ that is closed under finite unions, intersections and complements. We say that $\fkA$ is \emph{unital} if it contains $X$.
Given a Hilbert space $\CH$, a \emph{representation of $\fkA$ on $\CH$} is a map $\chf{\bullet}\colon\fkA\to \CB(\CH)$ such that $\chf{\emptyset}=0$, $\chf{A\cap B}=\chf A\chf B$ and $\chf{A\cup B}=\chf A+\chf B -\chf A\chf B$. Observe that this yields that $\paren{\chf{A}}_{A \in \fkA}$ is a family of commuting projections. We say $\chf{\bullet}$ is \emph{unital} if $\chf{X}=1$. Note that this then forces $\chfcX{A}=1 -\chf{A}$.

Let $t_i\in\CB(\CH)$ with $i\in I$ be a family of bounded operators. Recall that their sum converges in the \emph{strong operator topology to an operator $t\in \CB(\CH)$} if for every $v\in \CH$ and $\varepsilon>0$ there is a finite $J\subseteq I$ such that for any finite set $K\subseteq I\smallsetminus J$
\[
  \norm{t(v) - \sum_{i\in K}t_i(v)} < \varepsilon.
\]
In the sequel we will often make use of \emph{SOT-sums} (or \emph{strongly convergent sums}) without further notice.

\smallskip

Two operators $s,t$ are \emph{orthogonal} if $st^*=s^*t=0$. If $\paren{t_i}_{i\in I}$ is a family of uniformly bounded pairwise orthogonal operators, then the SOT-sum $\sum_{i\in I} t_i$ exists and it is an operator of norm $\sup_{i\in I}\norm{t_i}$, which is bounded by assumption.

\smallskip

In a few places we will use the notation $\matrixunit{v'}{v}$ to denote the operator given by $\matrixunit{v'}{v} (h) \coloneqq \scal{v}{h} \, v'$. Observe that every rank-one operator $\CH\to\CH'$ can be expressed as $\matrixunit{v'}{v}$ for some $v\in \CH$ and $v'\in\CH'$.

Given a bounded operator $s\colon \CH\to\CH'$, we may consider its action by conjugation, that is, $\Ad(s)\colon\CB(\CH)\to\CB(\CH')$, which is defined by $\Ad(s)(t)\coloneqq sts^*$. Elementary computations show that $\Ad(s)\matrixunit{v'}{v}=\matrixunit{s(v')}{s(v)}.$ 

\smallskip

Recall that if $A\leq\CB(\CH)$ is a concretely represented \cstar{}algebra, its \emph{multiplier algebra} is
\[
  \multiplieralg{A}\coloneqq\braces{t\in\CB(\CH)\mid tA,At\subseteq A} 
\]
(that is, $\multiplieralg{A}$ is the largest idealizer of $A$ in $\CB(\CH)$). It is classical that $\multiplieralg{A}$ only depends on $A$, and not the representation of $A$ into $\CB(\CH)$.\footnote{\, $\multiplieralg{A}$ may also be defined independently of choosing a representation $A \subseteq \CB(\CH)$, but since all Roe-like algebras are concretely represented this approach is much better suited for the paper.}

Moreover, note that every unitary $u\in U(\multiplieralg{A})$ acts on $A$ by conjugation, \emph{i.e.}\ $\Ad$ defines a homomorphism $U(\multiplieralg{A})\to\aut(A)$. Such automorphisms are said to be \emph{inner}. 
The group of inner automorphisms is normal in $\aut(A)$, and the quotient 
\[
  \Out(A)\coloneqq \aut(A)/\{\text{inner}\}
\] 
is the group of \emph{outer automorphisms} of the \cstar{}algebra $A$.

\subsection{Classical Roe algebras of metric spaces} \label{subsec:classical-roe-alg}
Classically, Roe algebras are only defined for locally compact metric spaces. Here we shall give a very brief overview of the facts we will need and extend in the sequel. All the upcoming notions are taken from \cite{willett_higher_2020}*{Part~2}, to where we refer the reader for a comprehensive treatise.

Given a locally compact metric space $(X,d)$ a \emph{geometric module for $X$} (or \emph{$X$-module} for short) is a non-degenerate \Star{}representation $\rho\colon C_0(X)\to\CB(\CH_X)$ for some separable Hilbert space $\CH_X$.
For the purposes of this text, it is crucial that such a $\rho$ can always be extended to a \Star{}representation $\bar\rho\colon{\rm{Borel}}(X)\to \CB(\CH_X)$ of the algebra of Borel bounded functions. This follows from \cite{willett_higher_2020}*{Proposition~1.6.11}, or simply from the fact that any \Star{}representation of a \cstar{}algebra uniquely extends to a weakly continuous \Star{}representation of its double dual.

Given an operator $t\colon\CH_X\to\CH_Y$ between geometric modules on locally compact metric spaces $X$ and $Y$, we define its \emph{support} to be the set
\[
  \supp(f) \coloneqq 
  \left\{\left(y,x\right) \, \mid \, \bar\rho(\chf{V})t\bar\rho(\chf{U})\neq 0 \;\, \forall \, y \in V, x \in U \; \text{open}\right\} \subseteq Y \times X,
\]
where $\chf{V}$ and $\chf{U}$ denote the indicator functions.
An operator $t\in\CB(\CH_X)$ has \emph{finite propagation} if its support $\supp(t)\subseteq X \times X$ lies within finite distance from the diagonal $\Delta_X$. Likewise, $t$ is \emph{locally compact} if $\rho(f)t$ and $t\rho(f)$ are compact for every compactly supported $f\in C_0(X)$ or, equivalently, $\bar\rho(\chf{K})t$ and $t\bar\rho(\chf{K})$ are compact for every compact $K\subseteq X$.

An $X$-module is \emph{ample} if $\rho(f)$ is not compact for any $f\in C_0(X)\smallsetminus\{0\}$.\footnote{\,
Observe that an operator algebraist would usually call an ample representation \emph{essential}, as it does not intersect the compact operators. The naming \emph{ample} is coherent with \cite{willett_higher_2020}*{Definition~4.1.1}, and is motivated by its origin in the context of higher index theory.}
Every locally compact metric space admits an ample $X$-module, as it can be seen by, \emph{e.g.}\ considering $\ell^2(Z;\CH)$, where $Z \subseteq X$ is countable and dense and $\CH$ is an infinite-dimensional separable Hilbert space.
Given an ample $X$-module $\CH_X$, the \emph{Roe algebra} $\roecstar X\subseteq\CB(\CH_X)$ of $X$ (classically denoted by $\classicalroecstar{X}$) is the norm-closure of the set of locally compact operators of finite propagation. Note that these operators form a \Star{}algebra, and hence their closure is a \cstar{}algebra.

Lastly, given a ``controlled, proper'' map $f\colon X\to Y$ (see below for definitions) and ample modules $\CH_X$, $\CH_Y$, it is always possible to construct an isometry $t\colon\CH_X\to\CH_Y$ that ``covers f'' (see \cite{willett_higher_2020}*{Proposition~4.3.4}). Roughly speaking, this means that the support of $t$ is within finite distance from the graph of $f$ (see \cite{willett_higher_2020}*{Definition~4.3.3}).
The conjugation map $\Ad(t)\colon\CB(\CH_X)\to \CB(\CH_Y)$ then restricts to a homomorphism between the Roe algebras. Applying this fact to the identity on $X$ one sees that the Roe algebra is well-defined up to (non-canonical) \Star{}isomorphism, \emph{i.e.} the construction does not depend on the choice of ample $X$-module (see \cite{willett_higher_2020}*{Remark~5.1.13}).

%%%%%%%%%%%%%%%%%%%%%%%%%%%%%%%%%%%%%%%%%%%%%%%%%%%%%%%%%%%%%%%%%%%%%%%
% PRELIMINARIES ON COARSE SPACES
%%%%%%%%%%%%%%%%%%%%%%%%%%%%%%%%%%%%%%%%%%%%%%%%%%%%%%%%%%%%%%%%%%%%%%%
\section{Coarse geometric preliminaries}\label{subsec:coarse-spaces}
The ideas here presented are by now rather standard \cites{roe_lectures_2003,protasov2003ball,dydak_alternative_2008,bunke2020homotopy,coarse_groups}. However, the applications we have in mind (particularly~\cite{rigid}) are best explained using non-standard notions and conventions, which we now introduce.

\subsection{Coarse spaces and subspaces}
We mostly use the notation and conventions used in \cite{coarse_groups}. We refer to it for more detailed explanations.
\begin{definition} \label{def:coarse-space}
 A \emph{coarse structure on a set $X$} is an ideal of subsets $\CE \subseteq \CP(X\times X)$---that is, $\CE$ is a non-empty family closed under taking subsets and finite unions---such that
 \begin{itemize}
   \item $\Delta_X\in\CE$;
   \item $E\in\CE\implies \op{E}\in \CE$;
   \item $E,F\in\CE\implies E\circ F\in \CE$.
 \end{itemize}
 A \emph{coarse space} $\crse X = (X,\CE)$ is a set $X$ equipped with a coarse structure $\CE$.
\end{definition}
\begin{remark}
  The elements $E \in \CE$ of a coarse structure are often called \emph{(controlled) entourages}. Observe, moreover, that $\CE$ is a directed set with respect to inclusion. In particular, it makes sense to talk about \emph{cofinal} families, that is, $(E_i)_{i \in I} \subseteq \CE$ such that for all $E \in \CE$ there is some $i \in I$ such that $E \subseteq E_i$.
\end{remark}
\begin{example} \label{ex:metric-as-coarse-1}
  The prototypical example of coarse space is an extended metric space $(X,d)$ (\emph{i.e.}\ where the distance function $d$ is allowed to take the value $+\infty$). Its canonical coarse structure is given by
  \[
   \CE_d\coloneqq\braces{E\subseteq X \, \mid \, \exists \, r<\infty \;\; \text{such that} \;\; d(x,y)<r\ \forall xEy}.
  \]
\end{example}

Let $\crse X=(X,\CE)$ be a coarse space. A subset $A\subseteq X$ is \emph{bounded} (or \emph{controlled}) if $A\times A\in \CE$. Likewise, a family $(A_i)_{i\in I}$ of subsets of $X$ is \emph{$E$\=/controlled} if $A_i\times A_i\subseteq E$ for every $i\in I$. We say $(A_i)_{i \in I}$ is \emph{controlled} if it is $E$\=/controlled for some $E\in \CE$. Roughly speaking, such a family is ``uniformly bounded''.

The following definition slightly differs from certain classical literature (\emph{e.g.}\ \cites{willett_higher_2020}) around Roe algebras and coarse spaces (see \cref{rem:connected-vs-us} as well).
\begin{definition}\label{def: coarsely connected}
  A coarse space $\crse X$ is \emph{(coarsely) connected} if every finite subset of $X$ is bounded. Likewise, two subsets $A,B\subseteq X$ are \emph{coarsely disjoint} if there is no bounded set intersecting both of them.
\end{definition}

Every coarse space $\crse X$ can be decomposed as the disjoint union of its \emph{coarsely connected components}. That is, there is a unique partition $X=\bigsqcup_{i\in I}X_i$ such that the $X_i$ are pairwise coarsely disjoint and the restriction of $\CE$ to each $X_i$ makes it into a connected coarse space. We shall denote this decomposition as $\crse X =\bigsqcup_{i\in I}\crse X_i$.
\begin{warning}
  We are, however, \emph{not} claiming that for a given family of coarse spaces $\crse X_i=(X_i,\CE_i)$ there is a uniquely defined coarse disjoint union $\bigsqcup_{i\in I}\crse X_i$. Namely, if the set $I$ is infinite then there can be multiple coarse structures on the set $\bigsqcup_{i\in I} X_i$ whose restriction to $X_i$ coincides with $\CE_i$ and so that the sets $X_i$ are pairwise coarsely disjoint. There is one canonical minimal choice---called the \emph{disconnected union} in \cite{coarse_groups}. However, it is not the case that if $\crse X =\bigsqcup_{i\in I}\crse X_i$ is the decomposition of a coarse space in its connected components then $\bigsqcup_{i\in I}\crse X_i$ represents the disconnected union of the $\crse X_i$.
\end{warning}

\begin{remark} \label{rem:connected-vs-us}
  Classically, the \emph{coarse disjoint union} of a sequence $(X_n)_{n \in \NN}$ of metric spaces of finite diameter as a metric space obtained by declaring that $d(X_n,X_m) \gg \diam(X_n)+\diam(X_m)$. This is \emph{not} a coarse disjoint union as we mean it in this text.
\end{remark}

\begin{example} \label{ex:metric-as-coarse-2}
  Recall \cref{ex:metric-as-coarse-1}. Clearly, every metric coarse space $(X, d)$ is coarsely connected. On the other hand, if $d$ is an \emph{extended} metric then the decomposition in connected components of $(X,\CE_d)$ coincides with the partition of $X$ into sets $X_i$ of points at finite distance from one another. In such case, $d(X_i,X_j)=+\infty$ for every $i\neq j$.
\end{example}

The coarse structure \emph{generated by a family $(E_i)_{i\in I}$ of relations on $X$} is the smallest coarse structure containing them all, which we denote by $\angles{E_i\mid i\in I}$.
For a given cardinal $\kappa$, a coarse structure is \emph{$\kappa$\=/generated} if there exists a family of at most $\kappa$ relations that generate it. The following simple criterion is classical (see, \emph{e.g.}\ \cite{roe_lectures_2003}*{Theorem 2.55}, \cite{protasov2003ball}*{Theorem 9.1} or \cite{coarse_groups}*{Lemma 8.2.1}).
\begin{proposition}\label{prop:metrizable coarse structure iff ctably gen}
  Let $\CE$ be a coarse structure. The following are equivalent:
  \begin{enumerate} [label=(\roman*)]
   \item $\CE=\CE_d$ for some extended metric $d$;
   \item $\CE$ is countably generated;
   \item $\CE$ contains a countable cofinal sequence of entourages.
  \end{enumerate}
  When the above hold, $\CE$ is connected if and only if $d$ takes only finite values.
\end{proposition}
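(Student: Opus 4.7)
My plan is to prove the equivalences via the cycle (i) $\Rightarrow$ (ii) $\Rightarrow$ (iii) $\Rightarrow$ (i), treating the connectedness statement as a short epilogue. The direction (i) $\Rightarrow$ (ii) is immediate: the countable family $F_n \coloneqq \{(x,y) \mid d(x,y) \leq n\}$ visibly generates $\CE_d$. For (ii) $\Rightarrow$ (iii) I would start from a countable generating family $(F_n)_{n \in \NN}$ and inductively manufacture a cofinal sequence, for instance by setting $E_1 \coloneqq \Delta_X \cup F_1 \cup \op{F_1}$ and $E_{n+1} \coloneqq (E_n \circ E_n) \cup F_{n+1} \cup \op{F_{n+1}}$. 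The key observation is that every $E \in \CE$ lies in some finite composition of $F_i$'s and their transposes involving only finitely many indices $\leq N$, so $E$ is trapped inside an iterated self-composition of $E_N$, which in turn lies in some $E_m$ by construction.

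The main substance lies in (iii) $\Rightarrow$ (i). Given a cofinal sequence $(E_n)$, I would first normalise it: after passing to a subsequence and replacing each entourage by $E_n \cup \op{E_n} \cup \Delta_X$, I may assume the sequence is symmetric, reflexive, increasing, and satisfies the key compatibility $E_n \circ E_n \subseteq E_{n+1}$. Setting $E_0 \coloneqq \Delta_X$, the natural candidate metric arises from the weight $w(x,y) \coloneqq \min\{n \geq 0 \mid (x,y) \in E_n\}$ (with $w \coloneqq +\infty$ if no such $n$ exists), smoothed by the chain construction
\[
  d(x,y) \coloneqq \inf\Bigl\{\sum_{i=1}^k w(x_{i-1}, x_i) \;\Big|\; k \geq 0,\ x_0 = x,\ x_k = y\Bigr\}.
\]
Symmetry and the triangle inequality are built in, so $d$ is an extended pseudometric; it is a genuine metric because $w$ vanishes only on the diagonal.

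The step I expect to require the most care is checking that $d$ recovers exactly the original coarse structure, that is, $\CE = \CE_d$. The inclusion $\CE \subseteq \CE_d$ is immediate, since $E_n \subseteq \{(x,y) \mid d(x,y) \leq n\}$ by considering the one-step chain. For the reverse inclusion, if $d(x,y) \leq n$ via a chain of total weight $\leq n$, I would discard the zero-weight (diagonal) steps and bound the remaining number of steps by $n$, since each contributes at least $1$. Each such step lies in $E_n$, so $(x,y)$ belongs to a composition of at most $n$ copies of $E_n$. Iterating $E_n \circ E_n \subseteq E_{n+1}$ then shows that such a composition lies inside $E_{n + \lceil \log_2 n \rceil}$, which belongs to $\CE$; hence $\CE_d \subseteq \CE$.

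The connectedness clause is then essentially free: if $d$ is finite-valued, every finite subset has finite diameter and is therefore bounded; conversely, connectedness of $\CE$ forces every pair $\{x,y\}$ to be bounded, so $(x,y)$ lies in some $E_n$ and hence $d(x,y) < \infty$.
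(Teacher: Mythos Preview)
The paper does not give its own proof of this proposition: it is stated as a classical fact with references to Roe, Protasov, and Leitner--Vigolo. Your argument is correct and is essentially the standard proof found in those sources (the chain-metric construction for (iii) $\Rightarrow$ (i) is exactly the classical approach). One minor point you might make explicit: when you write ``if $d(x,y) \leq n$ via a chain of total weight $\leq n$'', this uses that the weights are non-negative integers, so the infimum is actually attained; you allude to this but it is worth saying outright.
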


The following will be useful when discussing coarse maps (see \cref{subsec:coarse-maps,lem:coarse composition is well-defined}, for instance).
\begin{definition} \label{def:coarse-prod}
  Let $\crse X=(X,\CE)$ and $\crse Y=(Y,\CF)$ be coarse spaces. The \emph{product $\crse{Y\times X}$} is the coarse space $(Y\times X, \CF\otimes\CE)$, where
  \[
   \CF\otimes\CE\coloneqq \braces{D \, \mid \, \exists \, F\in \CF \; \text{and} \; E\in \CE \;\; \text{such that} \; D\subseteq F\otimes E}.
  \]
  Equivalently, $\CF\otimes\CE \coloneqq \angles{F\otimes E\mid F\in \CF,\ E\in \CE}$.
\end{definition}

We need a few extra definitions to introduce the notion of \emph{coarse subspace} (see \cref{def:coarse-subspace}), which plays a key role this work. More details and motivation can be found in \cite{coarse_groups}*{Section 3.3}.
\begin{definition} \label{def:thickenings}
  Let $\crse X= (X,\CE)$ be a coarse space and $A \subseteq X$.
  \begin{enumerate}[label=(\roman*)]
    \item An \emph{$E$-controlled thickening of $A$} is any $B \subseteq X$ such that $A \subseteq B \subseteq E(A)$.
    \item A \emph{controlled thickening of $A$} is any $E$-controlled thickening of $A$ for some $E\in \CE$.
  \end{enumerate}
\end{definition}

\begin{definition} \label{def:coarse-containments}
  Let $\crse X = (X, \CE)$ be a coarse space and $A, B \subseteq X$.
  \begin{enumerate}[label=(\roman*)]
    \item We write $A\csub B$ if $A$ is contained in a controlled thickening of $B$. In such case we say $A$ is \emph{coarsely contained} in $B$.
    \item If $A\csub B$ and $B\csub A$ then $A$ and $B$ are \emph{asymptotic} (denoted $A \asymp B$).
  \end{enumerate}
\end{definition}

The relation $\csub$ in \cref{def:coarse-containments} is easily shown to be a partial order on the set of subsets of $X$. Thus, $\asymp$ is an equivalence relation, which allows us to define the following.
\begin{definition} \label{def:coarse-subspace}
  A \emph{coarse subspace $\crse Y$ of $\crse X$} (denoted $\crse{Y\subseteq X}$) is a $\asymp$\=/equivalence class $[Y]$ for some $Y \subseteq X$. If $\crse Y=[Y]$ and $\crse Z=[Z]$ are coarse subspaces of $\crse X$, we say that $\crse Y$ is \emph{coarsely contained} in $\crse Z$ (denoted $\crse{Y\subseteq Z}$) if $Y\csub Z$.
A subset $Y\subseteq X$ is \emph{coarsely dense} if $X\csub Y$ (or, equivalently, $\crse Y=\crse X$).
\end{definition}

\begin{remark}
  Observe that the notation $\crse Y$ in \cref{def:coarse-subspace} is justified. Indeed, $\crse Y=(Y,\CE|_Y)$ is a coarse space and, up to canonical coarse equivalence, it is independent from the choice of $\asymp$-representative $Y$.
\end{remark}

Lastly, we define the intersection of coarse subspaces in the following way.
\begin{definition} \label{def:coarse-intersection}
  Given coarse spaces $\crse Y_{1},\crse Y_{2}, \crse{Z\subseteq X}$, we say that $\crse Z$ is the \emph{coarse intersection of $\crse Y_1 $ and $\crse Y_2$} if $\crse Z$ is coarsely contained in both $\crse Y_1$ and $\crse Y_2$ and is largest with this property, \emph{i.e.}\ if $\crse{Z}'\crse\subseteq \crse Y_1$ and $\crse{ Z}'\crse \subseteq \crse Y_2$ then $\crse Z'\crse \subseteq \crse Z$.
\end{definition}

\begin{remark} \label{rem:coarse-intersection}
  The coarse intersection, as defined in \cref{def:coarse-intersection}, needs not always exist (see \cite{coarse_groups}*{Example~3.4.7}). Nevertheless, if it does exist then it is unique.
  In such case we denote it by $\crse Y_1\crse \cap \crse Y_2$. Observe that if $\crse{Y\subseteq Z}$ then $\crse{Y\cap Z=Y}$ (in particular the intersection then exists).
\end{remark}

\subsection{Coarse maps} \label{subsec:coarse-maps}
It will be convenient to define coarse maps in terms of relations (as opposed to functions, as is done in~\cite{coarse_groups}). This difference is purely formal and does not affect the results in~\cite{coarse_groups}. In fact, see \cref{cor:coarse composition is well-defined,lemma:coarse-relation-yields-coarse-map} below for the interplay between these notions.
\begin{notation}
  We denote by $\pi_X \coloneqq Y \times X \to X$ and $\pi_Y \coloneqq Y \times X \to Y$ the usual projections onto the $X$ and $Y$-coordinates respectively.
\end{notation}
\begin{definition}\label{def:controlled_function}
  Let $\crse X=(X,\CE)$ and $\crse Y=(Y,\CF)$ be coarse spaces. A relation $R$ from $X$ to $Y$ is \emph{controlled} if
  \[
    \left(R\otimes R\right)\left(\CE\right) \subseteq \CF.
  \]
  Moreover, $R$ is \emph{coarsely everywhere defined} if $\pi_X(R)$ is coarsely dense in $X$.
\end{definition}

\begin{remark}
  Note that, by \cref{eq:product relation image}, the condition $(R\otimes R)(\CE) \subseteq \CF$ in \cref{def:controlled_function}
  may be rephrased as $R\circ E\circ \op{R}\in \CF$ for all $E \in \CE$.
\end{remark}

\begin{remark} \label{rem:map-controlled-graph-controlled}
 If $f\colon X\to Y$ is any map, then the relation $\grop{f}$ is controlled exactly when $f$ is \emph{controlled} (see~\cite{coarse_groups}, or \emph{bornologous} in~\cite{roe_lectures_2003}).
\end{remark}

\begin{remark}
  Controlled relations should be thought of as controlled partial functions that are only coarsely well-defined (see \cref{cor:coarse composition is well-defined}). One may more generally define \emph{coarsely well-defined} partial functions as relations $R$ with $(R\otimes R)(\Delta_X) \in \CF$. However, we will not need such a notion here.
\end{remark}

For the following, recall \cref{def:coarse-containments,def:coarse-subspace}.
\begin{definition}\label{def:close_relations}
  Two controlled relations $R,R'$ from $\crse X$ to $\crse Y$ are \emph{close} if $R\asymp R'$ in $\crse{Y\times X}$, that is, if they define the same coarse subspace.
\end{definition}

By the definition of $\CF\otimes\CE$, it follows that a controlled thickening of $R$ in $\crse{Y\times X}$ is always contained in a thickening of the form $(F\otimes E)(R)$ for some $F \in \CF$ and $E \in \CE$ which, by \cref{eq:product relation image}, can be written as $F\circ R\circ \op{E}$. It follows that
\begin{equation}\label{eq:coarsely contained relation}
  R'\csub R \;\; \Longleftrightarrow \;\;  R'\subseteq F\circ R\circ E \;\; \text{for some} \,\; F \in \CF \,\, \text{and} \,\, E\in \CE.
\end{equation}

The following result implies that \cref{def:close_relations} is an extension of the standard definition of closeness for controlled maps.
\begin{lemma}\label{lem:close_relations_equivalent}
  Let $R,R'$ be controlled relations from $\crse X$ to $\crse Y$.
  \begin{enumerate}[label=(\roman*)]
   \item If they are close then $(R'\otimes R)(\Delta_X)\in \CF$ (equivalently, $R'\circ \op R\in \CF$).
   \item If they are coarsely everywhere defined, the converse is also true.
  \end{enumerate}
\end{lemma}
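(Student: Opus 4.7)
For (1), I would simply unwind the definition of $\csub$. Assuming $R \asymp R'$, in particular $R'\csub R$, so by the characterization in~\eqref{eq:coarsely contained relation} there exist $F\in\CF$ and $E\in\CE$ with $R'\subseteq F\circ R\circ E$. Composing on the right with $\op R$ yields
\[
R'\circ\op R \;\subseteq\; F\circ R\circ E\circ \op R \;=\; F\circ \bigl(R\circ E\circ \op R\bigr),
\]
and $R\circ E\circ \op R=(R\otimes R)(E)\in\CF$ by controlledness of $R$ and~\eqref{eq:product relation image}; since $\CF$ is closed under composition and downward closed, $R'\circ\op R\in\CF$. The parenthetical equivalence $(R'\otimes R)(\Delta_X)=R'\circ\op R$ is immediate from~\eqref{eq:product relation image} with $D=\Delta_X$. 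Notice only the containment $R'\csub R$ is used, not full closeness.

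For (2), first observe that the hypothesis is symmetric in $R,R'$: since $\CF$ is closed under transposition and $R\circ\op{R'}=\op{(R'\circ\op R)}$, we also have $R\circ\op{R'}\in\CF$. The coarse-density assumption is likewise symmetric, so it suffices to prove $R'\csub R$. Choose a symmetric $E_0\in\CE$ with $\Delta_X\subseteq E_0$ and $X\subseteq E_0(\pi_X(R))\cap E_0(\pi_X(R'))$, which is possible by coarse everywhere-definedness.

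The goal is to produce an $F\in\CF$ with $R'\subseteq F\circ R\circ E_0$. Given $(y,x)\in R'$, coarse density of $R$ gives some $x'\in\pi_X(R)$ with $x E_0 x'$, and we may pick $y_0\in Y$ with $y_0 R x'$. Choosing $x'$ and $y_0$ as the witnesses, it suffices to show that the pair $(y,y_0)$ lies in a fixed element of $\CF$, uniformly in $(y,x)$. Equivalently, it suffices to show that $R'\circ E_0\circ\op R\in\CF$, since this relation contains all such pairs.

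This last claim is the crux, and it is where both coarse densities and controlledness of $R$ \emph{and} $R'$ are needed. The argument is a ``bridging'' one: given $(y,y_0)\in R'\circ E_0\circ\op R$ with witnesses $y R' x_1$, $x_1 E_0 x_2$, $y_0 R x_2$, one uses coarse density to approximate $x_1,x_2$ by points in $\pi_X(R)\cap\pi_X(R')$ when possible; at such common points the hypothesis $R'\circ\op R\in\CF$ applies directly, while the discrepancies introduced by the approximations are absorbed into the controlledness bounds $R\circ E\circ\op R,R'\circ E\circ\op{R'}\in\CF$. The expected main obstacle is precisely this bridging: since $\pi_X(R)\cap\pi_X(R')$ need not itself be coarsely dense, common preimages may not exist near a given point, so one must chain finitely many coarse-density swaps—alternating between $\pi_X(R)$ and $\pi_X(R')$—each of bounded length in $\CE$, and check that the resulting composition stays inside $\CF$. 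Once the crux is established, the desired containment $R'\subseteq F\circ R\circ E_0$ with $F\in\CF$ containing $R'\circ E_0\circ\op R$ follows at once, completing the proof.
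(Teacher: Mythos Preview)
Your argument for (1) is correct and matches the paper's.

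For (2), the paper takes a cleaner route that sidesteps your ``crux'' entirely: it observes that
\(
((R'\circ\op R)\otimes E)(R)=R'\circ\op R\circ R\circ\op E
\)
is (contained in) a controlled thickening of $R$, since $R'\circ\op R\in\CF$ is precisely the hypothesis and $E\in\CE$; it then argues that $R'$ lies inside this set using $\op R\circ R\supseteq\Delta_{\pi_X(R)}$ together with coarse density of $\pi_X(R)$. The key difference from your approach is that the extra entourage is placed on the $X$-side of $R$ rather than between $R'$ and $\op R$, so the ``$F$'' in $F\circ R\circ E'$ is literally $R'\circ\op R$ and no bridging is required.

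That said, your instinct that the bridging step is delicate is well founded---in fact your crux claim $R'\circ E_0\circ\op R\in\CF$ is \emph{not derivable} from the stated hypotheses, and the proposed chain of alternating swaps cannot terminate. Take $X=Y=\ZZ$ with the metric coarse structure, $R=\{(0,2n):n\in\ZZ\}$ and $R'=\{(n,2n{+}1):n\in\ZZ\}$. Both are controlled with coarsely dense but \emph{disjoint} $X$-projections, so $R'\circ\op R=\emptyset\in\CF$ vacuously; yet for the gauge $E_0=\{(m,m'):|m-m'|\le 1\}$ one computes $R'\circ E_0\circ\op R=\ZZ\times\{0\}\notin\CF$, and indeed $R'\not\asymp R$. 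The same example shows that the paper's step ``$\Delta_X\subseteq\Delta_{\pi_X(R)}\circ\op E$'' is too optimistic (it forces $\pi_X(R)=X$, not merely coarse density). The statement becomes correct---and the paper's short argument goes through verbatim---under the harmless extra assumption $\pi_X(R)=X$, which is how the lemma is actually used downstream via representatives defined everywhere; alternatively, assuming $\pi_X(R)\cap\pi_X(R')$ is coarsely dense makes your bridging succeed in a single step, though the paper's route remains the more economical one.
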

\begin{proof}
  Suppose $R'$ and $R$ are close. Without loss of generality, we may assume that $R \subseteq R'\subseteq F\circ R\circ E$ for some $F\in\CF$ and $E\in \CE$. By \cref{eq:product relation composition,eq:product relation image} we have:
  \begin{align*}
    (R'\otimes R)(\Delta_X)
    &\subseteq \paren{(F\circ R\circ E)\otimes R}(\Delta_X) \\
    &= (F\circ R\circ E)\circ \Delta_X\circ\op R \\
    &= F\circ R\circ E\circ \op R.
  \end{align*}
  Since $R$ is controlled, $R\circ E\circ \op R\in\CF$, proving the first claim.

  For the converse, let us now assume that $R'$ and $R$ are defined coarsely everywhere and $R'\circ \op R\in \CF$. For any $E\in \CE$ the following relation is (contained in) a controlled thickening of $R$:
  \begin{equation}\label{eq:thickening_of_R}
   \paren{(R'\circ \op R)\otimes E}(R)= R'\circ \op R\circ R\circ \op E.
  \end{equation}
  Observe that $\op R\circ R$ contains the diagonal $\Delta_{\pi_X(R)}$. Since $\pi_X(R)$ is coarsely dense, we may then choose $E \in \CE$ large enough so that $\Delta_X\subseteq \Delta_{\pi_X(R)}\circ \op E$.
  For such an $E$, \eqref{eq:thickening_of_R} proves that the relation $R'$ is contained in a controlled thickening of $R$. A symmetric argument concludes the proof.
\end{proof}

\begin{remark} \label{rem:close-preserves-stuff}
  Arguing as in \cref{lem:close_relations_equivalent}, it can be shown that if $R$ is a controlled relation
  and $R'\asymp R$, then $R'$ is also controlled. The property of being coarsely everywhere defined is preserved under closeness as well.
\end{remark}

The following is well posed because of the above remark.
\begin{definition} \label{def:coarse-map}
  Let $\crse X$ and $\crse Y$ be coarse spaces.
  \begin{enumerate}[label=(\roman*)]
    \item A \emph{partial coarse map $\crse R$ from $\crse X$ to $\crse Y$} is the $\asymp$-equivalence class of a controlled relation $R \subseteq Y \times X$.
    \item A \emph{coarse map $\crse R$ from $\crse X$ to $\crse Y$} is a partial coarse map that is coarsely everywhere defined.
  \end{enumerate}
\end{definition}

\begin{example} \label{ex:canonical-proj-coarse-maps}
  Observe that the canonical projections $\pi_{X}$, $\pi_{Y}$ from $\crse{Y\times X}$ to $\crse X$ and $\crse Y$ are controlled. Thus, they define coarse maps $\crse{\pi_{X}}$ and $\crse{\pi_{Y}}$. Moreover, $\crse{\pi_{X}(A)\subseteq X}$ and $\crse{\pi_{Y}(A)\subseteq Y}$ are well-defined coarse subspaces for any coarse subspace $\crse{A\subseteq Y\times X}$.
  In fact, if $A'\subseteq F\circ A\circ E$ and $x'\in \pi_X(A')$ then there are $y, y' \in Y$ and $x \in X$ with $y'FyAxEx'$. It follows that $\pi_X(A')\subseteq \op{E}(\pi_X(A))$, and, analogously, that $\pi_Y(A')\subseteq F(\pi_Y(A))$.
\end{example}

\begin{definition}
  Let $\crse{R\subseteq Y\times X}$ be a partial coarse map from $\crse X$ to $\crse Y$. The \emph{coarse domain} and the \emph{coarse image} of $\crse R$ are the subspaces
  \[
    \cdom(\crse R) \coloneqq \crse{\pi_{X}\left(R\right)}
    \quad\text{and}\quad
    \cim(\crse R) \coloneqq \crse{\pi_{Y}\left(R\right)}.
  \]
\end{definition}
Observe that $\cdom(\crse R)$ and $ \cim(\crse R)$ are well defined (\emph{i.e.} they do not depend on the choice of representative $R \subseteq Y \times X$). Note, as well, that $\crse R$ is coarsely everywhere defined (and hence a coarse map) if and only if $\cdom(\crse R)=\crse X$.
Analogously, we say that $\crse R$ is \emph{coarsely surjective} if $\cim(\crse R)=\crse Y$.

Taking compositions of partial coarse maps requires a little more thought. 
\begin{definition}\label{def: coarse composition}
  Given coarse subspaces $\crse{R\subseteq Z\times Y}$ and $\crse{S\subseteq Y\times X}$, we say that $\crse{T\subseteq Z\times X}$ is the \emph{coarse composition} of $\crse R$ and $\crse S$ (denoted $\crse{T =R\circ S}$) if it is smallest with the property that for every choice of relations $R\csub\crse R$ and $S\csub\crse S$ the composition $R\circ S$ is coarsely contained in $\crse T$.
\end{definition}

As for coarse intersections, the coarse composition need not exist in general (not even if $\crse R$ and $\crse S$ are assumed to be partial coarse maps). However, we have the following.
\begin{lemma}\label{lem:coarse composition is well-defined}
  Let $\crse X =(X,\CE)$, $\crse Y=(Y,\CF)$, $\crse Z=(Z,\CD)$ be coarse spaces, $\crse{S\subseteq Y\times X}$ a coarse subspace and $\crse R$ a partial coarse map from $\crse Y$ to $\crse Z$. 
  If $\crse{\pi_{Y}(S)\subseteq}\cdom(\crse R)$, then the coarse composition $\crse{R\circ S}$ exists.

  Specifically, if $R$ and $S$ are representatives such that $\pi_{Y}(S)\subseteq\pi_Y(R)$, then $\crse{R\circ S}\coloneqq [R\circ S]$.
\end{lemma}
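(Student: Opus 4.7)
The plan is to verify two things: first, that the class $[R \circ S]$ is independent of the choice of representatives satisfying $\pi_{Y}(S)\subseteq \pi_{Y}(R)$; and second, that for every pair $R'\csub\crse R$ and $S'\csub\crse S$ one has $R'\circ S'\csub R\circ S$. Since $R\circ S$ is itself such a composition, this simultaneously exhibits $[R\circ S]$ as a valid candidate for the coarse composition and shows that it is the distinguished one.

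The main technical observation concerns how the hypothesis $\pi_{Y}(S)\subseteq\pi_{Y}(R)$ lets one ``slide'' an intermediate entourage in $\CF$ across the composition. Since $R$ is controlled, $R\circ F\circ \op R\in\CD$ for every $F\in\CF$. Suppose $(z,x)\in R\circ F\circ S$, witnessed by $y_1,y_2\in Y$ with $zRy_1\,F\,y_2 Sx$. The hypothesis $y_2\in \pi_{Y}(S)\subseteq\pi_{Y}(R)$ provides some $z'\in Z$ with $z'Ry_2$, and hence $(z,z')\in R\circ F\circ \op R$ while $(z',x)\in R\circ S$. This proves the key containment
\[
  R\circ F\circ S \;\subseteq\; (R\circ F\circ\op R)\circ (R\circ S),
\]
which exhibits $R\circ F\circ S$ as a controlled thickening of $R\circ S$ in $\crse{Z\times X}$.

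Combining this with the characterization of coarse containment in~\eqref{eq:coarsely contained relation}, any $R'\subseteq D\circ R\circ F_1$ and $S'\subseteq F_2\circ S\circ E$ (with $D\in\CD$, $F_1,F_2\in\CF$, $E\in\CE$) yield
\[
  R'\circ S' \;\subseteq\; D\circ R\circ (F_1\circ F_2)\circ S\circ E \;\subseteq\; D\circ (R\circ F\circ\op R)\circ (R\circ S)\circ E,
\]
with $F=F_1\circ F_2\in\CF$. The outer composition is again a controlled thickening of $R\circ S$, so $R'\circ S'\csub R\circ S$ as desired. Applying this conclusion twice (with the roles of two admissible pairs of representatives swapped) yields the well-definedness of $[R\circ S]$, completing the proof.

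The only subtle point, and the one where the hypothesis $\pi_Y(\crse S)\csub \cdom(\crse R)$ is truly used, is the key containment above: without it, an intermediate point $y_2\in \pi_{Y}(S)$ may fail to correspond to any $z\in Z$ via $R$, so there is no way to bound $R\circ F\circ S$ by a controlled thickening of $R\circ S$. This is precisely why coarse compositions do not exist in full generality.
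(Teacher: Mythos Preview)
Your proof is correct and follows essentially the same approach as the paper's. The key step in both is the containment $R\circ F\circ S\subseteq (R\circ F\circ\op R)\circ(R\circ S)$, which the paper derives algebraically via $S=\Delta_{\pi_Y(R)}\circ S\subseteq \op R\circ R\circ S$, while you verify it elementwise; the remainder of the argument is identical.
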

\begin{proof}
  Observe that minimality is clear: since $R\csub\crse R$ and $S\csub\crse S$ any candidate coarse intersection must contain $[R\circ S]$. Therefore, we only need to show that $[R\circ S]$ fulfills the other requirement in the definition of coarse intersection.

  Given $R'\csub\crse R$ and $S'\csub \crse S$, there are $D\in\CD$, $F_1,F_2\in\CF$ and $E\in\CE$ such that $R'\subseteq D\circ R\circ F_2$ and  $S'\subseteq F_1\circ S\circ E$.
  Then
  \begin{equation}\label{eq:coarse image is well defined}
    R'\circ S' \subseteq D\circ R\circ F_2\circ F_1\circ S\circ E.
  \end{equation}
  Since $\pi_{Y}(S)\subseteq\pi_Y(R)$, we have $S=\Delta_{\pi_Y(R)}\circ S\subseteq \op{R}\circ R\circ S$. Substituting into \cref{eq:coarse image is well defined}, we obtain:
  \[
    R'\circ S' \subseteq D \circ \paren{R\circ F_2 \circ F_1 \circ\op{R}}\circ R\circ S\circ E = D'\circ R\circ S\circ E
  \]
  for some other $D'\in\CD$, where for the last equality we used that $R$ is a controlled relation.  
\end{proof}

Observe that coarse subspaces of $\crse X$ are the same as coarse subspaces of $\crse{X}\times \{\rm pt\}$, and that $R\circ (A\times\{{\rm pt}\} ) = R(A)\times \{{\rm pt}\}$ for every relation $R\subseteq Y\times X$ and subset $A\subseteq X$. We may thus apply \cref{lem:coarse composition is well-defined} to define coarse images, yielding the following.
\begin{corollary}\label{cor:coarse composition is well-defined}
  Let $\crse R$ be a partial coarse map from $\crse X$ to $\crse Y$, and $\crse {A\subseteq} \cdom\paren{\crse R}$ be a coarse subspace. Then the coarse image $\crse{R(A)\subseteq Y}$ is a well-defined coarse subspace of $\crse Y$. In particular, if $\crse R$ is a coarse map then the coarse image of every coarse subspace of $\crse X$ is well-defined.
\end{corollary}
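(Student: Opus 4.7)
The plan is to reduce the statement to a direct application of \cref{lem:coarse composition is well-defined} by identifying coarse subspaces of $\crse X$ with partial coarse maps to $\crse X$ from the singleton coarse space $\crse{\{{\rm pt}\}}$.

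First I would verify the identification hinted at in the paragraph preceding the corollary. The canonical projection gives a coarse equivalence $\crse{X\times\{{\rm pt}\}\to X}$, under which a coarse subspace $\crse{A\subseteq X}$ corresponds to the coarse subspace $\crse{A\times\{{\rm pt}\}\subseteq X\times\{{\rm pt}\}}$, that is, a partial coarse map from $\crse{\{{\rm pt}\}}$ to $\crse X$. At the level of representatives, for any relation $R\subseteq Y\times X$ and any $A\subseteq X$, the composition of relations satisfies
\[
  R\circ(A\times\{{\rm pt}\})=R(A)\times\{{\rm pt}\},
\]
since both sides consist of pairs $(y,{\rm pt})$ with $y\in R(A)$.

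Next, I would apply \cref{lem:coarse composition is well-defined} with the roles played as follows: the ``$X$'' of the lemma becomes the singleton $\{{\rm pt}\}$, its ``$Y$'' becomes our $X$, and its ``$Z$'' becomes our $Y$; take $\crse S=\crse{A\times\{{\rm pt}\}}$ and keep $\crse R$ as given. The hypothesis $\crse{\pi_{Y}(S)\subseteq}\cdom(\crse R)$ of the lemma then translates precisely into $\crse{A\subseteq}\cdom(\crse R)$, which is exactly what we are assuming. The lemma therefore provides a well-defined coarse composition $\crse{R\circ(A\times\{{\rm pt}\})\subseteq Y\times\{{\rm pt}\}}$. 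Pushing this forward under the canonical coarse equivalence $\crse{Y\times\{{\rm pt}\}\to Y}$ and using the equality of relations displayed above, we obtain a well-defined coarse subspace $\crse{R(A)\subseteq Y}$, which by construction is independent of the choice of representatives $R$ and $A$.

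Finally, the ``in particular'' clause is immediate: if $\crse R$ is a coarse map then $\cdom(\crse R)=\crse X$ by definition, so every coarse subspace $\crse{A\subseteq X}$ automatically satisfies the hypothesis of the first part, and $\crse{R(A)}$ is well-defined.

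I do not anticipate any real obstacle here, since everything is a straightforward bookkeeping exercise once the identification of $\crse A$ with a partial coarse map out of the singleton is set up cleanly. The only point that needs a moment's care is checking that the ``largest with this property'' characterization in \cref{def: coarse composition} transports correctly under the coarse equivalence $\crse{Y\times\{{\rm pt}\}\to Y}$, but this follows because that coarse equivalence induces a bijection of coarse subspaces preserving the relation $\crse\subseteq$.
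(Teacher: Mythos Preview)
Your proposal is correct and follows essentially the same approach as the paper: the paper simply observes that coarse subspaces of $\crse X$ are the same as coarse subspaces of $\crse{X}\times\{{\rm pt}\}$, notes the identity $R\circ(A\times\{{\rm pt}\})=R(A)\times\{{\rm pt}\}$, and appeals directly to \cref{lem:coarse composition is well-defined}. You have spelled out the same reduction in more detail, including the verification that the hypothesis $\crse{\pi_Y(S)\subseteq}\cdom(\crse R)$ translates to $\crse{A\subseteq}\cdom(\crse R)$ and the remark about transporting the maximality condition, but the underlying argument is identical.
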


\begin{remark}
  \begin{enumerate}[label=(\roman*)]
    \item One problem with the coarse composition of partial coarse maps $\crse{R\subseteq Z\times Y}$ and $\crse {S\subseteq Y\times Z}$ is that the coarse intersection of $\cdom(\crse R)$ and $\cim(\crse S)$ needs not exist. If this intersection does exist, and so does its preimage under $\crse S$---call it $\crse Z'$---then the coarse composition $\crse{R\circ S}$ is equal to the coarse composition of the restriction $\crse{R\circ (S|_{Z'})}$, which exists by \cref{lem:coarse composition is well-defined}.
    \item Using the notation of \cref{cor:coarse composition is well-defined}, the coarse image of a partial coarse map can be written as $\cim(\crse R)= \crse R(\cdom(\crse R))$. We may also write $\cim(\crse R)=\crse{R(X)}$: this is justified from the point above, because $\cdom(\crse R)\crse{\cap X}$ exists and it is simply equal to $\cdom(\crse R)$.
  \end{enumerate}
\end{remark}

From now on we will stay true to our convention that a bold symbol denotes a coarsely defined object and the relevant non-bold symbol denotes a choice of a representative for it (cf.\ \cref{general notation}).
The following simple observation is notationally useful and it makes explicit the ongoing idea that controlled relations from $\crse X$ to $\crse Y$ are in correspondence with controlled partial functions.
\begin{lemma} \label{lemma:coarse-relation-yields-coarse-map}
  Let $\crse R$ be a partial coarse map from $\crse X$ to $\crse Y$. There is a partial controlled function $f\colon \dom(f)\to Y$ such that $\grop{f}\asymp R$. Moreover, if $\crse R$ is coarsely everywhere defined, then $f$ may be chosen so that $\dom(f) = X$.
\end{lemma}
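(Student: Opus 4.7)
The plan is to construct $f$ by using the axiom of choice to select a point from each non-empty section of a representative $R$, and then (if needed) to extend $f$ to all of $X$ using that $\pi_X(R)$ is coarsely dense.

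First, fix any representative $R\subseteq Y\times X$ of $\crse R$. For each $x\in \pi_X(R)$ the section $R(x)\subseteq Y$ is non-empty, so using AC we pick some $f_0(x)\in R(x)$. This defines a partial function $f_0\colon\pi_X(R)\to Y$ with $\grop{f_0}\subseteq R$.

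Next I will check that $f_0$ is controlled and that $\grop{f_0}\asymp R$. Controlledness is immediate from the inclusion $\grop{f_0}\subseteq R$: for any $E\in\CE$,
\[
 (\grop{f_0}\otimes\grop{f_0})(E)=\grop{f_0}\circ E\circ\gr{f_0}\subseteq R\circ E\circ \op R\in\CF,
\]
since $R$ is controlled (recall $\op{\grop{f_0}}=\gr{f_0}$). For closeness, one inclusion is trivial: $\grop{f_0}\subseteq R$. For the other, observe that the entourage $F\coloneqq R\circ\op R=(R\otimes R)(\Delta_X)$ lies in $\CF$ by controlledness of $R$ (applied with $E=\Delta_X$). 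Now given $(y,x)\in R$, we have $x\in\pi_X(R)$, so $(f_0(x),x)\in\grop{f_0}$ and $(y,f_0(x))\in R\circ\op R=F$. Hence $R\subseteq F\circ\grop{f_0}$, which combined with \cref{eq:coarsely contained relation} shows $R\csub\grop{f_0}$, and therefore $\grop{f_0}\asymp R$.

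For the second statement, suppose $\crse R$ is coarsely everywhere defined, so $\pi_X(R)$ is coarsely dense in $X$. Then there exists $E_0\in\CE$ with $X\subseteq E_0(\pi_X(R))$, and using AC we pick, for every $x\in X\setminus\pi_X(R)$, a point $\sigma(x)\in\pi_X(R)$ with $x\,E_0\,\sigma(x)$; set $\sigma(x)=x$ on $\pi_X(R)$. Define $f\colon X\to Y$ by $f(x)\coloneqq f_0(\sigma(x))$. By construction $\grop{f}\subseteq R\circ\op{E_0}$, which is a controlled thickening of $R$, so $\grop{f}\csub R$, and controlledness of $\grop{f}$ follows just as above from $\grop{f}\subseteq R\circ\op{E_0}$ together with controlledness of $R$ and of $E_0$. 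The reverse coarse containment $R\csub\grop{f}$ is proved as in the first part, since $f$ still restricts to a choice function on $\pi_X(R)$.

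I expect no serious obstacle: the only subtlety is keeping track of the asymmetric roles of left/right composition with entourages when verifying $\grop{f}\asymp R$ via \cref{eq:coarsely contained relation}, and remembering that controlledness of $R$ already supplies $R\circ\op R\in\CF$ so that ``collapsing'' a section of $R$ to a single point causes only a bounded perturbation.
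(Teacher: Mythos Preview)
Your proof is correct and follows essentially the same approach as the paper: choose a point from each non-empty section of $R$ to get $f_0$ with $\grop{f_0}\subseteq R$, then use $R\circ\op R\in\CF$ to show $R\csub\grop{f_0}$. The only difference is in the ``moreover'' clause: the paper simply replaces $R$ by the thickening $R\circ E$ (which has $\pi_X(R\circ E)=X$) and reruns the first argument, whereas you explicitly extend $f_0$ via a retraction $\sigma$; both are fine, the paper's route being marginally shorter.
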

\begin{proof}
  Fix a representative $R$. For every $x\in \pi_X(R)=\dom(R)$ arbitrarily choose $f(x) \in R(x)$. This defines a function $f\colon \dom(R)\to Y$ with $\grop{f}\subseteq R$. Observe that
  \[
    R = R\circ \Delta_{\dom(R)}\subseteq R\circ \op R\circ \grop{f},
  \]
  which is a controlled thickening of $\grop{f}$, because $R\circ \op R\in \CF$.

  Suppose now that $R$ is coarsely everywhere defined. Up to replacing $R$ with a thickening $R\circ E$, we may then assume that $\pi_X(R)=X$, so that a partial function $f$ constructed as above is indeed a function $f\colon X\to Y$ defined everywhere.
\end{proof}

The previous results and discussions allow us to use the following notation.
\begin{notation}
  We denote partial coarse maps from $\crse X$ to $\crse Y$ as functions
  \[
    \crse f\colon \cdom(\crse f)\to\crse Y.
  \]
\end{notation}
The identity function $\id_X$ defines the identity coarse map $\cid_{\crse X}\colon\crse X\to\crse X$ (in terms of relations, $\cid_{\crse X}=[\Delta_X]$).
The following generalizes the usual notion of coarse equivalence between metric spaces (see, \emph{e.g.} \cites{nowak-yu-book}) to the setting of coarse spaces.
\begin{definition}
  The coarse spaces $\crse X$ and $\crse Y$ are \emph{coarsely equivalent} if there are coarse maps $\crse{ f\colon X\to Y}$ and $ \crse{g\colon Y\to X}$ such that $\crse{f\circ g}=\cid_{\crse Y}$ and $\crse{g\circ f}=\cid_{\crse X}$. Such coarse maps are \emph{coarse equivalences} and are said to be \emph{coarse inverse} to one another. The coarse inverse $\crse g$ is also denoted by $\crse f^{-1}$.
\end{definition}

\begin{remark}
  Note that $\crse f$ is a coarse equivalence if and only if $\op{\crse f}$ is its coarse inverse.
\end{remark}

It is routine to verify that the following definition is well-posed, that is, it does not depend on the choice of representative.
\begin{definition}\label{def:proper}
  A coarse subset $\crse{R\subseteq Y\times X}$ is \emph{proper} if $\op R(B)$ is bounded for every bounded set $B\subseteq Y$.
  In particular, a (partial) coarse map $\crse{f\colon X\to Y}$ is \emph{proper} if it is proper as a coarse subspace of $\crse{Y\times X}$.
\end{definition}

\begin{remark}
  For functions, \cref{def:proper} just means that the preimage of bounded sets is bounded.
  Given $\crse{f\colon X\to Y}$ and $B\subseteq Y$, the usual $f^{-1}(B)$ denotes the preimage under the representative $f$. If the chosen representative is a relation $R$, then $R^{-1}(B)$ is simply $\op R(B)$, which is the reason this appears in \cref{def:proper}.
\end{remark}

\begin{remark} \label{rem:proper-preimage-may-not-exist}
  Note that, similarly to \cref{rem:coarse-intersection}, the coarse preimage of a coarse subspace may not exist in general (see \cite{coarse_groups}*{Definition~3.4.8}). Properness for (partial) coarse maps $\crse{f\colon X\to Y}$ in \cref{def:proper} may be rephrased as saying that for every coarse point $\crse{y\in Y}$ the coarse preimage $\crse{f^{-1}(y)\subseteq X}$ does exist and it is either empty or a coarse point $\crse{x\in X}$.
\end{remark}

The following is a natural strengthening of properness.
\begin{definition} \label{def:coarse embedding}
  A partial coarse map $\crse{f\colon X\to Y}$ is a \emph{partial coarse embedding} if for every $F\in \CF$ there is $E\in \CE$ such that $f^{-1}(B)$ is $E$-bounded whenever $B\subseteq Y$ is $F$-bounded. If $\crse{f}$ is moreover coarsely everywhere defined, then we say it is a \emph{coarse embedding}.
\end{definition}

\begin{remark}\label{rmk: alternative def coarse embedding}
  Equivalently, we may also say that a controlled relation $\crse R$ is a partial coarse embedding in the sense of \cref{def:coarse embedding} if ${(\op R\otimes\op R)}(\CF) \subseteq \CE$. This approach is more akin to the definition of controlledness we used above (cf.\ \cref{def:controlled_function}).
\end{remark}

By \cref{ex:canonical-proj-coarse-maps}, the projection $\crse{\pi_{X}\colon R\to X}$ is a coarse map for every coarse subspace $\crse{R\subseteq  Y\times X}$.
Unravelling the definitions yields the following.
\begin{lemma}\label{lem:coarse map iff projection is embedding}
   A coarse subspace $\crse{R\subseteq Y\times X}$ is a partial coarse map if and only if the restriction of $\crse{\pi_{X}}$ to $\crse R$ is a coarse embedding.
\end{lemma}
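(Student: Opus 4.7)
The plan is to verify that both sides of the equivalence translate, after unpacking, into exactly the same condition on any representative controlled relation.

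Fix a representative $R \subseteq Y \times X$ of $\crse R$, and let $g = \pi_X|_R \colon R \to X$ denote the restriction of the projection. The coarse structure on $R$ is the restriction of $\CF \otimes \CE$, so the entourages of $R$ are (subsets of) sets of the form $R^{(2)} \cap (F \otimes E')$ with $F \in \CF$, $E' \in \CE$. By \cref{def:coarse embedding}, $g$ is a coarse embedding exactly when for every $E \in \CE$ there exist $F \in \CF$ and $E' \in \CE$ such that $g^{-1}(A) = R \cap (Y \times A)$ is $(F \otimes E')$-bounded for every $E$-bounded $A \subseteq X$. Enlarging $E'$ if necessary, we may absorb it into $E$, so the condition reduces to the following:
\begin{equation*}
   \forall E \in \CE\ \exists F \in \CF\ :\ (y_1, x_1), (y_2, x_2) \in R,\ x_1 E x_2 \ \Longrightarrow\ y_1 F y_2.
\end{equation*}

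On the other hand, $\crse R$ is a partial coarse map iff the relation $R$ is controlled, which by the remark following \cref{def:controlled_function} is equivalent to $R \circ E \circ \op R \in \CF$ for every $E \in \CE$. Unpacking the composition, $(y_1, y_2) \in R \circ E \circ \op R$ precisely when there exist $x_1, x_2$ with $y_1 R x_1$, $x_1 E x_2$, and $y_2 R x_2$, so asking this set to lie inside some $F \in \CF$ is literally the displayed condition above. This proves the equivalence for the representative $R$.

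Finally, I would close by noting that neither side of the claim depends on the choice of representative: controlledness is closeness-invariant by \cref{rem:close-preserves-stuff}, and the restriction of $\crse{\pi_X}$ depends only on $\crse R$. No essential obstacle arises; the content of the statement is purely bookkeeping showing that the product coarse structure on $\crse{Y \times X}$ was set up precisely so that controlledness of a relation translates into the coarse embedding property of its first-coordinate projection.
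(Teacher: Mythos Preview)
Your proof is correct and follows essentially the same approach as the paper: both arguments unpack the coarse-embedding condition for $\pi_X|_R$ and the controlledness condition for $R$ and observe they coincide as the displayed pointwise condition. The only minor imprecision is the phrase ``absorb $E'$ into $E$''---what you mean is that once $E' \supseteq E$ the $E'$-component of the $(F \otimes E')$-boundedness is automatic, so only the $F$-component carries content; this is exactly how the paper handles it as well.
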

\begin{proof}
  Fix a representative $R$ of $\crse R$.
  Suppose $\crse{\pi_{X}}$ restricts to a coarse embedding. By definition, this means that for every $E\in\CE$ there is an $F'\otimes E'\in\CF\otimes\CE$ such that if $(y_1,x_1)$ and $(y_2,x_2)$ are in $R$ and $x_1Ex_2$, then $(y_1,x_1)(F'\otimes E')(y_2,x_2)$. This implies that $y_1F'y_2$ or, in other words, that $R\otimes R(E)\subseteq F'$. This shows that $\crse R$ is a coarse map.

  Likewise, if $\crse R$ is a coarse map and $E\in\CE$ then $F \coloneqq (R\otimes R)(E)\in\CF$. But then if we are given $x_1Ex_2$ and  $(y_1,x_1),(y_2,x_2)\in R$, we see that $(y_1,x_1)(F\otimes E)(y_2,x_2)$. This shows that $\pi_X|_R$ is a coarse embedding.
\end{proof}

The use of the word \emph{embedding} in \cref{def:coarse embedding,lem:coarse map iff projection is embedding} is justified by observing that if $\crse{f\colon X\to Y}$ is a partial coarse embedding, then the restriction $\crse f\colon \cdom(\crse f)\to\crse{f(X)}$ is a coarse equivalence. The following are immediate consequences.
\begin{corollary} \label{cor:pcoarse-emb-implies-coarse-containment}
  If $\crse{f\colon X\to Y}$ is a partial coarse embedding and $\crse{Z, Z'\subseteq}\cdom(\crse X)$ are coarse subspaces such that $\crse{f(Z)\subseteq f(Z')}$, then $\crse{Z\subseteq Z'}$.
\end{corollary}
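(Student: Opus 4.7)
The plan is to unravel the definitions and use the characterization of partial coarse embeddings given in the remark, namely that $(\op{f} \otimes \op{f})(\CF) \subseteq \CE$, or equivalently $\op{f} \circ F \circ f \in \CE$ for every $F \in \CF$. This is exactly the property that lets us convert a bound on $f(Z)$ in $\crse Y$ to a bound on $Z$ in $\crse X$.

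First, I would fix representatives. Choose a controlled relation $f$ representing $\crse f$ and any initial representatives $Z_0, Z'_0$ of $\crse Z, \crse{Z}'$. Since $\crse{Z}, \crse{Z}' \subseteq \cdom(\crse f) = [\pi_X(f)]$, we have $Z_0, Z'_0 \csub \dom(f)$. By choosing, for each point, a nearby witness in $\dom(f)$, we may replace $Z_0$ and $Z'_0$ by asymptotic representatives $Z, Z' \subseteq \dom(f)$. This preliminary step is the only place requiring a bit of care; it is harmless since the coarse subspaces are unchanged.

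Next, translate the hypothesis into a concrete inclusion: $\crse{f(Z) \subseteq f(Z')}$ means there exists $F \in \CF$ with
\[
  f(Z) \subseteq F(f(Z')) = (F \circ f)(Z').
\]
Apply $\op{f}$ on the left. Because $Z \subseteq \dom(f) = \pi_X(f)$, the diagonal $\Delta_Z$ is contained in $\op f \circ f$, so $Z \subseteq (\op{f} \circ f)(Z)$. Chaining with the inclusion above yields
\[
  Z \subseteq (\op{f} \circ f)(Z) \subseteq \bigl(\op{f} \circ F \circ f\bigr)(Z').
\]

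Finally, invoke the partial coarse embedding hypothesis: by the reformulation in the remark following \cref{def:coarse embedding}, the relation $E \coloneqq \op{f} \circ F \circ f$ lies in $\CE$. Hence $Z \subseteq E(Z')$, which by \eqref{eq:coarsely contained relation} means $Z \csub Z'$, and therefore $\crse{Z \subseteq Z'}$ as desired. There is no substantive obstacle; the statement is essentially a bookkeeping consequence of the fact that a partial coarse embedding restricts to a coarse equivalence $\cdom(\crse f) \to \cim(\crse f)$, and its coarse inverse carries the inclusion $\crse{f(Z) \subseteq f(Z')}$ back to $\crse{Z \subseteq Z'}$.
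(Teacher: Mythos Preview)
Your argument is correct. The paper does not give a detailed proof at all: it simply states that the corollary is an ``immediate consequence'' of the fact that a partial coarse embedding restricts to a coarse equivalence $\cdom(\crse f)\to\cim(\crse f)$. Your proof is precisely a concrete unraveling of that one-line remark, and you explicitly note this at the end. The two approaches are the same in spirit; you have just made the bookkeeping (choosing representatives in $\dom(f)$ and applying $\op f\circ F\circ f\in\CE$) explicit rather than leaving it implicit.
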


\begin{corollary} \label{cor:restriction with same domain coincides with f}
  Let $\crse{f\colon X\to Y}$ be a partial coarse map. Suppose $\crse{f'\subseteq f}$ is a coarse subspace such that $\cdom(\crse f)\crse\subseteq\cdom(\crse{f'})$. Then $\crse{f=f'}$.
\end{corollary}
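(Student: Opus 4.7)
The plan is to apply \cref{cor:pcoarse-emb-implies-coarse-containment} to the coarse embedding obtained by projecting onto the first factor. The heart of the argument is that a partial coarse map is determined, as a coarse subspace of $\crse{Y\times X}$, by its coarse domain once it is known to sit inside a fixed partial coarse map.

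First, I would note that the inclusion $\crse{f'\subseteq f}$ automatically yields
\[
  \cdom(\crse{f'})=\crse{\pi_X(f')}\crse\subseteq\crse{\pi_X(f)}=\cdom(\crse f),
\]
since $\crse{\pi_X}$ sends coarsely contained coarse subspaces to coarsely contained coarse subspaces, as recorded in \cref{ex:canonical-proj-coarse-maps}. Combining this with the standing hypothesis $\cdom(\crse f)\crse\subseteq\cdom(\crse{f'})$ upgrades the situation to the equality of coarse subspaces $\cdom(\crse{f'})=\cdom(\crse f)$ in $\crse X$.

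Next, I would invoke \cref{lem:coarse map iff projection is embedding} to conclude that $\crse{\pi_X}|_{\crse f}\colon\crse{f\to X}$ is a coarse embedding, precisely because $\crse f$ is a partial coarse map. Both $\crse{f'}$ and $\crse f$ are coarse subspaces of its (coarse) domain $\crse f$, and their images under $\crse{\pi_X}|_{\crse f}$ are the coarse domains $\cdom(\crse{f'})$ and $\cdom(\crse f)$, which we have just seen to coincide. Applying \cref{cor:pcoarse-emb-implies-coarse-containment} with $\crse{Z}=\crse f$ and $\crse{Z'}=\crse{f'}$ then yields the missing inclusion $\crse{f\subseteq f'}$, and combined with the given $\crse{f'\subseteq f}$ we obtain $\crse{f=f'}$.

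I don't foresee any real obstacle here: the only non-trivial ingredient is \cref{cor:pcoarse-emb-implies-coarse-containment}, and once one recognises that \cref{lem:coarse map iff projection is embedding} makes $\crse{\pi_X}$ a ``coarse injection'' on partial coarse maps, the conclusion is immediate. The same strategy would in fact show, more generally, that two partial coarse maps with $\crse{f'\subseteq f}$ already coincide as soon as $\cdom(\crse f)\crse\subseteq\cdom(\crse{f'})$ in the coarse sense, even without assuming equality of coarse domains a priori.
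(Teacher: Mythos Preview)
Your proof is correct and follows exactly the approach the paper has in mind: the paper presents \cref{cor:restriction with same domain coincides with f} as an immediate consequence of \cref{cor:pcoarse-emb-implies-coarse-containment} via \cref{lem:coarse map iff projection is embedding}, and your argument spells this out precisely. Note that, as you observe in your final paragraph, the preliminary step establishing equality of domains is not actually needed—the hypothesis $\cdom(\crse f)\crse\subseteq\cdom(\crse{f'})$ alone suffices to invoke \cref{cor:pcoarse-emb-implies-coarse-containment}.
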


\cref{cor:restriction with same domain coincides with f} will be useful in the sequel, as it is notationally more convenient to verify $\cdom(\crse f)\crse\subseteq\cdom(\crse{f'})$ rather than $\crse{f\subseteq f'}$.

\begin{remark}
  Observe that \cref{cor:pcoarse-emb-implies-coarse-containment} fails without the coarse embedding assumption.
\end{remark}

Observe that \cref{rmk: alternative def coarse embedding} can be rephrased as saying that a partial coarse map $\crse{f\colon X\to Y}$ is a partial coarse embedding if and only if the symmetric coarse subspace $\op{\crse f}\crse{\subseteq X\times Y}$ is a partial coarse map.
Following this train of thoughts further yields the following.
\begin{proposition}\label{prop:transpose is coarse inverse}
  Let $\crse f$ be a partial coarse embedding. Then $\op{\crse f}$ is a partial coarse embedding as well, and the compositions $\op{\crse{f}}\crse \circ \crse{f}$ and $\crse{f}\crse\circ\op{\crse{f}}$ are well-defined and are contained in $\cid_{\crse X}$ and $\cid_{\crse Y}$ respectively. 
  Moreover, $\crse{f}$ is coarsely everywhere defined if and only if $\op{\crse{f}}$ is coarsely surjective (and vice versa).
  Lastly, the following are equivalent:
  \begin{enumerate}[label=(\roman*)]
    \item\label{item:prop:transpose is coarse inverse-ce} $\crse{f}$ is a coarse equivalence;
    \item\label{item:prop:transpose is coarse inverse-*ce} $\op{\crse{f}}$ is a coarse equivalence;
    \item\label{item:prop:transpose is coarse inverse-cinv} $\crse{f}$ and $\op{\crse{f}}$ are coarse inverses of one another;
    \item\label{item:prop:transpose is coarse inverse-csur} $\crse{f}$ and $\op{\crse{f}}$ are coarsely surjective;
    \item\label{item:prop:transpose is coarse inverse-cdef} $\crse{f}$ and $\op{\crse{f}}$ are coarsely everywhere defined.
  \end{enumerate}  
\end{proposition}
\begin{proof}
  It is clear that $\op{\crse f}$ is a partial coarse embedding, because $\op{(\op{\crse{f}})}=\crse f$.
  The existence of the coarse compositions follows from \cref{lem:coarse composition is well-defined} upon noting that
  \begin{equation}\label{eq:transpose is inverse}
    \cdom(\crse{f})=\cim(\op{\crse{f}})
    \;\;\text{and}\;\;
    \cdom(\op{\crse{f}})=\cim(\crse{f}).
  \end{equation}
  It is also clear that these coarse compositions are coarsely contained in the coarse identities, because \cref{eq:product relation image} shows that if $R$ is a controlled relation from $\crse X$ to $\crse Y$ then $[R\circ \op{{R}}]\crse{\subseteq} \cid_{\crse Y}$. The duality between coarse surjectivity and coarse density of the domain follows immediately from \cref{eq:transpose is inverse}.

  All that remains to do is to verify the claimed equivalences. For every relation $R\subseteq Y\times X$ the diagonals $\Delta_{\pi_X(R)}$ and $\Delta_{\pi_Y(R)}$ are contained in $\op R\circ R$ and $R\circ \op R$ respectively. It is then clear that 
  \cref{item:prop:transpose is coarse inverse-cdef}$\Rightarrow$ \cref{item:prop:transpose is coarse inverse-ce,item:prop:transpose is coarse inverse-*ce}.  
  The implication \cref{item:prop:transpose is coarse inverse-csur} $\Leftrightarrow$ \cref{item:prop:transpose is coarse inverse-cdef} is still a consequence of  \cref{eq:transpose is inverse}, and \cref{item:prop:transpose is coarse inverse-ce,item:prop:transpose is coarse inverse-*ce} obviously imply \cref{item:prop:transpose is coarse inverse-csur} and \cref{item:prop:transpose is coarse inverse-cdef}.

  The implication \cref{item:prop:transpose is coarse inverse-cdef}+\cref{item:prop:transpose is coarse inverse-ce}$\Rightarrow$ \cref{item:prop:transpose is coarse inverse-cinv} is essentially the usual proof of the uniqueness of inverse:
  \[
    \op{\crse f} = (\crse{f}^{-1}\crse \circ\crse f) \crse\circ \op{\crse f} \crse
    = \crse{f}^{-1}\crse \circ (\crse f\crse\circ \op{\crse f}) \crse \subseteq  \crse{f}^{-1}
  \]
  paired with the observation that if $\op{\crse f}$ is coarsely everywhere defined and $\op{\crse f} \crse\subseteq  \crse{f}^{-1}$ then equality must hold by \cref{cor:restriction with same domain coincides with f}.
  The converse is trivial.
\end{proof}

\begin{corollary}[cf.\ \cite{coarse_groups}*{Lemma~2.4.9}]
  A coarse map is a coarse equivalence if and only if it is a coarsely surjective coarse embedding.
\end{corollary}

\subsection{Useful coarse geometric properties}
In this subsection we further develop the glossary of useful terms and notions in coarse geometry.
In our naming conventions, a property is ``local'' if it is defined in terms of bounded sets, while it is ``controlled'' if it is defined in terms of entourages (or, equivalently, uniformly bounded sets).
\begin{definition} \label{def:controlled-partition}
 A \emph{controlled partition} of $\crse X = (X, \CE)$ is a partition  $X=\bigsqcup_{i\in I} C_i$, where there is some $E \in \CE$ such that $C_i$ is $E$\=/bounded for all $i \in I$.
\end{definition}

\begin{remark}\label{rmk:controlled partitions give crs eq}
  If $X=\bigsqcup_{i\in I}C_i$ is a controlled partition of a coarse space $\crse X=(X,\CE)$, then we may generate a coarse structure $\CE_I$ on $I$ by declaring
  \[
    \CE_I \coloneqq \angles{\braces{(i,j)\mid C_i\cup C_j \, \text{ is $E$-bounded}} \mid {E \in \CE}} \subseteq I \times I.
  \]
  The map sending a point $x\in X$ to the (uniquely determined) index $i \in I$ such that $x \in C_i$ can then be shown to be a coarse equivalence $\crse X\to(I,\CE_I)$.
  This coarse equivalence is more concisely defined via the binary relation
  \[
    \bigcup\braces{\braces{i}\times C_i\mid i\in I}\subseteq I\times X.
  \]
\end{remark}

\begin{definition} \label{def:locally finite family}
  Let $\{C_i\}_{i\in I}$ be a family where $C_i \subseteq X$. We say
  \begin{enumerate}[label=(\roman*)]
    \item $\{C_i\}_{i\in I}$ is \emph{locally finite} if for every bounded $A \subseteq X$ there are at most finitely many $i \in I$ so that $C_i \cap A \neq \emptyset$;
    \item $\{C_i\}_{i\in I}$ is \emph{uniformly} (or \emph{controlledly}) \emph{locally finite} if for every $E \in \CE$
      \[
       \sup\braces{\#\braces{i\in I\mid C_i \cap A \neq \emptyset} \mid A \subseteq X \text{ is $E$-controlled}} < \infty.
      \]
  \end{enumerate}
\end{definition}

With \cref{def:locally finite family} at hand, the following should be no surprise.
\begin{definition} \label{def:locally finite space - bounded geo}
 Let $\crse X = (X, \CE)$ be a coarse space. We say
\begin{enumerate}[label=(\roman*)]
  \item $\crse X$ is \emph{coarsely locally finite} if it admits a locally finite controlled partition;
  \item $\crse X$ has \emph{bounded (coarse) geometry} if it admits a uniformly locally finite controlled partition.
\end{enumerate}
\end{definition}

\begin{example}
  Recall \cref{ex:metric-as-coarse-1,ex:metric-as-coarse-2}. If $(X,d)$ is a locally compact (extended) metric space, then $(X,\CE_d)$ is coarsely locally finite. There is also a metric version of bounded (coarse) geometry, and it is easily seen that the metric space $(X,d)$ has bounded geometry if and only if the metric coarse space $(X,\CE_d)$ has bounded geometry in the sense of \cref{def:locally finite space - bounded geo}.
\end{example}

\begin{remark} \label{rem:coarse-bdd-vs-ulf}
  A coarse space is \emph{uniformly locally finite} if $\sup_{x \in X} \abs{E(x)} < + \infty$ for all $E \in \CE$. In the literature, the term \emph{bounded geometry} is often used as a synonym for uniformly locally finite~\cites{braga_rigid_unif_roe_2022,bbfvw_2023_embeddings_vna,braga_farah_rig_2021,braga_gelfand_duality_2022}. However, we will keep them as separate concepts. In particular, our bounded geometry coarse spaces need not be discrete.
\end{remark}

\cref{rmk:controlled partitions give crs eq} shows that a coarse space is coarsely locally finite (resp.\ has bounded geometry) if and only if it is coarsely equivalent to a coarse space where all bounded sets are finite (resp.\ uniformly finite).

\begin{remark}
  Using \cref{rmk:controlled partitions give crs eq}, when working with a coarsely locally finite coarse space $\crse X$ one may, in principle, replace it with a coarsely equivalent locally finite space. However, we do not particularly recommend following this procedure, as it may hide important features that may otherwise be apparent. Besides, replacing $\crse X$ with a coarsely equivalent model would not work well with, for instance, uniform Roe algebras.
\end{remark}

When dealing with technical details, particularly with separated sets, it is often useful to consider \emph{symmetric} entourages that contain the diagonal. We thus give them a name.
\begin{notation}
  A \emph{gauge} on $\crse X$ is a symmetric controlled entourage $\gauge\in \CE$ containing the diagonal, that is, $\Delta_X \subseteq \gauge$.
\end{notation}

Coarse geometric properties can often be rephrased using gauges (and it will often be convenient to do so).

%%%%%%%%%%%%%%%%%%%%%%%%%%%%%%%%%%%%%%%%%%%%%%%%%%%%%%%%%%%%%%%%%%%%%%%
% MODULES
%%%%%%%%%%%%%%%%%%%%%%%%%%%%%%%%%%%%%%%%%%%%%%%%%%%%%%%%%%%%%%%%%%%%%%%
\section{Coarse geometric modules} \label{sec:modules}
In this section we start the path towards the construction of the \emph{Roe algebra} of a coarse space. Henceforth, $\crse X$ will be a fixed coarse space, and $\fkA \leq \CP(X)$ will be a unital Boolean algebra of subsets of $X$.

\subsection{Definition and general observations}
As in the classical setting, we will make use of representations of the relevant algebras to give an appropriate definition of modules. We thus start with the following.
\begin{definition}\label{def:non-degenerate}
  A unital representation $\chf{\bullet}\colon\fkA\to\CB(\CH)$ is \emph{non-degenerate} if there is a gauge $\gauge\in\CE$ such that the images $\chf{A}(\CH)$ with $\gauge$-bounded $A\in\fkA$ generate a dense subspace of $\CH$, \emph{i.e.}\
  \[
    \CH = \overline{\angles{\chf{A}(\CH) \mid A\in\fkA ,\ \gauge\text{-bounded}}}^{\norm{\cdot}}.
  \]
  If $\gauge$ is as above, then we call it a \emph{non-degeneracy gauge}.
\end{definition}

Recall that the image of $\fkA$ is a commuting family of projections in $\CB(\CH)$.
\begin{example} \label{ex:degenerate-rep}
  The prototypical example of a \emph{degenerate} representation may be obtained by considering ``natural representations of $X$'' when the coarse structure is too small. For example, if $(X,\mu)$ is a Borel probability space, the standard representation ${\rm{Borel}}(X)\to \CB(L^2(X,\mu))$ is certainly degenerate if all the $\CE$\=/bounded subsets of $X$ have measure $0$.
\end{example}

The following simple observations are very useful.

\begin{lemma}\label{lem:supports of vectors almost contained in bounded}
  Let $\chf{\bullet}$ be non-degenerate. For every $v\in \CH$ and $\varepsilon > 0$, there is an $A\in\fkA$ which is a finite union of disjoint $\gauge$-bounded sets such that
  \[
    \norm{v - \chf{A}(v)}<\varepsilon.
  \]
\end{lemma}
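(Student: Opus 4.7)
The plan is to use non-degeneracy to approximate $v$ by a finite linear combination of vectors ``supported'' on $\gauge$-bounded elements of $\fkA$, and then exploit the fact that $\chf{A}$ is an orthogonal projection to conclude that $\chf{A}(v)$ is at least as good an approximation as that combination.

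First I would apply \cref{def:non-degenerate}: the span of $\bigcup\{\chf{B}(\CH) \mid B\in\fkA,\ \gauge\text{-bounded}\}$ is dense in $\CH$, so there exist $\gauge$-bounded $A_1,\ldots,A_n \in \fkA$ and vectors $w_1,\ldots,w_n \in \CH$ with
\[
  \norm[\big]{v - \textstyle\sum_{i=1}^n \chf{A_i}(w_i)} < \varepsilon.
\]
Next, I would disjointify: set $A_i' \coloneqq A_i \smallsetminus \bigcup_{j<i} A_j$, which lies in $\fkA$ (as $\fkA$ is a Boolean algebra) and is $\gauge$-bounded (being a subset of $A_i$). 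By construction the $A_i'$ are pairwise disjoint with $A \coloneqq \bigsqcup_{i=1}^n A_i' = \bigcup_{i=1}^n A_i$.

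The key observation is now that, since $A_i \subseteq A$, we have $\chf{A}\chf{A_i} = \chf{A\cap A_i} = \chf{A_i}$, and therefore, setting $u \coloneqq \sum_{i=1}^n \chf{A_i}(w_i)$, we obtain $u = \chf{A}(u)$. Since $\chf{A}$ is a self-adjoint projection, $v - \chf{A}(v)$ lies in $\ker(\chf{A})$ while $\chf{A}(v) - u = \chf{A}(v-u)$ lies in the range of $\chf{A}$; these two subspaces are orthogonal, so Pythagoras gives
\[
  \norm{v - u}^2 = \norm{v - \chf{A}(v)}^2 + \norm{\chf{A}(v) - u}^2 \geq \norm{v - \chf{A}(v)}^2.
\]
Therefore $\norm{v - \chf{A}(v)} \leq \norm{v - u} < \varepsilon$, which is the desired inequality for $A = \bigsqcup_{i=1}^n A_i'$.

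There is no real obstacle here; the only subtlety worth stating explicitly is that the disjointification step needs $\fkA$ to be a Boolean algebra (which is assumed) to ensure the $A_i'$ remain in $\fkA$, and the final Pythagoras step crucially uses that $\chf{A}$ is a projection (\emph{i.e.}\ self-adjoint) rather than merely an idempotent, which is part of the standing convention on ``projection'' set in \cref{subsec:operator-boolalg}.
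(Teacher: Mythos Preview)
Your proof is correct and follows essentially the same approach as the paper: approximate $v$ by a finite sum $u=\sum_i\chf{A_i}(w_i)$, set $A=\bigcup_i A_i$, observe $u\in\chf{A}(\CH)$, and use that $\chf{A}$ is an orthogonal projection. Your version is in fact slightly more streamlined, since you go directly from $A_i\subseteq A$ to $u=\chf{A}(u)$ via Pythagoras, whereas the paper first rewrites the sum over the disjoint pieces $A_i'$ before drawing the same conclusion.
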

\begin{proof}
  By non-degeneracy, there exist \gauge-bounded $A_1,\ldots,A_n\in \fkA$ and vectors $w_1,\ldots, w_n\in \CH$ such that $\norm{v-\sum_{i=1}^n\chf{A_i}(w_i)}\leq \varepsilon$. We wish to make the $A_i$ disjoint. This is easily done by observing that
  \begin{align*}
    \chf{B}(w)+\chf{B'}(w')&=\chf{B}(w)+\chf{B\cap B'}(w') + \chf{B'\smallsetminus B}(w') \\
    &=\chf{B}(w)+\chf{B}\chf{B'}(w')+\chf{B'\smallsetminus B}(w') \\
    &=\chf{B}\paren{w+\chf{B'}(w')}+\chf{B'\smallsetminus B}(w') = \chf{B}\paren{w''}+\chf{B'\smallsetminus B}(w'),
  \end{align*}
  and so it is always possible to replace $A_i$ with $A_i'\coloneqq A_i\smallsetminus (\bigcup_{j<i}A_j)$ if needed.
  Thus, $A\coloneqq \bigsqcup_{i=1}^n A_i'$ is a disjoint union of \gauge-bounded elements of $\fkA$ so that $w\coloneqq \sum_{i=1}^n\chf{A_i}(w_i)\in \chf{A}(\CH)$. It follows that $\norm{v - \chf{A}(v)}\leq \varepsilon$, as desired.
\end{proof}

\begin{lemma}\label{lem:non-orthogonality is local}
  Let $\chf{\bullet}$ be non-degenerate. If $v,w\in \CH$ are not orthogonal, there is an $\gauge$-bounded $A\in\fkA$ such that $\angles{\chf{A}(v),\chf{A}(w)}\neq 0$.
\end{lemma}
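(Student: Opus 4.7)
The idea is to use \cref{lem:supports of vectors almost contained in bounded} to reduce to the case of vectors supported on a finite disjoint union of $\gauge$-bounded sets, and then exploit the orthogonality of the projections $\chf{B_i}$ to isolate one piece where non-orthogonality survives.

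\textbf{Step 1: Simultaneous approximation.} Fix $v,w\in\CH$ with $\angles{v,w}\neq 0$. By \cref{lem:supports of vectors almost contained in bounded} applied to $v$ and to $w$ (with any $\varepsilon>0$ to be determined), we obtain $A_v,A_w\in\fkA$, each a finite disjoint union of $\gauge$-bounded elements of $\fkA$, such that $\norm{v-\chf{A_v}(v)}<\varepsilon$ and $\norm{w-\chf{A_w}(w)}<\varepsilon$. Let $A'\coloneqq A_v\cup A_w$; since $\chf{A_v}\leq \chf{A'}$ as projections, we still have $\norm{v-\chf{A'}(v)}<\varepsilon$ and analogously for $w$. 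Moreover $A'$ is itself a finite disjoint union $A'=B_1\sqcup\cdots\sqcup B_n$ of $\gauge$-bounded elements of $\fkA$ (the argument used in the proof of the previous lemma, namely replacing each set by its difference with the union of the earlier ones, applies verbatim).

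\textbf{Step 2: Diagonalising the inner product.} Since the $B_i$ are pairwise disjoint, $\chf{B_i}\chf{B_j}=\chf{B_i\cap B_j}=0$ for $i\neq j$, so using self-adjointness
\[
  \angles{\chf{A'}(v),\chf{A'}(w)}=\sum_{i,j=1}^n\angles{\chf{B_i}(v),\chf{B_j}(w)}=\sum_{i=1}^n\angles{\chf{B_i}(v),\chf{B_i}(w)}.
\]

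\textbf{Step 3: Choosing $\varepsilon$.} A standard estimate gives
\[
  \abs{\angles{\chf{A'}(v),\chf{A'}(w)}-\angles{v,w}}\leq \norm{v-\chf{A'}(v)}\norm{w}+\norm{\chf{A'}(v)}\norm{w-\chf{A'}(w)}\leq \varepsilon(\norm{v}+\norm{w}+\varepsilon).
\]
Picking $\varepsilon$ small enough that the right-hand side is strictly less than $\abs{\angles{v,w}}$, we conclude $\angles{\chf{A'}(v),\chf{A'}(w)}\neq 0$. By Step 2 there is therefore some $i$ with $\angles{\chf{B_i}(v),\chf{B_i}(w)}\neq 0$, and $A\coloneqq B_i\in\fkA$ is $\gauge$-bounded, as required.

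\textbf{Main obstacle.} There is no deep obstacle: the only subtle point is ensuring that a single $\gauge$-bounded set works, rather than a finite disjoint union. This is resolved by the diagonalisation in Step 2, which is precisely why the previous lemma was stated with \emph{disjoint} $\gauge$-bounded pieces—without disjointness, cross terms could conspire to cancel, and one could not pass from non-vanishing of the full sum to non-vanishing of one summand.
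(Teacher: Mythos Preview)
Your proof is correct and follows essentially the same approach as the paper's: both use \cref{lem:supports of vectors almost contained in bounded} to approximate by finite disjoint unions of $\gauge$-bounded measurable sets, then isolate a single piece where the inner product survives. Your variant of first merging $A_v$ and $A_w$ into a common disjoint union is a minor cosmetic difference; the paper instead keeps the two decompositions separate and passes to the intersection $B_i\cap C_j$ at the end, but the underlying idea is identical.
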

\begin{proof}
  By \cref{lem:supports of vectors almost contained in bounded}, we may find $B=\bigsqcup_{i=1}^n B_i$ and $C=\bigsqcup_{j=1}^m C_j$ disjoint unions of \gauge-controlled elements of $\fkA$ such that $\norm{v-\chf{B}(v)} + \norm{w-\chf{C}(w)}$ is much smaller than $\abs{\scal{v}{w}}$.
  By the triangle inequality, we obtain that
  \[
    0<\abs{\scal{\chf{B}(v)}{\chf{C}(w)}}=\abs{\sum_{i,j}\scal{\chf{B_i}(v)}{\chf{C_j}(w)}}.
  \]
  In particular, there are $i$ and $j$ such that $\scal{\chf{B_i}(v)}{\chf{C_j}(w)}\neq 0$, and $A\coloneqq B_i\cap C_j$ is as desired.
\end{proof}

\cref{lem:supports of vectors almost contained in bounded,lem:non-orthogonality is local} show that the non-degeneracy condition is enough to imply that the representation can---to some extent---be studied locally. In other words, ``$\chf{\bullet}$ knows about the geometry of $\crse X$''. We make this into a definition.

\begin{definition} \label{def:coarse-module}
  A (\emph{finitely additive}) \emph{coarse geometric module for $\crse X$} (or an \emph{$\crse X$-module} for short) is a boolean algebra $\fkA\leq \CP(X)$ together with a Hilbert space $\CHx$ and a non-degenerate unital representation $\chf{\bullet}\colon \fkA \to \CB(\CHx)$.
\end{definition}

We will denote an $\crse X$-module simply by $\CH$. However, we will use $(\fkA,\CH)$ or $(\fkA,\chf{\bullet},\CH)$ if confusion may arise.
\begin{convention}\label{conv:sigma additive module}
  We drop the \emph{finitely additive} label, as these are the only modules we will work with. 
  Nevertheless, it is conceivable that, in some applications where $\fkA \subseteq \CP(X)$ is a $\sigma$-algebra, it may be useful to work with \emph{$\sigma$-additive coarse geometric modules} (which are defined in the natural way). This will however not be the case in this text.
\end{convention}

It is important to note that our definition of coarse geometric module extends the classical definition of geometric modules for metric spaces (recall \cref{subsec:classical-roe-alg}).
\begin{example}
  Let $(X,d)$ be a locally compact metric space and $\rho\colon C_0(X)\to\CB(\CH)$ an geometric module as explained in \cref{subsec:classical-roe-alg}, and let $\bar\rho$ be the extension to a unital representation of the algebra of bounded Borel functions ${\rm Borel}(X)$.
  For every Borel subset $A\subseteq X$ we may thus define $\chf A$ by applying $\bar\rho$ to the indicator function of $A$ (whence our choice of notation). The resulting map $\chf\bullet$ is a unital representation of the Borel $\sigma$\=/algebra, which is easily seen to be non-degenerate in the sense of \cref{def:non-degenerate}. This shows that $\CH$ naturally defines a ($\sigma$-additive) coarse geometric module for $\crse X=(X,\CE_d)$.
\end{example}

\begin{notation}
  Let $(\fkA,\chf{\bullet},\CH)$ be an $\crse X$-module. With a light abuse of notation, we call the elements of $\fkA$ \emph{measurable subsets} of $X$. For every $A\in\fkA$, we denote by $\CH_A$ the Hilbert space $\chf{A}(\CH) \subseteq \CH$.
\end{notation}

\begin{remark}\label{rmk:partitions extend the modules}
  Suppose that $(\fkA,\CH)$ is an $\crse X$-module and $X=\bigsqcup_{i\in I}A_i$ is a partition with $A_i\in \fkA$ for every $i\in I$. Let $\fkI$ be the boolean algebra consisting of arbitrary unions of the $A_i$, \emph{i.e.}\ $\fkI=\braces{\bigsqcup_{j\in J}A_j\mid J\subseteq I}$.
  Since arbitrary SOT-sums of orthogonal projections are orthogonal projections,
  $\chf{\bullet}$ naturally gives rise to a representation $\fkI\to \CB(\CH)$.
  Even more is true. Namely, one may canonically extend $\chf{\bullet}$ to a representation of the algebra $\widehat\fkA$ consisting of all the sets $B$ such that $B\cap A_i\in\fkA$ for every $i\in I$. Of course $(\widehat \fkA,\CH)$ still is an $\crse X$-module.
\end{remark}

\subsection{Additional properties for coarse geometric modules}
With no further assumptions, coarse geometric modules do not carry enough information about the coarse space. In this subsection we introduce various compatibility properties which turn them into much more useful constructs.
\begin{definition}\label{def:ample}
  An $\crse X$-module is \emph{faithful} if there is a gauge $\gauge\in\CE$ such that the family of non-trivially represented, \gauge-controlled elements of $\fkA$ is coarsely dense, that is,
  \[
    \bigcup\braces{A\in\fkA\mid A\text{ is \gauge-controlled and }\, \chf{A}\neq 0}\asymp X.
  \]
  Moreover, given a cardinal $\kappa$, we say $\CH$ is \emph{$\kappa$-ample} if it is faithful and $\chf{A}$ has rank at least $\kappa$ for every non-trivially represented, \gauge-controlled $A \in \fkA$.
\end{definition}

If $\gauge$ is as in \cref{def:ample}, then we say $\gauge$ is a \emph{faithful} (resp.\ \emph{ample}) gauge. If $\kappa=\aleph_0$, we drop it from the notation.
\begin{remark}
  Henceforth, we shall use the same symbol for the various gauges. Indeed, in most cases one would fix one gauge that is large enough to witness all the required properties.
\end{remark}

\begin{example}\label{exmp: measure on metric spaces}
  Let $\crse X =(X,\CE_d)$ be a metric coarse space (recall \cref{ex:metric-as-coarse-1}), and let $\fkB(X)$ be the Borel $\sigma$-algebra.
  Let $\mu$ be a measure on $X$. Then $L^2(X,\mu)$ with multiplication by indicator functions is a coarse geometric module for $\crse X$. If $\mu$ is fully supported (\emph{i.e.}\ $\mu(\Omega)>0$ for every open set $\Omega \subseteq X$) then $L^2(X,\mu)$ is faithful. It is non-faithful if there exist balls $B(x_n;r_n)\subset X$ of diverging radius such that $\mu(B(x_n;r_n))=0$.
  If $\mu$ is fully supported and non-atomic then $L^2(X,\mu)$ is also ample. On the contrary, if $X$ is discrete then $L^2(X,\mu)$ can never be ample.
\end{example}

\begin{remark}\label{rmk: rank vs local rank}
  In most cases, for instance for locally compact metric spaces, one would only work with separable Hilbert spaces. However, if one wishes to consider spaces such as $\ell^2(X;\CH)$ for some uncountable space $X$, then the notion of ampleness must be adapted to take this into account, whence the definition of $\kappa$\=/ampleness.
  It is generally desirable that the degree of ampleness coincides with the rank of the coarse geometric module under consideration (see \emph{e.g.} \cref{sec:covering iso}).

  If one does not wish to restrict the rank of the whole module, it may suffice to restrict its rank \emph{locally}. Namely, we say that $\CHx$ has \emph{local rank at most $\kappa$} if the rank of $\chf A$ is at most $\kappa$ for every bounded measurable $A\subseteq X$. This condition is mostly useful if $\crse X$ is a space of coarse local cardinality at most $\kappa$, \emph{e.g.}\ if $\kappa=\aleph_0$ and $\crse X$ is coarsely locally finite. 
  The local bound on the rank of modules interacts well with \emph{proper} coarse maps among coarse spaces, see the ``moreover'' statement in \cref{thm:covering isometries exist}.
  A setup where these conditions are rather natural is that of locally compact extended metric spaces with uncountably many coarsely connected components.
\end{remark}

In order to actually do coarse geometry, it is highly desirable to assume that sets admit measurable controlled thickenings. We make this into a definition.

\begin{definition}\label{def:admissible}
  A $\crse X$-module $(\fkA, \CH)$ is \emph{admissible} (resp.\ \emph{locally admissible}) if there is a gauge $\gauge\in\CE$ such that for every $B \subseteq X$ (resp.\ bounded $B \subseteq X$) there is an $A\in \fkA$ with $B \subseteq A \subseteq \gauge(B)$.
\end{definition}

As usual, if $\gauge$ is as above, we say that $\gauge$ is a \emph{(local) admissibility gauge}.
\begin{example}\label{exmp:non-admissible}
  One may construct modules that are not (locally) admissible by choosing a suitably small $\fkA$.
  For instance, consider $\NN$ with the standard metric coarse structure.
  If $\fkA$ consists of the sets that are either finite or co-finite in $\NN$, then any infinite non coarsely-dense subset of $\NN$ does not have a measurable controlled thickening, hence $\ell^2(\NN)$ with this boolean algebra is not admissible.
  Moreover, if we let $\fkA$ consist of finite subsets that do not contain $0$ or cofinite sets that do contain it, the usual representation on $\ell^2(\NN_{>0})$ is a (faithful) module, but the only measurable sets containing $0$ are cofinite, and hence unbounded.
\end{example}

\begin{remark}\label{rmk:modules extend}
  Let $(\fkA, \CH)$ be an $\crse X$-module. Since the projections $\chf{A}\in\CB(\CH)$ commute by assumption, and the set of projections in a commutative von Neumann algebra is a complete boolean algebra, it follows from Sikorski's Extension Theorem that it is possible to extend $\chf{\bullet}$ to a representation
  \[
   \chf{\bullet}\colon \CP(X)\to \overline{\angles{\chf{A}\mid A\in \fkA}}^{w},
  \]
  where $\CP(X)$ is the power set of $X$ and $\overline{(\ldots)}^{w}$ denotes the weak closure. In particular, every $\crse X$-module can be extended to an admissible module, and one could in principle always assume that $\chf{\bullet}$ is defined on every subset of $X$. On the other hand, it does not seem advisable to take this point of view, as taking such an extension may spoil other desirable properties of that module.

  For instance, for $X=\RR$, it is natural to consider the module structure given by ${\rm Borel}(\RR)\to \CB(L^2(\RR))$. Extending this module to all of $\CP(\RR)$ would spoil the $\sigma$-additivity property.
\end{remark}

\begin{remark}
  If a module is admissible, \cref{def:ample} is equivalent to saying that---up to picking a larger gauge if necessary---for every $x\in X$ there is an \gauge-bounded $A\in\fkA$ with $x\in A$ and $\chf{A} \neq 0$ (resp.\ with rank $\kappa$).
\end{remark}

The modules we are most interested in are those that can be ``reconstructed from their local structure'' in the following way.
\begin{definition}\label{def:discrete}
  A $\crse X$-module $(\fkA, \CH)$ is \emph{discrete} if there is a \emph{discreteness gauge} $\gauge\in\CE$ and a partition $X=\bigsqcup_{i\in I}A_i$ such that
  \begin{enumerate}[label=(\roman*)]
    \item \label{def:discrete:contr} $A_i$ is $\gauge$\=/controlled for every $i \in I$;
    \item \label{def:discrete:meas} $A_i\in \fkA$ for every $i\in I$;
    \item \label{def:discrete:sum} $\sum_{i\in I}\chf{A_i} = 1 \in \CB(\CH)$ (the sum is in the strong operator topology);
    \item \label{def:discrete:extension} if $C \subseteq X$ is such that $C \cap A_i \in \fkA$ for all $i \in I$ then $C \in \fkA$.
  \end{enumerate}
  We call such a partition $(A_i)_{i \in I}$ a \emph{discrete partition}.
\end{definition}

\begin{remark}\label{rmk:discrete module}
  \begin{enumerate}[label=(\roman*)]
    \item Due to \cref{def:discrete}~\cref{def:discrete:extension}, if $X=\bigsqcup_{i\in I}A_i$ is a discrete partition then the block diagonal $E\coloneqq \bigsqcup_{i\in I}A_i\times A_i$ is an admissibility gauge for $\CH$. This shows that every discrete module is admissible.
    \item If $\CH$ is an $\crse X$-module admitting a measurable partition $X=\bigsqcup_{i\in I}A_i$ that satisfies \cref{def:discrete:contr,def:discrete:meas,def:discrete:sum}, then \cref{rmk:partitions extend the modules} shows that we may canonically extend $\fkA$ and $\chf{\bullet}$ so that \cref{def:discrete:extension} holds as well.
  \end{enumerate}
\end{remark}

As expected, the most important modules are discrete.
\begin{example}\label{exmp:uniform module}
  Given any coarse space $\crse X= (X,\CE)$, the \emph{uniform module $\CH_{u,\crse X}$} is defined by the representation $\chf \bullet\colon\CP(X)\to\CB(\ell^2(X))$ given by multiplication by the indicator function.
  The uniform module is clearly discrete.

  Similarly, for any cardinal $\kappa$, we may also fix a Hilbert space $\CH$ of rank $\kappa$ and define the \emph{uniform rank-$\kappa$ module $\CH_{u,\crse X}^{\kappa}$} as the tensor product $\CH_{u,\crse X}^{\kappa}\coloneqq \CH_{u,\crse X}\otimes\CH = \ell^2(X;\CH)$.
  Then $\CH_{u, \crse X}^\kappa$ is discrete as well.
\end{example}

\begin{example} \label{ex:ell-2-representations}
  As in \cref{exmp: measure on metric spaces}, let $\mu$ be any measure on the Borel $\sigma$-algebra of a metric space $(X,d)$, and consider the coarse geometric module $L^2(X,\mu)$. If $(X,d)$ is separable, this module is discrete. This is easily shown by considering a countable controlled measurable partition $X=\bigsqcup_{n\in\NN}A_n$ and observing that $L^2(X, \mu)=\bigoplus_{n\in\NN}L^2(A_n, \mu|_{A_n})$.
\end{example}

\begin{remark}\label{rmk:discrete modules are discrete}
  If $(\fkA,\CH)$ is a discrete $\crse X$-module, we may make $I$ into a coarse space $\crse I=(I,\CF)$ coarsely equivalent to $\crse X$ as explained in \cref{rmk:controlled partitions give crs eq}.
  Furthermore, we may also use $(\fkA,\CH)$ to naturally define a coarse geometric module $(\CP(I),\CH)$ on $\crse I$.
  We observe that $\CH$ is isomorphic to the Hilbert sum $\bigoplus_{i\in I}\CH_{A_i}$. We can thus make the discreteness of this module particularly explicit by letting $\CH_i\coloneqq \CH_{A_i}$ and rewriting $(\CP(I),\chf{\bullet},\CH)$ as $(\CP(I),\,\delta,\, \bigoplus_{i\in I}\CH_i)$, where we let $\delta_i\coloneqq\chf{A_i}$ and $\delta(J)\coloneqq\sum_{j\in J}\delta_j$.

  Also observe that if every $\CH_{i}$ is isomorphic to some fixed Hilbert space $\CH'$ of rank $\kappa$, we may fix a choice of such isomorphisms and write the above module as $(\CP(I), \ell^2(I;\CH'))$, which is just the uniform $\crse I$-module of rank $\kappa$ (see \cref{exmp:uniform module}). These rewritings are purely formal, but they do bridge with the notation that is most often used in the literature when working with (uniform) Roe algebras of coarse spaces~\cites{bbfvw_2023_embeddings_vna,braga_rigid_unif_roe_2022,braga_farah_rig_2021,braga_gelfand_duality_2022}.
\end{remark}

The following shows that most of the geometric modules that one may reasonably expect to encounter in applications are indeed discrete (up to extension).
\begin{proposition}\label{prop:admissible is discrete}
  If $\crse X = (X, \CE)$ is coarsely locally finite and $(\fkA,\chf{\bullet}, \CH)$ is a locally admissible $\crse X$-module, then $\fkA$ and $\chf{\bullet}$ can be extended to a define a discrete module.
\end{proposition}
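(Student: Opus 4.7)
The plan is to combine a locally finite controlled partition of $\crse X$ with the local admissibility gauge to produce a measurable controlled partition, then disjointify using local finiteness, and finally invoke \cref{rmk:discrete module}(ii) to extend to a genuinely discrete module.

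\textbf{Step 1 (setup):} By coarse local finiteness, fix a locally finite controlled partition $X=\bigsqcup_{i\in I}C_i$. Let $\gauge$ be a local admissibility gauge for $(\fkA,\chf{\bullet},\CH)$, and for each $i\in I$ pick $A_i\in\fkA$ with $C_i\subseteq A_i\subseteq\gauge(C_i)$. Since each $A_i$ is controlled by $\gauge\circ(C_i\times C_i)\circ\gauge$ and the $C_i$ are uniformly controlled, the family $(A_i)_{i\in I}$ is uniformly controlled. Moreover $(A_i)_{i\in I}$ is locally finite: if a bounded set $B$ meets $A_i$, then $\gauge(B)$ meets $C_i$, and local finiteness of $(C_i)_{i\in I}$ combined with boundedness of $\gauge(B)$ shows that this happens for only finitely many $i$.

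\textbf{Step 2 (disjointification):} Fix a well-ordering of $I$ and set
\[
  A_i'\coloneqq A_i\smallsetminus\bigcup_{j<i}A_j.
\]
For each fixed $i$, only finitely many $j<i$ satisfy $A_j\cap A_i\neq\emptyset$ by Step 1, so $A_i' = A_i\smallsetminus\bigcup_{j<i,\, A_j\cap A_i\neq\emptyset}(A_j\cap A_i)$ is obtained from elements of $\fkA$ by finitely many boolean operations; hence $A_i'\in\fkA$. Clearly $(A_i')_{i\in I}$ is pairwise disjoint, contained in $(A_i)_{i\in I}$ (so still uniformly controlled), and $\bigsqcup_{i\in I}A_i' = \bigcup_{i\in I}A_i\supseteq\bigcup_{i\in I}C_i = X$, so it is a measurable controlled partition of $X$.

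\textbf{Step 3 (SOT convergence):} We check that $\sum_{i\in I}\chf{A_i'}$ converges strongly to $1\in\CB(\CH)$. Since the $\chf{A_i'}$ are pairwise orthogonal projections, the net of partial sums over finite subsets $J\Subset I$ is an increasing net of projections, and it suffices to show the dense subspace $\bigcup_{A\in\fkA,\,\gauge\text{-bdd}}\chf{A}(\CH)$ (dense by non-degeneracy, cf.\ \cref{def:non-degenerate}) lies in the range of the limit projection. Fix $v = \chf{A}(v)$ with $A\in\fkA$ bounded. By local finiteness of $(A_i')_{i\in I}$, there is a \emph{finite} set $J\subseteq I$ with $A\cap A_i'=\emptyset$ for all $i\notin J$; since $(A_i')_{i\in I}$ partitions $X$, this gives $A = \bigsqcup_{i\in J}(A\cap A_i')$ and hence
\[
  v=\chf{A}(v)=\sum_{i\in J}\chf{A\cap A_i'}(v)=\sum_{i\in J}\chf{A_i'}(v).
\]
Therefore $\sum_{i\in I}\chf{A_i'}=1$ in the strong operator topology, so the partition satisfies conditions \cref{def:discrete:contr}--\cref{def:discrete:sum} of \cref{def:discrete}.

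\textbf{Step 4 (extension to a discrete module):} By \cref{rmk:discrete module}(ii), it suffices now to extend $\fkA$ and $\chf{\bullet}$ to the boolean algebra $\widehat{\fkA}\coloneqq\{C\subseteq X\mid C\cap A_i'\in\fkA\ \forall i\in I\}$ by setting $\chf{C}\coloneqq\sum_{i\in I}\chf{C\cap A_i'}$ in the strong operator topology, which exists and is a projection because the summands are orthogonal and uniformly bounded. This yields a coarse geometric module on $(\widehat\fkA,\CH)$ for which $(A_i')_{i\in I}$ satisfies all four conditions of \cref{def:discrete}, so the module is discrete.

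The only mildly delicate point is showing that $(A_i)_{i\in I}$ is itself locally finite in Step~1; everything else is a routine combination of disjointification and SOT convergence of orthogonal projections, exploiting that non-degeneracy lets us test convergence on bounded measurable sets, which only meet finitely many members of the partition.
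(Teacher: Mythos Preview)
Your proof is correct and follows essentially the same approach as the paper: thicken a locally finite controlled partition to a measurable covering via local admissibility, verify the covering is still locally finite, disjointify using an ordering on $I$, check SOT convergence to the identity via non-degeneracy, and invoke \cref{rmk:discrete module} to obtain condition \cref{def:discrete:extension}. The only cosmetic difference is that in Step~3 you verify the limit projection fixes each $\chf{A}(\CH)$ exactly (for $A$ bounded measurable), whereas the paper uses the $\varepsilon$-approximation of \cref{lem:supports of vectors almost contained in bounded}; both arguments are equivalent here.
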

\begin{proof}
  As explained in \cref{rmk:discrete module}, it is enough to find a measurable partition such that $(\fkA,\chf{\bullet}, \CH)$ satisfies \cref{def:discrete:contr,def:discrete:meas,def:discrete:sum} of \cref{def:discrete}: condition \cref{def:discrete:extension} can then be imposed passing to a canonical extension.

  Let $X=\bigsqcup_{i\in I}B_i$ be a locally finite controlled partition. By local admissibility, we may fix for every $i\in I$ a controlled thickening $A_i\in\fkA$ with $B_i\subseteq A_i\subseteq \gauge(B_i)$. Thus $X=\bigcup_{i\in I}A_i$.

  Observe that the family $\paren{A_i}_{i\in I}$ is still locally finite. Indeed, given a bounded $B\subseteq X$ and $i \in I$ with $A_i\cap B\neq\emptyset$, the set $\gauge(B)$ is still bounded and $\gauge(B)\cap B_i\neq \emptyset$. Local finiteness of $\paren{B_i}_{i\in I}$ hence shows that this only happens for finitely many $i\in I$.

  Arbitrarily choose a total ordering $\leq$ on $I$, and let
  \begin{equation}\label{eq:take the difference}
    C_i\coloneqq A_i\smallsetminus\Bigpar{\bigcup_{j< i}A_j}.
  \end{equation}
  The sets $C_i$ are pairwise disjoint. Moreover, since $\paren{A_i}_{i\in I}$ is locally finite, $C_i$ is equal to the difference between $A_i$ and finitely many other $A_j$, whence $C_i\in\fkA$. Moreover, for every $x\in X$ there are only finitely many $j$ so that $x\in A_j$, so we see that $x\in C_{i_0}$ for $i_0 \coloneqq \min\{j\in I\mid x\in A_j\}$. Hence
  \begin{equation}\label{eq:partition}
    X=\bigsqcup_{i\in I}C_i
  \end{equation}
  is a locally finite controlled partition.

  It remains to show that the sum of the $\chf{C_i}$ converges to the identity in the strong operator topology. Since the projections $\chf{C_i}$ are orthogonal we already know that their sum strongly converges to some projection, call it $p$.
  By non-degeneracy of $\chf{\bullet}$, for every vector $v\in \CH$ and $\varepsilon>0$ there is an $A\in \fkA$ that is a finite union of $\gauge$-bounded sets and $\norm{v-\chf{A}(v)}<\varepsilon$ (see \cref{lem:supports of vectors almost contained in bounded}).
  By local finiteness of $(C_i)_{i \in I}$ there is a finite $J\subseteq I$ so that $A \subseteq \widetilde A\coloneqq \bigsqcup_{j\in J}C_j$, whence
  \begin{equation}\label{eq:convergence to identity}
    \norm{v-\chf{\widetilde A}(v)}\leq \norm{v-\chf{A}(v)}<\varepsilon.
  \end{equation}
  Since $\chf{\widetilde A}\leq p\leq 1$, it follows that $\norm{v-p(v)}\leq \norm{v-\chf{\widetilde A}(v)} \leq \varepsilon$. The claim follows letting $\varepsilon$ tend to zero.
\end{proof}

\begin{remark}
  The proof of \cref{prop:admissible is discrete} can be adapted to show that if $(\fkA, \CH)$ is an admissible $\sigma$-additive $\crse X$-module (recall \cref{conv:sigma additive module}) and $\crse X$ is locally countable (this is defined as one would expect), then $(\fkA, \CH)$ is discrete.
  In fact, \cref{eq:take the difference} defines an element in $\fkA$, because $\fkA$ is a $\sigma$-algebra. In order to get \cref{eq:partition} one has to make sure that the $C_i$ cover $X$, so one should assume that $(I,\leq)$ is a well-ordered set.
  Finally, \cref{eq:convergence to identity} holds because local countability implies that $J$ can be taken to be countable, so that one can use the $\sigma$\=/additivity of $\chf{\bullet}$.
\end{remark}

\begin{remark}
  Our notion of coarse geometric module extends that of \cite{winkel2021geometric}. 
  In that paper, Winkel considers Hilbert spaces of the form $L^2(X,\mu)$ where $\mu$ is a measure defined on a (arbitrary) $\sigma$-algebra containing all singletons. Bounded Borel functions are then represented by multiplication. 
  Without further assumptions, this data needs not define a coarse geometric module, as the representation needs not be non-degenerate (in the sense of \cref{def:non-degenerate}).
  However, the assumptions that \cite{winkel2021geometric} makes from Section 4 onward do imply that $X$ is coarsely locally finite and $L^2(X,\mu)$ is an admissible coarse module. It follows from \cref{prop:admissible is discrete} that these modules are even discrete (see also \cite{winkel2021geometric}*{Lemma 4.2}).
\end{remark}

As in \cref{exmp:non-admissible}, one may easily construct examples of non-discrete geometric modules by choosing a suitably small $\fkA$. Nevertheless, these examples are somewhat unsatisfactory because every module can be extended to a module defined on $\CP(X)$ (recall \cref{rmk:modules extend}). We do not know whether every geometric module can be extended to a discrete one. On the other hand, it is not hard to come up with examples of a degenerate representation that cannot be extended to a discrete one. For instance, consider $\NN$ with the coarse structure $\varcrs{fin}$ consisting of entourages with finitely many off-diagonal points. Then the natural representation $\CP(\NN)\to\CB(\ell^2(\NN))$ is not discrete.

\subsection{Technical lemmas for future references}
We collect here several basic lemmas on coarse geometric modules that will be useful later. Of particular interest are \cref{cor:exists disjoint ample coarsely dense,cor:discrete and faithful}, which allow us to find partitions of $X$ witnessing several structural properties simultaneously. 
\begin{lemma}\label{lem:exists disjoint coarsely dense}
  Let $(A_l)_{l\in L}$ be a \gauge-controlled family of subsets of $X$ such that $\bigcup_{l\in L}A_l$ is coarsely dense, and let $E$ be any gauge. Then there exists
  $I\subseteq L$ such that $A_i$ and $A_{i'}$ are $E$-separated for every $i\neq i'\in I$ and $\bigsqcup_{i\in I}A_i$ is coarsely dense.
\end{lemma}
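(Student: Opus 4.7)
The plan is a standard maximality argument. Consider the collection $\CS$ of subsets $J\subseteq L$ such that the family $(A_j)_{j\in J}$ is pairwise $E$-separated, that is, $A_j\cap E(A_{j'})=\emptyset$ whenever $j\neq j'\in J$. The collection $\CS$ is non-empty (it contains $\emptyset$) and is clearly closed under unions of chains, so by Zorn's Lemma there is a maximal element $I\in\CS$. Since $E$ is a gauge, it contains the diagonal, so $E$-separated subsets are in particular disjoint; thus $\bigsqcup_{i\in I}A_i$ makes sense.

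It remains to show that $\bigsqcup_{i\in I}A_i$ is coarsely dense. By maximality of $I$, for every $l\in L$ there exists some $i\in I$ such that $A_l$ and $A_i$ fail to be $E$-separated (we allow $l\in I$, in which case take $i=l$). This means that there are points $x\in A_l$ and $y\in A_i$ with $yEx$. Since $(A_l)_{l\in L}$ is $\gauge$-controlled, every $x'\in A_l$ satisfies $x'\gauge x$, and therefore
\[
  x' \in \gauge(x) \subseteq \gauge\circ E(y) \subseteq \gauge\circ E(A_i).
\]
Hence $A_l\subseteq \gauge\circ E\paren{\bigsqcup_{i\in I}A_i}$. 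Taking unions over $l\in L$,
\[
  \bigcup_{l\in L} A_l \subseteq \gauge\circ E\Bigpar{\bigsqcup_{i\in I}A_i},
\]
which is a controlled thickening. Since $\bigcup_{l\in L}A_l$ is coarsely dense in $X$ by hypothesis, it follows that $\bigsqcup_{i\in I}A_i$ is coarsely dense as well.

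The only mildly delicate point is the use of $\gauge$-controlledness to promote a single point of non-separation into a statement about the whole of $A_l$; this is precisely where we exploit the hypothesis that the family consists of uniformly bounded sets, rather than arbitrary ones.
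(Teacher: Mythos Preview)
Your proof is correct and follows essentially the same approach as the paper: a Zorn's Lemma argument for a maximal $E$-separated subfamily, followed by using maximality together with the $\gauge$-controlledness of the $A_l$ to bound $\bigcup_{l\in L}A_l$ (and hence $X$) by a controlled thickening of $\bigsqcup_{i\in I}A_i$. The only difference is organizational: the paper works point-by-point (for each $x\in X$, find $l$ then $i$), whereas you first show $A_l\subseteq\gauge\circ E\bigl(\bigsqcup_{i\in I}A_i\bigr)$ for each $l$ and then invoke coarse density; both yield the same controlled thickening $E'\circ\gauge\circ E$.
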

\begin{proof}
  Let $I\subseteq L$ be a maximal subset such that $A_i\cap E(A_{i'})=\emptyset$ for every $i\neq i'\in I$ (such $I$ exists by Zorn's Lemma).
  Let $E'$ be a gauge witnessing the coarse denseness of $\bigcup_{l\in L}A_l$. Therefore,
  for every $x\in X$ there is some $l\in L$ with $x\in E'(A_l)$. By maximality of $I$, there also is $i\in I$ such that $A_l\cap E(A_i)\neq \emptyset$. It follows that $x\in E'\circ\gauge\circ E(A_i)$.
\end{proof}

\begin{corollary}\label{cor:exists disjoint ample coarsely dense}
  Let $\CH$ be a faithful (resp.\ $\kappa$-ample) $\crse X$-module. Then there is an \gauge-controlled family $(A_i)_{i\in I}$ of measurable pairwise disjoint subsets of $X$ such that $\chf{A_i}$ is non-zero (resp.\ has rank $\kappa$) for every $i\in I$ and $\bigsqcup_{i\in I}A_i$ is coarsely dense.
\end{corollary}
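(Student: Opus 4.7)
The plan is to apply \cref{lem:exists disjoint coarsely dense} directly to the family of "good" controlled measurable sets provided by faithfulness (resp.\ $\kappa$-ampleness).

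First, by \cref{def:ample}, pick a gauge $\gauge \in \CE$ witnessing faithfulness (resp.\ $\kappa$-ampleness). Let
\[
  L \coloneqq \braces{A \in \fkA \mid A \text{ is $\gauge$-controlled and } \chf{A} \neq 0 \text{ (resp.\ has rank $\kappa$)}},
\]
indexed by itself, so that the family $(A_l)_{l \in L}$ is tautologically $\gauge$-controlled. By definition of faithfulness (resp.\ $\kappa$-ampleness), the union $\bigcup_{l \in L} A_l$ is coarsely dense in $X$.

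Next, apply \cref{lem:exists disjoint coarsely dense} with this family and with the gauge $E \coloneqq \gauge$ to extract a subset $I \subseteq L$ such that $A_i \cap \gauge(A_{i'}) = \emptyset$ for every distinct $i, i' \in I$ and $\bigsqcup_{i \in I} A_i$ is coarsely dense. Since $\Delta_X \subseteq \gauge$ by our gauge convention, $\gauge$-separatedness implies ordinary disjointness: $A_i \cap A_{i'} \subseteq A_i \cap \gauge(A_{i'}) = \emptyset$, so the family $(A_i)_{i \in I}$ is pairwise disjoint. The family is $\gauge$-controlled because it is a subfamily of $(A_l)_{l \in L}$, and each $\chf{A_i}$ is non-zero (resp.\ has rank $\kappa$) by the defining property of $L$. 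This yields all the required properties, completing the proof.

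There is no real obstacle here: the statement is essentially a repackaging of \cref{lem:exists disjoint coarsely dense} specialised to the family of measurable controlled sets of the appropriate kind. The only subtle point, which is purely notational, is ensuring that the family $(A_l)_{l \in L}$ is measurable and $\gauge$-controlled to begin with, which is baked into the definition of faithfulness (resp.\ $\kappa$-ampleness). Measurability is automatic since $L \subseteq \fkA$, and disjointness comes for free from $\gauge$-separatedness because our gauges contain the diagonal.
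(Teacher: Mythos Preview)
Your proof is correct and follows exactly the approach the paper intends: the corollary is stated without proof immediately after \cref{lem:exists disjoint coarsely dense}, and your argument---take the family of non-trivially represented $\gauge$-controlled measurable sets from \cref{def:ample}, apply the lemma with $E=\gauge$, and note that $\gauge$-separatedness forces disjointness because $\Delta_X\subseteq\gauge$---is precisely the intended one-line deduction.
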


\begin{lemma}\label{lem:discrete have nice partitions}
  Let $\CH$ be a discrete module and let $(B_l)_{l\in L}$ be a controlled family of subsets such that $\bigcup_{l\in L}B_l$ is coarsely dense. Then we may find a discrete partition $X=\bigsqcup_{i\in I}A_i$ such that for every $i\in I$ there exists an $l\in L$ with $B_l\subseteq A_i$.
\end{lemma}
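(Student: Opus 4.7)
The plan is to construct the new partition by coarsening the discrete partition already present on $\CH$. First I would fix a discreteness gauge $\gauge\in\CE$ with associated discrete partition $X = \bigsqcup_{j \in J} C_j$, a gauge $F\in\CE$ bounding every $B_l$, and a gauge $E'\in\CE$ witnessing the coarse density of $\bigcup_{l \in L} B_l$. The high-level idea is to thin out $(B_l)_{l\in L}$ to a well-separated subfamily $(B_i)_{i\in I}$ and then merge together those $C_j$'s that are nearest to each $B_i$.

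To carry this out, I would apply \cref{lem:exists disjoint coarsely dense} with separation gauge $\gauge$ itself, producing $I\subseteq L$ such that the $B_i$ are pairwise $\gauge$-separated and $\bigsqcup_{i\in I} B_i$ is still coarsely dense, with density witnessed by some $E''\in\CE$. Next I would define $\phi \colon J \to I$ as follows: since each $C_j$ is $\gauge$-bounded while distinct $B_i$'s are pairwise $\gauge$-separated, $C_j$ meets at most one $B_i$; if such an $i$ exists I set $\phi(j)=i$, otherwise I pick any $i\in I$ with $C_j \cap E''(B_i) \neq \emptyset$ (such an $i$ exists whenever $C_j$ is non-empty, by coarse density). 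The candidate partition is then $A_i \coloneqq \bigcup_{j \in \phi^{-1}(i)} C_j$.

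It then remains to verify the four axioms in \cref{def:discrete} for $(A_i)_{i\in I}$, together with the inclusion $B_i \subseteq A_i$. The inclusion is immediate, as any $C_j$ meeting $B_i$ is assigned to $i$. Controlledness follows from the fact that $\phi(j)=i$ forces $C_j \subseteq \gauge\circ E''(B_i)$, which gives $A_i\times A_i \subseteq \gauge\circ E''\circ F\circ E''\circ \gauge$ uniformly in $i$. Measurability of $A_i$ and the closure axiom \cref{def:discrete:extension} for the new partition both reduce to the analogous facts for $(C_j)_{j\in J}$ using that $A_i \cap C_{j'}$ is either $C_{j'}$ or $\emptyset$. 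Finally, the identity $\chf{A_i}\chf{C_{j'}} = \chf{A_i \cap C_{j'}}$ gives $\chf{A_i} = \sum_{j \in \phi^{-1}(i)} \chf{C_j}$ in SOT, so summing over $i$ yields $\sum_i \chf{A_i} = \sum_j \chf{C_j} = 1$. The only point requiring care is choosing the separation gauge in the first step at least as large as $\gauge$: this is precisely what forces each $C_j$ to see at most one $B_i$, and hence what keeps the $A_i$ uniformly bounded once they absorb whole $C_j$ blocks. Beyond this piece of gauge-bookkeeping, the discreteness axioms transfer essentially for free from the original partition to the coarsened one.
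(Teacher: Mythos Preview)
Your argument is correct. Both proofs share the same skeleton---start from the given discrete partition $(C_j)_{j\in J}$, use \cref{lem:exists disjoint coarsely dense} to produce well-separated ``centers'', and then merge each $C_j$ into the block of a nearby center---but they differ in \emph{which} family is thinned out. The paper thins out the cells $(C_j)_{j\in J}$ themselves to obtain centers $(C_i)_{i\in I}$, builds the blocks $A_i'$ around those, attaches the leftover cells in a second pass, and only then observes \emph{a posteriori} that each $A_i$ swallows some $B_l$ (using that $A_i$ contains the whole thickening $E_{\rm ctrl}\circ E_{\rm dense}(C_i)$). You instead thin out the family $(B_l)_{l\in L}$ directly, obtaining centers $(B_i)_{i\in I}$ that are $\gauge$-separated, and then assign each $C_j$ to the unique $B_i$ it meets (or to any $E''$-close one otherwise). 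This makes the inclusion $B_i\subseteq A_i$ automatic and collapses the paper's two-pass attachment into a single map $\phi\colon J\to I$. The trade-off is minor: your index set $I$ is a subset of $L$ rather than of $J$, while the paper's version keeps the indices inside the discrete partition; neither fact is used downstream. Your gauge bookkeeping (separating the $B_i$'s by at least the discreteness gauge $\gauge$ so that each $C_j$ sees at most one of them) is exactly the point that makes the one-pass version work, and your verification of the four discreteness axioms for $(A_i)_{i\in I}$ via the identities $A_i\cap C_{j'}\in\{C_{j'},\emptyset\}$ is clean and complete.
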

\begin{proof}
  Fix gauges $E_{\rm disc},\ E_{\rm dense}$ and $E_{\rm ctrl}$ such that there is an $E_{\rm disc}$-controlled discrete partition $X=\bigsqcup_{j\in J}C_j$; the family $(B_l)_{l \in L}$ is $E_{\rm ctrl}$\=/controlled; and $X = E_{\rm dense}(\bigcup_{l \in L}B_l)$.
  Consider:
  \[
    E_1 \coloneqq E_{\rm dense}\circ E_{\rm ctrl}\circ E_{\rm disc}\circ E_{\rm ctrl}\circ E_{\rm dense} \in \CE.
  \]
  By \cref{lem:exists disjoint coarsely dense} there is some $I\subseteq J$ such that $C_i$ and $C_{i'}$ are $E_1$-separated for every $i\neq i'\in I$ and $\bigsqcup_{i\in I}C_i$ is coarsely dense.
  For every $i\in I$, let
  \[
    J_i\coloneqq\{j\mid C_j\cap E_{\rm ctrl}\circ E_{\rm dense}(C_i)\neq \emptyset\},
  \]
  and define $A'_i\coloneqq\bigsqcup_{j\in J_i}C_j$. Observe that each $A_i$ is $E_2$-controlled for
  \[
    E_2 \coloneqq
    E_{\rm disc} \circ
    \paren{ E_{\rm ctrl}\circ E_{\rm dense}}
    \circ E_{\rm disc}\circ
    \paren{ E_{\rm dense}\circ E_{\rm ctrl}}
    \circ E_{\rm disc}.
  \]
  Since $(C_j)_{j \in J}$ partitions $X$, it follows that $E_{\rm ctrl}\circ E_{\rm dense}(C_i)\subseteq A_i'$.
  By the choice of $E_1$ it follows that $A'_i\cap A'_{i'}=\emptyset$ for every $i\neq i'$, 

  Let $E_3$ be a gauge witnessing coarse denseness of the $\paren{C_i}_{i \in I}$. Then for every $j \in J\smallsetminus\bigcup_{i\in I}J_i$ we may arbitrarily choose an $i\in I$ with $C_j\cap E_3(C_i)\neq \emptyset$ and attach the set $C_j$ to the previously defined $A_i'$.
  After all attachments are done, we obtain new sets $\paren{A_i}_{i \in I}$ which form a measurable partition of $X$ that is controlled by
  \[
    E_2\cup(E_3\circ E_{\rm disc}\circ E_3).
  \]
  Since the $A_i$ are unions of elements of a discrete partition, the partition $\bigsqcup_{i\in I}A_i$ is itself discrete.

  Observe that for every $i\in I$ there is an $l\in L$ such that $C_i\cap E_{\rm dense}(B_l)\neq\emptyset$, hence $B_l$ is contained in $E_{\rm ctrl}\circ E_{\rm dense}(C_i)\subseteq A_i$.
\end{proof}

\begin{corollary}\label{cor:discrete and faithful}
  Let $\CH$ be a discrete and faithful (resp.\ $\kappa$\=/ample) module. Then there is a discrete partition $X=\bigsqcup_{i\in I}A_i$ such that $\chf{A_i}$ is non-zero (resp.\ has rank $\kappa$) for every $i\in I$.
\end{corollary}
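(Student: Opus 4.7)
The plan is to obtain this corollary as a direct combination of the two preceding lemmas: use \cref{cor:exists disjoint ample coarsely dense} to produce a coarsely dense, $\gauge$-controlled family of pairwise disjoint measurable sets on which the representation is already ``large enough'', and then feed this family into \cref{lem:discrete have nice partitions} to refine it to an honest discrete partition while preserving the size condition on each piece.

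First, I would fix a single gauge $\gauge \in \CE$ that simultaneously witnesses discreteness of the module and faithfulness (resp.\ $\kappa$-ampleness); such a gauge exists since coarse structures are directed. Applying \cref{cor:exists disjoint ample coarsely dense} to this $\gauge$ yields a $\gauge$-controlled family $(B_l)_{l \in L}$ of pairwise disjoint measurable subsets of $X$ with $\bigsqcup_{l \in L} B_l$ coarsely dense in $\crse X$, and such that $\chf{B_l} \neq 0$ (resp.\ has rank at least $\kappa$) for each $l \in L$.

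Next, I would feed $(B_l)_{l \in L}$ into \cref{lem:discrete have nice partitions}, whose hypotheses are met (the family is controlled and its union is coarsely dense). This produces a discrete partition $X = \bigsqcup_{i \in I} A_i$ together with, for each $i \in I$, an index $l_i \in L$ such that $B_{l_i} \subseteq A_i$. The containment $B_{l_i} \subseteq A_i$ gives $\chf{B_{l_i}} \leq \chf{A_i}$ in the usual order of projections, so the range of $\chf{A_i}$ contains the range of $\chf{B_{l_i}}$. In the faithful case this immediately yields $\chf{A_i} \neq 0$, and in the $\kappa$-ample case it yields that $\chf{A_i}$ has rank at least $\kappa$, as required.

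There is essentially no obstacle here: the whole content is packaged into \cref{cor:exists disjoint ample coarsely dense,lem:discrete have nice partitions}, and this corollary is just the convenient combination of the two. The only minor care needed is to pick one gauge $\gauge$ witnessing both the discreteness and faithfulness/ampleness hypotheses before invoking \cref{cor:exists disjoint ample coarsely dense}, so that the family $(B_l)_{l \in L}$ one extracts already has the desired local rank property.
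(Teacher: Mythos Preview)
Your proposal is correct and is precisely the intended argument: the paper states this corollary immediately after \cref{cor:exists disjoint ample coarsely dense} and \cref{lem:discrete have nice partitions} with no proof, and the combination you describe is exactly what is meant. The only cosmetic remark is that \cref{cor:exists disjoint ample coarsely dense} does not take a gauge as input (it produces a $\gauge$-controlled family for the ampleness gauge), so the preliminary step of ``fixing a single gauge witnessing both properties'' is unnecessary, though harmless.
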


\begin{corollary}\label{cor:discrete and bounded geometry}
  Let $\CH$ be a discrete module and suppose that $\crse X$ has bounded geometry (in the sense of \cref{def:locally finite space - bounded geo}). Then there is a uniformly locally finite discrete partition $X=\bigsqcup_{i\in I}A_i$.
\end{corollary}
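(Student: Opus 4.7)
The plan is to combine \cref{lem:discrete have nice partitions} with the hypothesis that $\crse X$ has bounded geometry, and then argue that the resulting discrete partition is automatically uniformly locally finite.

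First, since $\crse X$ has bounded geometry (in the sense of \cref{def:locally finite space - bounded geo}), we may fix a uniformly locally finite controlled partition $X = \bigsqcup_{l \in L} B_l$. In particular, $\bigcup_{l \in L} B_l = X$ is coarsely dense. I can then invoke \cref{lem:discrete have nice partitions} with the family $(B_l)_{l \in L}$ to obtain a discrete partition $X = \bigsqcup_{i \in I} A_i$ such that for every $i \in I$ there is some $l \in L$ with $B_l \subseteq A_i$. Using the axiom of choice, this lets me select an injection $\phi \colon I \hookrightarrow L$ (injective because the $A_i$ are pairwise disjoint) with $B_{\phi(i)} \subseteq A_i$ for every $i$.

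It only remains to verify that $(A_i)_{i \in I}$ is uniformly locally finite. Fix a discreteness gauge $\gauge$ for the partition $(A_i)_{i \in I}$, so that each $A_i$ is $\gauge$-controlled. Given any controlled entourage $E \in \CE$ and any $E$-controlled set $A \subseteq X$, if $x \in A \cap A_i$ for some $i$, then $A \subseteq E(x)$ and $A_i \subseteq \gauge(x)$, whence
\[
  A_i \subseteq \gauge(x) \subseteq (\gauge \circ E)(A).
\]
In particular, $B_{\phi(i)} \subseteq A_i$ meets the set $(\gauge \circ E)(A)$. By uniform local finiteness of $(B_l)_{l \in L}$, the number of indices $l \in L$ for which $B_l$ meets a $(\gauge \circ E)$-controlled set is uniformly bounded. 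Since $\phi$ is injective, this yields a uniform bound on $\#\{i \in I \mid A_i \cap A \neq \emptyset\}$, as required.

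The main (minor) obstacle is merely keeping track of the various gauges involved, as \cref{lem:discrete have nice partitions} produces $A_i$'s controlled by a gauge that is strictly larger than the one controlling the input family $(B_l)_{l \in L}$; however, this enlargement is harmless since uniform local finiteness of $(B_l)_{l \in L}$ holds for every controlled entourage.
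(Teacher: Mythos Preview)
Your proof is correct and follows exactly the same approach as the paper: apply \cref{lem:discrete have nice partitions} to a uniformly locally finite controlled partition, then verify that the resulting controlled discrete partition is itself uniformly locally finite. The paper's proof leaves the verification step as ``easy to verify,'' while you spell it out explicitly via the injection $\phi$; your bookkeeping is essentially right, though the inclusion $\gauge(x)\subseteq(\gauge\circ E)(A)$ tacitly assumes $\Delta_X\subseteq E$, which is harmless since one may always enlarge $E$ to a gauge.
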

\begin{proof}
  Apply \cref{lem:discrete have nice partitions} to a uniformly locally finite controlled partition $X=\bigsqcup_{l\in L}B_l$. Since the resulting partition is controlled, it is also easy to verify that it is uniformly locally finite.
\end{proof}

%%%%%%%%%%%%%%%%%%%%%%%%%%%%%%%%%%%%%%%%%%%%%%%%%%%%%%%%%%%%%%%%%%%%%%%
% SUPPORT OF OPERATORS
%%%%%%%%%%%%%%%%%%%%%%%%%%%%%%%%%%%%%%%%%%%%%%%%%%%%%%%%%%%%%%%%%%%%%%%
\section{Operators among coarse geometric modules}
We now shift the study, and start investigating operators on and among coarse geometric modules. 
The most important notion that we introduce here is that of \emph{coarse support} of an operator in \cref{def:coarse support}. This will then be used in the definition of most other coarse-geometric properties of operators.
\begin{convention}
  Unless otherwise specified, for the rest of the paper $\crse X= (X,\CE)$ and $\crse Y=(Y,\CF)$ are fixed coarse spaces equipped with coarse geometric modules $\CHx$ and $\CHy$ respectively.
\end{convention}

\subsection{Supports of operators}\label{sec:support}
Recall that if $R\subseteq Y\times X$ is a relation, we say that $A\subseteq X$ and $B\subseteq Y$ are \emph{$R$-separated} (written as \emph{$B\sep{R}A$}) if $B\cap R(A)=\emptyset$.
\begin{definition} \label{def:supp-operator}
  Given $t\colon \CHx\to\CHy$ and $S\subseteq Y\times X$, we write \emph{$\supp(t)\subseteq S$} if $\chf B t\chf A = 0$ for any pair of $S$-separated measurable subsets $B\subseteq X$,  $A\subseteq Y$.
\end{definition}

Naturally, if $\supp(t) \subseteq S$ we say \emph{the support of $t$ is contained in $S$} (see \cref{rem:coarse support} below). Containments of supports are fairly well behaved with respect to operations of operators, as the following shows.
\begin{lemma}\label{lem:supports and operations}
  Given $r,s\colon\CHx\to\CHy$ and $t\colon\CHy\to \CHz$ with $\supp(r)\subseteq S_r$, $\supp(s)\subseteq S_s$ and $\supp(t)\subseteq S_t$, then:
  \begin{enumerate} [label=(\roman*)]
    \item \label{lem:supports and operations:sum} $\supp(r+s)\subseteq S_r\cup S_s$;
    \item \label{lem:supports and operations:star} $\supp(r^*)\subseteq \op{S_r}$;
    \item \label{lem:supports and operations:comp} $\supp(ts)\subseteq S_t\circ\gauge_{\crse Y}\circ S_s$, where $\gauge_{\crse Y}$ is any non-degeneracy gauge for $\CHy$.
  \end{enumerate}
\end{lemma}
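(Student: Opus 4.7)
The plan is to treat items \cref{lem:supports and operations:sum} and \cref{lem:supports and operations:star} as immediate consequences of the definition, while the composition statement \cref{lem:supports and operations:comp} is the substantive part and requires invoking non-degeneracy via \cref{lem:non-orthogonality is local}.

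For \cref{lem:supports and operations:sum}, I would observe that if $B\subseteq Y$ and $A\subseteq X$ are $(S_r\cup S_s)$-separated, then $B\cap S_r(A)=B\cap S_s(A)=\emptyset$, so both $\chf{B}r\chf{A}$ and $\chf{B}s\chf{A}$ vanish, hence so does $\chf{B}(r+s)\chf{A}$. For \cref{lem:supports and operations:star}, the key computation is unraveling definitions: $A\cap\op{S_r}(B)=\emptyset$ is equivalent to $B\cap S_r(A)=\emptyset$, so $\chf{B}r\chf{A}=0$, and taking adjoints yields $\chf{A}r^*\chf{B}=0$.

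The interesting case is \cref{lem:supports and operations:comp}. My plan is to proceed by contradiction. Let $C\subseteq Z$ and $A\subseteq X$ be $(S_t\circ\gauge_{\crse Y}\circ S_s)$-separated and suppose $\chf{C}ts\chf{A}\neq 0$. Then there exist $v\in\CHx$ and $w\in\CHz$ such that
\[
  \scal{t^*\chf{C}w}{s\chf{A}v}=\scal{\chf{C}w}{ts\chf{A}v}\neq 0.
\]
Since this pair of vectors lies in $\CHy$ and is not orthogonal, \cref{lem:non-orthogonality is local} produces a $\gauge_{\crse Y}$-bounded measurable $B\in\fkA_{\crse Y}$ with
\[
  \scal{\chf{B}t^*\chf{C}w}{\chf{B}s\chf{A}v}\neq 0.
\]
In particular $\chf{B}s\chf{A}\neq 0$ and $\chf{C}t\chf{B}=(\chf{B}t^*\chf{C})^*\neq 0$. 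Applying the hypotheses $\supp(s)\subseteq S_s$ and $\supp(t)\subseteq S_t$ (contrapositively), there must exist $y\in B\cap S_s(A)$ and $y'\in B$ with some $z\in C\cap S_t(y')$.

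The finishing observation is that $B$ being $\gauge_{\crse Y}$-bounded forces $y\,\gauge_{\crse Y}\,y'$, so we obtain a chain
\[
  z\,S_t\,y'\,\gauge_{\crse Y}\,y\,S_s\,a
\]
for some $a\in A$, that is, $z\in(S_t\circ\gauge_{\crse Y}\circ S_s)(A)\cap C$, contradicting the separation assumption. The main (and only real) obstacle is recognizing that non-degeneracy of $\chf{\bullet}$ on $\CHy$ is precisely what allows one to ``localize'' the non-vanishing of a bilinear pairing to a single $\gauge_{\crse Y}$-bounded set, which is what forces the extra factor of $\gauge_{\crse Y}$ in the composition formula.
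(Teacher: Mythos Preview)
Your proposal is correct and follows essentially the same approach as the paper: both treat \cref{lem:supports and operations:sum} and \cref{lem:supports and operations:star} as immediate from the definitions, and for \cref{lem:supports and operations:comp} both start from a nonzero pairing $\scal{t^*\chf{C}w}{s\chf{A}v}$, invoke \cref{lem:non-orthogonality is local} to localize to a $\gauge_{\crse Y}$-bounded measurable $B$, and then use the support hypotheses on $s$ and $t$ to produce the chain $z\,S_t\,y'\,\gauge_{\crse Y}\,y\,S_s\,a$. The only cosmetic difference is that you phrase the argument as a proof by contradiction, whereas the paper phrases it as a direct contrapositive.
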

\begin{proof}
  \cref{lem:supports and operations:sum} and \cref{lem:supports and operations:star} are straightforward, so we will only prove \cref{lem:supports and operations:comp}.
  Assume $A_X\subseteq X$ and $A_Z\subseteq Z$ are measurable sets with $\chf{A_Z}ts\chf{A_X}\neq 0$. Fix $v\in \CHx$ and $w\in\CHz$ with
  \[
    0\neq \angles{w,\chf{A_Z}ts\chf{A_X}(v)}=\angles{t^*\chf{A_Z}(w),s\chf{A_X}(v)}.
  \]
  By \cref{lem:non-orthogonality is local}, there is a  $\gauge_{\crse Y}$-bounded measurable set $B\subseteq Y$ such that
  \[
     0\neq \angles{\chf{B}t^*\chf{A_Z}(w),\chf{B}s\chf{A_X}(v)}.
  \]
  In particular, both $B\cap S_s(A_X)$ and $A_Z\cap \op{S_t}(B)$ are non-empty (we used \cref{lem:supports and operations:star} for the second one). It follows that $A_Z\cap \paren{S_t\circ\gauge_{\crse Y}\circ S_s}(A_X)\neq\emptyset$.
\end{proof}

\begin{remark}
  It is tempting to drop the non-degeneracy gauge and state part \cref{lem:supports and operations:comp} of \cref{lem:supports and operations} as $\supp(ts)\subseteq S_t\circ S_s$. The tentative proof would proceed by observing that for every $A\subseteq X$
  \[
    s\chf{A_X}=(\chf{S_s(A_X)}+\chfcX{S_s(A_X)})s\chf{A_X} =\chf{S_s(A_X)} s\chf{A_X},
  \]
  as $\chfcX{S_s(A_X)} s\chf{A_X}=0$ because $s$ is supported on $S_s$. Similarly, one would then have
  that $ts\chf{A_X}= \chf{S_t(S_s(A_X))}t\chf{S_s(A_X)} s\chf{A_X}$, which would imply that $\chf{B}ts\chf{A_X}$ vanishes whenever $B$ and $A$ are $(S_t\circ S_s)$-separated.
  The issue with this argument is that it may well be the case that $A_X$ is measurable but $S_s(A_X)$ is \emph{not}, so the above formal writing does not make sense.

  This, in fact, is a fundamental issue. It is essential to be able to control the support of the composition of operators and, for this, it is crucial to include the non-degeneracy assumption in the definition of $\crse X$-module. In \cite{winkel2021geometric} an even stronger countermeasure is taken, namely the class of modules there considered is further restricted so that one can slightly enlarge any relation $S$ to some relation $S'$ with the property that $S'(A)$ is measurable whenever $A$ is measurable. This can always be assumed for instance when $\fkA$ is a Borel $\sigma$-algebra on a metric space, as it is then enough to pick a small open neighborhood of $S$.
\end{remark}

Observe that \cref{def:supp-operator} does \emph{not} define what the support of an operator actually is. This is due to the fact that there is no natural way to choose a subset of $Y\times X$ that supports $t$. However, we will be able to define the support ``coarsely'' (see \cref{def:coarse support} below).
\begin{remark} \label{rem:coarse support}
  If $X$ and $Y$ are metric spaces and $\CHx$, $\CHy$ are geometric modules in the classical sense (see~\cite{willett_higher_2020} or \cref{subsec:classical-roe-alg} above), one usually defines the support of $t$ as the smallest closed relation $R$ with $\supp(t)\subseteq R$.
  Such an approach is unreasonable for general coarse spaces, as it relies on topological data.
  On the contrary, the approach here taken is closer to that in measure theory: when one cannot use the topology to define the support of a measurable function $f$, the second best thing to consider is the \emph{essential support} of $f$. The key point here is that, although the essential support is not a well defined set, it is well defined up to measure 0. In the same way, we have the following.
\end{remark}

\begin{definition}\label{def:coarse support}
  The \emph{coarse support} of an operator $t\colon \CHx\to \CHy$ is the minimal coarse subspace $\csupp(t)\crse{\subseteq Y\times X}$ containing the support of $t$.
\end{definition}

Explicitly, $\csupp(t)=[S]$ if $S$ is such that $\supp(t) \subseteq S$ and whenever $\supp(t) \subseteq R$ then $S\csub R$. Since the coarse support is defined by minimality, it is clear that if it exists it is unique.
The following shows it always exists (compare with \cref{rem:coarse-intersection}).
\begin{proposition}\label{prop:coarse_support_exists}
  Let $\gaugex$ and $\gaugey$ be non-degeneracy gauges for $\CHx$ and $\CHy$ respectively, and let $t\colon\CHx\to\CHy$ be a bounded operator. Let
  \[
    S\coloneqq \bigcup\left\{B\times A \;\middle|
    \begin{array}{c}A\ \text{measurable, \gaugex-bounded}\\ B\text{ measurable, \gaugey-bounded}\end{array},\ \chf{B}t\chf{A}\neq 0\right\} \subseteq Y \times X.
  \]
  Then $[S]$ is the coarse support of $t$.
\end{proposition}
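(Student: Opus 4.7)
\emph{Plan.} I need to verify two things: (a) $\supp(t)\subseteq S$, and (b) $[S]$ is minimal, i.e.\ $S\csub S'$ for every relation $S'$ with $\supp(t)\subseteq S'$. Both facts should follow quickly from \cref{lem:non-orthogonality is local} combined with elementary relation calculus.

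For (a), I will argue the contrapositive. Suppose $A'\subseteq X$ and $B'\subseteq Y$ are measurable with $\chf{B'}t\chf{A'}\neq 0$. Choose $v\in\CHx$ and $w\in\CHy$ with $\angles{\chf{B'}(w),t\chf{A'}(v)}\neq 0$. Applying \cref{lem:non-orthogonality is local} inside $\CHy$ to the vectors $\chf{B'}(w)$ and $t\chf{A'}(v)$, I obtain a $\gaugey$-bounded measurable $B\subseteq Y$ such that $\chf{B\cap B'}t\chf{A'}\neq 0$. Applying the same lemma inside $\CHx$ to $\chf{A'}(v)$ and $t^{*}\chf{B\cap B'}(w)$, I likewise obtain a $\gaugex$-bounded measurable $A\subseteq X$ such that $\chf{B\cap B'}t\chf{A\cap A'}\neq 0$. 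Since $B\cap B'$ is $\gaugey$-bounded and measurable, and $A\cap A'$ is $\gaugex$-bounded and measurable, the very definition of $S$ yields $(B\cap B')\times(A\cap A')\subseteq S$. Picking any point in this non-empty product shows $B'\cap S(A')\neq\emptyset$, so $\supp(t)\subseteq S$.

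For (b), let $S'\subseteq Y\times X$ be any relation with $\supp(t)\subseteq S'$; by \cref{eq:coarsely contained relation} it suffices to exhibit $E\in\CE$ and $F\in\CF$ such that $S\subseteq F\circ S'\circ E$. I claim that $E=\gaugex$ and $F=\gaugey$ work. Indeed, pick any $(b,a)\in S$; by construction there exist a $\gaugex$-bounded measurable $A\ni a$ and a $\gaugey$-bounded measurable $B\ni b$ with $\chf{B}t\chf{A}\neq 0$. Since $\supp(t)\subseteq S'$, the sets $B$ and $A$ are not $S'$-separated, so there exist $a'\in A$ and $b'\in B$ with $b' S' a'$. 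Because $A$ is $\gaugex$-bounded and $B$ is $\gaugey$-bounded (and both gauges are symmetric), we have $a'\gaugex a$ and $b\gaugey b'$. Composing, $b\gaugey b'\, S'\, a'\gaugex a$, that is, $(b,a)\in \gaugey\circ S'\circ \gaugex$. Thus $S\subseteq \gaugey\circ S'\circ \gaugex$, giving $S\csub S'$ and completing the argument.

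I don't anticipate any real obstacle: the whole statement is essentially a packaging of the fact that non-degeneracy of the module allows one to witness non-vanishing of operators on arbitrarily small $\gauge$-bounded measurable ``cells'', with the size of these cells precisely determining the coarse error incurred. The slightly delicate point is simply remembering that $S$ need not be a controlled relation itself---only its coarse class is well-defined---which is why the argument for minimality produces a thickening by $\gaugey$ and $\gaugex$ rather than an actual containment $S\subseteq S'$.
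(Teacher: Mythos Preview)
Your proof is correct and follows essentially the same approach as the paper's own proof: both use \cref{lem:non-orthogonality is local} twice to localize a non-vanishing $\chf{B'}t\chf{A'}$ onto a $\gaugey$-bounded by $\gaugex$-bounded rectangle, and both establish minimality by the identical argument that any $B\times A$ appearing in the definition of $S$ must meet $S'$, hence lie inside $\gaugey\circ S'\circ\gaugex$. The only cosmetic difference is that you intersect with $B'$ and $A'$ to obtain rectangles literally contained in $B'\times A'$, whereas the paper works with the possibly larger $B$ and $A$ and then observes that $A\cap A_0$, $B\cap B_0$ are non-empty; your version is marginally more direct.
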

\begin{proof}
  We first show that $\supp(t)\subseteq S$. Suppose $\chf{B_0}t\chf{A_0}\neq 0$. Then there are $v\in\chf{A_0}(\CHx)$ and $w\in\chf{B_0}(\CHy)$ such that $\scal{w}{t(v)}=\scal{t^*(w)}{v}\neq 0$. We may apply \cref{lem:non-orthogonality is local} twice to find a measurable \gaugex-bounded $A$ with
  \[
    0\neq\scal{\chf{A}t^*(w)}{\chf{A}(v)}=\scal{w}{t\chf{A}(v)}
  \]
  and a measurable \gaugey-bounded $B$ with
  \[
    0\neq\scal{\chf{B}(w)}{t\chf{A}(v)}.
  \]
  By construction, $B\times A\subseteq S$. Since $v = \chf{A_0} (v)$ and $w = \chf{B_0} (w)$, the intersections $A_0\cap A$ and $B_0\cap B$ cannot be empty, and therefore $B_0$ and $A_0$ are not $S$-separated.

  Suppose now that $\supp(t)\subseteq R$ for some $R$. Let $A,B$ be measurable and \gaugex-bounded (resp.\ \gaugey-bounded) such that $\chf{B}t\chf{A}\neq 0$. Then $B$ and $A$ cannot be $R$\=/separated. It follows that $B\times A\subseteq \gaugey\circ R\circ \gaugex$. This shows that $S\subseteq \gaugey\circ R\circ \gaugex$. Thus $S\csub R$, as desired.
\end{proof}

Having assessed that the coarse support of an operator is well defined, we may define when an operator is \emph{controlled}.
\begin{definition}\label{def:controlled operator}
  An operator $t\colon\CHx\to\CHy$ is \emph{controlled} if its coarse support is a partial coarse map $\csupp(t)\crse{\colon X\to Y}$.
\end{definition}

By definition, if $\csupp(t)=[S]$ then $t$ is controlled if and only if $S \subseteq Y\times X$ is a controlled relation (recall \cref{def:controlled_function}). Explicitly, this is the case if and only if  $S\circ E\circ \op S\in\CF$ for every $E\in\CE$. This explicit characterisation works well together with the explicit description of the coarse support obtained in \cref{prop:coarse_support_exists}. We observe that \cref{lem:coarse composition is well-defined} implies the following (compare with \cref{lem:supports and operations}).
\begin{corollary}\label{cor:coarse support of composition}
  Let $r,s\colon\CHx \to \CHy$ and $t\colon \CHy \to \CHz$. Then:
  \begin{enumerate} [label=(\roman*)]
    \item \label{cor:coarse support of composition:sum} $\csupp(r+s) \crse\subseteq \csupp(s) \crse\cup \csupp(r)$;
    \item \label{cor:coarse support of composition:star} $\csupp(r^*) = \op{\csupp(r)}$;
    \item \label{cor:coarse support of composition:comp} if $t$ is controlled and $\crse\pi_{\crse Y}(\csupp(s)) \crse \subseteq \crse\pi_{\crse Y}(\csupp(t))$, then 
    \[
      \csupp(ts) \crse\subseteq \csupp(t)\crse\circ\csupp(s).
    \]
  \end{enumerate}
\end{corollary}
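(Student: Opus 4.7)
The plan is to reduce each part of the statement to the corresponding part of \cref{lem:supports and operations} applied to chosen representatives of the coarse supports, combined with \cref{lem:coarse composition is well-defined} for part~(iii). To this end I would fix representatives $S_r$, $S_s$, $S_t$ of $\csupp(r)$, $\csupp(s)$, $\csupp(t)$ so that $\supp(r) \subseteq S_r$, $\supp(s) \subseteq S_s$, $\supp(t) \subseteq S_t$, and then deduce the containments of coarse subspaces by minimality of the coarse support (cf.\ \cref{prop:coarse_support_exists}).

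Parts~(i) and (ii) are essentially immediate. For (i), \cref{lem:supports and operations}~(i) gives $\supp(r+s) \subseteq S_r \cup S_s$, whence $\csupp(r+s) \crse\subseteq [S_r \cup S_s] = \csupp(r) \crse\cup \csupp(s)$; the union on the right is a well-defined coarse subspace, as enlarging each representative to a controlled thickening enlarges the union to a controlled thickening of the union. For (ii), \cref{lem:supports and operations}~(ii) yields $\csupp(r^*) \crse\subseteq \op{\csupp(r)}$, and the reverse inclusion follows from the same observation applied to $(r^*)^* = r$.

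For part~(iii), I would first invoke \cref{lem:coarse composition is well-defined} to ensure that $\csupp(t) \crse\circ \csupp(s)$ is a well-defined coarse subspace of $\crse{Z \times X}$: since $t$ is controlled, $\csupp(t)$ is a partial coarse map with $\cdom(\csupp(t)) = \crse\pi_{\crse Y}(\csupp(t))$, so the hypothesis on projections is exactly what that lemma requires. After possibly replacing $S_s$ by a suitable controlled thickening, we may assume $\pi_Y(S_s) \subseteq \pi_Y(S_t)$, and then the explicit formula in the same lemma gives $\csupp(t) \crse\circ \csupp(s) = [S_t \circ S_s]$. Applying \cref{lem:supports and operations}~(iii) with a non-degeneracy gauge $\gauge_{\crse Y}$ for $\CHy$ yields $\supp(ts) \subseteq S_t \circ \gauge_{\crse Y} \circ S_s$, and it remains to check that this is a controlled thickening of $S_t \circ S_s$. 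Using controlledness of $S_t$, the relation $D \coloneqq S_t \circ \gauge_{\crse Y} \circ \op{S_t}$ lies in the coarse structure on $Z$, and
\[
  S_t \circ \gauge_{\crse Y} \circ S_s \subseteq D \circ (S_t \circ S_s)
\]
is the desired thickening.

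The main technical hurdle is the need to absorb the non-degeneracy gauge $\gauge_{\crse Y}$ that appears in \cref{lem:supports and operations}~(iii). Without controlledness of $t$, one only knows that $S_t \circ \gauge_{\crse Y}$ is a thickening of $S_t$ in $\crse{Z \times Y}$, which says nothing about the coarse class of the composition with $S_s$. Controlledness of $t$ is precisely what lets us conjugate $\gauge_{\crse Y}$ through $S_t$ and absorb it as a controlled thickening on the $Z$-side; this also explains why the hypothesis is imposed on $t$ and not on $s$.
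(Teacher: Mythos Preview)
Your argument is essentially correct and follows the same overall strategy as the paper, reducing to \cref{lem:supports and operations} on representatives. There is one small slip in part~(iii): to arrange $\pi_Y(S_s) \subseteq \pi_Y(S_t)$ you should thicken $S_t$ (on the $Y$-side, to $S_t \circ F$ for a suitable gauge $F \in \CF$), not $S_s$---thickening $S_s$ can only enlarge $\pi_Y(S_s)$. Once this is corrected, your inclusion $S_t \circ \gauge_{\crse Y} \circ S_s \subseteq D \circ (S_t \circ S_s)$ with $D = S_t \circ \gauge_{\crse Y} \circ \op{S_t}$ goes through exactly as you describe.

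The paper handles the absorption of $\gauge_{\crse Y}$ slightly differently: rather than conjugating it through $S_t$ to the $Z$-side, it simply observes that $\gauge_{\crse Y} \circ S_s$ is itself another representative of $\csupp(s)$, so that $S_t \circ (\gauge_{\crse Y} \circ S_s)$ is already a composition of representatives and is therefore coarsely contained in $\csupp(t)\crse\circ\csupp(s)$ by the defining property of the coarse composition (\cref{def: coarse composition}). This sidesteps the need to first arrange the pointwise containment $\pi_Y(S_s) \subseteq \pi_Y(S_t)$ and is marginally cleaner; your route is equally valid and has the virtue of making explicit where controlledness of $t$ is doing the work.
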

\begin{proof}
  As was the case in the proof of \cref{lem:supports and operations}, \cref{cor:coarse support of composition:sum,cor:coarse support of composition:star} are routine to show. For \cref{cor:coarse support of composition:comp},
  since $\csupp(t)$ is a partial coarse map, \cref{lem:coarse composition is well-defined} implies that the composition $\csupp(t)\crse\circ\csupp(s)$ is a coarsely well-defined coarse subspace of $\crse{Z\times X}$.
  On the other hand, if we choose representatives $S_s$ and $S_t$ for the supports of $s$ and $t$ respectively, then \cref{lem:supports and operations} shows that $\supp(ts)\subseteq S_t\circ\gauge_{\crse Y}\circ S_s$, where $\gauge_{\crse Y}$ is a non-degeneracy gauge for $\CHy$.
  Since $\gauge_{\crse Y}\circ S_s$ is just another representative for $[S_s]=\csupp(s)$, it follows from the definition of the coarse composition (cf.\ \cref{def: coarse composition}) that
  \(
    [S_t\circ\gauge_{\crse Y}\circ S_s] = [S_t\circ\paren{\gauge_{\crse Y}\circ S_s}]
    \crse \subseteq 
    \csupp(t)\crse\circ\csupp(s)
  \).
\end{proof}

Continuing with our philosophy of confounding operators with their coarse supports, we also give the following.
\begin{definition} \label{def: proper operator}
  $s\colon \CHx\to\CHy$ is \emph{proper} if $\csupp(s)\subseteq \crse{Y\times X}$ is a proper coarse subspace (in the sense of \cref{def:proper}).
\end{definition}

It is worthwhile observing that, in most cases, properness also has a more operator-algebraic characterisation.
\begin{lemma}\label{lem: properness equivalent defs}
  Given $t\colon\CHx\to\CHy$, consider the following:
  \begin{enumerate}[label=(\roman*)]
    \item\label{lem: properness equivalent defs_mods} for every measurable bounded $B\subseteq Y$ there is a measurable bounded $A\subseteq X$ such that $\chf{B}t=\chf{B}t\chf{A}$;
    \item\label{lem: properness equivalent defs_supp} $t$ is proper.
  \end{enumerate}
  If $\CHy$ is locally admissible then \cref{lem: properness equivalent defs_mods} implies \cref{lem: properness equivalent defs_supp}. If $\CHx$ is locally admissible, the converse implication is true.
\end{lemma}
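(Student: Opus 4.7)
The plan is to rewrite the identity $\chf{B}t=\chf{B}t\chf{A}$ as the separation condition $\chf{B}t\chcf{X}{A}=0$, and to match this against the definition of coarse support (\cref{def:supp-operator}), using the explicit representative of $\csupp(t)$ provided in \cref{prop:coarse_support_exists}. Throughout I would fix gauges $\gaugex$ and $\gaugey$ that simultaneously serve as non-degeneracy and (whenever available) local-admissibility gauges for $\CHx$ and $\CHy$ respectively.

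For \cref{lem: properness equivalent defs_mods}$\Rightarrow$\cref{lem: properness equivalent defs_supp}, I would work with the explicit representative $S$ of $\csupp(t)$ from \cref{prop:coarse_support_exists}. Given a bounded $B\subseteq Y$, I would first use local admissibility of $\CHy$ to produce a measurable bounded $B'\supseteq\gaugey(B)$, and then invoke \cref{lem: properness equivalent defs_mods} applied to $B'$ to obtain a measurable bounded $A\subseteq X$ with $\chf{B'}t\chcf{X}{A}=0$. I would finish by showing $\op{S}(B)\subseteq\gaugex(A)$, which is bounded: any $(y_0,x_0)\in S$ with $y_0\in B$ arises from a $\gaugey$-bounded measurable $B_0\ni y_0$ and a $\gaugex$-bounded measurable $A_0\ni x_0$ satisfying $\chf{B_0}t\chf{A_0}\neq 0$. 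The containment $B_0\subseteq \gaugey(B)\subseteq B'$ then gives
\[
  \chf{B_0}t\chf{A_0\smallsetminus A}=\chf{B_0}\chf{B'}t\chcf{X}{A}\chf{A_0}=0,
\]
which forces $A_0\cap A\neq\emptyset$ and hence $x_0\in\gaugex(A)$.

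For the converse implication, with $\CHx$ locally admissible, I would pick any representative $R$ of $\csupp(t)$. Given a measurable bounded $B\subseteq Y$, properness of $\csupp(t)$ ensures $\op{R}(B)$ is bounded, and local admissibility of $\CHx$ yields a measurable bounded $A\supseteq\op{R}(B)$. With this choice, $B$ and $X\smallsetminus A$ are $R$-separated by construction, so \cref{def:supp-operator} delivers $\chf{B}t\chcf{X}{A}=0$, equivalently $\chf{B}t=\chf{B}t\chf{A}$.

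The main obstacle is the forward implication: the explicit representative $S$ is built from $\gaugey$-bounded measurable pieces $B_0$ that can protrude beyond $B$, so \cref{lem: properness equivalent defs_mods} cannot be applied directly to $B$. Enlarging to a measurable $B'\supseteq\gaugey(B)$ cures this, and is precisely where local admissibility of $\CHy$ enters. The converse is comparatively easy because the representative $R$ can be chosen freely and local admissibility of $\CHx$ is only invoked to upgrade the already-bounded set $\op R(B)$ to a measurable one.
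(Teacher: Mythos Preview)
Your approach is essentially identical to the paper's, and correct. One small imprecision: in the converse direction you say ``pick any representative $R$ of $\csupp(t)$'', but \cref{def:supp-operator} only gives $\chf{B}t\chcf{X}{A}=0$ when $\supp(t)\subseteq R$, which is not satisfied by an arbitrary representative of the coarse class---you should specify, as the paper does, a representative $R$ with $\supp(t)\subseteq R$ (e.g.\ the explicit one from \cref{prop:coarse_support_exists}).
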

\begin{proof}
  To show that \cref{lem: properness equivalent defs_mods} implies \cref{lem: properness equivalent defs_supp} it is enough to consider the concrete construction of coarse support given in \cref{prop:coarse_support_exists}. Namely, we have to show that if $S$ is the set there defined and $B\subseteq Y$ is any bounded subset then $\op{S}(B)\subseteq X$ is bounded.
  Since $\CHy$ is locally admissible, up to passing to a controlled thickening we may assume that $B$ is measurable. Let $B''$ be a measurable controlled thickening of $\gaugey(B)$, and let $A\subseteq X$ denote the bounded set obtained applying \cref{lem: properness equivalent defs_mods} to $B''$. Suppose that $B'\times A'\subseteq S$ is as in the definition of $S$, with $B'\cap B\neq 0$. It follows that $B'\subseteq B''$. Hence we see that
  \[
    0<\norm{\chf{B'}t\chf{A'}}
    \leq \norm{\chf{B''}t\chf{A'}}
    = \norm{\chf{B''}t\chf{A}\chf{A'}},
  \]
  whence $A'\cap A$ is non-empty. This shows that $\op{S}(B)\subseteq \gaugex(A)$ is bounded.

  The converse implication is simpler. Let $S$ be a representative for $\csupp(t)$ with $\supp(t)\subseteq S$, and let $B\subseteq Y$ be measurable and bounded. By properness, $\op{S}(B)$ is bounded. Since $\CHx$ is locally admissible, we may thus pick a measurable controlled thickening $A$ thereof. In particular, $B\sep{S}(X\smallsetminus A)$, and we then obtain:
  \[
    \chf{B}t = \chf{B}t\chf{A} + \chf{B}t\chfcX{A} = \chf{B}t\chf{A}
  \]
  as was desired.
\end{proof}

%%%%%%%%%%%%%%%%%%%%%%%%%%%%%%%%%%%%%%%%%%%%%%%%%%%%%%%%%%%%%%%%%%%%%%%
% CONTROLLED PROPAGATION AND QUASI-LOCAL OPERATORS
%%%%%%%%%%%%%%%%%%%%%%%%%%%%%%%%%%%%%%%%%%%%%%%%%%%%%%%%%%%%%%%%%%%%%%%
\subsection{More properties of operators: propagation and local compactness}\label{sec:operators and propagation}
In the sequel, we want to consider algebras of operators ``with bounded displacement''.
The following is a key notion to define algebras of operators of coarse-geometric significance.
\begin{definition} \label{def: controlled propagation operator}
  An operator $t \in \CB(\CHx)$ has \emph{controlled propagation} if $\csupp(t)\crse\subseteq \cid_{\crse X}=[\Delta_X]$. Explicitly, this is the case if and only if its support is controlled by $E$ for some $E \in \CE$ (in which case we say that $t$ has \emph{$E$-controlled propagation}).
\end{definition}

An alternative name for the operators of controlled propagation would be ``translation operators'', as it is common to refer to functions with bounded displacement as ``translations''.
Since coarse subspaces of (partial) coarse maps are themselves (partial) coarse maps, an operator with controlled propagation is certainly controlled (in the sense of \cref{def:controlled operator}). The opposite is clearly false, since there are coarse subspaces that are \emph{not} contained in $[\Delta_X]$.

There are weakenings of \cref{def: controlled propagation operator} that arise naturally when entering into more complex investigations (\emph{e.g.}\ \cites{braga_rigid_unif_roe_2022,bbfvw_2023_embeddings_vna,rigid}). In the following definitions, $E$ is always a controlled entourage in $\CE$ and $\varepsilon$ is some positive constant.
\begin{definition} \label{def:approx-ope}
  An operator $t \in \CB(\CHx)$ is
  \begin{enumerate} [label=(\roman*)]
    \item \emph{$\varepsilon$-$E$-approximable} if there exists $s \in \CB(\CHx)$ with $E$-controlled propagation such that $\norm{s - t} \leq \varepsilon$;
    \item \emph{$\varepsilon$-approximable} if there is $E \in \CE$ such that $t$ is $\varepsilon$-$E$-approximable;
    \item \emph{approximable} if $t$ is $\varepsilon$-approximable for all $\varepsilon > 0$.
\end{enumerate}
\end{definition}

\begin{definition} \label{def:ql-ope}
  An operator $t \in \CB(\CHx)$ is
  \begin{enumerate} [label=(\roman*)]
    \item \emph{$\varepsilon$-$E$-quasi-local} if $\norm{\chf {A'} t\chf A} \leq \varepsilon$ for every choice of measurable $E$-separated subseteq $A, A' \subseteq X$ (that is, $A'\sep{E}A$);
    \item \emph{$\varepsilon$-quasi-local} if there is $E \in \CE$ such that $t$ is $\varepsilon$-$E$-quasi-local;
    \item \emph{quasi-local} if $t$ is $\varepsilon$-quasi-local for all $\varepsilon > 0$.
  \end{enumerate}
\end{definition}

\begin{remark}\label{rmk:approximable is ql}
  It is immediate to verify that $\varepsilon$-$E$-approximable operators are $\varepsilon$-$E$-quasi-local. In particular, approximable operators are always quasi-local.
  If $X$ is a bounded geometry metric space with Yu's property A (see \cite{yu_coarse_2000}*{Definition~2.1}) then every quasi-local operator is approximable (see \cite{spakula-zhang-2020-quasi-loc-prop-a}*{Theorem~3.3}). Quite recently, Ozawa~\cite{ozawa-2023} proved that there are quasi-local operators that are \emph{not} approximable, answering a long-standing open problem. The existence of these operators, of course, relies on $X$ being far from having Yu's property A, and in fact $X$ consists of a family of expander graphs.
\end{remark}

It is sometimes desirable to test quasi-locality ``in a local manner''. The next lemma shows that the non-degeneracy assumption on $\CHx$ allows this.
\begin{lemma}\label{lem:quasi-locality locally}
  An operator $t \in \CB(\CHx)$ is quasi-local if and only if for every $\varepsilon>0$ there is $E\in\CE$ such that $\norm{\chf{B'}t\chf B} \leq \varepsilon$ for every $B,B'\subseteq X$ bounded, measurable, and $E$-separated.
\end{lemma}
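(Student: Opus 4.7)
The forward implication is immediate: if $t$ is quasi-local, then for every $\varepsilon>0$ the entourage $E$ witnessing quasi-locality also witnesses the weaker local condition since bounded measurable sets are in particular measurable. The real content is the converse, and the key tool will be the non-degeneracy Lemma~\ref{lem:supports of vectors almost contained in bounded} used twice (once for a vector on the domain side and once on the image side) to reduce an arbitrary pair of measurable $E$-separated sets to bounded ones.

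Given $\varepsilon>0$, I would pick $E\in\CE$ so that $\norm{\chf{B'}t\chf{B}}\leq\varepsilon/2$ for all bounded measurable $E$-separated $B,B'$, and then show $\norm{\chf{A'}t\chf{A}}\leq\varepsilon/2$ for arbitrary measurable $E$-separated $A,A'$. Fix a unit vector $v$ and set $u\coloneqq\chf{A}(v)$. For any $\delta>0$, Lemma~\ref{lem:supports of vectors almost contained in bounded} gives a bounded measurable $C\in\fkA$ with $\norm{u-\chf{C}(u)}<\delta$; observing that $\chf{C}(u)=\chf{C\cap A}(v)$, one gets
\[
    \norm{\chf{A'}t\chf{A}(v)-\chf{A'}t\chf{C\cap A}(v)}\leq \norm{t}\,\delta.
\]
Let $w\coloneqq \chf{A'}t\chf{C\cap A}(v)$; since $\chf{A'}w=w$, we can write $\norm{w}^2=\scal{w}{t\chf{C\cap A}(v)}$.

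Now I apply Lemma~\ref{lem:supports of vectors almost contained in bounded} once more, this time to $w$: for any $\delta'>0$ there is a bounded measurable $D\in\fkA$ with $\norm{w-\chf{D}(w)}<\delta'$, and $\chf{D}(w)=\chf{D\cap A'}(w)$. Splitting $w$ as $\chf{D}(w)+(w-\chf{D}(w))$ inside the inner product yields
\[
  \norm{w}^2 \leq \abs{\scal{w}{\chf{D\cap A'}t\chf{C\cap A}(v)}} + \delta'\norm{t}
  \leq \norm{w}\cdot\norm{\chf{D\cap A'}t\chf{C\cap A}} + \delta'\norm{t}.
\]
The crucial point is that $D\cap A'\subseteq A'$ and $C\cap A\subseteq A$ are now bounded and still $E$-separated, so the local hypothesis bounds the operator norm by $\varepsilon/2$. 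Since $\delta'$ was arbitrary, the quadratic inequality $\norm{w}^2\leq \norm{w}\varepsilon/2$ forces $\norm{w}\leq\varepsilon/2$, and then letting $\delta\to 0$ gives $\norm{\chf{A'}t\chf{A}(v)}\leq\varepsilon/2$.

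The main subtlety is that $C$ depends on $v$ and $D$ depends on $w$ (hence on $v$ and $\delta$), so one must be careful to take the two limits ($\delta'\to 0$ first, then $\delta\to 0$) in the right order and avoid any circular dependence. Once this is organised correctly, taking the supremum over unit $v$ yields $\norm{\chf{A'}t\chf{A}}\leq\varepsilon/2<\varepsilon$, concluding that $t$ is quasi-local.
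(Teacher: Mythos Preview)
Your argument is correct when $\crse X$ is coarsely connected, but there is a gap in the general case. Lemma~\ref{lem:supports of vectors almost contained in bounded} does not produce a \emph{bounded} measurable set $C$: it produces a set that is a \emph{finite disjoint union of $\gauge$-bounded sets}. In a coarsely connected space these notions coincide, but in general a finite union of bounded sets need not be bounded (think of two points lying in distinct coarse components). Consequently $C\cap A$ and $D\cap A'$ are only finite unions of bounded sets, and you cannot directly invoke the local hypothesis on the pair $(D\cap A',\,C\cap A)$. If you decompose into the bounded pieces and sum, the resulting bound picks up a factor depending on the number of pieces, which in turn depends on $v$, so no uniform estimate follows.

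The paper's proof handles precisely this issue. After approximating both $v$ and $v'$ via Lemma~\ref{lem:supports of vectors almost contained in bounded}, the relevant inner product becomes a double sum over bounded pieces $B_j$, $B_{j'}'$. The crucial extra step is to regroup these pieces according to the coarsely connected components $\crse X_i$ of $\crse X$: if $B_{i'}'$ and $B_i$ lie in distinct components they are $E'$-separated for \emph{every} $E'\in\CE$, and applying the local hypothesis for every $\varepsilon'>0$ forces $\chf{B_{i'}'}t\chf{B_i}=0$. This kills all cross-component terms, leaving a diagonal sum indexed by components, which is then controlled by Cauchy--Schwarz. Your quadratic-inequality reduction $\norm{w}^2\leq\norm{w}\cdot\varepsilon/2$ is a pleasant alternative to the paper's direct inner-product estimate, but it still needs this connected-components observation to go through.
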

\begin{proof}
  The \emph{only if} direction is immediate from the definition. For the \emph{if} direction. Given $\varepsilon > 0$ fix $E$ as in the hypothesis, and let $A',A\subseteq X$ be two $E$-separated measurable subsets.
  Observe that
  \[
    \norm{\chf{A'}t\chf A}=\sup\{\abs{\scal{v'}{t(v)}}\mid v'\in\CH_{A'} \; \text{and} \; v\in\CH_A\text{ unit vectors}\},
  \]
  where, as usual, $\CH_{A'} = \chf{A'}(\CHx)$ and $\CH_A=\chf{A}(\CHx)$.
  Fix $v' \in \CH_{A'}$ and $v \in \CH_A$ realizing the supremum up to small $\delta>0$. By \cref{lem:supports of vectors almost contained in bounded}, there are $C'=\bigsqcup_{j=1}^{n'} B_{j'}'$ and $C=\bigsqcup_{j=1}^n B_j$,
  which are finite disjoint unions of bounded measurable subsets such that both $\norm{v'-\chf {C'}(v)}$ and $\norm{v-\chf{C} (v)}$ are smaller than $\delta$. Intersecting with $A'$ and $A$ if necessary we may further assume that $C'$ and $C$ are also $E$-separated.

  Let $\crse{X} = \bigsqcup_{i\in I} \crse X_i$ be the decomposition of $\crse X$ in coarsely connected components (recall \cref{def: coarsely connected}). The union of the sets $B_j$ (resp.\ $B_{j'}'$) that belong to the same coarsely connected component $X_i$ is still bounded. We may thus reorganize the partitions of $C'$ and $C$ in such a way that $B_i, B_i'\subseteq X_i$ for every $i\in I$ (these may also be empty).
  By the triangle inequality,
  \begin{equation}\label{eq:squasi-locality locally}
    \abs{\scal{v'}{t(v)}} \leq 2\delta +\abs{\scal{\chf{C'}(v')}{t\chf C(v)}}\leq 2\delta + \sum_{i',i\in I}\abs{\scal{\chf{B_{i'}'}(v')}{t\chf {B_i}(v)}}.
  \end{equation}
  Observe that if $i'\neq i$ then $B_{i'}$ and $B_i$ are $E'$-separated for every $E'\in \CE$ (as they are in different components of $\crse X$), and hence $\scal{\chf{B_{i'}'}(v')}{t\chf {B_i}(v)}=0$ from the assumption on $t$. We may thus rewrite the sum on the RHS of \eqref{eq:squasi-locality locally} as
  \[
    \sum_{i\in I}\abs{\scal{\chf{B_{i}'}(v')}{t\chf {B_i}(v)}} \leq \sum_{i\in I}\varepsilon\norm{\chf{B_{i}'}(v')}\norm{\chf{B_{i}}(v)}\leq \varepsilon \norm{\chf{C'}(v')}\norm{\chf{C}(v)}\leq \varepsilon,
  \]
  where the second-to-last inequality is an application of Cauchy--Schwarz. The statement follows by letting $\delta$ go to zero.
\end{proof}

\begin{remark}
  The proof of \cref{lem:quasi-locality locally} actually shows that if $\crse X$ is coarsely connected then $t\in\CB(\CHx)$ is $\varepsilon$\=/$E$\=/quasi-local if and only if $\norm{\chf{B'}t\chf B}\leq\varepsilon$ for every $B,B'\subseteq X$ bounded measurable $E$-separated subsets. That is, when $\crse X$ is coarsely connected an effective version of \cref{lem:quasi-locality locally} is true.
  This effective statement is not true in general. Namely, knowing the ``local'' $\varepsilon$\=/$E$\=/quasi-locality for some fixed $E$ does not imply that the operator is ``globally'' $\varepsilon$\=/$E$\=/quasi-local for the same $E$.

  Indeed, in order to obtain it in the disconnected case it is necessary to know \emph{a priori} that $\scal{\chf{B_{i'}'}(v')}{t\chf {B_i}(v)}=0$ whenever $B_{i'}'$ and $B_i$ belong to different connected components.
  This failure of locality will have as a consequence that in the implication \cref{prop:coarse functions preserve controlled propagation:1} $\Rightarrow$ \cref{prop:coarse functions preserve controlled propagation:4} in \cref{prop:coarse functions preserve controlled propagation} below one cannot easily replace admissibility with local admissibility.
\end{remark}

For later reference, it is worthwhile adapting \cref{lem:supports and operations} to the setting of quasi-local operators.
\begin{lemma}\label{lem:supports and operations quasi-local}
  Let $\CHx$ be an admissible $\crse X$-module.
  Given $s,t\in\CB(\CHx)$ that are respectively $\varepsilon_s$\=/$E_s$\=/quasi-local and $\varepsilon_t$\=/$E_t$\=/quasi-local, then 
  \begin{enumerate} [label=(\roman*)]
    \item  $s+t$ is $(\varepsilon_s+\varepsilon_t)$\=/$(E_s\cup E_t)$\=/quasi-local;
    \item  $\supp(s^*)$ is $\varepsilon_s$\=/$\op{E_s}$\=/quasi-local;
    \item  $st$ is $(\varepsilon_s\norm{t}+\varepsilon_t\norm{s})$\=/$(E_s\circ\gauge\circ E_t)$\=/quasi-local, where $\gauge$ is any admissibility gauge for $\CHx$.
  \end{enumerate}
\end{lemma}
\begin{proof}  
  The only non-trivial assertion is the third one.
  By assumption, $E_s, E_t \in \CE$ are such that $\norm{\chf{A'}s\chf A} \leq \varepsilon_s$ and $\norm{\chf{B'}t\chf B} \leq \varepsilon_t$ whenever $A',A$ and $B',B$ are measurable and $E_s$ (resp.\ $E_t$) separated.

  Suppose now that $A'$ and $B$ are measurable and $(E_s\circ\gauge\circ E_t)$\=/separated, and let $C$ be a measurable \gauge-controlled thickening of $E_t(B)$. Since $A'$ and $C$ are still $E_s$\=/separated, and $X\smallsetminus C$ and $B$ are $E_t$\=/separated, it follows from the triangle inequality that
  \begin{align*}
    \norm{\chf{A'}st\chf{B}} 
    \leq \norm{\chf{A'}s(\chfcX{C}t\chf{B})} + \norm{(\chf{A'}s\chf{C})t\chf{B}}
    \leq \varepsilon_t\norm{s}+\varepsilon_s\norm{t},
  \end{align*}
  as we needed to prove.
\end{proof}

\smallskip

For $K$-theoretic reasons (see \cites{higson-2000-analytic-k-hom,roe_index_1996,roe_lectures_2003} and \cref{thm:k-theory-functor} below), it is fundamental to study operators $t \colon \CHx \to \CHy$ that are \emph{locally compact}.
\begin{definition}\label{def:locally-compact-ope}
  An operator $t\colon\CHx\to\CHy$ is \emph{locally compact} (resp.\ has \emph{locally finite rank}) if $\chf Bt$ and $t\chf A$ are compact (resp.\ finite rank) operators for every bounded measurable $A\subseteq X$ and $B\subseteq Y$.
\end{definition}

The motivation behind the following example is deeply rooted in differential geometry and the study of operators on manifolds~\cite{roe_lectures_2003}.
\begin{example}
  Let $\fkA$ be a $\sigma$-algebra and $\mu$ a measure on $(X,\fkA)$, so that $\CH \coloneqq L^2(X,\mu)$ is a $\sigma$-additive $\crse X$-module (recall \cref{conv:sigma additive module}). Let $k\colon X\times X\to \CCC$ be a measurable kernel such that the essential supremum
  \[
    n_k \coloneqq \esssup_{x\in X}\int_X \abs{k(x,y)}^2 d\mu(y) < \infty.
  \]
  It then follows from Cauchy--Schwarz that the integral operator
  \[
    I_k \colon L^2\left(X,\mu\right) \to L^2\left(X,\mu\right), \;\; \text{where} \;\, I_kf\left(x\right) \coloneqq \int_X k\left(x,y\right)f\left(y\right) d\mu\left(y\right)
  \]
  is well-defined and of norm bounded by $n_k$. Observe that $\supp(I_k)$ is contained in the essential support $\esssupp(k)$. In particular, $I_k$ has $E$-controlled propagation if and only if $k$ is essentially supported on $E$ (as a function).
  Given a measurable $E \in \CE$ and $\varepsilon>0$, one may also consider the kernel $\chf E(k)$ (which gives rise to an integral operator of $E$-controlled propagation), and the remainder $k-\chf E(k)$. If $\norm{k-\chf E(k)} \leq \varepsilon$, it follows that $I_k$ is $\varepsilon$\=/$E$\=/approximable.
  Furthermore, if the kernel $k$ is so that the integrals
  \[
    \int_{X\times A}\abs{k\left(x,y\right)}^2d\mu\left(x\right)d\mu\left(y\right)
    \quad\text{and}\quad
    \int_{A\times X}\abs{k\left(x,y\right)}^2d\mu\left(x\right)d\mu\left(y\right)
  \]
  are finite for every bounded measurable $A \subseteq X$, then the operators $\chf AI_k$ and $I_k\chf A$ are Hilbert--Schmidt integral operators, and hence compact. By definition, this means that for such a $k$ the operator $I_k$ is locally compact.
  These integral operators thus yield natural examples of locally compact operators of controlled propagation/approximable operators.
\end{example}

\cref{lem: properness equivalent defs} has a few corollaries that will be useful later on.
\begin{corollary}\label{cor:local compactness and compositions by proper}
  Let $t\colon\CHx\to\CHy$ be a locally compact operator, and let $s\colon\CHy\to\CHz$ and $r\colon\CHx\to\CH_{\crse W}$ be proper.
  \begin{enumerate}[label=(\roman*)]
    \item \label{cor:local compactness and compositions by proper:y} If $\CHy$ is locally admissible then $st$ is locally compact.
    \item \label{cor:local compactness and compositions by proper:x} If $\CHx$ is locally admissible then $tr^*$ is locally compact.
  \end{enumerate}
\end{corollary}
\begin{proof}
  Let us prove \cref{cor:local compactness and compositions by proper:y}, \emph{i.e.}\ $st$ is locally compact. It is trivial that $st\chf A$ is compact when $A\subseteq X$ is measurable and bounded, for $t \chf{A}$ is then compact.
  Now, let $C\subseteq Z$ be bounded and measurable. Since $\CHy$ is locally admissible, \cref{lem: properness equivalent defs} implies that there is a bounded and measurable $B\subseteq Y$ such that $\chf Cs=\chf Cs\chf B$. 
  Therefore $\chf C st= \chf C s\chf Bt$, and the latter is compact because so is $\chf Bt$.

  Assertion~\cref{cor:local compactness and compositions by proper:x} follows from~\cref{cor:local compactness and compositions by proper:y}. In fact, taking the adjoint preserves local compactness and $(tr^*)^* =rt^*$ is locally compact.
\end{proof}

Note that if $s\colon\CHz\to\CHx$ is controlled, then $s^*$ is proper. Observing again that $(ts)^*=s^*t^*$ and adjoints preserve local compactness, we deduce the following.
\begin{corollary}\label{cor:local compactness and compositions by function}
  If $t\colon\CHx\to\CHy$ a locally compact operator, $s\colon\CHz\to\CHx$ is controlled and $\CHx$ is locally admissible, then $ts$ is locally compact.
\end{corollary}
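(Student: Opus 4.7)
The plan is to deduce the statement from \cref{lem:local compactness and compositions by proper}~\cref{lem:local compactness and compositions by proper:y} by passing to adjoints and exploiting the duality between controlledness and properness. Concretely, writing $(ts)^* = s^*t^*$, if I can show that $s^*\colon\CHx\to\CHz$ is proper and that $t^*\colon\CHy\to\CHx$ is locally compact, then \cref{lem:local compactness and compositions by proper}~\cref{lem:local compactness and compositions by proper:y} applies to the composition $s^*t^*$ with intermediate module $\CHx$ (which is locally admissible by hypothesis), yielding that $(ts)^*$ is locally compact; since local compactness is manifestly preserved under taking adjoints, $ts$ itself will then be locally compact.

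To verify that $s^*$ is proper, I would invoke \cref{cor:coarse support of composition}~\cref{cor:coarse support of composition:star} to write $\csupp(s^*)=\op{\csupp(s)}$, so that properness of $s^*$ amounts to showing that $\csupp(s)$ sends bounded subsets of $\crse Z$ to bounded subsets of $\crse X$. This is immediate from the controlledness of $s$: picking any representative $R$ of $\csupp(s)$ and any bounded $C\subseteq Z$, the set $C\times C$ lies in the coarse structure of $\crse Z$, so by the definition of a controlled relation $R(C)\times R(C)=R\circ(C\times C)\circ\op R$ lies in $\CE$, whence $R(C)$ is bounded. The local compactness of $t^*$ is immediate from \cref{def:locally-compact-ope}, since the adjoint of a compact operator is compact.

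I do not foresee any real obstacle: the core content has already been done in \cref{lem:local compactness and compositions by proper}, and all that remains is the bookkeeping of taking adjoints. The only thing to be careful about is keeping straight which module plays the role of the ``intermediate'' one when invoking \cref{lem:local compactness and compositions by proper}~\cref{lem:local compactness and compositions by proper:y} for $s^*t^*$.
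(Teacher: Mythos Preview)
The proposal is correct and follows essentially the same approach as the paper's own proof: both argue by taking adjoints, observing that $s^*$ is proper (because $s$ is controlled) and $t^*$ is locally compact, and then invoking \cref{lem:local compactness and compositions by proper}~\cref{lem:local compactness and compositions by proper:y} with $\CHx$ as the locally admissible intermediate module. Your write-up simply spells out in more detail why controlled implies that the adjoint is proper, which the paper states in one line.
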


\section{Roe-like algebras of modules} \label{sec:roe-algs-modules}
In this section we use the \emph{controlled propagation} and \emph{locally compact} notions for operators in $\CB(\CHx)$ introduced in \cref{def: controlled propagation operator,def:locally-compact-ope} respectively in order to construct \emph{Roe-like} (\cstar{})algebras associated to a coarse geometric module $\CHx$.
We then prove some general structural results for these \cstar{}algebras.
Heuristically, a coarse geometric module over $\crse X$ lets us define algebras of operators that ``know about the coarse geometry of $\crse X$''. 

\subsection{Definitions and basic properties}
We begin with the following.
\begin{definition}\label{def:C*cp}
  For a fixed $\crse X$-module $\CHx$, the \emph{\Star{}algebra of operators of controlled propagation of $\CHx$} is
  \[
    \cpstar{\CHx} \coloneqq \{t\in \CB\left(\CHx\right)\mid t\text{ has controlled propagation}\}.
  \]
  The norm-closure of $\cpstar{\CHx}$, which we denote by $\cpcstar{\CHx}$, is called the \emph{\cstar{}algebra of operators of controlled propagation of $\CHx$}.
\end{definition}

\begin{remark} \label{rem:fp-alg}
 Braga and Vignati~\cite{braga_gelfand_duality_2022} use the notation $\text{BD}(X)$ for the algebra $\cpcstar{\CHx}$ in the particular case when $X$ is a uniformly locally finite metric space and $\CHx = \ell^2(X) \otimes \ell^2(\NN)$ (\emph{i.e.}~the uniform rank-$\aleph_0$ module of \cref{exmp:uniform module}).
 In such context, operators $t \in \CB(\ell^2(X) \otimes \ell^2(\NN))$ can be understood as $X \times X$-matrices, and they have \emph{finite} (\emph{i.e.}\ controlled) propagation if their non-trivial entries are contained in a band of finite diameter around the diagonal. Norm limits of such operators are classically called \emph{band dominated}, hence the naming $\text{BD}(X)$.
\end{remark}
\begin{remark}
  Observe that \cref{lem:supports and operations} implies that $\cpstar{\CHx}$ and $\cpcstar{\CHx}$ are, respectively, a \Star{}algebra and a \cstar{}algebra. Moreover, note
  \[
    \cpcstar{\CHx} = \{t\in \CB(\CHx)\mid t\text{ is approximable}\}.
  \]
\end{remark}

\begin{exmp}\label{exmp:uniform roe algebra}
  Recall from \cref{exmp:uniform module} that the uniform coarse geometric module of $\crse X$ is simply defined as $\CH_{u,\crse X} \coloneqq \ell^2(X)$. The \cstar{}algebra of operators of controlled propagation $\cpcstar{\CH_{u,\crse X}}$ is classically known as the \emph{uniform Roe algebra} of $\crse X$~\cites{braga_rigid_unif_roe_2022,bbfvw_2023_embeddings_vna,higson-2000-analytic-k-hom,braga_farah_rig_2021}, and denoted by $\uroecstar{\crse X}$. This notion is most often considered when the coarse structure $\CE$ on $X$ is uniformly locally finite, recall \cref{rem:coarse-bdd-vs-ulf}.
\end{exmp}

Similarly to \cref{def:C*cp}, we also give the following.
\begin{definition} \label{def:C*ql-lc}
For a fixed $\crse X$-module $\CHx$, we define:
\begin{enumerate}[label=(\roman*)]
  \item \label{def:C*ql-lc:ql} The \emph{\cstar{}algebra of quasi-local operators} is
    \[
      \qlcstar{\CHx} \coloneqq \{t\in \CB\left(\CHx\right)\mid t\text{ is quasi-local}\}.
    \]
  \item \label{def:C*ql-lc:lc} Likewise, the \emph{\cstar{}algebra of locally compact operators} is
    \[
      \lccstar \CHx\coloneqq\{t\in\CB(\CHx)\mid t\text{ is locally compact}\}.
    \]
\end{enumerate}
\end{definition}

We observe the following.
\begin{lemma} \label{lem:ql-lc-c-star-algs}
  $\lccstar{\CHx}$ is a \cstar{}algebra. If $\CHx$ is admissible then $\qlcstar{\CHx}$ is a \cstar{}algebra as well.
\end{lemma}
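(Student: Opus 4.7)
The plan is to verify the \cstar{}algebra axioms directly in both cases, leveraging the technical lemmas already available.

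For $\lccstar{\CHx}$, the verification is routine and does not require admissibility. Stability under sums and scalar multiples follows immediately because the compact operators $\CK(\CHx)$ form a two-sided ideal in $\CB(\CHx)$: if $s,t$ are locally compact, then $\chf B(s+t)=\chf B s+\chf B t$ and $(s+t)\chf A=s\chf A+t\chf A$ are sums of compacts. Stability under involution follows by taking adjoints of $\chf B t$ and $t\chf A$, giving $t^*\chf B=(\chf Bt)^*$ and $\chf A t^*=(t\chf A)^*$, both compact. For the product, write $\chf B(st)=(\chf B s)t$ and $(st)\chf A=s(t\chf A)$: in each case we compose a compact operator with a bounded one, which is compact. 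Norm-closure is immediate from the fact that $\CK(\CHx)$ is norm-closed in $\CB(\CHx)$: if $t_n\to t$ with each $t_n$ locally compact, then $\chf B t_n\to \chf B t$ and $t_n\chf A\to t\chf A$ in norm, so $\chf B t$ and $t\chf A$ are compact.

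For $\qlcstar{\CHx}$ under the admissibility hypothesis, the algebraic structure follows from \cref{lem:supports and operations quasilocal}: items (i) and (ii) of that lemma give stability under sums and adjoints (scalars are trivial), while item (iii) gives stability under products. The subtle point for the product is that the quasi-locality parameter produced by \cref{lem:supports and operations quasilocal}(iii) is $(\varepsilon_s\norm{t}+\varepsilon_t\norm{s})$; so given arbitrary $\varepsilon>0$, one first chooses $\varepsilon_s,\varepsilon_t$ small enough (depending on $\norm{s}$ and $\norm{t}$) so that the sum is below $\varepsilon$, and then selects the corresponding entourages $E_s,E_t$ witnessing quasi-locality of $s$ and $t$. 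This is the step where admissibility of $\CHx$ is genuinely used, via the appeal to \cref{lem:supports and operations quasilocal}.

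Norm-closure of $\qlcstar{\CHx}$ is a standard $\varepsilon/3$ argument. If $t_n\to t$ with each $t_n$ quasi-local and $\varepsilon>0$ is given, pick $n$ with $\norm{t-t_n}<\varepsilon/2$ and then $E\in\CE$ witnessing $(\varepsilon/2)$\=/$E$\=/quasi-locality of $t_n$; the triangle inequality $\norm{\chf{A'}t\chf{A}}\leq \norm{\chf{A'}t_n\chf{A}}+\norm{t-t_n}$ valid for all $E$-separated measurable $A',A$ shows that $t$ is $\varepsilon$\=/$E$\=/quasi-local. I expect the only mildly delicate ingredient in the whole proof to be the bookkeeping in the product case, where one must trade norms of factors for quasi-locality parameters; everything else is bookkeeping with the already established machinery.
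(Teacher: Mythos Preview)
Your proof is correct and follows essentially the same approach as the paper: routine verification for $\lccstar{\CHx}$ using that $\CK(\CHx)$ is a closed two-sided ideal, and appeal to \cref{lem:supports and operations quasilocal} for the algebraic closure of $\qlcstar{\CHx}$. The paper's proof is terser and leaves the norm-closure of $\qlcstar{\CHx}$ implicit, whereas you spell it out explicitly (a minor quibble: you call it an $\varepsilon/3$ argument but actually use $\varepsilon/2$).
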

\begin{proof}
  The fact that $\lccstar \CHx$ is a \cstar{}algebra follows from the fact that the compact operators $\CK(\CHx)$ are a closed bi-lateral ideal in $\CB(\CHx)$ and some routine computations. For instance, if $\paren{t_\lambda}_{\lambda \in \Lambda}$ is a net of locally compact operators converging to $t \in \CB(\CHx)$, for any bounded $A \subseteq X$ we have that $t_\lambda \chf A \to t \chf A$. Since $t_\lambda \chf A$ are compact operators, so is $t \chf A$. Likewise, it also follows that $\chf A t$ is compact, and thus $t$ is locally compact.

  The proof for $\qlcstar\CHx$ follows immediately from \cref{lem:supports and operations quasi-local}.
\end{proof}

\begin{remark} \label{rmk:approximable is ql 2}
  As noted in \cref{rmk:approximable is ql} it is always the case that $\cpcstar{\CHx}\subseteq\qlcstar{\CHx}$.
  Moreover, quite recently Ozawa~\cite{ozawa-2023} proved this inclusion can be strict. Namely, there are examples where $\qlcstar{\CHx}$ is strictly larger than $\cpcstar{\CHx}$. Thus, we will continue working with both \cstar{}algebras, as they have different advantages. 
  Indeed, $\cpcstar{\CHx}$ is better behaved with respect to coarse-geometric properties, whereas it is easier to verify if an operator is in $\qlcstar{\CHx}$.
\end{remark}

The following combines \cref{def:C*cp,def:C*ql-lc}.
\begin{definition}\label{def:roestar}
  The \emph{Roe \Star{}algebra of $\CHx$} is
  \[
    \roestar{\CHx}\coloneqq \{t\in \CB\left(\CHx\right)\mid t\text{ is locally compact of controlled propagation}\}.
  \]
  The norm-closure of $\roestar{\CHx}$, denoted by $\roecstar{\CHx}$, is called the \emph{Roe \cstar{}algebra of $\CHx$}, or \emph{Roe algebra} for short.
\end{definition}

As $\roestar{\CHx} = \cpstar\CHx\cap\lccstar\CHx$ is an intersection of \Star{}algebras, it is itself a \Star{}algebra. Moreover, note that
\[
 \roecstar\CHx\subseteq\cpcstar\CHx\cap\lccstar\CHx
\]
for all $\crse X$-modules $\CHx$. In fact, later on we will prove this containment is an equality when $\crse X$ is coarsely locally finite and $\CHx$ is discrete (see \cref{thm:roe-alg:intersection}). For now, we observe that $\cpcstar{\CHx}$ idealizes $\roecstar{\CHx}$.
\begin{corollary}\label{cor:cpc is multiplier of roec}
  Let $\crse X $ be a coarse space. If $\CHx$ is a locally admissible $\crse X$-module, then $\cpcstar\CHx $ is contained in the multiplier algebra $\multiplieralg{\roecstar\CHx}$.
\end{corollary}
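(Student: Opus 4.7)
Since the multiplier algebra $\multiplieralg{\roecstar\CHx}\subseteq\CB(\CHx)$ is norm-closed and $\cpstar\CHx$ is dense in $\cpcstar\CHx$, it suffices to prove $\cpstar\CHx\subseteq\multiplieralg{\roecstar\CHx}$. Since multiplication is jointly continuous on bounded sets and $\roestar\CHx$ is dense in $\roecstar\CHx$, this in turn reduces to verifying the algebraic inclusions $\cpstar\CHx\cdot\roestar\CHx\subseteq\roestar\CHx$ and $\roestar\CHx\cdot\cpstar\CHx\subseteq\roestar\CHx$.

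So fix $a\in\cpstar\CHx$ and $b\in\roestar\CHx$; the goal is to show that both $ab$ and $ba$ have controlled propagation and are locally compact. Controlled propagation of $ab$ and $ba$ is a direct application of \cref{cor:coarse support of composition}~\cref{cor:coarse support of composition:comp} (or equivalently \cref{lem:supports and operations}~\cref{lem:supports and operations:comp}), since the coarse composition of two coarse subspaces of $\cid_{\crse X}$ is again a coarse subspace of $\cid_{\crse X}$: concretely, one thickens by a non-degeneracy gauge of $\CHx$, which is still controlled.

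For local compactness, the key observation is that any operator of controlled propagation is automatically proper in the sense of \cref{def: proper operator}: if $\csupp(a)$ admits a representative $S\subseteq E$ for some $E\in\CE$, then $\op{S}(B)\subseteq\op{E}(B)$ is bounded whenever $B$ is bounded. Using local admissibility of $\CHx$, I then apply \cref{lem:local compactness and compositions by proper}~\cref{lem:local compactness and compositions by proper:y} (with $t=b$ locally compact and $s=a$ proper) to conclude that $ab$ is locally compact, and \cref{cor:local compactness and compositions by function} (with $t=b$ locally compact and $s=a$ controlled) to conclude that $ba$ is locally compact. Hence $ab,ba\in\roestar\CHx$, finishing the reduction.

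No serious obstacle is expected: the whole argument is an assembly of previously established lemmas, with the only nontrivial step being the remark that controlled propagation implies properness, which opens the door to invoking \cref{lem:local compactness and compositions by proper,cor:local compactness and compositions by function}. The local admissibility hypothesis is used exactly once, precisely to enable these two results.
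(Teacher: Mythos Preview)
Your proof is correct and essentially the same as the paper's: both reduce to showing $\cpstar\CHx\cdot\roestar\CHx$ and $\roestar\CHx\cdot\cpstar\CHx$ lie in $\roestar\CHx$, handling controlled propagation via \cref{lem:supports and operations} and local compactness via \cref{lem:local compactness and compositions by proper}/\cref{cor:local compactness and compositions by function}. The only cosmetic difference is that the paper invokes \cref{cor:local compactness and compositions by function} once and then passes to adjoints for the other direction, whereas you invoke the lemma and its corollary separately after observing that controlled propagation implies properness---these are equivalent routes through the same underlying results.
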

\begin{proof}
  \cref{cor:local compactness and compositions by function} implies that if $\CHx$ is locally admissible, $t\in \roestar\CHx$ and $s\in\cpstar\CHx$, then the compositions $st$ and $ts$ are again in $\roestar\CHx$ (for the latter consider the adjoint $s^*t^*$).
  The claim follows.
\end{proof}

\begin{notation} \label{notation:roe-like-algs}
  For convenience, we henceforth write
  \begin{enumerate}[label=(\roman*)]
    \item $\roelike\CHx$ to denote either $\roestar{\CHx}$ or $\cpstar{\CHx}$, which we call \emph{Roe-like \Star{}algebras};
    \item $\roeclike \CHx$ to denote either $\roecstar{\CHx}$, $\cpcstar{\CHx}$, or $\qlcstar{\CHx}$, which we call \emph{Roe-like \cstar{}algebras}.
  \end{enumerate}
\end{notation}

\begin{remark}
  One may similarly define other Roe-like (\cstar{})algebras of operators, such as \emph{stable Roe algebras}~\cite{braga_rigid_unif_roe_2022}*{Section~4}, or \emph{uniformly locally-finite-rank Roe} algebras. See also \cite{spakula_rigidity_2013} and references therein for definitions and applications of a few such algebras.
  The crucial features of such algebras are that they consist of (limits of) operators of (quasi) controlled propagation, and yet are sufficiently rich to capture meaningful geometric information of the module. 
  In this work we will only deal with the Roe algebras and the algebras of operators of controlled propagation as in \cref{notation:roe-like-algs}.
\end{remark}

%%%%%%%%%%%%%%%%%%%%%%%%%%%%%%%%%%%%%%%%%%%%%%%%%%%%%%%%%%%%%%%%%%%%%%%
% ROE ALGEBRAS OF (NON-CONNECTED) SPACES
%%%%%%%%%%%%%%%%%%%%%%%%%%%%%%%%%%%%%%%%%%%%%%%%%%%%%%%%%%%%%%%%%%%%%%%
\subsection{Roe-like algebras over (non-connected) spaces} \label{subsec:roe-algs-non-connected}
Here we detail the structure of Roe-like algebras over non (coarsely) connected coarse spaces.
If $\crse X = \bigsqcup_{i \in I} \crse X_i$ is the decomposition of $\crse X$ into its coarsely connected components, it is highly desirable for $X_i$ to be measurable. 
We observe the following, which immediately follows from \cref{def:discrete}~\cref{def:discrete:extension}.

\begin{lemma} \label{lemma:measurable-connected-comps}
  Let $\crse X $ be a coarse space, and let $\crse X=\bigsqcup_{i\in I} \crse X_i$ be the decomposition in coarsely connected components. If $\CHx$ is discrete, then $X_i$ is measurable for all $i \in I$.
\end{lemma}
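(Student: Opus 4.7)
The plan is to exploit condition~\cref{def:discrete:extension} of \cref{def:discrete}, which asserts that a subset $C \subseteq X$ belongs to $\fkA$ whenever its intersection with every block of the discrete partition belongs to $\fkA$. So, fixing an $E$-controlled discrete partition $X = \bigsqcup_{j \in J} A_j$ given by discreteness of $\CHx$, it will suffice to show that $X_i \cap A_j \in \fkA$ for every $i \in I$ and $j \in J$.

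The key observation is that each $A_j$ is entirely contained in a single coarsely connected component. Indeed, since $A_j$ is $E$-bounded and hence bounded, for any $x, y \in A_j$ the singleton set $\{x,y\} \subseteq A_j$ satisfies $\{x,y\} \times \{x,y\} \subseteq A_j \times A_j \in \CE$, so $\{x,y\}$ is itself a bounded (in particular, finite) set. By the definition of coarse connectedness and of the decomposition into coarsely connected components, this means $x$ and $y$ lie in the same $X_i$.

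Consequently, for each $j \in J$ there is a unique index $i(j) \in I$ with $A_j \subseteq X_{i(j)}$. Then
\[
  X_i \cap A_j =
  \begin{cases}
    A_j & \text{if } i = i(j), \\
    \emptyset & \text{otherwise,}
  \end{cases}
\]
and in both cases this intersection belongs to $\fkA$ (using \cref{def:discrete}~\cref{def:discrete:meas} and the fact that $\fkA$ is a boolean algebra). Applying \cref{def:discrete}~\cref{def:discrete:extension} with $C \coloneqq X_i$ yields $X_i \in \fkA$. There is no genuine obstacle here; the only subtlety is remembering that bounded sets cannot straddle distinct coarsely connected components, which is essentially the content of \cref{def: coarsely connected}.
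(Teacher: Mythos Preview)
Your proof is correct and follows exactly the approach the paper indicates: the paper simply states that the lemma ``immediately follows from \cref{def:discrete}~\cref{def:discrete:extension}'', and your argument is precisely the unpacking of that claim (each block of the discrete partition is bounded, hence lies in a single component, so $X_i\cap A_j$ is either $A_j$ or $\emptyset$).
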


We now aim to compare the Roe-like (\cstar{})algebras of the $\crse X$-module $\CHx$ with the Roe-like (\cstar{})algebras of the restriction of $\CHx$ to the connected components $\crse X_i$ of $\crse X$ (see \cref{cor:roe algs disconnected}), seen as $\crse X_i$-modules. However, we first introduce the following to ease notation.
\begin{definition} \label{def:unif-contr-family}
Let $\crse X$ be a coarse space, and let $\CHx$ be an admissible $\crse X$-module. We say that a family $\paren{t_\ell}_{\ell \in L} \subseteq \CB(\CHx)$ is
\begin{enumerate}[label=(\roman*)]
  \item \label{def:unif-contr-family:uc} \emph{equi controlled} if $\bigcup_{\ell \in L} \supp(t_\ell) \in \CE$;
  \item \label{def:unif-contr-family:uql} \emph{equi quasi-local} if for all $\varepsilon > 0$ there is some $E \in \CE$ such that $t_\ell$ is $\varepsilon$\=/$E$\=/quasi-local for all $\ell \in L$.
\end{enumerate}
\end{definition}

When the coarsely connected components $(X_i)_{i \in I}$ of $\crse X$ are measurable, we let $\CH_i \coloneqq \CH_{X_i}=\chf{X_i}(\CHx)$. Since every bounded set only intersects one component of $\crse X$, an argument similar to \cref{prop:admissible is discrete} shows that the strong sum $\sum_{i\in I}\chf{X_i}=1$ is the identity operator.
Given an operator $t\in\CB(\CHx)$, let $t_i\coloneqq \chf{X_i}t\chf{X_i}$.
\begin{lemma}\label{lem:controlled iff equicontrolled}
  An operator $t\in\CB(\CHx)$ has controlled propagation (resp.\ is quasi-local) if and only if $\paren{t_i}_{i\in I}$ is equicontrolled (resp.\ equi quasi-local) and $t$ is equal to the strong sum $\sum_{i\in I}t_i$.
\end{lemma}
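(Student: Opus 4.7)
The plan is to prove both directions by leveraging that different coarsely connected components $X_i$ and $X_j$ are ``maximally separated'' from the coarse-geometric standpoint. The preliminary observation I would establish first is that for every $E\in\CE$ and every $i\neq j$, the sets $X_i$ and $X_j$ are $E$-separated: given $x\in X_i$ and $x'\in E(x)$, after symmetrizing $E$ and adding the diagonal one sees that $\{x,x'\}\times\{x,x'\}\in\CE$, hence $\{x,x'\}$ is bounded and therefore $x'\in X_i$. As an immediate consequence, $\chf{X_j}s\chf{X_i}=0$ whenever $s$ has controlled propagation, and $\norm{\chf{X_j}s\chf{X_i}}\leq\varepsilon$ for every $\varepsilon>0$ whenever $s$ is $\varepsilon$-quasi-local.

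For the $\Rightarrow$ direction of both statements, I would apply this observation to $t$ itself: it forces $\chf{X_j}t\chf{X_i}=0$ for all $i\neq j$ (in the quasi-local case, by letting $\varepsilon\to 0$). Since $\sum_{i\in I}\chf{X_i}=1$ strongly---as recalled in the discussion preceding the statement---it follows that $t=\sum_{i\in I}t_i$ in SOT. Equi-control (resp.\ equi-quasi-locality) of the family $(t_i)_{i\in I}$ is then inherited directly from $t$: in the controlled case, if $\supp(t)\subseteq E$ then plainly $\supp(t_i)\subseteq E$ for every $i$; in the quasi-local case, the identity $\chf{A'}t_i\chf{A}=\chf{A'\cap X_i}t\chf{A\cap X_i}$ together with the fact that $A\cap X_i$ and $A'\cap X_i$ remain $E$-separated whenever $A,A'$ are shows that every $t_i$ is $\varepsilon$-$E$-quasi-local with the same $E$ as $t$.

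For the $\Leftarrow$ direction, the controlled-propagation case is a direct computation: if $E\in\CE$ witnesses equi-control of $(t_i)$ and $A,A'$ are $E$-separated, then for every $v\in\CHx$ one has $\chf{A'}t\chf{A}(v)=\sum_{i\in I}\chf{A'}t_i\chf{A}(v)=0$, because each summand vanishes. The quasi-local case is where I expect the only genuine subtlety to arise: given $E$-separated measurable $A,A'$, one must bound $\norm{\chf{A'}t\chf{A}}$ by $\varepsilon$, but in general the norm of an SOT-sum is not controlled by the supremum of the summand norms. The saving feature is that, using $t=\sum_i t_i$, one can rewrite $\chf{A'}t\chf{A}=\sum_i\chf{A'\cap X_i}t_i\chf{A\cap X_i}$ as an SOT-sum of \emph{pairwise orthogonal} operators---their images and domains lie in the mutually orthogonal subspaces $\chf{X_i}(\CHx)$---so by the principle recalled in \cref{subsec:operator-boolalg} the norm of this SOT-sum equals $\sup_i\norm{\chf{A'\cap X_i}t_i\chf{A\cap X_i}}\leq\varepsilon$. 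This closes the argument.
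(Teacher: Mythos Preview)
Your proof is correct and follows essentially the same approach as the paper's: both use that distinct components are $E$-separated for every $E\in\CE$ to kill the off-diagonal blocks, deduce $t=\sum_i t_i$ from $\sum_i\chf{X_i}=1$, and handle the quasi-local $\Leftarrow$ direction via the orthogonality of the summands so that the norm of the SOT-sum equals the supremum of the block norms.
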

\begin{proof}
  Fix an operator $t$. If $X_i$ and $X_j$ are different components of $\crse X$ and $t$ is quasi-local (and, \emph{a fortiori}, if it has controlled propagation), then $\norm{\chf{X_i}t\chf{X_j}}$ must be smaller than every $\varepsilon>0$, so it is zero. The fact that $t$ is the strong sum $\sum_{i \in I} t_i$ follows from $\sum_{i\in I}\chf{X_i}=1$. 

  When $t=\sum_{i\in I}t_i$, it is clear that $\supp(t)\subseteq E$ if and only if $\supp(t_i)\subseteq E$ for every $i\in I$ (see \cref{lem:supports and operations}~\cref{lem:supports and operations:sum}). This proves the ``if and only if'' condition for controlled propagation.

  For quasi-locality, let $A',A\subseteq X$ be measurable subsets, and let $A'_i\coloneqq A'\cap X_i$ and $A_i\coloneqq A\cap X_i$. When $t=\sum_{i\in I}t_i$, the operator $\chf{ A'}t\chf A$ is equal to the orthogonal sum
  \(
    \sum_{i\in I}\chf{A'_i}t_i\chf{A_i}.
  \)
  The if and only if condition follows, because the norm of a sum of orthogonal operators is equal to the supremum of their norms, and $A'\cap E(A)=\emptyset$ if and only if $A'_i\cap E(A_i)=\emptyset$ for every $i\in I$.  
\end{proof}

Observe that $\CH_i$ can be seen as either an $\crse X$- or $\crse X_i$-module, and both Roe-like (\cstar{})algebras $\roeclike{\CH_i}$ and $\roeclike{\CH}$ can be seen as subalgebras of $\CB(\CHx)$ (an operator in $\CB(\CH_i)$ can be extended as zero on the orthogonal complement).
Since the spaces $\CH_i$ are pairwise orthogonal, also the product $\prod_{i\in I}\roeclike{\CH}$ is naturally contained in $\CB(\CHx)$. 
Moreover, as local compactness is tested locally, the following is an immediate consequence of \cref{lem:controlled iff equicontrolled}.
\begin{corollary}\label{cor:roe algs disconnected}
  Let $\crse X=\bigsqcup_{i\in I} \crse X_i$ be the decomposition in coarsely connected components of $\crse X$, and suppose $X_i\subseteq X$ is measurable for all $i \in I$. Then 
  \[
    \chf{X_i} \roelike{\CHx} \chf{X_i} = \roelike{\CH_i} 
    \;\; \text{and} \;\; 
    \chf{X_i} \roeclike{\CHx} \chf{X_i} = \roeclike{\CH_i}.
  \]
  Moreover,
  \begin{align*}
    \cpstar{\CHx} =& \left\{\left(t_i\right)_{i \in I} \bigmid\text{equi controlled and bounded}\right\}, \\
    \roestar{\CHx} =& \left\{\left(t_i\right)_{i \in I} \bigmid\text{equi controlled, locally compact, and bounded}\right\}, \\
    \qlcstar{\CHx} =& {\left\{\left(t_i\right)_{i \in I} \bigmid\text{equi quasi-local and bounded}\right\}}.
  \end{align*}
  Therefore,
  \begin{align*}    
    \cpcstar{\CHx} =& \overline{\left\{\left(t_i\right)_{i \in I} \bigmid\text{equi controlled and bounded}\right\}}^{\norm\cdot}, \\
    \roecstar{\CHx} =& \overline{\left\{\left(t_i\right)_{i \in I} \bigmid\text{equi controlled, locally compact, and bounded}\right\}}^{\norm\cdot},
  \end{align*}
  where the terms on the right-hand side are seen as subalgebras of $\prod_{i \in I} \CB\left(\CH_i\right)$.
\end{corollary}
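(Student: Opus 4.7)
The plan is to deduce everything from \cref{lem:controlled iff equicontrolled}, together with the observation that since the components $X_i$ are measurable, the projections $\chf{X_i}$ are mutually orthogonal and strongly sum to $1 \in \CB(\CHx)$. In particular, any uniformly bounded family $(t_i)_{i \in I}$ with $t_i \in \CB(\CH_i)$ has a well-defined strong sum $t = \sum_{i\in I} t_i \in \CB(\CHx)$ whose operator norm equals $\sup_{i \in I}\norm{t_i}$. This gives an isometric \Star{}algebra identification between the bounded subsets of $\prod_{i \in I}\CB(\CH_i)$ and the block-diagonal operators in $\CB(\CHx)$ with respect to the decomposition $\CHx = \bigoplus_{i \in I}\CH_i$.

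First I would verify the corner identities $\chf{X_i}\roelike{\CHx}\chf{X_i}= \roelike{\CH_i}$ and $\chf{X_i}\roeclike{\CHx}\chf{X_i}= \roeclike{\CH_i}$. The projection $\chf{X_i}$ has controlled propagation (its support lies in $\Delta_{X_i}$), hence conjugation by it preserves controlled propagation, quasi-locality, and local compactness (using \cref{lem:supports and operations,lem:supports and operations quasilocal} and the fact that local compactness is an ideal property). This gives the inclusion ``$\subseteq$''. For the reverse inclusion, any $s\in\roelike{\CH_i}$ extends by zero outside $\chf{X_i}(\CHx)$, and since the coarse structure on $X_i$ is the restriction of $\CE$, the extended operator still has $E$-controlled propagation inside $\crse X$ whenever $s$ has $E$-controlled propagation in $\crse X_i$; local compactness and quasi-locality are preserved analogously.

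Next, for the explicit descriptions of the Roe-like \Star{}algebras, I would apply \cref{lem:controlled iff equicontrolled} directly: it says $t\in\cpstar{\CHx}$ (resp.\ $t\in\qlcstar{\CHx}$) if and only if the family $(t_i)_{i \in I}$, where $t_i \coloneqq \chf{X_i}t\chf{X_i}$, is equi controlled (resp.\ equi quasi-local) and $t = \sum_{i\in I}t_i$. Via the isometric identification above, this precisely matches the sets on the right-hand side in the statement. For $\roestar{\CHx}$, it only remains to observe that local compactness is a pointwise condition on the $t_i$: since every bounded measurable $A \subseteq X$ is contained in a single component $X_j$ (by \cref{def: coarsely connected} and measurability of the $X_i$), the products $\chf A t$ and $t\chf A$ coincide with $\chf A t_j$ and $t_j \chf A$, which is compact if and only if $t_j$ is locally compact.

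Finally, the descriptions of $\cpcstar{\CHx}$ and $\roecstar{\CHx}$ follow immediately: by \cref{def:C*cp,def:roestar} these are the norm-closures of $\cpstar{\CHx}$ and $\roestar{\CHx}$, and the identifications from the previous step are isometric. The only step requiring any care is checking that the isometric identification between block-diagonal operators on $\CHx$ and bounded families in $\prod_{i\in I}\CB(\CH_i)$ behaves well with infinite $I$, which I expect to be the mildest obstacle; it reduces to the standard fact that a SOT-sum of pairwise orthogonal bounded operators has norm equal to the supremum of the norms of its summands (cf.\ \cref{subsec:operator-boolalg}).
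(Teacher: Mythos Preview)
Your proposal is correct and follows the same approach as the paper: the paper simply states that the corollary is an immediate consequence of \cref{lem:controlled iff equicontrolled} together with the observation that local compactness is tested locally. You have spelled out the details of that argument (the corner identities via conjugation by $\chf{X_i}$, the block-diagonal identification, and the passage to norm closures), but the underlying strategy is identical.
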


\begin{corollary}\label{cor:roe algs disconnected vs sum and prod}
  With notation as in \cref{cor:roe algs disconnected},
  \[\arraycolsep=0.5ex \def\arraystretch{1.5}
    \begin{array}[]{rcl}
      \bigoplus_{i\in I}^{\rm alg}\roelike{\CH_i} \;\subseteq & \roelike{\CHx} & \subseteq\; \prod_{i\in I}\roelike{\CH_i}, \\
      \bigoplus_{i\in I}\roeclike{\CH_i} \;\subseteq & \roeclike{\CHx} & \subseteq\; \prod_{i\in I}\roeclike{\CH_i},
    \end{array}  
  \]
  where $\bigoplus_{i\in I}^{\rm alg}$ represents the algebraic direct sum (\emph{i.e.}\ consists of finite sums).
\end{corollary}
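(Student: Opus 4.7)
The plan is to deduce the four inclusions directly from \cref{cor:roe algs disconnected} and the fact that $\roeclike{\CHx}$ is norm closed, with almost no new coarse-geometric content.

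For the two inclusions on the right (\emph{i.e.}\ into the product), the \Star{}algebra case is an immediate reformulation of \cref{cor:roe algs disconnected}: any $t\in\roelike{\CHx}$ equals the strong sum $\sum_{i\in I}t_i$ where $t_i\coloneqq \chf{X_i}t\chf{X_i}\in\roelike{\CH_i}$, and the resulting family $(t_i)_{i\in I}$ is uniformly bounded by $\norm t$, so it naturally sits inside $\prod_{i\in I}\roelike{\CH_i}$. For the \cstar{}algebra case, given $t\in\roeclike{\CHx}$, approximate it by $t^{(n)}\in\roelike{\CHx}$; then $t_i^{(n)}\coloneqq \chf{X_i}t^{(n)}\chf{X_i}\to \chf{X_i}t\chf{X_i}=:t_i$ in norm, with $\norm{t_i^{(n)}-t_i}\leq \norm{t^{(n)}-t}$, so that each $t_i$ is a norm limit of operators in $\roelike{\CH_i}$ and therefore lies in $\roeclike{\CH_i}$. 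The family $(t_i)_{i\in I}$ is again uniformly bounded by $\norm t$.

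For the two inclusions on the left, I would first treat the \Star{}algebra case. If $(s_i)_{i\in I}\in\bigoplus_{i\in I}^{\rm alg}\roelike{\CH_i}$, then by definition only finitely many $s_i$ are non-zero. A finite union of controlled entourages is still controlled, so the family $(s_i)_{i\in I}$ is equi controlled; similarly, the finite sup of quasi-locality parameters remains quasi-locality data for the whole family. The sum $\sum_{i\in I}s_i$ is a \emph{finite} sum of locally compact operators when dealing with $\roestar{\variable}$, hence again locally compact. The characterisations in \cref{cor:roe algs disconnected} then show that $(s_i)_{i\in I}$ belongs to the corresponding $\roelike{\CHx}$. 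The \cstar{}algebra case follows formally: $\bigoplus_{i\in I}\roeclike{\CH_i}$ is by definition the norm-closure of the algebraic direct sum $\bigoplus_{i\in I}^{\rm alg}\roeclike{\CH_i}$ inside $\prod_{i\in I}\roeclike{\CH_i}$, and the algebraic direct sum is contained in $\roelike{\CHx}\subseteq\roeclike{\CHx}$ by the previous step; since $\roeclike{\CHx}$ is norm-closed in $\CB(\CHx)$ and the product norm agrees with the operator norm on such block-diagonal families (the blocks being mutually orthogonal), the closure lies inside $\roeclike{\CHx}$.

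There is no real obstacle here; the only mildly delicate point to mention is that the norm on $\prod_{i\in I}\roeclike{\CH_i}$ (and hence on its subalgebra $\bigoplus_{i\in I}\roeclike{\CH_i}$) coincides with the operator norm on $\CB(\CHx)$ when we view these families as block-diagonal operators via the orthogonal decomposition $\CHx=\bigoplus_{i\in I}\CH_i$ provided by \cref{lemma:measurable-connected-comps}. This makes the norm-closures on both sides compatible, so the two C*-level inclusions genuinely follow from their \Star{}-level counterparts by taking closures.
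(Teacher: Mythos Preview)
Your overall strategy is correct and is exactly what the paper intends (the corollary is stated without proof, as an immediate consequence of \cref{cor:roe algs disconnected}). There is, however, one small slip in your left-hand inclusion at the \cstar{}level.

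You write that $\bigoplus_{i\in I}^{\rm alg}\roeclike{\CH_i}$ is contained in $\roelike{\CHx}$ ``by the previous step''. But the previous step only showed $\bigoplus_{i\in I}^{\rm alg}\roelike{\CH_i}\subseteq\roelike{\CHx}$, and in general $\roeclike{\CH_i}\not\subseteq\roelike{\CH_i}$ (a norm-limit of controlled-propagation operators need not itself have controlled propagation). There are two immediate fixes. Either observe that $\bigoplus_{i\in I}\roeclike{\CH_i}$ is already the norm closure of the smaller set $\bigoplus_{i\in I}^{\rm alg}\roelike{\CH_i}$ (your own approximation argument for the right-hand inclusion shows this: first truncate to finitely many blocks, then approximate each remaining block within $\roelike{\CH_i}$), and then your closure argument goes through verbatim. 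Or, more directly, use the first assertion of \cref{cor:roe algs disconnected}, namely $\chf{X_i}\roeclike{\CHx}\chf{X_i}=\roeclike{\CH_i}$, to see that each $\roeclike{\CH_i}$ already sits inside $\roeclike{\CHx}$ as a subalgebra; a finite sum of such elements is again in $\roeclike{\CHx}$, and then close up. Either route repairs the gap with no new ideas.
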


One key feature of classical Roe algebras is that they contain the ideal of compact operators $\CK(\CHx)$. This is, in fact, a crucial first step towards rigidity, see~\cite{spakula_rigidity_2013}*{Lemma~3.1}.
We observe below that this phenomenon extends to the general setting, although some extra care is needed to deal with coarsely disconnected spaces.
\begin{lemma} \label{lemma:roe-algs-contains-compacts-connected}
  Let $\crse X $ be a coarsely connected coarse space. Then $\CK(\CHx) \subseteq\roeclike\CHx$ for any $\crse X$-module $\CHx$.
\end{lemma}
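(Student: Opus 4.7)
Here is my plan. Since $\roecstar{\CHx}\subseteq\cpcstar{\CHx}$ is immediate and $\roecstar{\CHx}\subseteq\qlcstar{\CHx}$ follows from \cref{rmk:approximable is ql 2}, it suffices to prove the strongest containment $\CK(\CHx)\subseteq\roecstar{\CHx}$. Because $\roecstar\CHx$ is norm-closed and the finite-rank operators are norm-dense in $\CK(\CHx)$, I will reduce further to showing that every rank-one operator $\matrixunit{v'}{v}$ lies in $\roecstar\CHx$. Since each such operator is already compact, hence locally compact, it only remains to show it can be approximated in norm by locally compact operators of controlled propagation.

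Fix $v\in \CHx$, $v'\in \CHx$ and $\varepsilon>0$. Let $\gauge$ be a non-degeneracy gauge. By \cref{lem:supports of vectors almost contained in bounded}, there exist measurable sets $A,B\subseteq X$, each a finite union of \gauge-bounded measurable sets, such that
\[
  \norm{v-\chf{A}(v)}<\varepsilon \quad\text{and}\quad \norm{v'-\chf{B}(v')}<\varepsilon.
\]
Consider the approximant
\[
  s\coloneqq \chf{B}\,\matrixunit{v'}{v}\,\chf{A}=\matrixunit{\chf{B}(v')}{\chf{A}(v)}.
\]
It is rank-at-most-one, hence compact, hence locally compact. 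Its support is contained in $B\times A$.

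The key point is the following use of coarse connectedness. I claim that $B\times A\in\CE$. Indeed, $A$ is a finite union of bounded sets $A_1,\dots,A_n$, and similarly $B=B_1\cup\dots\cup B_m$, where each $A_i\times A_i$ and $B_j\times B_j$ lies in $\CE$. To control $B\times A$, it suffices to control each $B_j\times A_i$. Pick $a\in A_i$ and $b\in B_j$; since $\crse X$ is coarsely connected, the finite set $\{a,b\}$ is bounded, so $\{(b,a)\}\in\CE$. Then
\[
  B_j\times A_i = (B_j\times B_j)\circ\{(b,a)\}\circ (A_i\times A_i)\in \CE.
\]
A finite union of entourages is an entourage, so $B\times A\in \CE$, and therefore $s$ has controlled propagation. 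Hence $s\in\roestar\CHx$.

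It remains to estimate $\norm{\matrixunit{v'}{v}-s}$. Writing
\[
  \matrixunit{v'}{v}-\matrixunit{\chf{B}(v')}{\chf{A}(v)} = \matrixunit{v'}{v-\chf{A}(v)}+\matrixunit{v'-\chf{B}(v')}{\chf{A}(v)},
\]
and using that $\norm{\matrixunit{w'}{w}}=\norm{w'}\norm{w}$, we obtain
\[
  \norm{\matrixunit{v'}{v}-s}\leq \norm{v'}\,\varepsilon+\varepsilon\,\norm{v},
\]
which tends to $0$ as $\varepsilon\to 0$. Thus $\matrixunit{v'}{v}\in\roecstar\CHx$, and the lemma follows.

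The only slightly delicate step is the argument that $B\times A$ is a controlled entourage; everything else is a direct application of the non-degeneracy approximation lemma. I expect this to be where coarse connectedness is essential: without it, $A\cup B$ need not be bounded (it could straddle two coarsely disjoint components), and no entourage would contain $B\times A$, so rank-one operators mixing distinct components could not be approximated by operators of controlled propagation.
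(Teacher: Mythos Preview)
Your proof is correct and follows essentially the same approach as the paper: reduce to rank-one operators, use \cref{lem:supports of vectors almost contained in bounded} to localize $v$ and $v'$ to finite unions of \gauge-bounded measurable sets, and invoke coarse connectedness to conclude that the product of these sets is a controlled entourage. Your argument that $B\times A\in\CE$ via the composition $(B_j\times B_j)\circ\{(b,a)\}\circ(A_i\times A_i)$ is a nice explicit unpacking of what the paper asserts in one line.
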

\begin{proof}
  A rank-one operator on $\CHx$ can always be expressed as $\matrixunit{v'}{v} = \scal{v}{h} \, v'$ for some $v',v\in \CHx$ (cf.\ \cref{subsec:operator-boolalg}). By \cref{lem:supports of vectors almost contained in bounded}, for every $\varepsilon>0$ there are measurable and bounded $B',B\subseteq X$ such that $\norm{v'-\chf{B'}(v')} \leq \varepsilon$ and $\norm{v-\chf{B}(v)} \leq \varepsilon$ (here we are using the coarse connected assumption).
  Letting $w'\coloneqq\chf{B'}(v')$ and $w\coloneqq\chf{B}(v)$ it follows that $\matrixunit{w'}{w}$ is an operator of propagation controlled by $B'\times B$, which is a controlled entourage because $\crse X$ is coarsely connected.
  It is, moreover, routine to verify that $\norm{\matrixunit{v'}{v}-\matrixunit{w'}{w}} \leq \varepsilon(\norm v +\norm w)$. Thus, $\matrixunit{v'}{v} \in \roeclike{\CHx}$, \emph{i.e.}\ every rank-one operator is approximable. Since every compact operator is the norm-limit of finite sums of rank-one operators, the claim follows.
\end{proof}

\begin{corollary}\label{cor: roe algs intersect compacts}
  With notation as in \cref{cor:roe algs disconnected},
  \[
    \roeclike{\CHx}\cap \CK(\CHx)= \bigoplus_{i\in I}\CK(\CH_i).
  \]
\end{corollary}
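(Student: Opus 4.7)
The plan is to prove both inclusions separately. For $\supseteq$, I would first apply \cref{lemma:roe-algs-contains-compacts-connected} to each coarsely connected component $\crse X_i$ to obtain $\CK(\CH_i) \subseteq \roeclike{\CH_i}$. Combining this with the identification $\chf{X_i}\roeclike{\CHx}\chf{X_i} = \roeclike{\CH_i}$ from \cref{cor:roe algs disconnected}, each $\CK(\CH_i)$ embeds into $\roeclike{\CHx}$. Since the subspaces $\CH_i$ are pairwise orthogonal, any finite sum of elements in the $\CK(\CH_i)$ is a compact operator and hence the algebraic direct sum $\bigoplus_{i\in I}^{\rm alg}\CK(\CH_i)$ sits inside $\roeclike{\CHx}\cap\CK(\CHx)$. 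Taking norm-closures and using that $\roeclike{\CHx}\cap\CK(\CHx)$ is itself norm-closed yields $\bigoplus_{i\in I}\CK(\CH_i)\subseteq\roeclike{\CHx}\cap\CK(\CHx)$.

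For the reverse inclusion, let $t\in\roeclike{\CHx}\cap\CK(\CHx)$. Since $\roeclike{\CHx}\subseteq\qlcstar{\CHx}$ (see \cref{rmk:approximable is ql 2}), \cref{lem:controlled iff equicontrolled} provides the strong decomposition $t=\sum_{i\in I}t_i$ with $t_i=\chf{X_i}t\chf{X_i}$. Each $t_i$ lies in $\roeclike{\CH_i}$ by \cref{cor:roe algs disconnected}, and is compact as a composition of a compact operator with bounded ones, so $t_i\in\roeclike{\CH_i}\cap\CK(\CH_i)$. What remains is the $c_0$-condition characterising the membership in $\bigoplus_{i\in I}\CK(\CH_i)$: for every $\varepsilon>0$ the set $J_\varepsilon\coloneqq\{i\in I\mid\norm{t_i}>\varepsilon\}$ must be finite.

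The main obstacle is precisely this $c_0$-condition, and I expect to handle it by a standard orthonormal-sequence argument. Suppose for contradiction that $J_\varepsilon$ is infinite, and choose a countably infinite subset $J\subseteq J_\varepsilon$. For each $i\in J$ select a unit vector $v_i\in\CH_i$ with $\norm{t(v_i)}=\norm{t_i(v_i)}>\varepsilon$. Because the subspaces $\CH_i$ are pairwise orthogonal, $(v_i)_{i\in J}$ is an orthonormal sequence and therefore converges weakly to $0$ (by Bessel's inequality). Compactness of $t$ then forces $\norm{t(v_i)}\to 0$, contradicting $\norm{t(v_i)}>\varepsilon$. Hence $J_\varepsilon$ is finite for every $\varepsilon>0$, which concludes the proof.
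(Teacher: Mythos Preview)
Your proof is correct and follows essentially the same route as the paper's. The only difference is in packaging: for the inclusion $\subseteq$, the paper condenses your orthonormal-sequence argument into the single observation that $\bigoplus_{i\in I}\CK(\CH_i)$ coincides with the set of compact operators in $\prod_{i\in I}\CB(\CH_i)$, and then invokes the containment $\roeclike{\CHx}\subseteq\prod_{i\in I}\CB(\CH_i)$ from \cref{cor:roe algs disconnected vs sum and prod}. Your explicit $c_0$-argument is precisely what underlies that observation, so the two proofs are the same at heart.
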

\begin{proof}
  Observe that $\bigoplus_{i\in I}\CK(\CH_i)$ coincides with the set of compact operators in $\prod_{i\in I}\CB(\CH_i)$.
  Then \cref{cor:roe algs disconnected vs sum and prod} immediately implies that 
  \[
    \roeclike{\CHx}\cap \CK(\CHx)
    \subseteq \Bigparen{\prod_{i\in I}\CB(\CH_i)}\cap\CK(\CHx) 
    = \bigoplus_{i\in I}\CK(\CH_i).
  \]
  The other containment follows from \cref{cor:roe algs disconnected vs sum and prod} applying \cref{lemma:roe-algs-contains-compacts-connected} on each component.
\end{proof}

\begin{remark}
  From a functional-analytic point of view, the two inclusions $\roeclike{\CHx} \subseteq \prod_{i \in I} \roeclike{\CH_i}$ and $\CK\left(\CHx\right) \cap \roeclike{\CHx} \subseteq \prod_{i\in I}\CK(\CH_{\crse X_i})$ in \cref{cor:roe algs disconnected,cor: roe algs intersect compacts} respectively are similar in nature to the inclusion $\ell^\infty(\NN) \otimes \CK \subseteq \ell^\infty(\NN; \CK)$.
  Namely, in the left hand side we first choose an entourage $E \in \CE$ and \emph{then} a tuple $(t_i)_{i \in I}$ such that $\bigcup_{i \in I} \supp(t_i) \subseteq E$. This yields that $(t_i)_{i \in I}$ is equi controlled.
  In contrast, in the right hand sides we only need to choose controlled $(t_i)_{i \in I}$, which may fail to be equi controlled.
\end{remark}

%%%%%%%%%%%%%%%%%%%%%%%%%%%%%%%%%%%%%%%%%%%%%%%%%%%%%%%%%%%%%%%%%%%%%%%
% THE ROE ALGEBRA AS AN INTERSECTION
%%%%%%%%%%%%%%%%%%%%%%%%%%%%%%%%%%%%%%%%%%%%%%%%%%%%%%%%%%%%%%%%%%%%%%%
\subsection{Approximate units, and Roe algebras as an intersections} \label{subsec:roe-alg-as-cap}
The goal of this subsection is to express the Roe algebra $\roecstar{\CHx}$ as the intersection of $\cpcstar{\CHx}$ and $\lccstar{\CHx}$ (see \cref{thm:roe-alg:intersection} below). 
This fact has also been observed by Braga and Vignati as a consequence of~\cite{braga_gelfand_duality_2022}*{Proposition~2.1}, which essentially contains its proof.
Nevertheless,~\cite{braga_gelfand_duality_2022}*{Proposition~2.1} only applies to bounded geometry metric spaces. The proof we give applies to arbitrary locally finite coarse spaces, and is therefore more involved.
\begin{theorem}\label{thm:approximate unit}\label{thm:roe-alg:intersection}
  Let $\crse X$ be a coarsely locally finite coarse space, and let $\CHx$ be a discrete $\crse X$-module.
  Then $\cpcstar{\CHx}\cap\lccstar{\CHx}$ has an approximate unit $\paren{p_\lambda}_{\lambda \in \Lambda}$ of projections such that
  \begin{enumerate}[label=(\roman*)]
    \item $(p_\lambda)_{\lambda \in \Lambda}$ is equi controlled (see \cref{def:unif-contr-family}~\cref{def:unif-contr-family:uc}) with propagation controlled by the discreteness gauge;
    \item $p_\lambda$ has locally finite rank for all $\lambda \in \Lambda$.
  \end{enumerate}
  In particular, $p_\lambda\in\roestar{\CHx}$ and hence $\cpcstar{\CHx}\cap\lccstar{\CHx} = \roecstar{\CHx}$.
\end{theorem}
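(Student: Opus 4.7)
The plan is to build $(p_\lambda)$ directly from a locally finite discrete partition of $\crse X$, verify its structural properties, prove the approximate unit property (the technical heart), and then deduce the algebraic identity $\cpcstar{\CHx} \cap \lccstar{\CHx} = \roecstar{\CHx}$.

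First, combining coarse local finiteness of $\crse X$ with discreteness of $\CHx$ (see \cref{def:discrete,prop:admissible is discrete}), I fix a locally finite measurable partition $X = \bigsqcup_{i \in I} A_i$ with each $A_i$ controlled by a common discreteness gauge $\gauge$. Let $\Lambda$ be the set of tuples $(q_i^\lambda)_{i \in I}$ where each $q_i^\lambda$ is a finite-rank projection on $\CH_{A_i}$, ordered componentwise; it is directed because joins of finite-rank projections on a fixed Hilbert space remain finite-rank. Define $p_\lambda \coloneqq \sum_{i \in I} q_i^\lambda$ as a strongly convergent sum of pairwise orthogonal projections. Then $p_\lambda$ is a projection with $\supp(p_\lambda) \subseteq \bigsqcup_i A_i \times A_i \subseteq \gauge$, giving (i). For (ii), local finiteness gives that for any bounded measurable $A \subseteq X$ only finitely many $A_i$ meet $A$, each contributing a finite-rank block to $\chf{A} p_\lambda$ and $p_\lambda \chf{A}$; hence $p_\lambda$ has locally finite rank and, combined with controlled propagation, lies in $\roestar{\CHx}$.

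The technical heart is the approximate unit property. Given $t \in \cpcstar{\CHx} \cap \lccstar{\CHx}$ and $\varepsilon > 0$, I aim to find $\lambda$ with $\norm{(1 - p_\lambda) t} < \varepsilon$ (the other side follows by taking adjoints). Choose $s \in \cpstar{\CHx}$ of $E$-controlled propagation with $\norm{s - t} < \varepsilon/3$, so that the problem reduces to estimating $\norm{(1 - p_\lambda) s}$. For each $i$, propagation of $s$ gives $\chf{A_i} s = \chf{A_i} s \chf{\op E(A_i)}$, and local compactness of $t$ makes the block $\chf{A_i} t \chf{\op E(A_i)}$ compact---so $\chf{A_i} s \chf{\op E(A_i)}$ is within $\varepsilon/3$ of a compact operator. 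By singular value decomposition I can then pick, for any preassigned $\delta_i > 0$, a finite-rank projection $r_i \leq \chf{A_i}$ with $\norm{(\chf{A_i} - r_i) s} \leq \delta_i + \varepsilon/3$. Taking $\lambda \geq (r_i)_{i \in I}$ forces $q_i^\lambda \geq r_i$, and the pairwise orthogonality of $\chf{A_i} - q_i^\lambda$ together with the bounded-column property of $\chf{A_i} s$ allows one to estimate $\norm{(1 - p_\lambda) s v}^2 = \sum_i \norm{(\chf{A_i} - q_i^\lambda) s v}^2 \leq \sum_i (\delta_i + \varepsilon/3)^2 \norm{\chf{\op E(A_i)} v}^2$. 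The \emph{main obstacle} is the non-uniformity of local finiteness: in general the overlap numbers $\#\{i : A_j \cap \op E(A_i) \neq \emptyset\}$ are finite but unbounded (unlike the bounded-geometry case handled in \cite{braga_gelfand_duality_2022}*{Proposition~2.1}), so $\sum_i \norm{\chf{\op E(A_i)} v}^2$ need not be $O(\norm v^2)$. Overcoming this will require a more refined choice of $\delta_i$'s, for instance by stratifying the index set into layers on which the overlaps are locally controlled and then combining the contributions.

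Once the approximate unit property is established, the equality $\cpcstar{\CHx} \cap \lccstar{\CHx} = \roecstar{\CHx}$ follows by density. For $t$ and $\varepsilon$ as above, the product $p_\lambda s$ has controlled propagation $\gauge \circ E$ and locally finite rank (since $p_\lambda$ does, using local finiteness of the partition and controlled propagation of $s$), hence $p_\lambda s \in \roestar{\CHx}$, and $\norm{p_\lambda s - t} \leq \norm{s - t} + \norm{p_\lambda t - t} < 2\varepsilon$. The reverse inclusion is automatic since $\roestar \subseteq \cpcstar \cap \lccstar$.
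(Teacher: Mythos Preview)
Your construction of the net $(p_\lambda)$ and verification of properties (i) and (ii) match the paper exactly, and you have correctly identified the main obstacle: without bounded geometry the overlap counts $\#\{i : A_j \cap \op E(A_i) \neq \emptyset\}$ are finite but unbounded, so the naive estimate $\sum_i \norm{\chf{\op E(A_i)} v}^2$ cannot be controlled by $\norm{v}^2$. However, your proposal does not actually overcome this obstacle. The suggestion of ``stratifying the index set into layers'' is pointing in the right direction but is not a proof, and the specific route you sketch cannot work as stated: in the bound $\sum_i (\delta_i + \varepsilon/3)^2 \norm{\chf{\op E(A_i)} v}^2$ the $\varepsilon/3$ term does not shrink with $i$, so no clever choice of $\delta_i$ will save you from the divergent overlap sum.

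The paper's resolution is a concrete orthogonality trick. One groups the $A_i$ according to an $E$-connectedness relation, and within each $E$-component one builds measurable ``annuli'' $B_n^\ell$ (nested thickenings by powers of $E$, taking successive differences) with the property that $E(B_m)$ and $E(B_n)$ are disjoint whenever $|m-n| \geq 2$. Splitting into even and odd $n$ then makes the family $\bigl(\chf{E(B_{2n}^\ell)} t \chf{B_{2n}^\ell}\bigr)_{n,\ell}$ pairwise orthogonal, so after choosing finite-rank projections approximating each compact block $\chf{E(B_n^\ell)} t \chf{B_n^\ell}$ to within $\varepsilon$, the norm of the total error is a \emph{supremum} rather than a sum. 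This is exactly the ``stratification'' you were reaching for; the key insight you are missing is that the layers must be chosen so that non-adjacent layers are genuinely $E$-orthogonal, and then one handles even and odd layers separately. Once the approximate unit property is in hand, your deduction of the equality is fine (the paper uses the multiplier inclusion $\cpcstar{\CHx} \subseteq \multiplieralg{\roecstar{\CHx}}$ from \cref{cor:cpc is multiplier of roec} to conclude $p_\lambda t \in \roecstar{\CHx}$ directly, which is slightly cleaner than your route through $p_\lambda s$).
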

\begin{proof}
  Let $X=\bigsqcup_{i\in I}A_i$ be a discrete partition of $\crse X$, and let $\CH_{A_i} \coloneqq \chf{A_i}(\CHx)$ for every $i\in I$. Let $\Lambda$ be the set of all functions $\lambda$ from $I$ to the set of finite rank vector subspaces of $\CHx$ subordinate to the condition that $\lambda(i)\leq \CH_{A_i}$ for every $i\in I$.
  Let $p_{\lambda(i)}$ be the projection onto $\lambda(i)$ and let $p_\lambda=\sum_{i\in I}p_{\lambda(i)}$ be their strong sum, \emph{i.e.}\ the projection onto the Hilbert space generated by all the $\lambda(i)$.
  We make $\Lambda$ into a directed set in the natural way, namely $\lambda \leq \lambda'$ if and only if $\lambda(i)\leq\lambda'(i)$ for every $i\in I$. Thus, $(p_\lambda)_{\lambda \in \Lambda}$ forms a net in $\CB(\CHx)$.

  Observe that $\supp(p_\lambda)\subseteq \bigsqcup_{i\in I}A_i\times A_i$, so the projections $p_\lambda$ have propagation controlled by the discreteness gauge $\gauge \in \CE$. Hence, they are equi controlled.
  We claim $p_\lambda$ has locally finite rank for every $\lambda \in \Lambda$. In fact, since $\crse X$ is locally finite, for every measurable bounded set $B\subseteq X$ there is a finite subset $J\subseteq I$ such that $B\subseteq \bigsqcup_{j\in J}A_j$. It follows that
  \[
    \chf{B}p_\lambda = \sum_{j\in J}\chf{B}\chf{A_j}p_\lambda =\sum_{j\in J}\chf{B}p_{\lambda(j)}
  \]
  and the latter is a finite sum of finite rank operators. The same argument applies to $p_\lambda\chf{B}$, and this shows that indeed $p_\lambda\in\roestar{\CHx}$.

  It remains to show that $(p_\lambda)_{\lambda \in \Lambda}$ forms an approximate unit of $\cpcstar{\CHx}\cap\lccstar{\CHx}$. Fix a contraction $t \in \cpcstar{\CHx}\cap\lccstar{\CHx}$ and $\varepsilon > 0$. Since $t$ is contained in $\cpcstar{\CHx}$, there is some controlled entourage $E \in \CE$ and a contraction $s \in \cpstar{\CHx}$ such that $\supp(s) \subseteq E$ and $\norm{s - t} \leq \varepsilon$. We may further assume that $E$ is a gauge with $\gauge \subseteq E$.

  Given $A_i, A_j \subseteq X$ (from the discrete decomposition above), we write $A_i \sim_E A_j$ if $A_i$ and $A_j$ are not $E^{\circ k}$\=/separated for any $k \in \NN$, where
  \[
    E^{\circ k} \coloneqq E \circ \dots \circ E \;\; \text{($k$ times)}.
  \]
  Observe that $\sim_E$ is an equivalence relation on $I$ (since $A_i \times A_i \subseteq \gauge \subseteq E$ and $E$ is symmetric).
  Therefore, we may decompose $X = \bigsqcup_{\ell \in L} X_\ell$ into its $\sim_E$\=/equivalence classes, which we may further decompose as $X_\ell = \bigsqcup_{i \in I_\ell} A_i$ for some $I_\ell \subseteq I$.
  
  For every $\ell\in L$, we arbitrarily fix an index $i_\ell \in I_\ell$ and we use it to subdivide each $X_\ell$ into concentric anuli. 
  Namely, we let  $C_0^\ell \coloneqq A_{i_\ell}$, and for every $n \geq 1$ we let $C_n^\ell$ be a measurable controlled thickening of $E^{\circ (2n)}(C_{n-1}^\ell)$. We then let
  \[
    B_n^\ell \coloneqq C_n^\ell \setminus C_{n-1}^\ell
    \quad\text{and}\quad 
    B_n \coloneqq \bigsqcup_{\ell \in L} B_n^\ell.
  \]
  Observe that the sets $B_n$ form a measurable (non-controlled) partition of $X$ with the property that $E(B_m) \cap E(B_n) = \emptyset$ when $\abs{n - m} \geq 2$.

  Since $B_n^\ell$ is bounded and $t$ is quasi-local, $\chf{E(B_n^\ell)}t \chf{B_n^\ell}$ is compact for all $\ell \in L$ and $n \in \NN$. In particular, we may choose a $\lambda^\ell_n\in\Lambda$ which is supported on the set indices $i\in I_\ell$ such that $A_i\cap E(B^\ell_n)\neq\emptyset$, and it satisfies
  \[
    \norm{\chf{E(B_n^\ell)}t \chf{B_n^\ell} - p_{\lambda_n^\ell} \chf{E(B_n^\ell)} t \chf{B_n^\ell}} \leq \varepsilon.
  \]
  Observe that for every $i\in I$ there are at most finitely many values $n\in\NN$ such that $A_i\cap E(B^\ell_n)$ is non-empty (they must be consecutive). It follows that the $\lambda^\ell_n$ can be joined to obtain some $\lambda\in\Lambda$ such that
  \[
    \norm{\chf{E(B_n^\ell)}t \chf{B_n^\ell} - p_{\lambda} \chf{E(B_n^\ell)} t \chf{B_n^\ell}} \leq \varepsilon.
  \]
  for every $\ell\in L$ and $n\in\NN$.

  Let now
  \[
    B_{\rm even}\coloneqq \bigsqcup_{n\in\NN}B_{2n}
    \quad\text{and}\quad
    B_{\rm odd}\coloneqq \bigsqcup_{n\in\NN}B_{2n+1}.
  \]
  Observe that $X = B_{\rm even}\sqcup B_{\rm odd}$, and therefore $t=t\chf{B_{\rm even}}+t\chf{B_{\rm odd}}$.  
  We will now focus on $B_{\rm even}$. Both 
  \[
    \norm{t\chf{B_{\rm even}} - s\chf{B_{\rm even}}}
    \quad\text{and}\quad
    \norm{p_\lambda t\chf{B_{\rm even}} - p_\lambda s\chf{B_{\rm even}}}
  \] are bounded by $\norm{t-s}$, which we assumed to be $\leq \varepsilon$.
  Since $\supp(s) \subseteq E$, we also have
  \[
    s\chf{B_{\rm even}} = \chf{E(B_{\rm even})} s\chf{B_{\rm even}}.
  \]
  It then follows that
  \begin{align*}
    p_\lambda t \chf{B_{\rm even}}
    &\approx_\varepsilon p_\lambda s \chf{B_{\rm even}}
    = p_\lambda \chf{E(B_{\rm even})} s \chf{B_{\rm even}}
    \approx_\varepsilon p_\lambda \chf{E(B_{\rm even})} t \chf{B_{\rm even}}; \\
    t \chf{B_{\rm even}}
    &\approx_\varepsilon s \chf{B_{\rm even}}
    = \chf{E(B_{\rm even})} s \chf{B_{\rm even}}
    \approx_\varepsilon \chf{E(B_{\rm even})} t \chf{B_{\rm even}}.
  \end{align*}
  It follows from the construction that
  \begin{align*}
    \paren{\chf{E(B_{\rm even})} t \chf{B_{\rm even}}} & - \paren{p_\lambda \chf{E(B_{\rm even})} t \chf{B_{\rm even}}} \\
    & = \sum_{\ell \in L}\sum_{n\in \NN}\paren{\chf{E(B_{2n}^\ell)}t \chf{B_{2n}^\ell}} - \paren{p_\lambda\chf{E(B_{2n}^\ell)}t \chf{B_{2n}^\ell}}
  \end{align*}
  is a sum of orthogonal operators, and therefore its norm is equal to
  \[
    \sup_{\ell \in L} \sup_{n \in \NN} \norm{\paren{\chf{E(B_{2n}^\ell)}t \chf{B_{2n}^\ell}} - \paren{p_\lambda\chf{E(B_{2n}^\ell)}t \chf{B_{2n}^\ell}}}\leq \varepsilon.
  \]
  Combining these inequalities yields that
  \[
   \norm{t \chf{B_{\rm even}} - p_\lambda t \chf{B_{\rm even}} }\leq 5\varepsilon,
  \]
  and a symmetric argument holds for the odd part as well. Since $\varepsilon > 0$ was arbitrary, this shows that $(p_\lambda)_{\lambda \in \Lambda}$ is a left approximate unit. Since each $p_\lambda$ is self-adjoint, it is also a right approximate unit.

  The ``in particular'' part of the statement is now trivial. Indeed, every operator $t \in \cpcstar\CHx\cap\lccstar{\CHx}$ is the norm-limit of the net $p_\lambda t$, which is contained in $\roecstar\CHx$ because 
  $p_\lambda \in \roecstar\CHx$ and $\cpcstar\CHx$ idealizes $\roecstar\CHx$ (see \cref{cor:cpc is multiplier of roec}).
\end{proof}

\cref{thm:roe-alg:intersection} is rather convenient, as there are instances where it is simpler to prove that an operator belongs to the Roe algebra by separately checking that it is the limit of operators of controlled propagation and that it is locally compact.
One instance where this is (implicitly) used is in the context of \emph{warped cones} (we refer, \emph{e.g.}\ to \cites{roe_warped_2005,li_markovian_2020} and references therein for notation and motivation).

In \cite{li_markovian_2020}*{Proposition 5.1} it is proved ``by hand'' that certain projections of the form $P\otimes 1_{\ell^2(\NN)}$ belong to the (classical) Roe algebra $\roecstar{\CO_\Gamma X}$ of the (\emph{unified}) \emph{warped cone} $\CO_\Gamma X$ by constructing approximations by locally finite rank operators of controlled propagation (this extends a previous result of Sawicki \cite{SawickiThesis}).\footnote{\,
The fact that the projections $P\otimes 1_{\ell^2(\NN)}$ belong to the Roe algebra has consequences regarding the failure of surjectivity of the coarse assembly map of the coarse Baum--Connes conjecture.}
This statement would have been an immediate consequence of \cref{thm:roe-alg:intersection}. In fact, the projections $P$ have rank-1, so $P\otimes 1_{\ell^2(\NN)}$ is obviously locally compact, and one of the main results of \cite{li_markovian_2020} directly implies that  $P\otimes 1_{\ell^2(\NN)}$ is a limit of certain operators of finite propagation (which are, however, never locally compact).

Using \cite{delaat2023dynamical}, \cref{thm:roe-alg:intersection} lets us prove a much more refined result than that of \cite{li_markovian_2020}. The \cstar{}algebra of operators of \emph{finite dynamical propagation} $C^*_{\rm fp}(\Gamma\curvearrowright X) \leq \CB(L^2(X))$, where $\Gamma\curvearrowright (X,\mu)$ is a measurable action on a measure space, is defined in \cite{delaat2023dynamical}.
It is shown in~\cite{delaat2023dynamical}*{Corollary~C} that if the $\Gamma$-action is ergodic then $C^*_{\rm fp}(\Gamma\curvearrowright X)\cap \CK(L^2(X))$ is either $\{0\}$ or the whole of $\CK(L^2(X))$ (and the latter case happens if and only if the action is strongly ergodic).
If $X$ is also a compact metric space so that $\mu$ is a $\sigma$-finite Borel measure of full support, taking tensor products defines a natural \Star{}embedding of the spatial (a.k.a.\ minimal) tensor product
\[
  C^*_{\rm fp}(\Gamma\curvearrowright X)\otimes \cpcstar{L^2(\RR_{\geq 0})}\to\cpcstar{\CO_\Gamma X}.
\]
It is clear by the definition of the warped metric that $\CK(L^2(X))\otimes \lccstar{L^2(\RR_{\geq 0})}$ is mapped into $\lccstar{\CO_\Gamma X}$. It then follows from \cref{thm:roe-alg:intersection} that the tensor product restricts to a \Star{}embedding
\[
  \paren{C^*_{\rm fp}(\Gamma\curvearrowright X)\cap\CK(L^2(X))}\otimes \roecstar{L^2(\RR_{\geq 0})}
  \to\roecstar{\CO_\Gamma X}.
\]
In particular, if the action is strongly ergodic this induces a \Star{}embedding of $\CK(L^2(X))\otimes \roecstar{L^2(\RR_{\geq 0})}$ into $\roecstar{\CO_\Gamma X}$. Observe that \cite{li_markovian_2020}*{Proposition 5.1} follows, as $P\in\CK(L^2(X))$.

We end this subsection by observing that in many cases the approximate unit constructed in \cref{thm:approximate unit} is an approximate unit for much larger \cstar{}algebras.
\begin{proposition}\label{prop: approx unit of lccstar}
  Let $\crse X$ be a coarsely locally finite and coarsely countable space, and let $\CHx$ be a discrete $\crse X$-module. Then the net $\paren{p_\lambda}_{l\in \Lambda}\subseteq \roestar{\CHx}$ constructed in \cref{thm:approximate unit} is an approximate unit of $\lccstar{\CHx}$.
\end{proposition}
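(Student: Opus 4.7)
The strategy is to use the coarse countability to enumerate the blocks $A_i$ of the discrete partition, then exploit the local compactness of $t$ on each block together with a summable error budget.

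First I would fix a discrete partition $X=\bigsqcup_{i\in I}A_i$ as in the construction of $(p_\lambda)_{\lambda\in\Lambda}$, and use the hypothesis that $\crse X$ is coarsely locally finite and coarsely countable to ensure that the index set $I$ can be taken countable. Enumerate $I=\{i_1,i_2,\ldots\}$ (the finite case is only easier). Fix a contraction $t\in\lccstar{\CHx}$ and $\varepsilon>0$. For each $k$, the operator $\chf{A_{i_k}}t$ is compact by local compactness of $t$, hence admits an arbitrarily good finite-rank approximation: choose a finite rank projection $q_k\leq \chf{A_{i_k}}$ whose range contains the top singular vectors of $\chf{A_{i_k}}t$ enough to ensure
\[
\bignorm{(\chf{A_{i_k}}-q_k)\,t} \;\leq\; \varepsilon\cdot 2^{-k/2}.
\]

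Now I would define $\lambda_0\in\Lambda$ by setting $\lambda_0(i_k)\coloneqq\mathrm{range}(q_k)$ (and $\lambda_0(i)=0$ for any leftover $i$, if necessary). For any $\lambda\geq\lambda_0$ one has $p_{\lambda(i_k)}\geq q_k$, and therefore $\chf{A_{i_k}}-p_{\lambda(i_k)}\leq\chf{A_{i_k}}-q_k$. Since $\sum_{i\in I}\chf{A_i}=1$ in the strong operator topology (by discreteness), we may write the SOT-sum
\[
(1-p_\lambda)\,t \;=\; \sum_{k\in\NN}\bigl(\chf{A_{i_k}}-p_{\lambda(i_k)}\bigr)\,t,
\]
whose summands have mutually orthogonal ranges because they are sub-projections of the pairwise orthogonal $\chf{A_{i_k}}$. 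For every $v\in\CHx$ this yields
\[
\bignorm{(1-p_\lambda)tv}^2
\;=\; \sum_{k\in\NN}\bignorm{(\chf{A_{i_k}}-p_{\lambda(i_k)})tv}^2
\;\leq\; \sum_{k\in\NN}\bignorm{(\chf{A_{i_k}}-q_k)tv}^2
\;\leq\; \sum_{k\in\NN}\varepsilon^2 2^{-k}\,\norm{v}^2 \;\leq\; \varepsilon^2\norm{v}^2,
\]
so $\norm{t-p_\lambda t}\leq\varepsilon$ for all $\lambda\geq\lambda_0$. Applying the same argument to $t^*\in\lccstar{\CHx}$ and taking adjoints gives $\norm{t-tp_{\lambda'}}\leq\varepsilon$ for $\lambda'$ large; taking a common upper bound of $\lambda_0$ and $\lambda'$ in the directed set $\Lambda$ concludes.

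The main obstacle I expect is keeping the total error under control. Local compactness only provides compactness \emph{on each block separately}, which a priori only gives a \emph{pointwise} (SOT) estimate. The trick is to use the pairwise orthogonality of the $\chf{A_{i_k}}-p_{\lambda(i_k)}$ to turn the norm of the SOT-sum into the $\ell^2$-sum of the individual norms, so that the geometric choice $\delta_k=\varepsilon 2^{-k/2}$ produces a summable error. This is exactly where coarse countability is essential: without a countable indexing of the blocks, one cannot fit the individual error tolerances into a convergent series and the argument breaks down.
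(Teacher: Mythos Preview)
Your proof is correct and follows essentially the same approach as the paper: fix a countable discrete partition, use local compactness to choose finite-rank approximants on each block with summable error tolerances, and then sum. The only cosmetic difference is that the paper uses the triangle inequality with tolerances $\varepsilon/2^i$ (an $\ell^1$-bound), whereas you exploit the orthogonality of the ranges to use Pythagoras with tolerances $\varepsilon\cdot 2^{-k/2}$ (an $\ell^2$-bound); both arguments are equivalent here.
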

\begin{proof}
  Let $X=\bigsqcup_{i\in \NN}A_i$ be a \emph{countable} discrete partition.
  Fix a contraction $t \in \lccstar\CHx$ and $\varepsilon>0$. For every $i\in I$, the operator $\chf{A_i}t$ is compact and has range contained in $\CH_{A_i}$. It follows that we may choose $\lambda(i)\leq\CH_{A_i}$ such that
  \[
    \norm{p_{\lambda(i)}t -\chf{A_i}t}= \norm{p_{\lambda\left(i\right)}\chf{A_i}t -\chf{A_i}t}\leq \frac{\varepsilon}{2^i}
  \]
  (here we are using in a key manner that $\crse X$ is coarsely countable). Hence,
  \begin{align*}
    \norm{p_\lambda t - t} = \norm{\sum_{i \in \NN} p_{\lambda\left(i\right)} t - \chf{A_i} t} \leq \sum_{i \in \NN} \norm{p_{\lambda\left(i\right)} t - \chf{A_i} t} \leq \varepsilon.
  \end{align*}
  Taking $\varepsilon \to 0$ yields that $(p_\lambda)_{\lambda \in \Lambda}$ is an approximate unit.
\end{proof}

%%%%%%%%%%%%%%%%%%%%%%%%%%%%%%%%%%%%%%%%%%%%%%%%%%%%%%%%%%%%%%%%%%%%%%%
% (NON-COMMUTATIVE) CARTAN SUBALGEBRAS OF ROE-LIKE ALGEBRAS
%%%%%%%%%%%%%%%%%%%%%%%%%%%%%%%%%%%%%%%%%%%%%%%%%%%%%%%%%%%%%%%%%%%%%%%
\subsection{(Non-commutative) Cartan subalgebras of Roe-like algebras} \label{subsec:cartan subalgebras}
In this subsection we continue the study of some structural properties of $\roeclike{\CHx}$. Specifically, we construct non-commutative Cartan subalgebras in Roe-like algebras.
We refer the reader to~\cites{feldman-moore-1997-i1,feldman-moore-1997-ii} for the classical theory of Cartan subalgebras for von Neumann algebras,\footnote{\,
  Cartan subalgebras for von Neumann algebras are closer in spirit to \emph{\cstar{}diagonals} in the sense of~\cite{kumjian-diagonals-1986}. This is a special kind of (\cstar{})Cartan subalgebra, which were introduced in \cite{Renault2008CartanSI}.
  }
and to \cites{kumjian-diagonals-1986,Renault2008CartanSI} for its analogue in \cstar{}algebras. We also note that \cite{white_cartan_2018} discusses the existence and uniqueness of Cartan subalgebras of the classical uniform Roe algebras of metric spaces, and that these constructions have been extended to the non-uniform case in \cite{braga_gelfand_duality_2022}.
For now, let us recall the definition of a (\cstar{})\emph{Cartan subalgebra}.
\begin{definition} \label{def:cartan-subalg}
  Let $B \subseteq A$ be an inclusion of \cstar{}algebras. We say that $B$ is a \emph{(\cstar{})Cartan subalgebra} of $A$ if the following hold.
  \begin{enumerate}[label=(\alph*)]
    \item \label{def:cartan-subgal:max-ab} $B$ is maximal abelian in $A$;
    \item \label{def:cartan-subgal:app-unit} $B$ contains an approximate unit of $A$;
    \item \label{def:cartan-subgal:norm} $A$ is generated (as a \cstar{}algebra) by the \emph{normalizers} of $B$, that is, those $n \in A$ such that $nBn^*, n^*Bn \subseteq B$;
    \item \label{def:cartan-subgal:cond-exp} There is a faithful conditional expectation $E \colon A \to B$.
  \end{enumerate}
  Since we only work with \cstar{}algebras we shall drop the \cstar{} and simply call these pairs \emph{Cartan subalgebras}.
\end{definition}

Note that \cref{def:cartan-subalg}~\cref{def:cartan-subgal:max-ab} implies, in particular, that $B$ is abelian. 
Such a Cartan pair can be usually found for Roe algebras $\roecstar{\CHx}$ (see \cref{rmk:commutative Cartan for Roe}) or Roe-like algebras of the uniform coarse geometric module $\CH_{u,\crse X}$ of \cref{exmp:uniform module}.
However, this cannot be arranged in the setting of general Roe-like algebras (see \cref{rmk:commutative Cartan for Roe}).
In the general setup it is instead more natural to work with \emph{non-commutative} Cartan pairs, as introduced in~\cite{exel-nc-cartan-subalgs-2011}.
\begin{definition}[cf.~\cite{exel-nc-cartan-subalgs-2011}*{Definition~9.2}]
  Let $B \subseteq A$ be an inclusion of \cstar{}algebras. A \emph{virtual commutant of $B \subseteq A$} is a pair $(\varphi, J)$, where $J \subseteq B$ is a (closed $2$-sided) ideal, and $\varphi \colon J \to A$ is a linear map such that $\varphi(bx) = b \varphi(x)$ and $\varphi(xb) = \varphi(x) b$ for all $x \in J$ and $b \in B$.
\end{definition}

\begin{definition}[cf.~\cite{exel-nc-cartan-subalgs-2011}*{Definition~12.1}] \label{def:nc-cartan-subalg}
  Let $B \subseteq A$ be an inclusion of \cstar{}algebras. We say that $B$ is a \emph{non-commutative Cartan subalgebra} of $A$ if \cref{def:cartan-subgal:app-unit,def:cartan-subgal:norm,def:cartan-subgal:cond-exp} in \cref{def:cartan-subalg} hold, and it moreover satisfies
  \begin{enumerate}[label=(\alph*')]
    \item \label{def:cartan-subgal:virtual-comm} $\image(\varphi) \subseteq B$ for any virtual commutant $(\varphi, J)$ of $B \subseteq A$.
  \end{enumerate}
\end{definition}
It is proven in~\cite{exel-nc-cartan-subalgs-2011}*{Propositions~9.3 and~9.4} that every virtual commutant is bounded and its image is contained in $JAJ$. Likewise,~\cite{exel-nc-cartan-subalgs-2011}*{Proposition~9.7} shows that if~\cref{def:cartan-subgal:virtual-comm} holds then $B' \cap A \subseteq B$, \emph{i.e.}\ $B$ contains all the elements of $A$ that commute with it.
Moreover, \cite{exel-nc-cartan-subalgs-2011}*{Proposition~9.8} proves that if $B$ is assumed to be abelian then the converse also holds. It follows that conditions~\cref{def:cartan-subgal:max-ab,def:cartan-subgal:virtual-comm} are equivalent if $B$ is abelian, and hence \cref{def:nc-cartan-subalg} is a natural non-commutative generalization of \cref{def:cartan-subalg}. We record the following for future use.
\begin{theorem}[cf.~\cite{exel-nc-cartan-subalgs-2011}*{Theorem~9.5}] \label{thm:virtual-comm}
  Suppose $A \subseteq \CB(\CH)$ is concretely represented, and let $B \subseteq A$ be a non-commutative Cartan subalgebra.
  Then, for any virtual commutant $(\varphi, J)$ of $B \subseteq A$ there is some $t \in \CB(\CH) \cap B'$ such that $\varphi(x) = tx \in A$ for all $x \in J$.
\end{theorem}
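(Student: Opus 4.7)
The plan is to realize $t$ as a weak-operator-topology (WOT) limit point of the net $\paren{\varphi(e_\lambda)}_{\lambda \in \Lambda}$, where $(e_\lambda)_{\lambda \in \Lambda}$ is a bounded approximate unit for $J$ that is \emph{quasi-central for $B$} in the sense of Arveson: the $e_\lambda$ are positive contractions in $J$ with $\norm{e_\lambda x - x}\to 0$ for every $x\in J$ \emph{and} $\norm{e_\lambda b - b e_\lambda}\to 0$ for every $b\in B$. Such approximate units always exist for a \cstar{}ideal inside a \cstar{}algebra. Since $\varphi$ is bounded by \cite{exel-nc-cartan-subalgs-2011}*{Proposition~9.3}, the net $\paren{\varphi(e_\lambda)}$ is norm-bounded in $\CB(\CH)$, so by weak-$\ast$ compactness of norm-closed balls we may pass to a subnet (still denoted $(e_\lambda)$) such that $\varphi(e_\lambda)\to t$ in WOT for some $t\in\CB(\CH)$.

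For the identity $\varphi(x)=tx$ with $x\in J$, I will use that $\varphi(e_\lambda)\,x=\varphi(e_\lambda x)$ by the bimodularity of $\varphi$ (applied with $x\in J\subseteq B$), and $e_\lambda x\to x$ in norm; hence $\varphi(e_\lambda)\,x\to \varphi(x)$ in norm, a fortiori in WOT. On the other hand, separate WOT-continuity of right multiplication by $x$ gives $\varphi(e_\lambda)\,x\to tx$ in WOT, and comparing the two limits yields $\varphi(x)=tx$, which lies in $A$ as needed. To show $t\in B'$, fix $b\in B$ and compute
\[
  \varphi(e_\lambda)\,b - b\,\varphi(e_\lambda) = \varphi(e_\lambda b) - \varphi(b e_\lambda) = \varphi(e_\lambda b - b e_\lambda),
\]
whose norm is at most $\norm{\varphi}\cdot\norm{e_\lambda b - b e_\lambda}\to 0$ by quasi-centrality. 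Taking WOT-limits of the left-hand side yields $tb-bt=0$, so $t\in B'$.

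The principal obstacle is producing a $t$ which lies in $B'$: mere norm-boundedness of $\paren{\varphi(e_\lambda)}$ supplies a WOT cluster point but not one commuting with $B$, because the bimodularity of $\varphi$ only converts to commutation modulo the commutator defect $[e_\lambda, b]$. Arveson's quasi-central approximate unit theorem is the one non-trivial input that annihilates this defect and closes the argument; once $t$ has been produced, the relation $\varphi(x)=tx$ drops out immediately from bimodularity and separate WOT-continuity of multiplication.
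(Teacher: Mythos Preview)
The paper does not give its own proof of this theorem: it is quoted verbatim as a result from Exel's paper \cite{exel-nc-cartan-subalgs-2011}*{Theorem~9.5}, so there is no in-house argument to compare against. That said, your proof is correct. The bimodularity identity $\varphi(e_\lambda x)=\varphi(e_\lambda)x$ (with $e_\lambda\in J$ and $x\in J\subseteq B$) is applied in the right direction, the WOT-compactness/subnet extraction is standard, and the quasi-central approximate unit neatly kills the commutator defect $\varphi([e_\lambda,b])$ needed to place $t$ in $B'$. Note also that your argument never uses the Cartan hypothesis on $B\subseteq A$; this is as it should be, since in Exel's paper Theorem~9.5 precedes the definition of non-commutative Cartan subalgebra and holds for arbitrary inclusions (the Cartan assumption in the paper's statement is only contextual).
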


We turn now to the construction of non-commutative Cartan subalgebras in Roe-like algebras (see \cref{thm:cartan-subalgs}). The following sets notation that will be useful in the sequel.
\begin{notation}
  In the following, we denote by $\discdec \coloneqq (A_i)_{i \in I}$ a discrete partition $X = \bigsqcup_{i \in I} A_i$ for a discrete $\crse X$-module $\CHx$.
\end{notation}

As in \cref{subsec:roe-algs-non-connected}, once the partition $\discdec$ is fixed, for every operator $t\in\CB(\CHx)$ we denote the section $\chf{A_i}t\chf{A_i}$ by $t_i$.
\begin{definition} \label{def:cond-exp}
  Let $\crse X$ be a coarse space, and let $\CHx$ be a discrete module. Let $\discdec = (A_i)_{i \in I}$ be a discrete partition of $\CHx$. We define
  \[
    \begin{tikzcd}[row sep =0]
      \condexp{\discdec} \colon \CB\left(\CHx\right) \arrow{r} &  \prod_{i \in I} \CB\left(\CH_{A_i}\right), \\
    \hspace{1.5 em} t \arrow[|->, r] & (t_i)_{i \in I}.
    \end{tikzcd}    
  \]
  Moreover, let $\condexpql{\discdec}, \condexpcp{\discdec}$ and $\condexproe{\discdec}$ be the restrictions of $\condexp{\discdec}$ to $\qlcstar{\CHx}, \cpcstar{\CHx}$ and $\roecstar{\CHx}$ respectively. 
\end{definition}

Henceforth, we will use the convention that $\condexproelike{\discdec}$ denotes one of $\condexpql{\discdec}, \condexpcp{\discdec}$ and $\condexproe{\discdec}$. We also define $\cartansubalgroelike{\discdec}{\CHx}$ to be the image of $\condexproelike{\discdec}$.
Observe that
\begin{equation}\label{eq:cartan algs I}
    \cartansubalgcp{\discdec}{\CHx} = \cartansubalgql{\discdec}{\CHx} = \prod_{i\in I}\CB(\CH_{A_i}).
\end{equation}
Moreover, if $\fka $ is locally finite we also have
\begin{equation}\label{eq:cartan algs II}
    \cartansubalgroe{\discdec}{\CHx} = \prod_{i\in I}\CK(\CH_{A_i}).
\end{equation}

\begin{lemma} \label{lemma:cond-exp}
  For any discrete partition $\discdec = (A_i)_{i \in I}$ of $\crse X$, the maps $\condexproelike{\discdec} \colon \roeclike{\CHx} \to \cartansubalgroelike{\discdec}{\CHx}$ are faithful conditional expectations.
\end{lemma}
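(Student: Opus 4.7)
The plan is to treat all three cases ($\condexpcp{\discdec}$, $\condexpql{\discdec}$, $\condexproe{\discdec}$) in parallel by first understanding $\condexp{\discdec}$ as a bounded linear map on all of $\CB(\CHx)$, then restricting appropriately.

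First, I would rewrite $\condexp{\discdec}$ explicitly as the SOT-convergent sum $\condexp{\discdec}(t) = \sum_{i \in I} \chf{A_i} t \chf{A_i}$. Since the operators $\chf{A_i} t \chf{A_i}$ are pairwise orthogonal (as $\chf{A_i}\chf{A_j} = 0$ for $i \neq j$) and uniformly bounded by $\norm{t}$, their SOT sum converges to a bounded operator of norm $\sup_i \norm{t_i} \leq \norm{t}$. This identifies $\condexp{\discdec}(t)$ with the block-diagonal tuple $(t_i)_{i\in I} \in \prod_i \CB(\CH_{A_i})$, and shows $\condexp{\discdec}$ is a contractive linear $*$-preserving map $\CB(\CHx) \to \prod_i \CB(\CH_{A_i})$.

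Next, I would check that $\condexp{\discdec}$ restricts to each of $\roecstar{\CHx}$, $\cpcstar{\CHx}$, and $\qlcstar{\CHx}$. The key observation is that $\supp(\condexp{\discdec}(t)) \subseteq \bigsqcup_i A_i \times A_i \subseteq \gauge$, where $\gauge$ is the discreteness gauge. Hence $\condexp{\discdec}(t)$ already has controlled propagation, placing it in $\cpstar{\CHx} \subseteq \cpcstar{\CHx} \cap \qlcstar{\CHx}$. For the Roe case, since each $A_i$ is $\gauge$\=/bounded and measurable, local compactness of $t$ gives compactness of $\chf{A_i} t$, hence of $t_i = \chf{A_i} t \chf{A_i}$; so $\condexp{\discdec}(t) \in \roestar{\CHx}$ whenever $t \in \roecstar{\CHx}$ (using the fact that compactness is preserved under norm limits on the bounded pieces).

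The conditional expectation properties now follow routinely. Idempotency is immediate: if $s = (s_i)_i \in \cartansubalgroelike{\discdec}{\CHx}$ then $\chf{A_j} s \chf{A_j} = s_j$. For the bimodule property over $\cartansubalgroelike{\discdec}{\CHx}$, given $s = (s_i)_i$ in the image one computes $\chf{A_j}(st)\chf{A_j} = s_j t_j$ using that $s$ commutes with each $\chf{A_j}$, so $\condexp{\discdec}(st) = s \cdot \condexp{\discdec}(t)$, and similarly on the right.

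Finally, for faithfulness, suppose $t \in \roeclike{\CHx}$ is positive with $\condexp{\discdec}(t) = 0$. Writing $t = s^*s$ for some $s \in \CB(\CHx)$, we get $(s\chf{A_i})^*(s\chf{A_i}) = \chf{A_i} t \chf{A_i} = 0$, hence $s\chf{A_i} = 0$ for every $i \in I$. Since $\sum_i \chf{A_i} = 1$ in the strong operator topology, $s = \text{SOT-}\sum_i s\chf{A_i} = 0$, whence $t = 0$. The main thing to be careful about is just confirming that for the Roe case $t_i$ genuinely lies in $\CK(\CH_{A_i})$, but this is direct from the definitions since $A_i$ is a bounded measurable set.
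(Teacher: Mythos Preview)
Your proof is correct and follows essentially the same approach as the paper: the paper invokes Tomiyama's theorem (while also noting the bimodule identity can be checked directly, as you do), and your faithfulness argument via $(s\chf{A_i})^*(s\chf{A_i})=0$ together with $\sum_i\chf{A_i}=1$ is exactly the paper's argument. The only difference is cosmetic---you take the extra step of checking that $\condexp{\discdec}$ lands back in each Roe-like algebra, which the paper leaves implicit since it works with the image by definition.
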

\begin{proof}
  Observe that the map $\condexp{\discdec}$ itself is completely positive and contractive, and hence so is $\condexproelike{\discdec}$.
  Moreover, since $\cartansubalgroelike{\discdec}{\CHx} \subseteq \prod_{i \in I} \CB(\chf{A_i}(\CHx))$, it follows that $\condexproelike{\discdec}$ restricts to the identity on $\cartansubalgroelike{\discdec}{\CHx}$. Hence, by Tomiyama's Theorem, it follows that $\condexproelike{\discdec}$ is a conditional expectation.
  Alternatively, it is just as easy to directly verify that $\condexproelike{\discdec}\paren{xty}=x\condexproelike{\discdec}\paren{t}y$ for every $t\in\roeclike{\CHx}$ and $x,y\in\cartansubalgroelike{\discdec}{\CHx}$.

  Therefore, it suffices to prove that $\condexproelike{\discdec}$ is faithful. Let $x \in \roeclike{\CHx}$ be such that $\condexproelike{\discdec}(x^*x) = 0$. Then $0 = \chf{A_i} x^*x \chf{A_i} = (x \chf{A_i})^*(x \chf{A_i})$ for all $i \in I$. Thus $x \chf{A_i} = 0$. As $\sum_{i \in I} \chf{A_i} = 1$, the claim follows.
\end{proof}

\begin{lemma} \label{lemma:cartan:commutant}
  If $\CR$ is ${\rm cp}$ or ${\rm ql}$, or $\CR$ is ${\rm Roe}$ and $\fka$ is locally finite, then the commutant of $\cartansubalgroelike{\discdec}{\CHx}$ is $\prod_{i\in I}\paren{\CCC\cdot 1_{\CH_{A_i}}}$.
\end{lemma}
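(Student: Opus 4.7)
The strategy is to exploit the explicit descriptions of $\cartansubalgroelike{\discdec}{\CHx}$ provided by \eqref{eq:cartan algs I} and \eqref{eq:cartan algs II}. In all three cases the algebra $\cartansubalgroelike{\discdec}{\CHx}$ is, or at least contains, the $\ell^\infty$-sum of operators acting diagonally on the orthogonal decomposition $\CHx=\bigoplus_{i\in I}\CH_{A_i}$ (which exists by discreteness of $\discdec$). The local finiteness hypothesis on $\fka$ in the Roe case is exactly what lets us use \eqref{eq:cartan algs II} and, crucially, guarantees that every rank one operator on $\CH_{A_i}$, extended by zero, lies in $\cartansubalgroe{\discdec}{\CHx}$.

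The inclusion $\prod_{i\in I}(\CCC\cdot 1_{\CH_{A_i}})\subseteq \cartansubalgroelike{\discdec}{\CHx}'$ is immediate: any block diagonal scalar operator $\sum_{i}\lambda_i \chf{A_i}$ (strong sum, with $\sup_i|\lambda_i|<\infty$) commutes with every element of $\prod_{i\in I}\CB(\CH_{A_i})$, and in particular with every element of $\prod_{i\in I}\CK(\CH_{A_i})$.

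For the reverse inclusion, let $t\in\CB(\CHx)$ commute with $\cartansubalgroelike{\discdec}{\CHx}$. Fix $i\in I$ and let $v,w\in\CH_{A_i}$. In all three cases, the rank one operator $\matrixunit{v}{w}\in\CB(\CH_{A_i})$, extended by zero outside of $\CH_{A_i}$, is an element of $\cartansubalgroelike{\discdec}{\CHx}$ (in the Roe case, this uses that such a rank one operator has propagation in $A_i\times A_i$, is itself compact, and is fixed by $\condexproe\discdec$). Hence $t\matrixunit{v}{w}=\matrixunit{v}{w}t$. Evaluating both sides on a nonzero vector $w\in\CH_{A_i}$ yields $\|w\|^2 t(v)=\scal{w}{tw}\,v$, which simultaneously shows that $t(v)\in\CH_{A_i}$ and that $t|_{\CH_{A_i}}$ is the scalar $\lambda_i\coloneqq\scal{w}{tw}/\|w\|^2$ times the identity. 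Varying $i$ and summing up via $\sum_{i\in I}\chf{A_i}=1$ shows $t=\sum_{i\in I}\lambda_i\chf{A_i}\in\prod_{i\in I}(\CCC\cdot 1_{\CH_{A_i}})$, as desired.

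The only subtlety I anticipate is making sure that the extension-by-zero of a compact (in particular, rank one) operator on $\CH_{A_i}$ indeed belongs to $\cartansubalgroe{\discdec}{\CHx}$ in the Roe case; this is where \eqref{eq:cartan algs II} (and hence the local finiteness of $\fka$) enters. Everything else is routine, so the entire argument should fit in a short direct proof.
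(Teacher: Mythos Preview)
Your proof is correct and follows essentially the same approach as the paper's. The only difference is packaging: the paper observes that $\CK(\CH_{A_i})\subseteq\cartansubalgroelike{\discdec}{\CHx}$ and then directly invokes the standard fact $\CK(\CH_{A_i})'=\CCC\cdot 1_{\CH_{A_i}}\times\CB(\CH_{X\smallsetminus A_i})$, taking the intersection over $i$, whereas you reprove that fact by hand using rank-one operators.
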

\begin{proof}
  For every $i\in I$, the compacts $\CK(\CH_{A_i})$ are naturally contained in $\cartansubalgroelike{\discdec}{\CHx}$. Therefore
  \[
    \paren{\cartansubalgroelike{\discdec}{\CHx}}'
    \subseteq (\CK(\CH_{A_i}))'
    = \CCC\cdot 1_{\CH_{A_i}}\times\CB(\CH_{X\smallsetminus A_i}).
  \]
  It follows that $\paren{\cartansubalgroelike{\discdec}{\CHx}}'$ is contained in
  \[
    \bigcap_{i\in I}\bigparen{\CCC\cdot 1_{\CH_{A_i}}\times\CB(\CH_{X\smallsetminus A_i})}
    = \prod_{i\in I}\paren{\CCC\cdot 1_{\CH_{A_i}}}.
  \]
  The converse containment is obvious.
\end{proof}

Recall that $\crse X $ has bounded geometry (in the sense of \cref{def:locally finite space - bounded geo}) if it admits a uniformly locally finite controlled partition. Moreover, if $\CHx$ is a discrete module, \cref{cor:discrete and bounded geometry} implies that $\crse X$ admits a uniformly locally finite partition.
The following is a rather useful standard trick.
\begin{lemma}\label{lem: bded geo partitioning controlled sets}
    Let $\discdec = (A_i)_{i\in I}$ be a uniformly locally finite partition. Then for every $E\in\CE$ one may find finitely many partially defined injective functions $f_0,\ldots,f_n\colon I\to I$ such that
    \[
      E\subseteq \bigsqcup_{k=0}^n\bigsqcup_{j\in J_k} \paren{A_{f(j)}\times A_j},
    \]
    where $J_k\subseteq I$ denotes the domain of definition of $f_k$.
\end{lemma}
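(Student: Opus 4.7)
The strategy is to reduce the problem to an edge-coloring question on a bipartite graph of bounded degree, and then invoke König's edge-coloring theorem. First I would enlarge $E$ to ensure it contains the block-diagonal $\bigsqcup_{i\in I}(A_i\times A_i)$; this keeps $E$ controlled because the partition $\fka$ is controlled. Next I would introduce the bipartite combinatorial relation
\[
R \coloneqq \{(i,j)\in I\times I \mid (A_i\times A_j)\cap E \neq \emptyset\}.
\]
Since the $A_i$ partition $X$, the blocks $A_i\times A_j$ are pairwise disjoint for distinct $(i,j)$, and by construction we immediately obtain $E \subseteq \bigsqcup_{(i,j)\in R}(A_i\times A_j)$. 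All that remains is to decompose $R$ into finitely many partial matchings.

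The main quantitative step is showing that $R$, viewed as a bipartite graph with both parts equal to $I$, has uniformly bounded vertex degree. Let $E_0\in\CE$ be an entourage controlling $\fka$, fix $j_0\in I$ and pick any $y_0\in A_{j_0}$. If $(i,j_0)\in R$ then $A_i\cap E(A_{j_0})\neq\emptyset$, and $E(A_{j_0})\subseteq (E\circ E_0)(y_0)$, which is a single controlled section and hence a controlled set in the sense of \cref{def:locally finite family}. By the uniform local finiteness of $\fka$, the number of indices $i$ for which $A_i$ meets this set is bounded by some constant $N$ that depends only on $E$, not on $j_0$. The symmetric argument applied to $\op{E}$ bounds the degree on the other side by $N$ as well, so $R$ has maximum degree $\leq N$.

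With bounded degree in hand, I would apply König's edge-coloring theorem: every bipartite graph of maximum degree $N$ admits a proper edge-coloring with $N$ colors. This yields a decomposition
\[
R = M_0 \sqcup M_1 \sqcup \cdots \sqcup M_n, \qquad n+1 \leq N,
\]
where each $M_k$ is a matching. Each matching $M_k$ is exactly the graph of a partial injection $f_k\colon J_k \to I$, defined by $f_k(j)=i \iff (i,j)\in M_k$, with $J_k\subseteq I$ the set of $j$ appearing as a right-coordinate in $M_k$. Combining with the containment from the first paragraph,
\[
E \subseteq \bigsqcup_{(i,j)\in R}(A_i\times A_j) = \bigsqcup_{k=0}^{n}\bigsqcup_{j\in J_k}\bigparen{A_{f_k(j)}\times A_j},
\]
as claimed.

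The only non-trivial technical point is justifying König's edge-coloring theorem for an infinite bipartite graph (since $I$ may be uncountable). This is routine: one applies the De Bruijn--Erd\H{o}s compactness principle to the line graph, using that the statement is known for finite bipartite graphs and that every finite subgraph of $R$ has maximum degree $\leq N$. Aside from this, the proof is purely combinatorial book-keeping of entourages, and the truly geometric input is concentrated in the degree bound of the second paragraph.
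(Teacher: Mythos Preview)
Your proof is correct and follows essentially the same strategy as the paper: both isolate the relation $R=\{(i,j)\mid (A_i\times A_j)\cap E\neq\emptyset\}$, bound its degree using uniform local finiteness, and then color it to extract partial injections. The difference is only in the coloring step. You view $R$ as a bipartite graph and apply K\"onig's edge-coloring theorem, which yields exactly $N$ colors (where $N$ is the maximum degree) but requires a compactness argument (De~Bruijn--Erd\H{o}s) to handle infinite $I$. The paper instead forms the conflict graph on the set of pairs $(i,j)\in R$ (edges between pairs sharing a coordinate---this is the line graph of your bipartite graph) and uses a greedy vertex coloring, which costs one extra color but works directly for infinite graphs without any compactness appeal. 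Neither approach is meaningfully harder than the other; yours gives a slightly sharper constant, theirs avoids the infinite-graph subtlety entirely.

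One small remark: your preliminary enlargement of $E$ to contain the block-diagonal $\bigsqcup_i(A_i\times A_i)$ is harmless but unnecessary---nothing in the subsequent argument uses it.
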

\begin{proof}
  Consider the graph $\CG$ whose vertices are the pairs $(i,j)\in I^2$ such that $(A_i\times A_j)\cap E\neq \emptyset$, and whose edges are the pairs sharing one of the two coordinates. Since $\discdec$ is uniformly locally finite, this graph has bounded degree.
  Let $n$ be the degree of $\CG$. We may then choose a coloring of the vertices of $\CG$ using the $n+1$ colours $\{0,\ldots,n\}$, so that any two adjacent vertices have different colours.
  By construction, the set of vertices of colour $k$ is the graph of an injective partially defined function $f_k$. The functions thus defined satisfy the requirements of the lemma.
\end{proof}

\begin{corollary}\label{cor:bded geo decomposition of operators}
  If $\discdec = (A_i)_{i\in I}$ is a uniformly locally finite discrete partition then every controlled  propagation operator $t$ can be written as a finite sum $t= t^{(0)}+\cdots+t^{(n)}$ with the property that for every $i\in I$ there is at most one $j\in I$ and one $j'\in I$ with
  \[
    \chf{A_j}t^{(k)}\chf{A_i}\neq 0
    \quad{ and/or }\quad
    \chf{A_i}t^{(k)}\chf{A_{j'}}\neq 0.
  \]
\end{corollary}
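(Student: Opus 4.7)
The plan is to apply \cref{lem: bded geo partitioning controlled sets} to an entourage witnessing the propagation of $t$, and use the resulting injective partial functions to split $t$ into the desired pieces.

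First I would fix $E\in\CE$ with $\supp(t)\subseteq E$ and apply \cref{lem: bded geo partitioning controlled sets} to obtain injective partial functions $f_0,\ldots,f_n\colon I\to I$ with domains $J_k\subseteq I$ such that the blocks $A_{f_k(j)}\times A_j$ disjointly cover $E$. The natural candidate is then
\[
  t^{(k)} \coloneqq \ssum_{j\in J_k} \chf{A_{f_k(j)}}\, t\, \chf{A_j}.
\]
Since the sets $\paren{A_j}_{j\in J_k}$ are pairwise disjoint, and so are $\paren{A_{f_k(j)}}_{j\in J_k}$ by injectivity of $f_k$, the summands are pairwise orthogonal operators of norm at most $\norm{t}$, so the SOT-sum converges to a bounded operator by the discussion of orthogonal SOT-sums in \cref{subsec:operator-boolalg}.

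Next I would verify the two sparsity conditions. If $\chf{A_j}t^{(k)}\chf{A_i}\neq 0$, then the disjointness of the partition $\fka$ forces $i\in J_k$ and $j=f_k(i)$, yielding at most one such $j$ for each fixed $i$. Symmetrically, if $\chf{A_i}t^{(k)}\chf{A_{j'}}\neq 0$, then $j'\in J_k$ and $i=f_k(j')$, and injectivity of $f_k$ then gives at most one such $j'$ (namely $f_k^{-1}(i)$, if $i$ lies in the range of $f_k$).

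Finally, to show $t=t^{(0)}+\cdots+t^{(n)}$, let $V\coloneqq\{(f_k(j),j)\mid 0\leq k\leq n,\ j\in J_k\}$; by the disjointness in \cref{lem: bded geo partitioning controlled sets}, the map $(k,j)\mapsto(f_k(j),j)$ is injective, so $\sum_{k=0}^n t^{(k)}=\ssum_{(i,j)\in V}\chf{A_i}t\chf{A_j}$. On the other hand, expanding $t=\ssum_{i,j}\chf{A_i}t\chf{A_j}$, the support condition $\supp(t)\subseteq E$ kills every term with $(A_i\times A_j)\cap E=\emptyset$, and \cref{lem: bded geo partitioning controlled sets} guarantees that every surviving pair lies in $V$. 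Hence the two SOT-sums agree. I do not foresee any serious obstacle: \cref{lem: bded geo partitioning controlled sets} does essentially all of the combinatorial work, and what remains is routine bookkeeping with orthogonal projections.
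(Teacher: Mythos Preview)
Your proof is correct and follows essentially the same approach as the paper: apply \cref{lem: bded geo partitioning controlled sets} to an entourage $E$ containing $\supp(t)$, define $t^{(k)}$ as the block sum along the graph of $f_k$, and read off both the decomposition $t=\sum_k t^{(k)}$ and the sparsity from the disjointness of the blocks and the injectivity of the $f_k$. You have simply supplied more detail than the paper (convergence of the SOT-sum, the explicit bookkeeping for $t=\sum_k t^{(k)}$), but the argument is the same.
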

\begin{proof}
  Say $\supp(t)\subseteq E$. Apply \cref{lem: bded geo partitioning controlled sets} to $E$, and let $f_0,\ldots,f_n$ be the resulting injections. For each $k$, let $J_k$ be the domain of $f_k$ and define
  \[
    t^{(k)}\coloneqq \sum_{j\in J_k}\chf{A_{f(j)}}t\chf{A_j}.
  \]
  Since $\supp(t)\subseteq E$, it follows that $t= t^{(0)}+\cdots+t^{(n)}$. Moreover, these $t^{(0)}, \dots, t^{(n)}$ are as claimed, since $f_0, \dots, f_n$ are injective.
\end{proof}

We are now ready to prove the main theorem of the subsection (note that the statement does not apply to the algebra of quasi-local operators, see \cref{rem:cartan-for-ql}).
\begin{theorem} \label{thm:cartan-subalgs}
  Let $\crse X $ be a coarse space of bounded geometry, $\CHx$ a discrete $\crse X$-module, and $\discdec = (A_i)_{i\in I}$ a uniformly locally finite discrete partition. If $\roeclike{\CHx}$ is either $\cpcstar\CHx$ or $\roecstar\CHx$, then the inclusion
  \[
    \cartansubalgroelike{\discdec}{\CHx}\subseteq \roeclike{\CHx}
  \]
  forms a non-commutative Cartan pair.
\end{theorem}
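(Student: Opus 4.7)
I will verify the four conditions \cref{def:cartan-subgal:app-unit}, \cref{def:cartan-subgal:norm}, \cref{def:cartan-subgal:cond-exp} of \cref{def:cartan-subalg} together with \cref{def:cartan-subgal:virtual-comm} of \cref{def:nc-cartan-subalg}. Condition \cref{def:cartan-subgal:cond-exp} is immediate from \cref{lemma:cond-exp}. For \cref{def:cartan-subgal:app-unit}, in the $\cpcstar$-case $1_{\CHx}=\sum_i\chf{A_i}$ already lies in $\cartansubalgcp{\discdec}{\CHx}=\prod_i\CB(\CH_{A_i})$, so the Cartan pair is even unital; in the Roe case, the projections $p_\lambda=\sum_i p_{\lambda(i)}$ forming the approximate unit of \cref{thm:approximate unit} have each $p_{\lambda(i)}$ of finite rank on $\CH_{A_i}$, hence $p_\lambda\in\prod_i\CK(\CH_{A_i})=\cartansubalgroe{\discdec}{\CHx}$.

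For \cref{def:cartan-subgal:norm}, I will exploit the bounded geometry hypothesis via \cref{cor:bded geo decomposition of operators} to decompose any $t\in\roelike{\CHx}$ as a finite sum $t=t^{(0)}+\cdots+t^{(n)}$, where each $t^{(k)}=\sum_{j\in J_k}\chf{A_{f_k(j)}}t\chf{A_j}$ is supported on the graph of an injective partial function $f_k\colon J_k\to I$. For $y=(y_i)_i\in\cartansubalgroelike{\discdec}{\CHx}$, block diagonality of $y$ together with injectivity of $f_k$ eliminate all off-diagonal terms in the expansion $t^{(k)}y(t^{(k)})^*=\sum_j \chf{A_{f_k(j)}}t\chf{A_j}y_j\chf{A_j}t^*\chf{A_{f_k(j)}}$, yielding a block-diagonal operator. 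Each block is bounded (with norm bounded by $\norm{t}^2\norm{y}$), and in the Roe case even compact since $\chf{A_{f_k(j)}}t$ is compact by local compactness of $t$ and boundedness of $A_{f_k(j)}$. A symmetric argument handles $(t^{(k)})^*yt^{(k)}$, so each $t^{(k)}$ normalizes the Cartan subalgebra, and since elements of $\roelike{\CHx}$ are finite sums of such normalizers, these generate $\roeclike{\CHx}$ as a \cstar{}algebra.

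The most delicate condition is \cref{def:cartan-subgal:virtual-comm}. Let $(\varphi,J)$ be a virtual commutant; by \cite{exel-nc-cartan-subalgs-2011}*{Propositions~9.3--9.4}, $\varphi$ is bounded with image in $A=\roeclike{\CHx}$. In the $\cpcstar$-case, $\chf{A_i}\in\cartansubalgcp{\discdec}{\CHx}$ directly, so (using $\chf{A_i}x=x\chf{A_i}$, which holds by block diagonality of $x\in B$) the bimodule relation immediately yields $\chf{A_i}\varphi(x)=\varphi(\chf{A_i}x)=\varphi(x\chf{A_i})=\varphi(x)\chf{A_i}$; thus $\varphi(x)$ is block diagonal and lies in $B$. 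In the Roe case, however, $\chf{A_i}\notin B=\prod_i\CK(\CH_{A_i})$ whenever $\CH_{A_i}$ is infinite dimensional, so I will instead approximate each $\chf{A_i}$ by an approximate unit $(e^i_\alpha)_\alpha$ of $\CK(\CH_{A_i})$ consisting of finite-rank projections, each of which lies in $B$, with $e^i_\alpha\to\chf{A_i}$ in SOT. For $i\neq j$, $e^i_\alpha xe^j_\beta=0$ since $x\in J$ preserves the block decomposition, so $e^i_\alpha\varphi(x)e^j_\beta=\varphi(e^i_\alpha xe^j_\beta)=0$; passing to SOT-limits and invoking boundedness of $\varphi(x)$ yields $\chf{A_i}\varphi(x)\chf{A_j}=0$. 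Thus $\varphi(x)$ is block diagonal, and local compactness of $\varphi(x)$ together with boundedness of each $A_i$ forces each diagonal block $\chf{A_i}\varphi(x)\chf{A_i}$ to be compact, so $\varphi(x)\in\prod_i\CK(\CH_{A_i})=B$. This last step---circumventing the absence of $\chf{A_i}$ from $B$ by finite-rank approximations and SOT continuity---is the main obstacle I anticipate.
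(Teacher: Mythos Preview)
Your proof is correct. Conditions \cref{def:cartan-subgal:app-unit}, \cref{def:cartan-subgal:norm}, \cref{def:cartan-subgal:cond-exp} are handled just as in the paper (with slightly more detail in your normalizer verification for \cref{def:cartan-subgal:norm}). The genuine difference lies in condition \cref{def:cartan-subgal:virtual-comm}: the paper invokes \cref{thm:virtual-comm} (Exel's structural result) to obtain a global implementer $t$ in the commutant with $\varphi(x)=tx$, and then appeals to \cref{lemma:cartan:commutant} to identify that commutant as $\prod_i \CCC\cdot 1_{\CH_{A_i}}\subseteq B$, immediately yielding $\varphi(x)=tx\in J$. You instead argue directly from the bimodule relations: in the $\cpcstar$-case using that $\chf{A_i}\in B$, and in the Roe case approximating each $\chf{A_i}$ by finite-rank projections in $B$ and passing to SOT-limits to force block-diagonality, then using local compactness of $\varphi(x)\in\roecstar{\CHx}\subseteq\lccstar{\CHx}$ to see each block is compact.

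Your route is more hands-on and self-contained, avoiding the external input from \cref{thm:virtual-comm}; the paper's route is slicker but leans on that structural theorem together with the commutant computation of \cref{lemma:cartan:commutant} (which you do not need). Both reach the same conclusion.
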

\begin{proof}
  We divide the proof into parts, each one containing one of the conditions defining a (non-commutative) Cartan pair in \cref{def:nc-cartan-subalg}.

  \smallskip

  {\cref{def:cartan-subgal:virtual-comm}}.
  By \cref{thm:virtual-comm}, for any virtual commutant $(\varphi, J)$ of $\cartansubalgroelike{\discdec}{\CHx}$ there is some $t$ in the commutant of $\cartansubalgroelike{\discdec}{\CHx}$ such that $\varphi(x) = tx \in \roeclike{\CHx}$ for all $x \in J$.
  \cref{lemma:cartan:commutant} implies in particular that $t$ belongs to $\cartansubalgroelike{\discdec}{\CHx}$.
  Since $J$ is an ideal, it follows that $\phi(x)=tx\in J$ for every $x\in J$.

  {\cref{def:cartan-subgal:app-unit}}. The inclusion $\cartansubalgcp{\discdec}{\CHx} \subseteq \cpcstar{\CHx}$ is unital, so there is nothing to prove. The proof of \cref{thm:approximate unit} shows that $\cartansubalgroe{\discdec}{\CHx}$ contains an approximate unit of $\roecstar{\CHx}$.

  {\cref{def:cartan-subgal:norm}}. By definition, every $t\in \roeclike{\CHx}$ is arbitrarily well approximated by an operator $s$ of controlled propagation. It is then enough to apply \cref{cor:bded geo decomposition of operators} to decompose $s$ as a finite sum $s^{(0)}+\cdots+s^{(n)}$ and observe that each $s^{(k)}$ is a normalizer.

  {\cref{def:cartan-subgal:cond-exp}}. The conditional expectations described in \cref{def:cond-exp} are faithful by \cref{lemma:cond-exp}.  
\end{proof}

\begin{remark} \label{rem:cartan-for-ql}
  The only part of the proof of \cref{thm:cartan-subalgs} that does not apply to $\qlcstar{\CHx}$ is \cref{def:cartan-subgal:norm}.
\end{remark}

We end the discussion on Cartan subalgebras with some consequences of \cref{thm:cartan-subalgs}, along with some remarks.
\begin{corollary} \label{cor:roe-models}
  Let $\crse X$ be a coarse space with bounded geometry, $\CHx$ a discrete $\crse X$-module, $\discdec = (A_i)_{i \in I}$ a uniformly locally finite discrete partition, and $\roeclike{\CHx}$ denote either $\cpcstar\CHx$ or $\roecstar{\CHx}$.
  Then there are:
  \begin{enumerate}[label=(\roman*)]
    \item an inverse semigroup $S_{\CR}(\CHx, \discdec)$;
    \item an action $\alpha_{\CR}$ of $S_{\CR}(\CHx, \discdec)$ on $\cartansubalgroelike{\discdec}{\CHx}$;
    \item an isomorphism $\Phi^\discdec \colon \roeclike{\CHx} \to \cartansubalgroelike{\discdec}{\CHx} \rtimes_{\rm red} S_{\CR}(\CHx, \discdec)$ such that $\Phi^\discdec$ restricted to $\cartansubalgroelike{\discdec}{\CHx}$ is the identity map.
  \end{enumerate}
  In particular, if the action $\alpha_{\CR}$ has the approximation property (in the sense of \cite{buss-martinez-approx-prop-23}*{Definition~4.4}) and $\paren{\chf{A_i}}_{i \in I}$ are of uniformly bounded rank, then $\roeclike{\CHx}$ is nuclear.
\end{corollary}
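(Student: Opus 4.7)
The plan is to combine \cref{thm:cartan-subalgs} with the reconstruction results of Exel for non-commutative Cartan pairs in order to obtain an inverse semigroup model of $\roeclike{\CHx}$, and then derive nuclearity from the approximation property of the resulting action together with the assumption that the blocks $\chf{A_i}(\CHx)$ have uniformly bounded rank.

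First, by \cref{thm:cartan-subalgs}, the inclusion $\cartansubalgroelike{\discdec}{\CHx}\subseteq\roeclike{\CHx}$ is a non-commutative Cartan pair. One can then invoke the reconstruction results of \cite{exel-nc-cartan-subalgs-2011} in order to obtain a canonical inverse semigroup $S_\CR(\CHx,\discdec)$ of normalizers, equipped with a natural action $\alpha_\CR$ on $\cartansubalgroelike{\discdec}{\CHx}$, together with a \Star{}isomorphism $\Phi^\discdec$ between the reduced crossed product and $\roeclike{\CHx}$ which restricts to the identity on the base algebra. Concretely, I would take $S_\CR(\CHx,\discdec)$ to be the inverse semigroup generated by the partial isometries appearing in the decomposition given by \cref{cor:bded geo decomposition of operators}: for every partial injection $f \colon J \to I$ (with $J \subseteq I$) whose graph is controlled and every uniformly bounded family of partial isometries $v_j \colon \CH_{A_j}\to \CH_{A_{f(j)}}$, the strongly convergent sum $\sum_{j\in J}v_j$ yields a normalizer of $\cartansubalgroelike{\discdec}{\CHx}$ with controlled propagation. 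The action $\alpha_\CR$ is then induced by the conjugation action of these normalizers on $\cartansubalgroelike{\discdec}{\CHx}$.

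The key points that need to be verified are: (a) condition~\cref{def:cartan-subgal:norm} in \cref{thm:cartan-subalgs} indeed shows that the norm closure of the linear span of these normalizers equals $\roeclike{\CHx}$; (b) the canonical conditional expectation from the reduced crossed product onto the base matches $\condexproelike{\discdec}$ of \cref{def:cond-exp} under $\Phi^\discdec$, which forces $\Phi^\discdec|_{\cartansubalgroelike{\discdec}{\CHx}}$ to be the identity and makes the isomorphism canonical (using faithfulness, \cref{lemma:cond-exp}). This is largely a bookkeeping task, and the essential technical ingredient is already contained in \cref{lem: bded geo partitioning controlled sets,cor:bded geo decomposition of operators}.

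For the nuclearity statement, observe that if the projections $\chf{A_i}$ have uniformly bounded rank, say bounded by $n\in\NN$, then each $\CB(\CH_{A_i})$ is a matrix algebra of size at most $n\times n$. Hence, by \cref{eq:cartan algs I,eq:cartan algs II},
\[
  \cartansubalgcp{\discdec}{\CHx} = \prod_{i\in I}\CB(\CH_{A_i})
  \quad\text{and}\quad
  \cartansubalgroe{\discdec}{\CHx} = \bigoplus_{i\in I}\CK(\CH_{A_i})
\]
are subhomogeneous, hence nuclear. Assuming the action $\alpha_\CR$ has the approximation property in the sense of \cite{buss-martinez-approx-prop-23}*{Definition~4.4}, the reduced crossed product $\cartansubalgroelike{\discdec}{\CHx}\rtimes_{\rm red}S_\CR(\CHx,\discdec)$ is nuclear, and through $\Phi^\discdec$ so is $\roeclike{\CHx}$.

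The main obstacle is the first step, namely the careful identification of $S_\CR(\CHx,\discdec)$ and the verification that the reconstruction theorem of \cite{exel-nc-cartan-subalgs-2011} applies and yields the claimed isomorphism that is canonical with respect to the base inclusion; once this is done, the nuclearity consequence is essentially automatic from the approximation property.
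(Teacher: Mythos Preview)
Your approach is essentially the same as the paper's: apply \cref{thm:cartan-subalgs} to obtain the non-commutative Cartan pair, invoke Exel's reconstruction theorem \cite{exel-nc-cartan-subalgs-2011}*{Theorem~14.5} for the inverse semigroup crossed product model, and then use \cite{buss-martinez-approx-prop-23}*{Theorem~6.7} for the nuclearity statement. One small slip: by \eqref{eq:cartan algs II} the Roe-case subalgebra is the \emph{product} $\prod_{i\in I}\CK(\CH_{A_i})$, not the direct sum, though under the uniformly bounded rank hypothesis this makes no difference for subhomogeneity.
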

\begin{proof}
  This is an immediate consequence of \cref{thm:cartan-subalgs}, \cite{exel-nc-cartan-subalgs-2011}*{Theorem~14.5} and \cite{buss-martinez-approx-prop-23}*{Theorem~6.7}.
\end{proof}

On any group $G$ (equipped with the discrete topology) one may consider the \emph{minimal connected left-invariant coarse structure} $\varcrs[left]{fin}$, whose controlled entourages are contained in the entourages of the form
\[
  E_A \coloneqq \braces{\left(g,h\right)\mid h^{-1}g \in A},
\]
where $A$ ranges among all finite subsets of $G$ (see \cite{coarse_groups} and references therein for more information on left-invariant coarse structures). 
If $G=\Gamma$ is a discrete \emph{countable} group, $\varcrs[left]{fin}$ coincides with the metric coarse structure associated with any proper left-invariant metric on $\Gamma$.
Observe $(G,\varcrs[left]{fin})$ is always uniformly locally finite.
An immediate corollary of the last statement of \cref{cor:roe-models} is the following result, which is well-known for discrete countable groups.
\begin{corollary} \label{cor:group-exact}
  Let $\crse X = (G,\varcrs[left]{fin})$ and let $\CH_G$ be a discrete $\crse X$-module such that $\paren{\chf{g}}_{g \in G}$ are of uniformly bounded rank. Then, $\cpcstar{\CH_G}$ is nuclear if and only if $G$ is exact.
\end{corollary}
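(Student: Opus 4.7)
The plan is to apply \cref{cor:roe-models} to the uniformly locally finite discrete partition $\discdec = (\{g\})_{g \in G}$ of the coarse space $\crse X = (G,\varcrs[left]{fin})$; this partition is discrete because $\CH_G$ is discrete and every singleton is measurable, and it is uniformly locally finite because $(G,\varcrs[left]{fin})$ is. Under the uniform bound $n$ on the ranks of $\chf g$, the spaces $\CH_g \coloneqq \chf g (\CH_G)$ can all be isomorphically identified (after padding) with a single Hilbert space $\CH_0$ of rank $n$, and consequently
\[
  \cartansubalgcp{\discdec}{\CH_G} \;\cong\; \ell^\infty\left(G; M_n\right).
\]

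Next I would exploit the left-invariance of $\varcrs[left]{fin}$ to identify the inverse semigroup $S_{\rm cp}(\CH_G, \discdec)$ from \cref{cor:roe-models}, and its action $\alpha_{\rm cp}$, with the translation action of $G$ on $\ell^\infty(G; M_n)$ (trivial on the $M_n$ factor once trivializing isomorphisms $\CH_g \cong \CH_0$ are chosen). Indeed, every controlled propagation operator is supported on some $E_A = \{(g,h) : h^{-1}g \in A\}$ with $A \subseteq G$ finite, and \cref{cor:bded geo decomposition of operators} lets us decompose it as a finite sum of normalizers indexed by the elements of $A$, each implementing left translation by a single group element. The crossed product isomorphism of \cref{cor:roe-models} thus takes the form
\[
  \cpcstar{\CH_G} \;\cong\; \ell^\infty(G; M_n) \rtimes_{\rm red} G \;\cong\; \paren{\ell^\infty(G) \rtimes_{\rm red} G} \otimes M_n \;\cong\; \uroecstar{G} \otimes M_n.
\]

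Since $M_n$ is nuclear and finite-dimensional, tensoring with it preserves nuclearity and non-nuclearity, so $\cpcstar{\CH_G}$ is nuclear if and only if the uniform Roe algebra $\uroecstar G$ is. The statement then reduces to the classical theorem that, for a discrete group $G$, the algebra $\uroecstar G = \ell^\infty(G) \rtimes_{\rm red} G$ is nuclear if and only if $G$ is exact. For countable $G$ this is the classical result of Guentner--Kaminker and Ozawa (see \cite{brown_c*-algebras_2008}*{Theorem~5.1.7}), and it extends to arbitrary discrete $G$ via the characterization of exactness as amenability of the $G$-action on $\beta G$.

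The main obstacle is the precise identification of the inverse semigroup action produced by \cref{cor:roe-models} with the standard $G$-action on $\ell^\infty(G; M_n)$: this needs a compatible choice of trivializing isomorphisms $\CH_g \cong \CH_0$, together with the observation that the conjugation by normalizers coming from translations matches the group action on the diagonal Cartan subalgebra. A secondary point is to check that, under this identification, the approximation property of $\alpha_{\rm cp}$ in the sense of \cite{buss-martinez-approx-prop-23}*{Definition~4.4} coincides with exactness of $G$; granting this, the forward implication of the corollary could alternatively be deduced directly from the last sentence of \cref{cor:roe-models}, while the reverse implication is handled by the identification with $\uroecstar G \otimes M_n$.
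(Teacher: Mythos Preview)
Your overall strategy is reasonable, but the ``padding'' step is not correct as written. If the ranks $r_g \coloneqq \rank(\chf g)$ are merely uniformly bounded by $n$ rather than all equal to $n$, then the Cartan subalgebra $\cartansubalgcp{\discdec}{\CH_G}$ is $\prod_{g\in G} M_{r_g}$, which is not isomorphic to $\ell^\infty(G;M_n)$, and the claimed identification $\cpcstar{\CH_G}\cong \uroecstar{G}\otimes M_n$ fails. Padding embeds each $\CH_g$ into $\CCC^n$, it does not make them isomorphic. The repair is to replace isomorphism by a pair of corner inclusions: embedding $\CH_G\hookrightarrow \ell^2(G;\CCC^n)$ realizes $\cpcstar{\CH_G}$ as the corner $p\bigl(\uroecstar{G}\otimes M_n\bigr)p$ by the diagonal projection $p=\bigoplus_g\chf g$, while choosing a unit vector in each nonzero $\CH_g$ realizes a copy of $\uroecstar{G}$ (over the support of the module) as a corner of $\cpcstar{\CH_G}$. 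Since nuclearity passes to and from hereditary subalgebras, this salvages the argument. This also surfaces an implicit faithfulness hypothesis: if $\chf g=0$ on a coarsely dense set, the ``only if'' direction is simply false.

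A second imprecision is the identification of the inverse semigroup $S_{\rm cp}(\CH_G,\discdec)$ with $G$. As the remark following \cref{cor:roe-models} indicates, $S_{\rm cp}$ consists of all controlled partial bijections of $G$, which is strictly larger than $G$; what is true is that the associated reduced crossed products coincide because the full translations already generate $\cpstar{\CH_G}$. This is not hard, but it is a step that needs to be said.

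By contrast, the paper gives no explicit proof: it records the statement as an immediate consequence of the last sentence of \cref{cor:roe-models}, so the intended argument is to verify that exactness of $G$ is equivalent to the approximation property of the inverse-semigroup action $\alpha_{\rm cp}$ (this is Ozawa's characterization of exactness as amenability of $G\curvearrowright\ell^\infty(G)$, read through the coarse-groupoid/pseudogroup model mentioned in the remark after \cref{cor:roe-models}). Your last paragraph correctly identifies this as the crux; pursuing it directly is closer to the paper's route and avoids the $\otimes M_n$ detour, whereas your concrete-identification approach, once corrected via corners, has the virtue of reducing everything to the classical theorem for $\uroecstar{G}$ without needing to unpack the approximation property.
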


\begin{remark}
  The condition that $\paren{\chf{A_i}}_{i \in I}$ have uniformly bounded rank is rather restrictive.
  Indeed, when it holds one may always pass to a coarsely equivalent space $\crse X'$ so that $\CHx$ can be identified with the uniform coarse module $\CH_{u,\crse X'}$.
  In particular, $\cpcstar\CHx=\roecstar\CHx=\uroecstar{\crse X'}$ (see \cref{exmp:uniform roe algebra}).
    
  However, this is a necessary condition, as otherwise $\cartansubalgroelike{\discdec}{\CHx}$ would not be exact since it would contain a copy of $\prod_{n \in \NN} M_{k(n)}$ for some increasing sequence $\{k(n)\}_{n \in \NN}$.
  In such case, $\roeclike{\CHx}$ cannot be nuclear.\footnote{\, Recall that every nuclear \cstar{}algebra is automatically exact by the work of Kirchberg, and exactness passes to \cstar{}sub-algebras~\cite{brown_c*-algebras_2008}.}
  Note that this condition is, indeed, needed in \cite{buss-martinez-approx-prop-23}*{Theorem~6.7}.
\end{remark}

\begin{remark}
  With the same assumptions and notation as in \cref{cor:roe-models}, it is worth mentioning that $S_{\CR}(\CHx, \discdec)$ may be chosen to be the set of \emph{slices} of the inclusion $\cartansubalgroelike{\discdec}{\CHx} \subseteq \roeclike{\CHx}$~\cite{exel-nc-cartan-subalgs-2011}*{Definition~10.1}. That is, the closed linear subspaces $M$ of the set of normalizers of the inclusion such that $Mt, tM \subseteq M$ for all $t \in \cartansubalgroelike{\discdec}{\CHx}$. 
  It is then not hard to show that $S_{\rm cp}(\CHx, \discdec)$ can be identified with the set of injective controlled partial maps $I \to I$.
  In other words, if $\crse I$ is the coarse space coarsely equivalent to $\crse X$ constructed from $\discdec = (A_i)_{i \in I}$ in the usual manner, the semigroup $S_{\rm cp}(\CHx, \discdec)$ coincides with the pseudogroup $\CG(\crse I)$ as defined in~\cite{skandalis-tu-yu-coarse-gpds-02}*{Definition~3.1}. Observe that the \emph{groupoid of germs} of the natural action of $S_{\rm cp}(\CHx, \discdec)$ on $I$ yields the so-called \emph{coarse groupoid} introduced in~\cite{skandalis-tu-yu-coarse-gpds-02}*{Section~3.2}.
\end{remark}

\begin{remark}\label{rmk:commutative Cartan for Roe}
  If we choose for every $i\in I$ an orthonormal basis $\paren{v_{i,\ell}}_{\ell \in L_i} $ of $\CH_{A_i}$, then $\ell^\infty(\paren{v_{i,\ell}}_{i, \ell}) \cap \roecstar{\CHx}$ is a commutative Cartan subalgebra of $\roecstar{\CHx}$.
  In fact, note that $\ell^\infty(\paren{v_{i,\ell}}_{i, \ell}) \cap \roecstar{\CHx}$ is abstractly isomorphic to $\prod_{i\in I}c_0(L_i)$, where the $L_i$ are treated as discrete sets. 
  The conditional expectation
  \[
    E' \colon \roecstar{\CHx} \to \ell^\infty(\paren{v_{i,\ell}}_{i, \ell}) \cap \roecstar{\CHx}
  \]
  is the composition $E' \coloneqq E_2 \circ \condexp{\discdec}$, where $E_2$ is the coordinate-wise sum of the usual conditional expectations $\CK(\chf{A_i}(\CHx)) \to c_0(\paren{v_{i,\ell}}_{\ell \in L_i}) \cong c_0(L_i)$.
  This conditional expectation $E'$ meets the requirements of \cref{def:cartan-subalg}~\cref{def:cartan-subgal:cond-exp}, while \cref{def:cartan-subgal:max-ab,def:cartan-subgal:app-unit,def:cartan-subgal:norm} are proven in a similar manner to the previous discussion.
  
  In contrast, we also observe that, unless the projections $\chf{A_i}$ have uniformly bounded rank, the algebra $\ell^\infty(\paren{v_{i,\ell}}_{i,\ell}) = \ell^\infty(\paren{v_{i,\ell}}_{i,\ell})\cap \cpcstar\CHx$ is \emph{not} a Cartan subalgebra of $\cpcstar\CHx$.
  For instance, if $\CH_{A_i}$ is infinite dimensional then $c_0(\paren{v_{i,\ell}}_{\ell \in L_i})<\CB(\CH_{A_i})\subseteq \cpcstar\CHx$ is not a Cartan subalgebra, as the inclusion does not admit enough normalizers to generate $\cpcstar{\CHx}$.
\end{remark}

\section{Mappings of operators}
After having discussed spaces, modules on spaces, operators on modules, and algebras of operators, it is now the time to peruse mappings between such algebras.
\subsection{Adjoint operators}
We will be mostly interested in mappings defined by conjugation. A bounded operator $s\colon \CHx\to\CHy$ gives rise to a norm-continuous mapping $\Ad(s)\colon \CB(\CHx)\to\CB(\CHy)$ by $t \mapsto sts^*$. 
Observe that $\norm{\Ad(s)}=\norm{s}^2$. 
We shall momentarily prove that  if $s$ is a controlled operator, then $\Ad(s)$ is by and large compatible with the various Roe-like algebras (see \cref{cor:coarse functions preserve roe-like algebras} for a precise statement).
The following result is simple but important.
\begin{proposition}\label{prop:coarse functions preserve controlled propagation}
  Let $s\colon\CHx\to\CHy$ be bounded. Consider the assertions.
  \begin{enumerate} [label=(\roman*)]
    \item \label{prop:coarse functions preserve controlled propagation:1} $s$ is controlled (in the sense of \cref{def:controlled operator}).
    \item \label{prop:coarse functions preserve controlled propagation:2} For every $E\in\CE$ there is $F\in\CF$ with $\Ad(s)$ sending operators of $E$-controlled propagation to operators of $F$-controlled propagation.
    \item \label{prop:coarse functions preserve controlled propagation:3} For every $E\in\CE$ there is $F\in\CF$ with $\Ad(s)$ sending $\varepsilon$-$E$-approximable operators to $\varepsilon\norm{s}^2$-$F$-approximable operators.
    \item \label{prop:coarse functions preserve controlled propagation:4} For every $E \in \CE$ there is $F \in \CF$ with $\Ad(s)$ sending $\varepsilon$-$E$-quasi-local operators to $\varepsilon\norm{s}^2$-$F$-quasi-local operators.
  \end{enumerate}
  Then \cref{prop:coarse functions preserve controlled propagation:1} $\Leftrightarrow$ \cref{prop:coarse functions preserve controlled propagation:2} $\Rightarrow$ \cref{prop:coarse functions preserve controlled propagation:3}. Moreover, \cref{prop:coarse functions preserve controlled propagation:1} $\Rightarrow$ \cref{prop:coarse functions preserve controlled propagation:4} when $\CHx$ is admissible.
\end{proposition}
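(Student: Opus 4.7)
The plan is to handle the four directions separately, using the support-calculus from \cref{lem:supports and operations} and the explicit coarse-support description of \cref{prop:coarse_support_exists}.

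\textbf{(i) $\Rightarrow$ (ii).} Fix a representative $S$ of $\csupp(s)$ with $\supp(s)\subseteq S$. Given $t$ with $\supp(t)\subseteq E$, two applications of \cref{lem:supports and operations} (using the non-degeneracy gauge $\gauge_{\crse X}$ of $\CHx$) yield
\[
  \supp(sts^*)\subseteq S\circ\gauge_{\crse X}\circ E\circ \gauge_{\crse X}\circ \op S.
\]
By controlledness of $s$, the right-hand side is an element $F\in\CF$, and it depends only on $E$.

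\textbf{(ii) $\Rightarrow$ (i).} Fix $E\in\CE$ and let $F''\in\CF$ be given by (ii) applied to $E'':=\gauge_{\crse X}\circ E\circ \gauge_{\crse X}$. Pick $S$ as in \cref{prop:coarse_support_exists}. Take $y_1(S\circ E\circ \op S)y_2$: there exist $x_1Ex_2$ and $\gauge_{\crse X}$-bounded measurable $A_i\ni x_i$, $\gauge_{\crse Y}$-bounded measurable $B_i\ni y_i$ with $\chf{B_i}s\chf{A_i}\neq 0$. Choose $u_i\in\chf{A_i}(\CHx)$ with $\chf{B_i}s(u_i)\neq 0$ and set $r:=\matrixunit{u_1}{u_2}$. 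Then $\supp(r)\subseteq A_1\times A_2\subseteq E''$, so by (ii) $\supp(srs^*)\subseteq F''$, whereas a direct computation gives $\chf{B_1}(srs^*)\chf{B_2}\neq 0$. Hence $B_1$ and $B_2$ are not $F''$-separated, which implies $y_1(\gauge_{\crse Y}\circ F''\circ \gauge_{\crse Y})y_2$. Therefore $F:=\gauge_{\crse Y}\circ F''\circ \gauge_{\crse Y}\in\CF$ witnesses $S\circ E\circ \op S\in\CF$, proving that $s$ is controlled.

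\textbf{(ii) $\Rightarrow$ (iii).} Routine: if $\norm{t-t'}\leq\varepsilon$ and $t'$ has $E$-controlled propagation then $\norm{\Ad(s)(t)-\Ad(s)(t')}\leq\norm{s}^2\varepsilon$, and $\Ad(s)(t')$ has $F$-controlled propagation by (ii).

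\textbf{(i) $\Rightarrow$ (iv) under admissibility of $\CHx$.} Fix a representative $S$ of $\csupp(s)$ with $\supp(s)\subseteq S$ and an admissibility gauge $\gauge_{\crse X}$. Set
\[
  F := S\circ \gauge_{\crse X}\circ E\circ \gauge_{\crse X}\circ \op S \in \CF.
\]
For any measurable $B_1,B_2\subseteq Y$ that are $F$-separated, use admissibility to choose measurable thickenings $A_i'$ of $\op S(B_i)$ with $A_i'\subseteq \gauge_{\crse X}(\op S(B_i))$. Since $\op S(B_i)\subseteq A_i'$, the set $B_i$ is $S$-separated from $X\smallsetminus A_i'$, so $\supp(s)\subseteq S$ forces $\chf{B_i}s=\chf{B_i}s\chf{A_i'}$. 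Moreover, $F$-separation of $B_1,B_2$ translates exactly into $A_1'$ and $A_2'$ being $E$-separated. Then
\[
  \norm{\chf{B_1}(sts^*)\chf{B_2}}
  =\norm{\chf{B_1}s\chf{A_1'}t\chf{A_2'}s^*\chf{B_2}}
  \leq\norm{s}^2\norm{\chf{A_1'}t\chf{A_2'}}\leq\varepsilon\norm{s}^2,
\]
where the last inequality is the $\varepsilon$-$E$-quasi-locality of $t$ (which by definition applies to \emph{any} measurable $E$-separated pair, not only bounded ones). Hence $\Ad(s)(t)$ is $\varepsilon\norm{s}^2$-$F$-quasi-local.

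The subtle point is the direction (ii) $\Rightarrow$ (i): one must manufacture enough controlled-propagation operators to detect the coarse support of $s$. Using rank-one operators $\matrixunit{u_1}{u_2}$ supported in prescribed bounded measurable neighbourhoods of the witnessing points is the clean way to do this, exploiting the non-degeneracy assumption built into the definition of $\crse X$-module. The other steps are largely a matter of bookkeeping with the support calculus.
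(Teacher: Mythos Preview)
Your proof is correct and follows essentially the same approach as the paper's own proof: the support calculus from \cref{lem:supports and operations} for (i)$\Rightarrow$(ii), rank-one operators $\matrixunit{u_1}{u_2}$ together with the explicit support of \cref{prop:coarse_support_exists} for (ii)$\Rightarrow$(i), the obvious norm estimate for (ii)$\Rightarrow$(iii), and admissible measurable thickenings of $\op S(B_i)$ for (i)$\Rightarrow$(iv). The organization and the key ideas coincide with the paper's argument.
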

\begin{proof}
  \cref{prop:coarse functions preserve controlled propagation:1} $\implies$ \cref{prop:coarse functions preserve controlled propagation:2}. Fix a representative $S\subseteq Y\times X$ for $\csupp(s)$ such that $\supp(s)\subseteq S$ (\emph{e.g.}\ the one in \cref{prop:coarse_support_exists}).
  Let $t\in\CB(\CHx)$ and $E\in\CE$ be such that $\supp(t)\subseteq E$, and let $\gaugex$ be a non-degeneracy gauge for $\CHx$.
  It then follows from \cref{lem:supports and operations} that
  \begin{equation}\label{eq:Ad(s) is controlled}
    \supp(sts^*)\subseteq S\circ \gaugex\circ E\circ\gaugex\circ\op S\eqqcolon F.
  \end{equation}
  Then, by definition of (partial) coarse function, $F \in \CF$, as desired.

  \smallskip

  \cref{prop:coarse functions preserve controlled propagation:2} $\implies$ \cref{prop:coarse functions preserve controlled propagation:1}. As in \cref{prop:coarse_support_exists}, let
  \[
    S\coloneqq \bigcup\left\{B\times A \;\middle|
    \begin{array}{c}A\ \text{measurable \gaugex-bounded}\\ B\text{ measurable \gaugey-bounded}\end{array},\ \chf{B}t\chf{A}\neq 0\right\}
  \]
  be a representative for $\csupp(t)$ and fix $E\in \CE$. Suppose $y'(S\circ E\circ \op S)y$. That is, there are $x',x$ with $y'Sx'Ex\op Sy$. This in turn means that there are measurable \gaugex-controlled $A', A\subseteq X$, measurable \gaugey-controlled $B',B\subseteq Y$ and unit vectors $v'\in\CH_{A'}$ and $v\in\CH_{A}$ with $x'\in A'$, $x\in A$ and so that both $\chf{B'}s\chf{A'}(v')=\chf{B'}s(v')$ and $\chf{B}s\chf{A}(v)=\chf{B}s(v)$ are non-zero.
  Recall that $\matrixunit{v'}{v} (h) \coloneqq \scal{v}{h} \, v'$  (cf.\ \cref{subsec:operator-boolalg}). By construction, $\matrixunit{v'}{v}$ is supported in $A'\times A\subseteq \gaugex\circ E\circ\gaugex$. Let $F\in\CF$ be so that $\Ad(s)$ sends operators of $\gaugex\circ E\circ\gaugex$-controlled propagation to operators of $F$-controlled propagation.

  Since $\Ad(s)\matrixunit{v'}{v}=\matrixunit{s(v')}{s(v)}$, it follows
  \[
    \norm{\chf{B'}s\matrixunit{v'}{v}s^*\chf{B}s}
    =\norm{\chf{B'}\matrixunit{s(v')}{s(v)}\chf{B}}
    =\norm{\chf{B'}s(v)}\norm{\chf Bs(v)}\neq 0.
  \]
  Since $\Ad(s)\matrixunit{v'}{v}$ has $F$-controlled propagation, $B'$ and $B$ are not $F$-separated, hence $(y',y)\in B'\times B\subseteq\gaugey \circ F \circ\gaugey$. This shows that indeed $S\circ E\circ \op S\subseteq\gaugey \circ F \circ\gaugey$.

  \smallskip

  \cref{prop:coarse functions preserve controlled propagation:1} $\implies$ \cref{prop:coarse functions preserve controlled propagation:3}. This follows immediately from \cref{prop:coarse functions preserve controlled propagation:1} $\implies$ \cref{prop:coarse functions preserve controlled propagation:2}, because $\Ad(s)$ sends an operator of $E$-controlled propagation that $\varepsilon$-approximates $t$ to an operator of $F$-controlled propagation that $\norm{\Ad(s)}\varepsilon$-approximates $\Ad(s)(t)$.

  \smallskip

  \cref{prop:coarse functions preserve controlled propagation:1} $\implies$ \cref{prop:coarse functions preserve controlled propagation:4}.
  This goes along the same lines. Suppose that $E\in\CE$ and $\varepsilon \geq 0$ are so that $\norm{\chf{A'}t\chf A}\leq \varepsilon$ for every pair of $E$-separated measurable subsets of $X$.
  Let $B',B\subseteq Y$ be $F$-separated measurable subsets, where $F$ is as in \eqref{eq:Ad(s) is controlled} and $\gaugex$ is now also assumed to be an admissibility gauge (as well as a non-degeneracy one).
  Then there are \gaugex-controlled measurable thickenings $A'$ and $A$ of $\op R(B')$ and $\op R(B)$ respectively. Observe that $\chf{B'}s=\chf{B'}s\chf{A'}$ and $s^*\chf B=\chf As^*\chf B$. Since $A'$ and $A$ are $E$-separated by construction,
  \[
    \norm{\chf{B'} sts^*\chf B}
    =\norm{\chf{B'} s\chf{A'} t\chf A s^*\chf B}
    \leq\norm{s}\norm{\chf{A'} t\chf A}\norm{s^*}\leq \varepsilon\norm{s}^2
  \]
  as desired.
\end{proof}

\begin{remark}
  The equivalence between \cref{prop:coarse functions preserve controlled propagation:1} and \cref{prop:coarse functions preserve controlled propagation:2} opens a way for defining approximate versions of control for operators much in the spirit of \cref{def:approx-ope} and \cref{def:ql-ope}. These will play an important role in \cite{rigid}.
\end{remark}

Observe that, even if $s$ is controlled, $\Ad(s)$ needs not preserve local compactness of operators.
On the other hand, it is an immediate consequence of \cref{cor:local compactness and compositions by proper} that $\Ad(s)$ does preserve local compactness if $s$ is proper (cf.\ \cref{def: proper operator}). We record this as a corollary.
\begin{corollary}\label{cor:proper operators preserve local compactness}
  If $\CHx$ is locally admissible and $s \colon \CHx \to \CHy$ is proper, then $\Ad(s)$ sends locally compact operators to locally compact operators.
\end{corollary}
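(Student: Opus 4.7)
The plan is to decompose $\Ad(s)(t) = sts^*$ as a composition and invoke \cref{lem:local compactness and compositions by proper} twice. Fix a locally compact $t \in \CB(\CHx)$. Note first that if $s \colon \CHx \to \CHy$ is proper, then so is $s^*$: indeed, by \cref{cor:coarse support of composition}\cref{cor:coarse support of composition:star}, $\csupp(s^*) = \op{\csupp(s)}$, so properness of the coarse support (as a coarse subspace of $\crse{Y \times X}$) transfers to the transposed coarse subspace of $\crse{X \times Y}$.

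First I would apply part \cref{lem:local compactness and compositions by proper:x} of \cref{lem:local compactness and compositions by proper} with the locally compact operator $t \colon \CHx \to \CHx$ and the proper operator $r \coloneqq s \colon \CHx \to \CHy$. Since $\CHx$ is locally admissible by hypothesis, the lemma yields that $t s^* \colon \CHy \to \CHx$ is locally compact.

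Then I would apply part \cref{lem:local compactness and compositions by proper:y} of the same lemma with the locally compact operator $ts^* \colon \CHy \to \CHx$ (playing the role of ``$t$'') and the proper operator $s \colon \CHx \to \CHy$ (playing the role of ``$s$''). In this second application the ``middle'' module whose local admissibility is needed is $\CHx$, which is again locally admissible by hypothesis. The lemma then gives that $s \cdot (t s^*) = sts^* = \Ad(s)(t)$ is locally compact, as desired.

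There is essentially no obstacle here; the only step requiring minimal attention is verifying that both applications of \cref{lem:local compactness and compositions by proper} need only local admissibility of $\CHx$ (and not of $\CHy$), so that the hypotheses of the corollary suffice. This is transparent once the roles in the lemma are tracked.
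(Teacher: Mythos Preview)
Your two-step application of \cref{lem:local compactness and compositions by proper} is correct and is exactly what the paper does (its one-line proof ``apply the lemma with $\crse X=\crse Y$, $\crse Z=\crse W$ and $s=r$'' is just a terse way of indicating the same pair of applications).

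However, your opening remark that properness of $s$ implies properness of $s^*$ is \emph{false} in general, and you should delete it. Properness of $\csupp(s)\crse{\subseteq Y\times X}$ means $\op{S}(B)\subseteq X$ is bounded for every bounded $B\subseteq Y$, whereas properness of $\csupp(s^*)=\op{\csupp(s)}\crse{\subseteq X\times Y}$ would require $S(A)\subseteq Y$ to be bounded for every bounded $A\subseteq X$; these are independent conditions (for a concrete failure, take $\crse X=\{*\}$, $\crse Y=\ZZ$, and $s\colon\CCC\to\ell^2(\ZZ)$ sending $1$ to a vector of full support). Fortunately, you never actually use this claim: in both invocations of the lemma the proper operator is $s$ itself, not $s^*$, so the argument stands once the stray sentence is removed.
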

\begin{proof}
  Apply \cref{cor:local compactness and compositions by proper} with $\crse X = \crse Y$, $\crse Z = \crse W$ and $s=r$.
\end{proof}

We end this section with the previously announced result, which is a consequence of \cref{prop:coarse functions preserve controlled propagation,cor:proper operators preserve local compactness}.
\begin{corollary}\label{cor:coarse functions preserve roe-like algebras}
  Let $\CHx$ be admissible. If $s\colon\CHx\to\CHy$ is controlled, then $\Ad(s)$ defines (strongly) continuous maps
  \begin{align*}
    \cpstar{\CHx}&\to\cpstar{\CHy}, \\
    \cpcstar{\CHx}\to\cpcstar{\CHy}, &\text{ and} \;\;
    \qlcstar{\CHx} \to \qlcstar{\CHy}.
  \end{align*}
  Moreover, if $s$ is proper then $\Ad(s)$ gives maps
  \begin{align*}
    \roestar{\CHx}&\to\roestar{\CHy}, \\
    \roecstar{\CHx}&\to\roecstar{\CHy}.
  \end{align*}
  Lastly, if $s$ is an isometry then all of the above are \Star{}embeddings.
\end{corollary}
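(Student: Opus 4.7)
The plan is to assemble the corollary as a direct synthesis of \cref{prop:coarse functions preserve controlled propagation,cor:proper operators preserve local compactness}, together with a short verification of continuity and of the isometry case; no new coarse-geometric input is required.

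For the first three maps, observe that the implications \cref{prop:coarse functions preserve controlled propagation:1} $\Rightarrow$ \cref{prop:coarse functions preserve controlled propagation:2,prop:coarse functions preserve controlled propagation:3,prop:coarse functions preserve controlled propagation:4} of \cref{prop:coarse functions preserve controlled propagation} already say exactly that $\Ad(s)$ sends operators of $E$-controlled propagation to operators of $F$-controlled propagation (hence $\cpstar\CHx\to\cpstar\CHy$), $\varepsilon$-$E$-approximable operators to $\varepsilon\norm{s}^2$-$F$-approximable ones (hence $\cpcstar\CHx\to\cpcstar\CHy$), and, using that $\CHx$ is admissible, $\varepsilon$-$E$-quasi-local operators to $\varepsilon\norm{s}^2$-$F$-quasi-local ones (hence $\qlcstar\CHx\to\qlcstar\CHy$). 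For the last two maps, note that a proper controlled $s$ is in particular controlled, so the image of $\roestar\CHx$ (resp.\ $\roecstar\CHx$) already lies in $\cpstar\CHy$ (resp.\ $\cpcstar\CHy$); \cref{cor:proper operators preserve local compactness} adds that $\Ad(s)$ also preserves local compactness, so the image is contained in $\lccstar\CHy$ as well. Intersecting gives the claimed inclusions into $\roestar\CHy$ and $\roecstar\CHy$.

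For continuity, norm-continuity of each of these maps is immediate since $\norm{\Ad(s)}=\norm{s}^2$. Strong continuity is the standard fact that left and right multiplication by a fixed bounded operator are SOT-continuous on norm-bounded sets: if $t_\lambda\to t$ strongly and the net is uniformly bounded (which one may assume after restricting to bounded subsets of the domain algebra), then $st_\lambda s^*v\to sts^*v$ for every $v\in\CHy$.

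Finally, assume $s$ is an isometry, so $s^*s=1_{\CHx}$. For any $a,b\in\CB(\CHx)$ we have $\Ad(s)(ab)=sabs^*=sas^*sbs^*=\Ad(s)(a)\Ad(s)(b)$, and $\Ad(s)(a^*)=sa^*s^*=\Ad(s)(a)^*$, so $\Ad(s)$ is a $\ast$-homomorphism on $\CB(\CHx)$. Injectivity follows at once from $\Ad(s)(t)=0\implies s^*sts^*s=t=0$, and every injective $\ast$-homomorphism between \cstar{}algebras is automatically isometric; at the \Star-algebra level the same computation shows that $\Ad(s)$ is an injective \Star-homomorphism. The main subtlety, such as it is, is simply to confirm that for \emph{proper} isometries the restricted maps $\roestar\CHx\to\roestar\CHy$ and $\roecstar\CHx\to\roecstar\CHy$ are well defined before observing they inherit the \Star-embedding property from $\Ad(s)\colon\CB(\CHx)\to\CB(\CHy)$, and this is exactly what \cref{cor:proper operators preserve local compactness} provides.
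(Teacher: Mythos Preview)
Your approach is essentially the same as the paper's---it simply cites \cref{prop:coarse functions preserve controlled propagation} and \cref{cor:proper operators preserve local compactness} as the two ingredients. However, there is one genuine slip in your assembly.

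For the map $\roecstar\CHx\to\roecstar\CHy$ you argue that the image lies in $\cpcstar\CHy\cap\lccstar\CHy$ and then conclude by ``intersecting''. But $\roecstar\CHy$ is \emph{defined} as the norm-closure of $\roestar\CHy$, and the containment $\roecstar\CHy\subseteq\cpcstar\CHy\cap\lccstar\CHy$ is in general only known to be an inclusion (equality is the content of \cref{thm:roe-alg:intersection}, which requires $\crse Y$ coarsely locally finite and $\CHy$ discrete---hypotheses not available here). So the intersection argument does not land you in $\roecstar\CHy$.

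The fix is immediate and you already have the ingredient: once you know $\Ad(s)(\roestar\CHx)\subseteq\roestar\CHy$ (where your intersection argument \emph{is} valid, since $\roestar\CHy=\cpstar\CHy\cap\lccstar\CHy$ by definition), norm-continuity of $\Ad(s)$ gives
\[
  \Ad(s)\bigl(\roecstar\CHx\bigr)=\Ad(s)\bigl(\overline{\roestar\CHx}\bigr)\subseteq\overline{\roestar\CHy}=\roecstar\CHy.
\]
The rest of your proof (the $\cpstar$, $\cpcstar$, $\qlcstar$ cases, continuity, and the isometry verification) is correct.
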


\begin{remark} \label{rmk:coarse functions preserve roe-like functorially}
  One way of rephrasing \cref{cor:coarse functions preserve roe-like algebras} is that $\cpstar{\variable}$, $\cpcstar{\variable}$ and $\qlcstar{\variable}$ are functors from the category of coarse geometric modules and controlled isometries between them, to the category of \cstar{}algebras and \Star{}homomorphisms. Similarly, $\roestar{\variable}$ and $\roecstar{\variable}$ are functors defined on the category of coarse geometric modules and \emph{proper} controlled isometries.
\end{remark}

%%%%%%%%%%%%%%%%%%%%%%%%%%%%%%%%%%%%%%%%%%%%%%%%%%%%%%%%%%%%%%%%%%%%%%%
% COVERING ISOMETRIES
%%%%%%%%%%%%%%%%%%%%%%%%%%%%%%%%%%%%%%%%%%%%%%%%%%%%%%%%%%%%%%%%%%%%%%%
\subsection{Covering isometries}\label{sec:covering iso}
One aspect we have not yet covered is how to construct controlled operators (and hence induce adjoint mappings of Roe-like algebras). The aim of this technical but important subsection is to show that discrete coarse geometric modules admit a great abundance of such operators. These can be constructed by ``covering'' coarse maps.
\begin{definition}\label{def:covering}
  An operator $t\colon \CHx\to \CHy$ \emph{covers} a partial coarse map $\crse{f\colon X\to Y}$ if $\csupp(t)=\crse f$.
\end{definition}

In the following discussion, $\crse{f\colon X\to Y}$ will be a fixed coarse map, and $t\colon \CHx\to\CHy$ an operator with $\csupp(t)\crse{\subseteq f}$. We wish to understand how large $\csupp(t)$ actually is.
\begin{lemma}\label{lem:injective operators have large domain}
   Fix $Z\subseteq X$ and a controlled entourage $E\in\CE$. Suppose there are $E$-bounded measurable $A_i\subseteq X$ and $v_i\in\CH_{A_i}$, where $i\in I$, such that
  \begin{itemize}
    \item $t(v_i)\neq 0$ for every $i\in I$;
    \item $Z\subseteq \bigcup_{i\in I}A_i$.
  \end{itemize}
  Then $\crse{Z\subseteq\pi_{X}}\paren{\csupp(t)}$.
\end{lemma}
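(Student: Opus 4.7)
The plan is to exhibit, for each $i \in I$, a measurable bounded $B_i \subseteq Y$ witnessing that a thickening of $A_i$ is contained in a representative of $\csupp(t)$, and then take the union over $i \in I$.

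Fix non-degeneracy gauges $\gaugex$ for $\CHx$ and $\gaugey$ for $\CHy$, and without loss of generality enlarge $E$ so that $\gaugex \subseteq E$. The first step is: for each $i \in I$, since $t(v_i) \neq 0$ we have $\scal{t(v_i)}{t(v_i)} \neq 0$, and hence by \cref{lem:non-orthogonality is local} there exists a measurable $\gaugey$-bounded $B_i \subseteq Y$ such that $\chf{B_i}(t(v_i)) \neq 0$. Since $v_i \in \CH_{A_i}$, we have $\chf{A_i}(v_i) = v_i$, so
\[
  \chf{B_i}\, t\, \chf{A_i}(v_i) = \chf{B_i}\, t(v_i) \neq 0,
\]
and in particular $\chf{B_i}\, t\, \chf{A_i} \neq 0$.

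The second step is to note that we may replace the specific representative of $\csupp(t)$ from \cref{prop:coarse_support_exists} by the slightly larger
\[
  S_E \coloneqq \bigcup\left\{B \times A \;\middle|\; \begin{array}{c} A \in \fkA \text{ is $E$-bounded},\\ B \in \fkA \text{ is $\gaugey$-bounded} \end{array},\ \chf{B}\, t\, \chf{A} \neq 0 \right\}.
\]
Since $\gaugex \subseteq E$, the set $S_E$ contains the representative $S$ of \cref{prop:coarse_support_exists}; conversely, if $B \times A \subseteq S_E$, then $\chf{B} t \chf{A} \neq 0$ implies that $B$ and $A$ are not $S$-separated, hence $B \times A \subseteq \gaugey \circ S \circ E$, so $S_E \csub S$. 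This gives $[S_E] = \csupp(t)$.

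Finally, by construction $B_i \times A_i \subseteq S_E$ for every $i \in I$, hence $A_i \subseteq \pi_X(S_E)$, and so
\[
  Z \;\subseteq\; \bigcup_{i \in I} A_i \;\subseteq\; \pi_X(S_E).
\]
Recalling that $\pi_X(S_E)$ is a representative of $\pi_X(\csupp(t))$ (see \cref{ex:canonical-proj-coarse-maps}), this yields $Z \csub \pi_X(S_E)$, i.e.\ $\crse{Z \subseteq \pi_{X}}(\csupp(t))$. The only mild subtlety is the shift from $\gaugex$-bounded to $E$-bounded sets in the representative, which is handled by verifying $[S_E] = \csupp(t)$ as above; everything else is a direct application of the non-degeneracy lemma.
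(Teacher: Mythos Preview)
Your proof is correct and follows essentially the same strategy as the paper's: for each $i$, use non-degeneracy on the $\CHy$-side to produce a bounded measurable $B_i$ with $\chf{B_i}t\chf{A_i}\neq 0$, and conclude that each $A_i$ lies in a controlled thickening of $\pi_X(S)$. The only organisational difference is that the paper first refines to a $\gaugex$-bounded $A\subseteq X$ intersecting $A_i$ (so that $B\times A\subseteq S$ for the standard representative $S$ of \cref{prop:coarse_support_exists}) and then applies an $E$-thickening at the end, obtaining $Z\subseteq E(\pi_X(S))$; you instead absorb the $E$-boundedness of the $A_i$ into an enlarged representative $S_E$ and verify $[S_E]=\csupp(t)$. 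Both routes amount to the same estimate, and your packaging is arguably slightly more direct.
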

\begin{proof}
  Let $S$ be as in \cref{prop:coarse_support_exists}, in particular $\csupp(t)=[S]$. Arguing as in the proof of \cref{prop:coarse_support_exists},
  we observe that for every $i\in I$ there are measurable $\gaugex$ and $\gaugey$-bounded sets $A$ and $B$ such that $\chf{B}t\chf{A}(v_i)\neq 0$.
  This implies that $A_i\cap A\neq\emptyset$. Since by construction $B\times A\subseteq S$, it follows that $A\cap\pi_X(S)\neq\emptyset$.
  Thus, $Z\subseteq E(\pi_X(S))$, as every $A_i$ is $E$-bounded.
\end{proof}

Considering the latter together with \cref{cor:restriction with same domain coincides with f} yields the following.

\begin{corollary}\label{cor:isometries cover}
  If $\CHx$ is a faithful module and $t$ is an isometry supported on $\crse f$, then it covers it.
\end{corollary}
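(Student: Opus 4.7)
The plan is to combine the faithfulness of $\CHx$, the isometry property of $t$, and \cref{lem:injective operators have large domain} to show that $\cdom(\csupp(t)) = \crse X$. Once this is established, \cref{cor:restriction with same domain coincides with f} immediately yields $\csupp(t) = \crse f$ from the hypothesis $\csupp(t) \crse\subseteq \crse f$ and the fact that $\crse f$, being a coarse map, satisfies $\cdom(\crse f) = \crse X$.

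To carry this out, I would first invoke faithfulness of $\CHx$ to produce a faithful gauge $\gauge \in \CE$ and a family $(A_i)_{i\in I}$ of $\gauge$-bounded measurable subsets with $\chf{A_i}\neq 0$ for every $i \in I$ and such that $Z \coloneqq \bigcup_{i\in I} A_i$ is coarsely dense in $X$, i.e., $\crse Z = \crse X$. For each $i \in I$, pick any non-zero vector $v_i \in \CH_{A_i} = \chf{A_i}(\CHx)$; such a vector exists because $\chf{A_i} \neq 0$. Since $t$ is an isometry, $\norm{t(v_i)} = \norm{v_i} > 0$, so $t(v_i) \neq 0$ for every $i \in I$.

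We are now in the hypotheses of \cref{lem:injective operators have large domain} with $Z$ as above and $E = \gauge$, so the conclusion gives $\crse Z \crse\subseteq \crse\pi_X(\csupp(t)) = \cdom(\csupp(t))$. Combined with $\crse Z = \crse X$, this yields $\crse X \crse\subseteq \cdom(\csupp(t))$. Since $\crse f$ is coarsely everywhere defined, $\cdom(\crse f) = \crse X \crse\subseteq \cdom(\csupp(t))$. Applying \cref{cor:restriction with same domain coincides with f} to the partial coarse map $\csupp(t) \crse\subseteq \crse f$ gives $\csupp(t) = \crse f$, which is precisely the statement that $t$ covers $\crse f$.

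There is no real obstacle here: the faithfulness hypothesis is precisely designed to supply enough non-trivially represented bounded sets to cover $X$, the isometry condition guarantees that $t$ does not kill any vector supported on these sets, and the two lemmas cited above already do the conceptual work. The argument is thus a straightforward bookkeeping exercise assembling these pieces.
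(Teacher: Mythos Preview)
Your proof is correct and follows precisely the approach the paper indicates: it combines \cref{lem:injective operators have large domain} with \cref{cor:restriction with same domain coincides with f}, using faithfulness to supply the non-trivially represented bounded sets and the isometry condition to ensure $t(v_i)\neq 0$. The paper does not spell out the details, but your argument is exactly the intended one.
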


\begin{remark}
  \cref{cor:isometries cover} implies that \cref{def:covering} is an extension of the usual definition of covering isometry.
  That is, if $X$ and $Y$ are metric spaces and $t\colon \CH_X\to\CH_Y$ is an isometry between (classical) geometric modules, then $t$ covers a controlled map $f\colon X\to Y$ in the sense of~\cite{higson-2000-analytic-k-hom}*{Definition~6.3.9} or~\cite{willett_higher_2012}*{Definition~4.3.3} if and only if it covers $\crse f$ according to \cref{def:covering}.
\end{remark}

The following technical lemma is a useful tool when constructing partial isometries that cover a given partial coarse map $\crse f \colon \crse{X \to Y}$.
\begin{lemma}\label{lem:partial isometries defined on bases are controlled}
  Let $(v_i)_{i\in I}$ and $(w_i)_{i\in I}$ be families of orthonormal vectors in $\CHx$ and $\CHy$ respectively. Suppose there are $E\in \CE$ and $F\in\CF$, measurable $E$-controlled $A_i \subseteq X$, and measurable $F$-controlled $B_i\subseteq Y$ such that for every $i\in I$
  \begin{enumerate} [label=(\roman*)]
    \item \label{lem:partial isometries defined on bases are controlled:vi} $v_i\in\CH_{A_i}$;
    \item \label{lem:partial isometries defined on bases are controlled:wi} $w_i\in\CH_{B_i}$;
    \item \label{lem:partial isometries defined on bases are controlled:f} $f(A_i)\cap B_i\neq\emptyset$.
  \end{enumerate}
  Then the partial isometry $t\colon \CHx\to\CHy$ defined by $t(v_i)=w_i$ and $0$ on the orthogonal complement of $\angles{v_i\mid i\in I}$ is supported on $\crse f$.
\end{lemma}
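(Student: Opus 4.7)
The plan is to compute $\csupp(t)$ explicitly via the concrete representative $S$ constructed in \cref{prop:coarse_support_exists}, and to show that $S$ is coarsely contained in any representative $R$ of $\crse f$.

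First I would fix non-degeneracy gauges $\gaugex$ for $\CHx$ and $\gaugey$ for $\CHy$, a representative $R \subseteq Y\times X$ of $\crse f$ such that $\supp(f)\subseteq R$, and the set
\[
  S \coloneqq \bigcup\left\{B'\times A' \;\middle|\;
    \begin{array}{c} A'\ \text{measurable \gaugex-bounded},\\ B'\ \text{measurable \gaugey-bounded}\end{array},\ \chf{B'}\,t\,\chf{A'}\neq 0\right\},
\]
so that $\csupp(t)=[S]$. Since the rank-one operators $\matrixunit{w_i}{v_i}$ are pairwise orthogonal and uniformly bounded, their SOT-sum $\sum_{i\in I}\matrixunit{w_i}{v_i}$ converges and coincides with $t$ on $\CHx$. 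For any measurable $A',B'$ one therefore has
\[
  \chf{B'}t\chf{A'} = \sum_{i\in I}\matrixunit{\chf{B'}(w_i)}{\chf{A'}(v_i)},
\]
a strongly convergent sum of pairwise orthogonal rank-one operators. Hence $\chf{B'}t\chf{A'}\neq 0$ forces the existence of some $i\in I$ for which both $\chf{A'}(v_i)\neq 0$ and $\chf{B'}(w_i)\neq 0$.

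Now, using hypotheses \cref{lem:partial isometries defined on bases are controlled:vi} and \cref{lem:partial isometries defined on bases are controlled:wi}, we have $\chf{A'}(v_i)=\chf{A'\cap A_i}(v_i)$ and $\chf{B'}(w_i)=\chf{B'\cap B_i}(w_i)$, so in particular the projections $\chf{A'\cap A_i}$ and $\chf{B'\cap B_i}$ are non-zero and $A'\cap A_i$, $B'\cap B_i$ are non-empty. Pick arbitrary points $x\in A'$, $y\in B'$, as well as witnesses $x_0\in A'\cap A_i$ and $y_0\in B'\cap B_i$. By hypothesis \cref{lem:partial isometries defined on bases are controlled:f}, there exist $x_1\in A_i$ and $y_1\in B_i$ with $y_1\,R\,x_1$. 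Chaining the relations
\[
  y\ \gaugey\ y_0\ F\ y_1\ R\ x_1\ E\ x_0\ \gaugex\ x
\]
(using that $A_i$ is $E$-controlled, $B_i$ is $F$-controlled, and both contain the diagonal on the relevant points) shows $(y,x)\in\gaugey\circ F\circ R\circ E\circ\gaugex$.

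Since $(y,x)\in B'\times A'$ was arbitrary, it follows that every one of the boxes making up $S$ is contained in the same controlled thickening of $R$, and thus
\[
  S\ \subseteq\ \gaugey\circ F\circ R\circ E\circ\gaugex.
\]
By \cref{eq:coarsely contained relation} this means $S\csub R$, hence $\csupp(t)=[S]\crse\subseteq [R]=\crse f$, as desired. The only delicate point is the transition from $\chf{A'}(v_i)\neq 0$ to the non-emptiness of $A'\cap A_i$ (and analogously for $B'$, $B_i$), but this is immediate once one remembers that $\chf{A'}(v_i)=\chf{A'\cap A_i}(v_i)$ and that $\chf{\emptyset}=0$; everything else is a direct composition of controlled relations.
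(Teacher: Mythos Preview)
Your argument is correct and follows essentially the same route as the paper's proof: both hinge on the observation that $\chf{B'}t\chf{A'}\neq 0$ forces some $i$ with $A'\cap A_i\neq\emptyset$ and $B'\cap B_i\neq\emptyset$, and then use hypothesis \cref{lem:partial isometries defined on bases are controlled:f} to place the resulting box inside a controlled thickening of $\grop{f}$. The paper is marginally slicker in that it introduces the intermediate relation $R=\bigcup_{i\in I}B_i\times A_i$ and verifies $\supp(t)\subseteq R$ directly via \cref{def:supp-operator} (only non-separation is needed), avoiding the explicit gauges from \cref{prop:coarse_support_exists}; your phrase ``$\supp(f)\subseteq R$'' should presumably read $\grop{f}\subseteq R$.
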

\begin{proof}
  Let $R\coloneqq \bigcup_{i\in I}B_i\times A_i$. Assumptions \cref{lem:partial isometries defined on bases are controlled:vi,lem:partial isometries defined on bases are controlled:wi,lem:partial isometries defined on bases are controlled:f} imply that
  \[
    R\subseteq F\circ \grop{f}\circ E\csub \grop{f}.
  \]
  It is therefore enough to verify that $\supp(t)\subseteq R$. Suppose that $\chf{B}t\chf{A}\neq 0$. Then there is some $i\in I$ such that neither $\chf{A}(v_i)$ nor $\chf{B}(w_i)$ are $0$. But this implies that the intersections $A\cap A_i$ and $B\cap B_i$ are not empty, and therefore $B$ and $A$ are not $R$-separated.
\end{proof}

We may now prove the existence of covering partial isometries.
\begin{theorem}\label{thm:covering isometries exist}
  Let $\crse{f \colon X\to Y}$ be a partial coarse map and let $\CHx$ be faithful and discrete.
  \begin{enumerate}[label=\arabic*.]
    \item \label{thm:covering isometries exist:inj} If $\CHx$ has rank at most $\kappa$ and $\CHy$ is $\kappa$-ample then
      \begin{enumerate}[label=(\cref{thm:covering isometries exist:inj}\alph*)]
        \item \label{thm:covering isometries exist:inj-pi} there is a partial isometry $t\colon\CHx\to\CHy$ covering $\crse f$;
        \item \label{thm:covering isometries exist:inj-s} if $\cdom(\crse f)=\crse X$ we may further assume $t$ is an isometry.
      \end{enumerate}
    \item \label{thm:covering isometries exist:sur} If $\crse f$ is coarsely surjective, $\CHy$ is discrete of rank at most $\kappa$ and $\CHx$ is $\kappa$-ample then:
      \begin{enumerate}[label=(\cref{thm:covering isometries exist:sur}\alph*)]
        \item \label{thm:covering isometries exist:sur:coi} there exists a coisometry $t\colon\CHx\to\CHy$ covering $\crse f$;
        \item \label{thm:covering isometries exist:sur:uni} if $\cdom(\crse f)=\crse X$ and both $\CHx$ and $\CHy$ are $\kappa$-ample of rank $\kappa$, then $t$ can be taken to be a unitary.
      \end{enumerate}
  \end{enumerate}
  Moreover, if  $\crse f$ is proper all the above hold with ``rank at most $\kappa$'' replaced with ``\emph{local} rank at most $\kappa$'' (see \cref{rmk: rank vs local rank}).
\end{theorem}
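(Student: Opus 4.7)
For Part 1(a), the plan is as follows. Apply \cref{cor:discrete and faithful} to the discrete faithful module $\CHx$ to obtain a discrete partition $X = \bigsqcup_{i \in I} A_i$ with $\chf{A_i} \neq 0$. Fix a representative $f$ of $\crse f$ and set $I' \coloneqq \{i : A_i \cap \dom(f) \neq \emptyset\}$, so $\bigcup_{i \in I'} A_i$ represents $\cdom(\crse f)$. For each $i \in I'$ pick $x_i \in A_i \cap \dom(f)$. By \cref{cor:exists disjoint ample coarsely dense} applied to the $\kappa$-ample $\CHy$, fix a pairwise disjoint controlled measurable family $(C_j)_{j \in J}$ of subsets of $Y$ with $\chf{C_j}$ of rank $\kappa$ and $\bigsqcup_j C_j$ coarsely dense; assign $j(i) \in J$ so that $f(x_i) \in F(C_{j(i)})$ for a fixed gauge $F$ of $\crse Y$ witnessing the coarse density. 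For each $j$, let $V_j \coloneqq \bigoplus_{i : j(i) = j} \CH_{A_i} \subseteq \CHx$; since $\mathrm{rank}(V_j) \leq \mathrm{rank}(\CHx) \leq \kappa = \mathrm{rank}(\CH_{C_j})$, $V_j$ embeds isometrically into $\CH_{C_j}$. Combine these embeddings (with zero on $\bigoplus_{i \notin I'} \CH_{A_i}$) to define a partial isometry $t \colon \CHx \to \CHy$. A mild relaxation of \cref{lem:partial isometries defined on bases are controlled} (allowing $f(A_i) \cap F(B_i) \neq \emptyset$ in place of $f(A_i) \cap B_i \neq \emptyset$, which still gives a controlled thickening of $\grop f$) yields $\csupp(t) \subseteq \crse f$, and \cref{lem:injective operators have large domain} combined with \cref{cor:restriction with same domain coincides with f} upgrades this to $\csupp(t) = \crse f$. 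Part 1(b) follows by replacing $f$ with a controlled extension to all of $X$ (which exists since $\cdom(\crse f) = \crse X$), making $I' = I$ so that $t$ becomes an isometry.

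Part 2(a) is approached dually: construct an isometry $s \colon \CHy \to \CHx$ with $\csupp(s) = \op{\crse f}$ viewed as a coarse subspace of $\crse{X \times Y}$, and let $t \coloneqq s^*$ (automatically a coisometry since $s^*s = 1$). Choose discrete partitions $Y = \bigsqcup_\beta B_\beta$ from the discreteness of $\CHy$, and $X = \bigsqcup_\alpha A_\alpha$ with $\chf{A_\alpha}$ of rank $\kappa$ from \cref{cor:discrete and faithful} applied to the $\kappa$-ample $\CHx$. For each $\alpha$ meeting $\dom(f)$ pick $x_\alpha \in A_\alpha \cap \dom(f)$ and the unique $\beta(\alpha)$ with $f(x_\alpha) \in B_{\beta(\alpha)}$. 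Within each $\CH_{A_\alpha}$ allocate pairwise orthogonal subspaces $V_{\beta, \alpha}$ of rank $\mathrm{rank}(\CH_{B_\beta})$ for each relevant $\beta$; these fit inside $\CH_{A_\alpha}$ since $\mathrm{rank}(\CHy) \leq \kappa$ and $\kappa \cdot \kappa = \kappa$. For each $\beta$ pick an isometric embedding $\iota_\beta \colon \CH_{B_\beta} \hookrightarrow \bigoplus_\alpha V_{\beta, \alpha}$ whose image has non-trivial components on enough of the $V_{\beta, \alpha}$'s to cover $\dom(f)$, and assemble these $\iota_\beta$'s into $s$. The transposed form of \cref{lem:partial isometries defined on bases are controlled} yields $\csupp(s) \subseteq \op{\crse f}$. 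Since $\op{\crse f}$ is not in general a partial coarse map, \cref{cor:restriction with same domain coincides with f} does not apply directly; however, a direct argument exploiting the controlledness of $f$ shows that any coarse subspace of $\op{\crse f}$ whose $\pi_X$-projection contains $\cdom(\crse f)$ must equal $\op{\crse f}$, whence $\csupp(s) = \op{\crse f}$. Part 2(b) combines Parts 1(b) and 2(a): under the rank-$\kappa$ hypotheses on both modules, the coarse cardinalities of $X$ and $Y$ are both exactly $\kappa$, so the above constructions can be arranged bijectively, yielding a unitary $t$.

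The main technical obstacle I anticipate is in Part 2(a), specifically in arranging that $\pi_X(\csupp(s))$ covers $\cdom(\crse f)$ when $\crse f$ is highly non-injective. A single $\iota_\beta$ of rank at most $\kappa$ can have non-trivial image components in at most countably many orthogonal $V_{\beta, \alpha}$'s at a time (by $\ell^2$-summability of its coordinates), whereas the preimages $\op f(F(B_\beta))$ may spread across many more blocks $A_\alpha$. The resolution requires distributing contributions across neighboring $\beta$'s (using overlaps between the $F$-thickenings of adjacent $B_\beta$'s) and exploiting the rank-$\kappa$ slack inside each $\CH_{A_\alpha}$; verifying that this combinatorial matching balances correctly is the delicate heart of the proof.
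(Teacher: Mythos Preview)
Your Part 1 is essentially the paper's proof; the ``mild relaxation'' of \cref{lem:partial isometries defined on bases are controlled} you invoke is exactly how the paper uses it, since $F\circ\grop{f}\circ E$ is still a controlled thickening of $\grop{f}$.

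Your Part 2 takes a more complicated route than the paper, and that complication is precisely what generates your ``main technical obstacle''. The paper does not fix an arbitrary discrete partition of $Y$ and then try to allocate orthogonal subspaces $V_{\beta,\alpha}$ inside each $\CH_{A_\alpha}$. Instead it exploits the coarse surjectivity of $\crse f$ together with the discreteness of $\CHy$ to invoke \cref{lem:discrete have nice partitions}: since $(f(A_i))_{i\in I_0}$ is a controlled coarsely dense family in $Y$, that lemma produces a discrete partition $Y=\bigsqcup_{j\in J}B_j$ in which each $B_j$ already \emph{contains} some $f(A_i)$. This makes the assignment $\phi\colon I_0\to J$ \emph{surjective}. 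From there the construction simply mirrors Part 1 with the direction of the basis injection reversed: since $\rank(\CH_{B_j})\leq\kappa$ while each $\CH_{A_i}$ with $i\in I_j$ has rank at least $\kappa$, one chooses an injection $\{w_{j,l}\}\hookrightarrow\{v_{i,k}:i\in I_j,\ k\in K_i\}$, defines the coisometry $t$ as the adjoint of the resulting isometry, and runs the Part-1 verification. For (2.b) all the bases have cardinality exactly $\kappa$, so the injections become bijections and $t$ is unitary. There is no need to ``distribute across neighboring $\beta$'s'' and no $\ell^2$-summability bookkeeping: the surjectivity of $\phi$ obtained from \cref{lem:discrete have nice partitions} is what does the work your allocation scheme was attempting.

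One further simplification: your worry that \cref{cor:restriction with same domain coincides with f} does not apply to $\op{\crse f}$ is misplaced. Since $\csupp(t)=\op{\csupp(s)}$, once you have $\csupp(t)\crse\subseteq\crse f$ and $\cdom(\crse f)\crse\subseteq\crse{\pi_{X}}(\csupp(t))$ you apply that corollary directly to $\crse f$ itself, which \emph{is} a partial coarse map; no separate argument about $\op{\crse f}$ is needed.
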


\begin{proof}
  We prove Part~\cref{thm:covering isometries exist:inj} first.
  By \cref{cor:discrete and faithful}, there is a discrete partition $X=\bigsqcup_{i\in I}A_i$ such that $\chf{A_i}$ is not zero for every $i\in I$.
  Likewise, by \cref{cor:exists disjoint ample coarsely dense} there is a controlled family $(B_j)_{j\in J}$ of disjoint measurable subsets of $Y$ such that $\chf{B_j}$ has rank at least $\kappa$ for every $j\in J$ and $\bigsqcup_{j\in J}B_j$ is $F$-dense, where $F \in \CF$.

  Choose a representative $f$ for $\crse f$, and let $I_0 \coloneqq \{i\in I\mid A_i\cap\dom(f)\neq \emptyset\}$. Then $f(A_i)\neq\emptyset$ for every $i\in I_0$. By coarse denseness of $(B_j)_{j \in J}$ there is some $j\in J$ with $f(A_i)\cap F(B_j)\neq\emptyset$. Arbitrarily choose one such $j$ and call it $\phi(i)$. This defines a function $\phi\colon I\to J$.
  For every $j\in J$, let $I_j\coloneqq \phi^{-1}(j)$, so that $I_0=\bigsqcup_{j\in J}I_j$.

  Fix $j\in J$. For every $i\in I_j$, choose a Hilbert basis $\braces{v_{i,k}}_{k\in K_i}$ of $\CH_{A_i}$. Similarly choose Hilbert bases $\braces{w_{j,l}}_{l\in L_j}$ for $\CH_{B_j}$.  By assumption, the cardinality of $L_j$ is at least $\kappa$, while 
  we have:
  \begin{equation}\label{eq: cardinality bound_1}
    \abs*{\bigsqcup\{K_i\mid i\in I_j\} }\leq \rank(\CHx)\leq \kappa.
  \end{equation}
  We may thus choose an injective mapping
  \[
    \braces{v_{i,k}\mid i\in I_j,\ k\in K_i}\hookrightarrow \braces{w_{j,l}\mid l\in L_j},
  \]
  which defines an isometry $t_j\colon \bigoplus_{i\in I_j}\CH_{A_i}\to\CH_{B_j}$. Doing this for every $j\in J$ and mapping the orthogonal complement of $\bigoplus_{i\in I_0}\CH_{A_i}$ to zero defines a partial isometry $t\colon \CHx\to\CHy$.

  Using \cref{lem:partial isometries defined on bases are controlled} yields $\csupp(t)\crse{\subseteq f}$. On the other hand, since $\dom(f)\subseteq \bigsqcup_{i\in I_0}A_i$, by \cref{lem:injective operators have large domain} it follows that $\cdom(\crse f)\crse\subseteq\crse{\pi_{X}}(\csupp(t))$. It then follows from \cref{cor:restriction with same domain coincides with f} that $\crse f=\csupp(t)$, establishing \cref{thm:covering isometries exist:inj-pi}.
  Moreover, if $\cdom(\crse f)=\crse X$ we may choose $f$ with $\dom(f)=X$, so that $I_0=I$. It then follows that $t$ is actually an isometry, yielding \cref{thm:covering isometries exist:inj-s}.

  \smallskip

  Part~\cref{thm:covering isometries exist:sur} is similar. Let $X=\bigsqcup_{i\in I}A_i$ be such that $\chf{A_i}$ has rank at least $\kappa$ for every $i\in I$. As before, choose a representative $f$ of $\crse f$ and let $I_0$ be the set of indices with $A_i\cap\dom(f)\neq\emptyset$.

  Since $f$ is controlled, the images $(f(A_i))_{i \in I}$ are uniformly controlled subsets of $Y$. They are, moreover, coarsely dense because $\crse f$ is coarsely surjective. By applying \cref{lem:discrete have nice partitions} to the discrete module $\CHy$ we obtain a discrete partition $Y=\bigsqcup_{j\in J}B_j$ with the property that for every $j\in J$ there is an $i\in I_0$ such that $f(A_i)\subseteq B_j$. This means that we can construct a function $\phi\colon I\to J$ as before, but this time we can also assume it is surjective.

  Again, choose Hilbert basis $\braces{v_{i,k}}_{k\in K_i}$ of $\CH_{A_i}$ and $\braces{w_{j,l}}_{l\in L_j}$ of $\CH_{B_j}$. Since $\abs{K_i}\geq \kappa$ for every $i\in I_0$ and 
  \begin{equation}\label{eq: cardinality bound_2}
    \abs{L_j}\leq\rank(\CHy)\leq\kappa,
  \end{equation}
  there is an embedding in the opposite direction
  \[
    \braces{v_{i,k}\mid i\in I_j,\ k\in K_i}\hookleftarrow \braces{w_{j,l}\mid l\in L_j}.
  \]
  Such a map defines an isometry $\CH_{B_j}\to \bigoplus_{i\in I_j}{\CH_{A_j}}$ whose adjoint is a co-isometry $t_j\colon\bigoplus_{i\in I_j}{\CH_{A_j}}\to \CH_{B_j}$.
  The rest of the argument now runs exactly as in Part~\cref{thm:covering isometries exist:inj}. Furthermore, observe that for \cref{thm:covering isometries exist:sur:uni}, all the basis have cardinality $\kappa$. Thus the injections may be replaced by bijections, and the resulting $t$ is a unitary.

  \smallskip

  For the moreover statement, since $\crse f$ is proper, the preimage of any bounded set $B\subseteq Y$ is bounded, and so is $C_B\coloneqq\bigsqcup\braces{A_i\mid A_i\cap f^{-1}(B)\neq \emptyset}$. We may then replace \cref{eq: cardinality bound_1} with 
  \[  
    \abs*{ \bigsqcup\{K_i\mid i\in I_j\} }= \rank\paren{\chf{C_{B_j}}} \leq \kappa.
  \]
  The condition of local rank at most $\kappa$ also suffices to deduce \cref{eq: cardinality bound_2}.
  These are the only adjustments needed to run the above arguments.
\end{proof}

Together with \cref{cor:coarse functions preserve roe-like algebras}, this implies the following (observe that if $t$ covers $\crse f$ then it is proper if and only if so is $\crse f$, cf.\ \cref{def: proper operator}).
\begin{corollary}\label{cor:existence of covering Ad}
  If $\CHx$ and $\CHy$ are both discrete, $\kappa$-ample and of rank $\kappa$, then every coarse map $\crse{f\colon X\to Y}$ is covered by an isometry $t$, which then induces a \Star{}embedding $\Ad(t)$ at the level of $\cpcstar{\variable}$ and $\qlcstar{\variable}$.
  If $\crse f$ is also proper, $\Ad(t)$ defines a \Star{}embedding of Roe algebras.
  If $\crse f$ is a coarse equivalence, $t$ can be taken to be unitary.
\end{corollary}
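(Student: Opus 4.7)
The plan is to obtain this corollary as a direct synthesis of \cref{thm:covering isometries exist} with \cref{cor:coarse functions preserve roe-like algebras}. First, I would verify that the hypotheses of \cref{thm:covering isometries exist}~\cref{thm:covering isometries exist:inj-s} are met: since $\crse f$ is a coarse map it is coarsely everywhere defined ($\cdom(\crse f) = \crse X$); $\CHx$ is discrete hence in particular faithful; $\CHx$ has rank $\kappa$ (so rank at most $\kappa$); and $\CHy$ is $\kappa$\=/ample. This produces the desired isometry $t \colon \CHx \to \CHy$ with $\csupp(t) = \crse f$.

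Next, I would invoke \cref{cor:coarse functions preserve roe-like algebras} to promote $t$ into the advertised \Star{}embeddings. For this I need $\CHx$ admissible and $t$ controlled. The first holds because discrete modules are automatically admissible (see \cref{rmk:discrete module}), and the second holds because $\csupp(t) = \crse f$ is a (partial) coarse map, which is the very definition of a controlled operator (\cref{def:controlled operator}). Thus \cref{cor:coarse functions preserve roe-like algebras} yields that $\Ad(t)$ restricts to a \Star{}homomorphism on $\cpcstar{\variable}$ and $\qlcstar{\variable}$, and the final clause of that corollary upgrades these to \Star{}embeddings because $t$ is an isometry.

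For the proper case, observe that properness of an operator was defined (\cref{def: proper operator}) precisely as properness of its coarse support, so $t$ is proper whenever $\crse f$ is. The corresponding clause of \cref{cor:coarse functions preserve roe-like algebras} then produces the \Star{}embedding $\roecstar{\CHx} \hookrightarrow \roecstar{\CHy}$. Finally, if $\crse f$ is a coarse equivalence then both $\crse f$ and its inverse are coarsely surjective and coarsely everywhere defined, so the stronger hypotheses of \cref{thm:covering isometries exist}~\cref{thm:covering isometries exist:sur:uni} are met: both modules are discrete and $\kappa$\=/ample of rank $\kappa$. That part of the theorem directly yields a unitary $t$ covering $\crse f$, and since coarse equivalences are automatically proper this unitary induces \Star{}embeddings at all three levels simultaneously.

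There is no real obstacle here beyond bookkeeping: the corollary is purely an assembly of the two preceding results, and the only point worth checking with some care is the translation dictionary between properties of $t$ and properties of $\csupp(t) = \crse f$ (controlledness $\leftrightarrow$ $\crse f$ is a partial coarse map; properness of $t$ $\leftrightarrow$ properness of $\crse f$; isometry/unitary status coming from \cref{thm:covering isometries exist}), together with the observation that discreteness of the modules supplies the (local) admissibility hypotheses required by \cref{cor:coarse functions preserve roe-like algebras}.
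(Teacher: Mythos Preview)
Your approach is essentially the same as the paper's: the corollary is just the assembly of \cref{thm:covering isometries exist} with \cref{cor:coarse functions preserve roe-like algebras}, together with the observation that properness of $t$ is by definition properness of $\csupp(t)=\crse f$. One small correction: discreteness alone does \emph{not} imply faithfulness (the zero module is trivially discrete); faithfulness of $\CHx$ here comes from the $\kappa$-ample hypothesis, which by \cref{def:ample} includes faithfulness.
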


\begin{remark} \label{rem:covering isos}
  With moderate effort, one can prove minor improvements or modifications of \cref{thm:covering isometries exist,cor:existence of covering Ad}. Among these, we would like to highlight the following two.
  \begin{enumerate}[label=(\roman*)]
    \item Conclusion~\cref{thm:covering isometries exist:sur:coi} in \cref{thm:covering isometries exist} does not require $\CHx$ to be discrete. Likewise, if $\crse f$ is a coarse embedding then~\cref{thm:covering isometries exist:inj-pi} holds even if $\CHy$ is assumed to be faithful instead of $\kappa$-ample.
    \item \label{rem:covering isos:coe-covered-loc-fin} The ``moreover'' statement of \cref{thm:covering isometries exist} shows that if $\crse f$ is assumed to be proper then in \cref{cor:existence of covering Ad} it is enough to ask that $\CHx$ and $\CHy$ be $\kappa$-ample of local rank at most $\kappa$ (\emph{e.g.}\ ample and locally separable).
  \end{enumerate}
\end{remark}

\subsection{Coarse equivalences, controlled unitaries and outer automorphisms}
Observe that the homomorphisms constructed in \cref{cor:existence of covering Ad} are highly non-canonical. The main aim of this subsection is to show that the choices involved there simply result in conjugated isometries. In the classical setting, this was observed in \cite{braga_gelfand_duality_2022}, see also \cite{spakula_rigidity_2013}*{Theorem A.5} for the algebraic counterpart.
\begin{lemma}\label{lem: operators with same coarse support are close}
  Let $t_0,t_1\colon\CHx\to\CHy$ be controlled operators with $\csupp(t_0)=\csupp(t_1)$. Then $t_it_j^*$ for $i,j=0,1$ has controlled propagation, \emph{i.e.}\ it belongs to $\cpstar{\CHy}$.
\end{lemma}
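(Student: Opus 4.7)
The plan is to reduce everything to a computation with coarse supports, using \cref{cor:coarse support of composition}. Set $\crse f \coloneqq \csupp(t_0) = \csupp(t_1) \crse\subseteq \crse{Y \times X}$; by hypothesis this is a partial coarse map $\crse{f \colon X \to Y}$. For any $i,j \in \{0,1\}$, the operator $t_i t_j^* \in \CB(\CHy)$ has coarse support inside $\crse{Y \times Y}$, and we wish to show it is coarsely contained in $\cid_{\crse Y} = [\Delta_Y]$, which by \cref{def: controlled propagation operator} is exactly controlled propagation.

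First I would record that \cref{cor:coarse support of composition}\cref{cor:coarse support of composition:star} gives $\csupp(t_j^*) = \op{\crse f} \crse\subseteq \crse{X \times Y}$. To compose $t_i$ with $t_j^*$ via \cref{cor:coarse support of composition}\cref{cor:coarse support of composition:comp}, I need to check the compatibility hypothesis, which in this setup reads
\[
  \crse \pi_{\crse X}\bigl(\csupp(t_j^*)\bigr) \crse\subseteq \crse \pi_{\crse X}\bigl(\csupp(t_i)\bigr).
\]
Both sides equal $\cdom(\crse f)$: for the left hand side this is just the observation that projecting $\op{\crse f}$ to the $\crse X$-coordinate recovers the domain of $\crse f$, and for the right hand side it is the definition. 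Hence the condition holds trivially, and \cref{cor:coarse support of composition}\cref{cor:coarse support of composition:comp} (together with the fact that $t_i$ is controlled by assumption) yields
\[
  \csupp(t_i t_j^*) \crse\subseteq \csupp(t_i) \crse\circ \csupp(t_j^*) = \crse f \crse\circ \op{\crse f}.
\]

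It remains to show $\crse f \crse\circ \op{\crse f} \crse\subseteq \cid_{\crse Y}$. Pick any representative $f$ of $\crse f$, so that $\op f$ represents $\op{\crse f}$ and, by the construction in \cref{lem:coarse composition is well-defined}, $f \circ \op f$ represents $\crse f \crse \circ \op{\crse f}$. Since $f$ is a controlled relation, unfolding \cref{def:controlled_function} with the entourage $\Delta_X \in \CE$ shows that $f \circ \Delta_X \circ \op f = f \circ \op f$ lies in $\CF$. As observed after \cref{def:coarse-prod}, any controlled entourage of $\crse Y$, viewed as a subset of $\crse{Y\times Y}$, is coarsely contained in $\Delta_Y$ (via the identity $f\circ \op f \subseteq (f\circ \op f)\circ \Delta_Y$). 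Therefore $\csupp(t_i t_j^*) \crse\subseteq [\Delta_Y] = \cid_{\crse Y}$, proving that $t_i t_j^* \in \cpstar{\CHy}$.

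No step in this argument is genuinely hard; the only subtlety is the bookkeeping around which space is the source and which is the target when invoking \cref{cor:coarse support of composition}\cref{cor:coarse support of composition:comp}, and the observation that controlledness of $f$ is exactly the statement that $f \circ \op f$ is a controlled entourage of $\crse Y$, which is what forces the composition to collapse into (a thickening of) the diagonal.
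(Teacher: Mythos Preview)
Your proof is correct and follows essentially the same route as the paper: both arguments set $\crse f = \csupp(t_0) = \csupp(t_1)$, use \cref{cor:coarse support of composition}\cref{cor:coarse support of composition:star} and \cref{cor:coarse support of composition:comp} to bound $\csupp(t_it_j^*)$ by $\crse f \crse\circ \op{\crse f}$, and then show this last coarse composition lies inside $\cid_{\crse Y}$. The only cosmetic difference is in that last step: the paper passes to a \emph{function} representative $f$ and computes $(\grop f)\circ(\gr f) = \Delta_{f(X)}$ explicitly, whereas you keep an arbitrary relation representative and invoke controlledness of $f$ directly (with $E=\Delta_X$) to get $f\circ\op f \in \CF$, hence coarsely inside $\Delta_Y$. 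Your variant is marginally more direct; the paper's is marginally more concrete. One small nitpick: the claim ``as observed after \cref{def:coarse-prod}'' is not quite accurate --- the fact that controlled entourages are coarsely contained in the diagonal is not stated there, but it follows immediately from \eqref{eq:coarsely contained relation}, which is what your parenthetical justification actually uses.
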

\begin{proof}
  Throughout, $i,j=0,1$. Let $\crse f$ denote $\csupp(t_0)=\csupp(t_1)$. By assumption, $\crse{f\colon X\to Y}$ is a partially defined coarse map.
  It follows from \cref{cor:coarse support of composition}~\cref{cor:coarse support of composition:star} that $\csupp(t_j^*)=\op{\crse f}$. In particular, 
  \[
    \crse{\pi_{X}}(\csupp\paren{t_j^*})=\crse{\pi_{X}}(\csupp\paren{t_i})=\cdom(\crse f).
  \]
  Since $\crse f$ is controlled, we deduce from \cref{cor:coarse support of composition}~\cref{cor:coarse support of composition:comp} that
  \begin{equation*}
    \csupp(t_it_j^*)
    \crse\subseteq \csupp(t_i)\crse\circ \csupp(t_j^*)
    =\crse{f\circ}\op{\crse f}.
  \end{equation*}
  Since the coarse composition is well-defined (see \cref{lem:coarse composition is well-defined}), we may as well fix a function $f$ as representative for $\crse f$, so that $\crse f=[\grop f]$. Since $\pi_X(\grop f)=\pi_X(\gr f)=\dom(f)$, \cref{lem:coarse composition is well-defined} shows that
  \[
    \crse{f\circ}\op{\crse f} = [(\grop f)\circ(\gr f)] = [\Delta_{f(X)}].
  \]
  This shows that $\csupp(t_it_j^*)\crse\subseteq \cid_{\crse Y}$, as desired.
\end{proof}

\cref{lem: operators with same coarse support are close} shows, in particular, that if $u_0$ and $u_1$ are controlled unitaries with the same coarse support, then $u_0u_1^*$ is a unitary element in $\cpstar{\CHy}$. To better leverage this observation, we make some definitions.
\begin{definition} \label{def: CE and CUni}
  Let $\crse X$ be a coarse space, and let $\CHx$ be an $\crse{X}$-module. Then we denote by
  \begin{enumerate}[label=(\roman*)]
    \item $\coe{\crse X}$ the set of coarse equivalences $\crse{f \colon X \to X}$;
    \item $\cuni{\CHx}$ the set of unitaries $u \colon \CHx \to \CHx$ such that both $u$ and $u^*$ are controlled.
  \end{enumerate}
\end{definition}

Observe that both sets above are, in fact, groups under composition (in the definition of $\cuni{\CHx}$ it is necessary to impose that $u^*$ is controlled in order to have inverses).
The action by conjugation of $\cuni{\CHx}$ on itself is nothing but $\Ad$. It then follows from \cref{prop:coarse functions preserve controlled propagation} that the group of unitaries $U(\cpstar{\CHx})$ is a normal subgroup of $\cuni{\CHx}$. The following is now easy to prove.

\begin{theorem}\label{thm: coarse equivalences iso to cuni}
  Let $\CHx$ be a discrete module that is $\kappa$-ample of local rank at most $\kappa$, where $\kappa >0$ is arbitrary. Then there is a canonical isomorphism
  \[
    \rho\colon\coe{\crse X}\xrightarrow{\ \cong\ } \cuni{\CHx}/U(\cpstar{\CHx}).
  \]
\end{theorem}
\begin{proof}
  Given $\crse f\in\coe{\crse X}$, by \cref{thm:covering isometries exist} we may choose a unitary $u$ covering $\crse f$. Since $\crse f$ is a coarse equivalence, and $u^*$ covers $\op{\crse{f}}=\crse f^{-1}$, we see that both $u$ and $u^*$ are controlled. We may thus define $\rho(\crse f)\coloneqq [u]$. 
  
  If $u'\in\cuni{\CHx}$ is a different choice of covering unitary, \cref{lem: operators with same coarse support are close} shows that $u' = (u'u^*)u$ with $u'u^*\in U(\cpstar{\CHx})$, therefore $[u]=[u']$. This shows that $\rho$ is well-defined, and it is clearly a homomorphism, because $u_1u_2$ covers $\csupp(u_1)\circ\csupp(u_2)$ by \cref{cor:coarse support of composition,cor:isometries cover}.
  Injectivity is also clear. By definition, $[u]=[1_{\CHx}]$ if and only if $u\in U(\cpstar{\CHx})$, \emph{i.e.}\ $\csupp(u)\crse \subseteq \cid_{\crse X}$. This shows that $\ker(\rho)=\{\cid_{\crse X}\}$, as desired.

  It only remains to discuss surjectivity.
  For a given $u\in\cuni{\CHx}$, both $\csupp(u)$ and $\csupp(u^*)$ are partial coarse maps.
  Recall that by \cref{cor:coarse support of composition} we have $\csupp(u^*) = \op{\csupp(u)}$.
  Since $\CHx$ is faithful and $u$ is a unitary, \cref{lem:injective operators have large domain} implies that $\csupp(u)$ and $\csupp(u^*)$ are coarsely everywhere defined.
  It then follows from \cref{prop:transpose is coarse inverse} that $\csupp(u)$ is a coarse equivalence (with coarse inverse $\csupp(u^*)$).
  Since an operator covers its own support by definition, we deduce that $[u]=\rho(\csupp(u))$.
\end{proof}

It is now natural to attempt to compose this homomorphism with the action by conjugation on the Roe-like algebras. Before doing so, we prove a preliminary result.
\begin{lemma}\label{lem: controlled quasi local is in cp}
  Let $\CHx$ be an admissible $\crse X$-module.
  If $t\colon\CHx\to\CHx$ is a controlled isometry that is also quasi-local, then it has controlled propagation.
\end{lemma}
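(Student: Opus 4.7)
The plan is to show that the canonical support $S$ of $t$ (as constructed in \cref{prop:coarse_support_exists}) is actually contained in a single entourage $E'' \in \CE$; since $\supp(t) \subseteq S$, this will give $t$ controlled propagation. Because $t$ is controlled, $S$ is a controlled relation, and in particular for any gauge $\gauge \in \CE$ one has $S \circ \gauge \circ \op{S} \in \CE$, so the fibres $S(A)$ over $\gauge$-bounded sets $A$ are uniformly bounded in $\CE$.

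Fix $E_{1/2} \in \CE$ witnessing quasi-locality at $\varepsilon = 1/2$, and let $\gauge \in \CE$ be a common admissibility and non-degeneracy gauge. For a $\gauge$-bounded measurable $A$ with $\chf A \neq 0$, use admissibility to pick measurable thickenings $A' \supseteq E_{1/2}(A)$ and $A'' \supseteq S(A)$. Since $\supp(t) \subseteq S$ and $X \setminus A''$ is $S$-separated from $A$, we have $t\chf A = \chf{A''} t\chf A$; quasi-locality yields $\|\chf{X \setminus A'} t\chf A\| \leq \tfrac{1}{2}$. For any unit $v \in \chf A(\CHx)$, the isometry property $\|tv\| = 1$ combined with the triangle inequality forces
\[
  \|\chf{A' \cap A''} tv\| \;\geq\; \|tv\| - \|\chf{A'' \setminus A'}tv\| \;\geq\; 1 - \tfrac{1}{2} \;=\; \tfrac{1}{2},
\]
so $\chf{A' \cap A''} \neq 0$ and hence $A' \cap A''$ is non-empty. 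A point of $A' \cap A''$ is $\gauge$-close to a point of $S(A)$ and to a point of $E_{1/2}(A)$, so there exists $y_0 \in S(A) \cap \gauge^{\circ 2} \circ E_{1/2}(A)$.

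Controllability of $S$ gives $S(A) \times S(A) \subseteq S \circ \gauge \circ \op{S} \subseteq E_S$ for some $E_S \in \CE$, so every $y \in S(A)$ satisfies $y \in E_S(y_0) \subseteq E_S \circ \gauge^{\circ 2} \circ E_{1/2}(A)$. Applied to a $\gauge$-bounded measurable $A \subseteq \gauge(x)$ containing any $x \in \pi_X(S)$ (which exists by the definition of $S$), this gives $S(x) \subseteq E''(x)$ with $E'' \coloneqq E_S \circ \gauge^{\circ 2} \circ E_{1/2} \circ \gauge \in \CE$. Hence $S \subseteq E''$, and so $t$ has $E''$-controlled propagation.

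The main subtlety to watch out for is that quasi-locality only gives norm estimates, not exact vanishing of $\chf B t \chf A$: the canonical $S$ can \emph{a priori} contain far-away pairs arising from tiny-but-nonzero overlaps. The role of the isometry assumption is to produce at least one genuine point $y_0 \in S(A)$ that lies close to $E_{1/2}(A)$ (since otherwise $\|tv\|$ could not reach $1$), while the role of controllability is to propagate this locality from the single point $y_0$ to the entire fibre $S(A)$ via the bounded-fibre estimate.
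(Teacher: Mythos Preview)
Your proof is correct and follows essentially the same strategy as the paper's: both use controllability to bound the fibres $S(A)$ uniformly, the isometry property to ensure $\|tv\|=1$, and quasi-locality to force the image near $A$. The only difference is cosmetic---you argue directly (constructing an explicit $E'' \in \CE$ containing $S$), whereas the paper argues by contrapositive (showing that if the support is not a controlled entourage then one can produce, for every $E\in\CE$, a pair of $E$-separated bounded measurable sets $A_x,A'_x$ with $\|\chf{A'_x}t\chf{A_x}\|=1$).
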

\begin{proof}
  Let $R\subseteq Y\times X$ be a representative for $\csupp(t)$ with $\supp(t)\subseteq R$, and consider the projection onto the second coordinate $\pi_2(R)\subseteq X$. Observe that there is some $E_0\in\CE$ such that for every $x\in \pi_2(R)$ we can find an $E_0$-controlled measurable thickening $x\in A_x$ such that $\CH_{A_x}\neq \{0\}$ (this is obvious \emph{e.g.} if $R$ is the support constructed in \cref{prop:coarse_support_exists}).
  
  Observe that $R(A_x)\times R(A_x)$ is contained in $E_1\coloneqq R\circ E_0\circ \op R$. That is, $R(A_x)$ is $E_1$-controlled. Let $\gauge$ be an admissibility gauge. We may then choose for every $x\in \pi_2(R)$ a measurable $\gauge$-controlled thickening $A'_x$ of $R(A_x)$. In particular, the sets $A'_x$ are $E_2$\=/controlled for $E_2\coloneqq \gauge\circ E_1\circ\gauge$.

  By definition, $t$ is \emph{not} controlled if and only if $R\notin \CE$. This implies that for every $E\in\CE$ there exist $x'Rx$ with $x'\sep{E_3}x$, where $E_3\coloneqq E_2\circ E \circ E_0$. Since $x'\in R(A_x)\subseteq A'_x$, the choice of $E_3$ is made so that $A'_x$ and $A_x$ must then be $E$-separated.
  Let now $v$ be a unit vector in $\CH_{A_x}$ (which is non-trivial by construction). Since $\supp(t)\subseteq R$ and $t$ is an isometry, the image $t(v)$ is a unit vector supported in $A_x'$. It follows that $\norm{\chf{A'_x}t\chf{A_x}} =1$. Since $A'_x$ and $A_x$ are $E$-separated by construction, and $E$ is arbitrary, this shows that $t$ is not quasi-local.
\end{proof}

\cref{lem: controlled quasi local is in cp} shows that $\cuni{\CHx}\cap\qlcstar{\CHx}=U(\cpstar{\CHx})$, which we see as \emph{trivial}, \emph{i.e.}\ \emph{inner}, unitaries. This will be needed in the proof of the following (recall that $\multiplieralg{\variable}$ denotes the multiplier algebra).
\begin{theorem}[cf.\ \cite{braga_gelfand_duality_2022}*{Section 2.2}]\label{thm: injection into outer}
  Let $\CHx$ be an admissible $\crse X$-module. Then $\Ad$ induces canonical homomorphisms
  \[
    \sigma_{\CR}\colon\cuni{\CHx}/U(\cpstar{\CHx})\rightarrow \out(\roeclike{\CHx}).
  \]
  Moreover, if $\multiplieralg{\roeclike{\CHx}} \subseteq \qlcstar{\CHx}$ then $\sigma_{\CR}$ is injective.
  In particular, the latter is the case when $\roeclike\CHx$ is $\cpcstar\CHx$ or $\qlcstar{\CHx}$.
\end{theorem}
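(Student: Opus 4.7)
The plan is to build $\sigma_{\CR}$ in three stages: construct a homomorphism $\cuni{\CHx}\to\aut(\roeclike{\CHx})$, show it factors through the quotient by $U(\cpstar{\CHx})$, and prove injectivity under the multiplier hypothesis. For the first two stages, I will use that every $u\in\cuni{\CHx}$ is a unitary whose inverse $u^{*}$ is also controlled; since $u$ is then automatically proper, \cref{cor:coarse functions preserve roe-like algebras} ensures $\Ad(u)$ restricts to a \Star{}automorphism of $\roeclike{\CHx}$ (with inverse $\Ad(u^{*})$), so $u\mapsto\Ad(u)$ is a group homomorphism into $\aut(\roeclike{\CHx})$. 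To descend modulo $U(\cpstar{\CHx})$, I need every $u\in U(\cpstar{\CHx})$ to induce an inner automorphism of $\roeclike{\CHx}$. Note that $\cpstar{\CHx}$ is unital, so $u$ is a unitary element of $\cpstar{\CHx}\subseteq\cpcstar{\CHx}\subseteq\multiplieralg{\roecstar{\CHx}}$ by \cref{cor:cpc is multiplier of roec}; for $\roeclike\in\{\cpcstar,\qlcstar\}$ the algebras are themselves unital, so $u\in U(\roeclike{\CHx})=U(\multiplieralg{\roeclike{\CHx}})$. In every case $\Ad(u)$ is inner and $\sigma_{\CR}$ is well defined.

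For injectivity, assume $\multiplieralg{\roeclike{\CHx}}\subseteq\qlcstar{\CHx}$ and suppose $\sigma_{\CR}([u])$ is trivial. Then there exists a unitary $v\in\multiplieralg{\roeclike{\CHx}}\subseteq\qlcstar{\CHx}$ with $\Ad(u)=\Ad(v)$ on $\roeclike{\CHx}$; equivalently, $w\coloneqq v^{*}u\in\CB(\CHx)$ commutes with $\roeclike{\CHx}$. The heart of the argument will be showing $w\in\cpstar{\CHx}$. The plan is to exploit that, for every bounded measurable $A\subseteq X$, the compact operators on $\CH_{A}=\chf{A}(\CHx)$ lie in $\roeclike{\CHx}$: each rank-one $\matrixunit{v'}{v}$ with $v,v'\in\CH_{A}$ has support contained in $A\times A\in\CE$ and is locally compact, hence belongs to all three flavors of $\roeclike{\CHx}$. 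Since $w$ commutes with $\CK(\CH_{A})$, the standard commutant computation forces $w$ to preserve $\CH_{A}$ and act there as a scalar $c_{A}\cdot 1_{\CH_{A}}$. Consequently $\chfcX{A}w\chf{A}=0$, and for any bounded measurable $A'$ this gives $\chf{A'}w\chf{A}=\chf{A'\cap A}w\chf{A}$, which vanishes when $A'\cap A=\emptyset$. Therefore $\supp(w)\subseteq\Delta_{X}$, so $w\in\cpstar{\CHx}$.

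From the factorization $u=vw$ with $v\in\qlcstar{\CHx}$ and $w\in\cpstar{\CHx}\subseteq\qlcstar{\CHx}$, \cref{lem:supports and operations quasilocal} shows $u$ is quasi-local. Being also a controlled unitary, \cref{lem: controlled quasi local is in cp} upgrades $u$ to an element of $\cpstar{\CHx}$, so $[u]$ is trivial. The ``in particular'' statement is immediate: $\cpcstar{\CHx}$ and $\qlcstar{\CHx}$ are unital (hence coincide with their own multiplier algebras), and $\cpcstar{\CHx}\subseteq\qlcstar{\CHx}$ by \cref{rmk:approximable is ql}. I expect the commutant calculation to be the main technical step; it is the inclusion $\CK(\CH_{A})\subseteq\roeclike{\CHx}$, holding uniformly in all three choices, that lets the same argument run in every case.
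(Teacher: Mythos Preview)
Your proof is correct and follows essentially the same strategy as the paper's: show that $w=v^{*}u$ lies in the commutant of $\roeclike{\CHx}$, deduce $w\in\cpstar{\CHx}$, and then invoke \cref{lem: controlled quasi local is in cp}. The only notable differences are that the paper computes the commutant globally via the decomposition into coarsely connected components (obtaining $\roeclike{\CHx}'=\prod_{i}\CCC\cdot\chf{X_i}$), whereas you argue locally using $\CK(\CH_A)\subseteq\roeclike{\CHx}$ for bounded measurable $A$; and the paper applies \cref{lem: controlled quasi local is in cp} to the multiplier $m$ first, while you apply it directly to $u$. One small gap: from $\chfcX{A}w\chf{A}=0$ for \emph{bounded} $A$ you jump to $\supp(w)\subseteq\Delta_X$, which literally requires the vanishing for all disjoint measurable pairs; this follows immediately from non-degeneracy (\cref{lem:supports of vectors almost contained in bounded}) or, alternatively, you can bypass it by invoking \cref{prop:coarse_support_exists} to conclude controlled propagation directly from the bounded-set test.
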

\begin{proof}
  Recall that the group of outer automorphisms is the quotient of $\aut(\roeclike{\CHx})$ by the action by conjugation of the unitaries in the multiplier algebra of $\roeclike{\CHx}$.
  \cref{cor:cpc is multiplier of roec} shows that $\cpstar{\CHx}$ is contained in the multiplier algebra of the three Roe-like \cstar{}algebras $\roeclike{\CHx}$. It then follows that $\Ad$ does indeed descend to homomorphisms
  \[
    \sigma_{\CR} \colon \cuni{\CHx}/U(\cpstar{\CHx})\to \out(\roeclike{\CHx}).
  \]

  It remains to prove injectivity of the maps $\sigma_{\CR}$. Suppose that $u\in \cuni{\CHx}$ is a controlled unitary such that $\Ad(u)=\Ad(m)$ for some $m\in U(\multiplieralg{\roeclike{\CHx}})$. This implies that $m^*ut=tm^*u$ for every $t\in \roeclike{\CHx}$. That is, $m^*u$ is a unitary in the commutant $\roeclike{\CHx}'$.

  Let $\crse X=\bigsqcup_{i\in I}\crse X_i$ be the decomposition in coarsely connected components of $\crse X$. Arguing as in \cref{lemma:cartan:commutant}, we deduce that $\roeclike{\CHx}'=\prod_{i\in I}\CCC\cdot \chf{\crse X_i}$.
  In particular, the element $m^*u$ is contained in $U(\cpstar{\CHx})$.
  As a consequence, we see that the multiplier $m = u(u^*m)$ is also a controlled unitary.
  Since $m \in U(\multiplieralg{\roeclike{\CHx}}) \subseteq \qlcstar\CHx$ by assumption, it follows from \cref{lem: controlled quasi local is in cp} that $m$ actually belongs to $\cpstar{\CHx}$.
  Writing $u = m (m^*u)$ we see that $u\in\cpstar{\CHx}$ as well.

  For the ``in particular'' statement, simply note that the hypothesis that $\multiplieralg{\roeclike{\CHx}} \subseteq \qlcstar{\CHx}$ clearly holds if $\roeclike{\CHx}$ is $\cpcstar\variable$ or $\qlcstar\variable$, as both are unital \cstar{}algebras.
\end{proof}

\begin{remark}\label{rmk: Ccp is multiplier of Roe}
  The injectivity part of \cref{thm: injection into outer} even holds when $\roeclike{\variable}$ is $\roecstar{\variable}$, as long as $\crse{X}$ is a coarsely locally finite extended metric space and $\CHx$ is discrete. In fact, it can be shown that in this case $\multiplieralg{\roecstar\CHx}=\cpcstar\CHx$, see \cfRigidityMultiplier (cf.~\cite{braga_gelfand_duality_2022}*{Theorem 4.1}).
  However, it is also shown in \cite{rigid}*{Remark~9.16} that there are non countably generated coarse spaces for which the equality $\multiplieralg{\roecstar\CHx}=\cpcstar\CHx$ fails and $\sigma_{\rm Roe}$ is not injective.
\end{remark}

Together with \cref{thm: coarse equivalences iso to cuni}, we obtain the following.
\begin{corollary} \label{cor:injection-from-ce-to-out}
  Let $\CHx$ be a discrete $\kappa$-ample $\crse X$-module of local rank $\kappa$. Then there are canonical homomorphisms 
  \[
    \coe{\crse X}\rightarrow \Out(\roeclike{\CHx})
  \]
  that are injective whenever $\multiplieralg{\roeclike{\CHx}} \subseteq \qlcstar{\CHx}$.
\end{corollary}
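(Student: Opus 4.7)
The plan is to simply compose the two homomorphisms established right before the statement. Namely, \cref{prop: coarse equivalences embed in cuni} (which applies because $\CHx$ is assumed to be discrete, $\kappa$-ample and of rank $\kappa$) provides the injective homomorphism
\[
  \rho\colon\coe{\crse X}\hookrightarrow \cuni{\CHx}/U(\cpstar{\CHx}),
\]
while \cref{thm: injection into outer} (which applies because $\CHx$ is discrete and hence admissible, cf.\ \cref{rmk:discrete module}) provides
\[
  \sigma_{\CR}\colon\cuni{\CHx}/U(\cpstar{\CHx})\to\Out(\roeclike{\CHx}).
\]
I would define the desired homomorphism as the composition $\sigma_{\CR}\circ\rho$, and note its naturality is inherited from the two constituents (both defined via $\Ad$ applied to a covering unitary).

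For the injectivity claim, the hypothesis $\multiplieralg{\roeclike{\CHx}}\subseteq\qlcstar{\CHx}$ is exactly what is needed to invoke the injectivity part of \cref{thm: injection into outer}, making $\sigma_{\CR}$ injective. Since $\rho$ is injective unconditionally, so is the composition $\sigma_{\CR}\circ\rho$.

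There is essentially no obstacle here, as the two ingredients have already been assembled. The one minor thing to double-check is that the construction is well-posed: given $\crse{f}\in\coe{\crse X}$, choose any unitary $u\in\cuni{\CHx}$ covering $\crse{f}$ (which exists by \cref{thm:covering isometries exist}\cref{thm:covering isometries exist:sur:uni}, since $\CHx$ is discrete and $\kappa$-ample of rank $\kappa$), and send $\crse{f}$ to the class $[\Ad(u)]\in\Out(\roeclike{\CHx})$. Independence of the choice of $u$ follows from \cref{lem: operators with same coarse support are close}, because any two such $u,u'$ differ by an element of $U(\cpstar{\CHx})\subseteq U(\multiplieralg{\roeclike{\CHx}})$ (using \cref{cor:cpc is multiplier of roec}), and conjugation by such a unitary is inner.
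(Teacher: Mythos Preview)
The proposal is correct and takes essentially the same approach as the paper: the corollary is simply the composition of the injective $\rho$ from \cref{prop: coarse equivalences embed in cuni} with the $\sigma_{\CR}$ of \cref{thm: injection into outer}, and you have verified all the needed hypotheses (in particular that discreteness gives admissibility via \cref{rmk:discrete module}).
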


\begin{remark}
  Remarkably, it can be proved that if $\crse X$ is a coarsely locally finite extended metric space and $\CHx$ is discrete, then all the all the above homomorphisms (both $\rho$ and $\sigma_\CR$) are actually isomorphisms \cites{braga_gelfand_duality_2022,rigid}.
\end{remark}

\begin{remark}
  The same proof of \cref{thm: injection into outer} implies that $\Ad$ embeds $\cuni{\CHx}/U(\cpstar{\CHx})$ into
  \begin{itemize}
    \item $\aut(\cpstar{\CHx})/\Ad\bigparen{U(\cpstar{\CHx})}$; and
    \item $\aut(\roestar{\CHx})/\Ad\bigparen{U(\cpstar{\CHx})}$.
  \end{itemize}
  A straightforward modification shows that \cref{lem: controlled quasi local is in cp} also holds when $t$ is an invertible operator (bounded below in fact suffices). In view of this, one may also ignore the \Star{}algebra structure of $\cpstar{\CHx}$ and obtain an embedding into the group of algebraic outer automorphisms of the algebra $\cpstar{\CHx}$ (\emph{i.e.} $\aut(\cpstar{\CHx})$ modulo $t\sim ata^{-1}$ for every invertible $t\in \cpstar{\CHx}$). 
\end{remark}

%%%%%%%%%%%%%%%%%%%%%%%%%%%%%%%%%%%%%%%%%%%%%%%%%%%%%%%%%%%%%%%%%%%%%%%
% ROE-LIKE ALGEBRAS OF COARSE SPACES
%%%%%%%%%%%%%%%%%%%%%%%%%%%%%%%%%%%%%%%%%%%%%%%%%%%%%%%%%%%%%%%%%%%%%%%
\section{Roe-like algebras of coarse spaces} \label{sec:roe-algs-spaces}
In \cref{sec:roe-algs-modules} we showed how coarse geometric modules naturally give rise to \Star{}algebras and \cstar{}algebras of interest (see \cref{cor:coarse functions preserve roe-like algebras,rmk:coarse functions preserve roe-like functorially}).
We may now shift the point of view and study the coarse spaces themselves.
The goal is to construct algebras of operators that only depend on the coarse geometry of the space (\emph{i.e.}\ are invariant under coarse equivalence). After such algebras are defined, their invariants---like the K-theory---become coarse geometric invariants for the space.

\subsection{Definition of rank-\texorpdfstring{$\kappa$}{k} Roe-like algebras}
The main difficulty in defining Roe-like algebras for a coarse space $\crse X$ is that it is generally not possible to \emph{canonically} choose an $\crse X$-module in a satisfactory manner.
On one hand, it is natural to consider the uniform module $\ell^2(X)$ and the associated uniform Roe algebra $\uroecstar{\crse X} = \roecstar{\ell^2(X)}$ (recall \cref{exmp:uniform roe algebra}). However, from a coarse geometric perspective this may not be a particularly good idea, as this module is too heavily dependent on the underlying set $X$.
Indeed, if $\crse X$ and $\crse X'$ are coarsely equivalent, it is highly desirable that the associated algebras be isomorphic. This, unfortunately, is just not the case for the uniform Roe algebras.\footnote{%
\, In fact, one can show that if uniformly locally finite metric spaces $X$ and $X'$ are not amenable (in the sense of Block and Weinberger, see \cite{block_weinberger_1992}*{Section~3}) then the uniform Roe algebras $\uroecstar X$ and $\uroecstar{X'}$ are isomorphic if and only if there is a bijective coarse equivalence between $X$ and $X'$ (see \cite{braga_rigid_unif_roe_2022}*{Corollary~3.9}, \cite{white_cartan_2018}*{Theorem~5.1} and \cite{whyte_amenability_1999}*{Theorem~4.1}).
Actually, it is an open question whether the amenability assumption is even necessary.
Nevertheless, there are numerous examples of spaces that are coarsely equivalent but not bijectively so.
For instance, the easiest obstruction is given by the (local) cardinality of the spaces themselves.
}
The approach we take in this text is closer to the original one, due to Higson and Roe~\cite{higson-2000-analytic-k-hom} (see also \cite{willett_higher_2012}).
\begin{definition}\label{def:roe-algebras}
  Let $\kappa$ be an (infinite) cardinal. If $\crse X$ admits a $\kappa$-ample discrete module $\CH_{\crse X}$ of rank $\kappa$ (in the sense of \cref{def:ample}), we let
  \begin{itemize}
    \item $\roecstar[\kappa]{\crse X} \coloneqq \roecstar{\CH_{\crse X}}$;
    \item $\cpcstar[\kappa]{\crse X} \coloneqq \cpcstar{\CH_{\crse X}}$;
    \item $\qlcstar[\kappa]{\crse X} \coloneqq \qlcstar{\CH_{\crse X}}$,
  \end{itemize}
  be the \emph{$\kappa$-ample Roe algebra}, the \emph{$\kappa$-ample \cstar{}algebra of operators of controlled propagation}, and the \emph{$\kappa$-ample quasi-local algebra of $\crse X$}.
  If $\kappa=\aleph_0$, we omit it from the notation and call the resulting algebras \emph{Roe-like \cstar{}algebras of $\crse X$}.
\end{definition}

\begin{remark}
  \cref{def:roe-algebras} is only interesting for infinite cardinals. If $\kappa$ is finite, it is simple to show that $\crse X$ admits a $\kappa$-ample discrete module of rank $\kappa$ if and only if $\crse{X}$ is bounded.
\end{remark}

The notation of \cref{def:roe-algebras} is somewhat abusive, as it hides the dependence of $\roeclike{\crse X}$ from the choice of ample module. We shall momentarily see that this causes no issue, as different choices yield isomorphic algebras. Before doing so, we introduce a convenient piece of nomenclature.
\begin{definition}
  A homomorphism $\phi\colon \roeclike{\crse X}\to \roeclike{\crse Y}$ is \emph{spatially implemented} if it is of the form $\Ad(t)$ for some isometry $t\colon\CHx\to\CHy$ at the level of the defining coarse geometric modules.
  We define the \emph{coarse support} of a spatially implemented homomorphism $\phi=\Ad(t)$ as $\csupp(\phi)\coloneqq\csupp(t)$, and similarly say that $\phi$ \emph{covers} a coarse map $\crse{f\colon X\to Y}$ if so does $t$.
\end{definition}

\begin{remark}
  As observed in \cite{spakula_rigidity_2013}, the good understanding we have on the set of compact operators belonging to Roe-like algebras (recall \cref{cor: roe algs intersect compacts}) implies that most of the ``meaningful'' homomorphisms among Roe-like algebras are spatially implemented. This applies, for instance, to all isomorphisms. We refer to \cite{rigid}*{Proposition~7.2} for a thorough discussion of these aspects.
\end{remark}

It is now a simple matter to verify that \cref{thm:covering isometries exist} together with \cref{cor:coarse functions preserve roe-like algebras} implies that $\roecstar[\kappa]{\crse X}, \cpcstar[\kappa]{\crse X},$ and $\qlcstar[\kappa]{\crse X}$ are well defined up to \Star{}isomorphism, justifying \cref{def:roe-algebras}. More precisely, those results imply the following.
\begin{theorem}\label{thm:maps-among-roe-algs}
  Suppose that $\crse X$ and $\crse Y$ admit discrete $\kappa$\=/ample modules of rank $\kappa$. Given a coarse map $\crse{f\colon X\to Y}$, there are spatially implemented \Star{}homomorphisms
  \begin{align*}
    \psi_{\kappa,\crse f}^{\rm cp} & \colon \cpcstar[\kappa]{\crse X} \to \cpcstar[\kappa]{\crse Y}, \\
    \psi_{\kappa,\crse f}^{\rm ql} & \colon \qlcstar[\kappa]{\crse X} \to \qlcstar[\kappa]{\crse Y}
  \end{align*}
  covering $\crse f$. If $\crse f$ is proper, there is a spatially implemented \Star{}homomorphism
  \[
    \phi_{\kappa,\crse f}\colon\roecstar[\kappa]{\crse X} \to \roecstar[\kappa]{\crse Y}.
  \]
  Moreover, if $\crse f$ is a coarse equivalence then all of the above can be taken to be \Star{}isomorphisms.
\end{theorem}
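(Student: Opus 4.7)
The plan is to reduce the statement to a direct application of the covering machinery of \cref{thm:covering isometries exist} combined with the adjoint action preservation results of \cref{cor:coarse functions preserve roe-like algebras}. First I would fix, as granted by hypothesis, discrete $\kappa$-ample modules $\CHx$ and $\CHy$ of rank $\kappa$ for $\crse X$ and $\crse Y$ respectively; these are precisely the modules used to define the various Roe-like \cstar{}algebras appearing in the statement, and by \cref{rmk:discrete module} they are automatically admissible, which will be needed when applying \cref{cor:coarse functions preserve roe-like algebras} to the quasi-local algebras.

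Next, since $\crse f$ is a coarse map (hence coarsely everywhere defined, so $\cdom(\crse f) = \crse X$), \cref{thm:covering isometries exist}~\cref{thm:covering isometries exist:inj-s} produces an isometry $t \colon \CHx \to \CHy$ with $\csupp(t) = \crse f$. In particular $t$ is a controlled isometry, so \cref{cor:coarse functions preserve roe-like algebras} provides spatially implemented \Star{}embeddings $\Ad(t)$ between the controlled-propagation algebras and between the quasi-local algebras, which I declare to be $\psi_{\kappa,\crse f}^{\rm cp}$ and $\psi_{\kappa,\crse f}^{\rm ql}$; they cover $\crse f$ by construction. If $\crse f$ is moreover proper, then so is $t$, since properness of an operator is defined via its coarse support and $\csupp(t) = \crse f$; hence the ``proper'' half of \cref{cor:coarse functions preserve roe-like algebras} gives the spatially implemented \Star{}embedding $\phi_{\kappa,\crse f} \coloneqq \Ad(t)$ at the level of Roe algebras.

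Finally, for the coarse equivalence case I would instead invoke \cref{thm:covering isometries exist}~\cref{thm:covering isometries exist:sur:uni}, which---precisely because both modules are discrete, $\kappa$-ample, and of rank $\kappa$---yields a \emph{unitary} $u \colon \CHx \to \CHy$ covering $\crse f$. A coarse equivalence is in particular proper, so $u$ implements maps on all three algebras as in the previous paragraph, and the unitarity of $u$ makes $\Ad(u)$ a \Star{}isomorphism of $\CB(\CHx)$ onto $\CB(\CHy)$. Moreover $u^*$ is itself a controlled unitary covering $\op{\crse f} = \crse f^{-1}$, so $\Ad(u^*) = \Ad(u)^{-1}$ provides the two-sided inverse at the level of each Roe-like algebra, and hence the induced maps are \Star{}isomorphisms. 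There is no substantive obstacle in the argument, as all the ingredients have been assembled earlier; the only subtlety worth flagging is that the choice of covering isometry or unitary is highly non-canonical, so these maps are only well-defined up to conjugation by a controlled unitary. This is exactly why functoriality is not asserted here and must instead be recovered at the level of $K$-theory in \cref{sec:k-theory}.
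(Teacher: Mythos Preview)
Your proof is correct and follows essentially the same approach as the paper: apply \cref{thm:covering isometries exist} to obtain a covering isometry (or unitary, in the coarse equivalence case) and then invoke \cref{cor:coarse functions preserve roe-like algebras}, noting that discreteness guarantees the admissibility needed for the quasi-local case. Your treatment of the ``moreover'' part, showing that $u^*$ covers $\crse f^{-1}$ and hence $\Ad(u^*)$ furnishes the inverse on each Roe-like algebra, is exactly the point the paper singles out as the only step requiring explicit verification.
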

\begin{proof}
  It is enough to consider $\Ad(t)$, where $t$ is an isometry obtained from \cref{thm:covering isometries exist} (the same isometry can be used to define all the claimed \Star{}homomorphisms). 
  The only thing that is not immediately clear from \cref{thm:covering isometries exist} is the `moreover' statement (this is due to the lack of naturality in the construction of $t$).
  This is however simple to verify: if $\crse f$ is a coarse equivalence, \cref{thm:covering isometries exist} can be used to obtain a unitary $u$ covering it. Since $u^*$ covers $\op{\crse{f}}=\crse f^{-1}$, $u^*$ is also controlled (and proper). Therefore $\Ad(u^*)\colon\roeclike[\kappa]{\crse Y}\to \roeclike[\kappa]{\crse X}$ is the inverse of $\Ad(u)\colon\roeclike[\kappa]{\crse X}\to \roeclike[\kappa]{\crse Y}$.
\end{proof}

\begin{remark}\label{rmk:roe alg are not functorial}
  Note that the \Star{}homomorphisms constructed in \cref{thm:maps-among-roe-algs} are non-canonical (although they only differ by conjugation in $\cpstar{\CHx}$, see \cref{lem: operators with same coarse support are close}).
  In particular, the choices made to construct them are not well-behaved under composition, \emph{i.e.}\ the induced maps from \Cat{Coarse} to \Cat{\cstar{}Alg} need not be functorial.
\end{remark}

\begin{example} \label{ex:roe-algs-extend-classical-case}
  Recall \cref{ex:metric-as-coarse-1}. If $(X,d)$ is a separable metric space then $\crse X=(X,\CE_d)$ admits separable ample $\crse X$-modules. For instance, if $Z\subseteq X$ is a countable dense subset then $\ell^2(Z) \otimes \ell^2(\NN)$ is such a module. In this case, $\roecstar{\crse X}$ coincides with the classical Roe algebra of $\crse X$.
  Thus, \cref{def:roe-algebras} extends the usual definition of the Roe algebra~\cites{roe_lectures_2003,higson-2000-analytic-k-hom,spakula_rigidity_2013}.
\end{example}

\begin{example}\label{exmp:uniform kappa-ample roe alg}
  Given $\crse X=(X,\CE)$ with $\abs{X}=\kappa$, let $\CH$ be a fixed Hilbert space of rank $\kappa$. Then the $\crse X$-module $\CH_{\crse X} \coloneqq \ell^2(X;\CH)$ is $\kappa$-ample of rank $\kappa$ (cf.\ \cref{exmp:uniform module}),
  and it may thus be used to construct the rank-$\kappa$ Roe-like algebras of $\crse X$.
\end{example}

The above examples show that every coarse space admits a rank-$\kappa$ Roe-like \cstar{}algebra for some $\kappa$ large enough. However, it is generally preferable to keep the rank as small as possible, and ideally even restrict to separable Hilbert spaces.
We observe that not every coarse space admits countable-rank Roe-like \cstar{}algebras. More specifically, we have the following.
\begin{lemma}\label{lem:existence of kappa-ample modules}
  Let $\crse X$ be a coarse space. The following are equivalent:
  \begin{enumerate}[label=(\roman*)]
    \item \label{lem:existence of kappa-ample modules:1} $\crse X$ admits a $\kappa$\=/ample discrete module of rank $\kappa$;
    \item \label{lem:existence of kappa-ample modules:2} $\crse X$ is coarsely equivalent to a coarse space over a set of cardinality at most $\kappa$.
  \end{enumerate}
\end{lemma}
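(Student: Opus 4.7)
The plan is to prove the two implications separately, using the structural results for discrete modules (\cref{cor:discrete and faithful}) in one direction and the uniform rank-$\kappa$ module construction (\cref{exmp:uniform module}) together with a coarse embedding argument in the other.

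For \cref{lem:existence of kappa-ample modules:1}~$\Rightarrow$~\cref{lem:existence of kappa-ample modules:2}, suppose $\CHx$ is a $\kappa$-ample discrete $\crse X$-module of rank $\kappa$. By \cref{cor:discrete and faithful}, there is a discrete partition $X=\bigsqcup_{i\in I} A_i$ such that $\chf{A_i}$ has rank $\kappa$ for every $i\in I$. Since $\sum_{i\in I}\chf{A_i}=1$, we have a Hilbert sum decomposition $\CHx=\bigoplus_{i\in I}\chf{A_i}(\CHx)$. Computing ranks yields $\kappa=\rank(\CHx)=\max(\abs{I},\kappa)$, which forces $\abs{I}\leq \kappa$. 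Applying \cref{rmk:controlled partitions give crs eq} to the partition produces a coarse equivalence $\crse{X\to}(I,\CE_I)$, and $\abs{I}\leq \kappa$ gives the desired conclusion.

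For \cref{lem:existence of kappa-ample modules:2}~$\Rightarrow$~\cref{lem:existence of kappa-ample modules:1}, fix a coarse equivalence $\crse{f\colon X\to Y}$ with $\abs Y\leq \kappa$ and choose a representative function $f\colon X\to Y$ (which exists by \cref{lemma:coarse-relation-yields-coarse-map}). Since $\crse f$ is a coarse equivalence, it is in particular a coarse embedding, and so (applying \cref{def:coarse embedding} to the controlled entourage $\Delta_Y\in\CF$) the sets $A_y\coloneqq f^{-1}(y)$ form a controlled partition of $X$ indexed by $y\in Y$. Let $Y'\coloneqq \{y\in Y\mid A_y\neq\emptyset\}$, so that $\abs{Y'}\leq \kappa$ and $X=\bigsqcup_{y\in Y'}A_y$. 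Fix a Hilbert space $\CH$ of rank $\kappa$ and set $\CHx\coloneqq \ell^2(Y';\CH)$, which has rank $\max(\abs{Y'},\kappa)=\kappa$. Let $\fkA$ be the boolean algebra generated by $\{A_y\mid y\in Y'\}$, and define the representation on generators by $\chf{A_y}\coloneqq e_y\otimes 1_\CH$, where $e_y\in\CB(\ell^2(Y'))$ is the rank-one projection onto the $y$-th coordinate. This extends to a unital representation $\chf{\bullet}\colon \fkA\to \CB(\CHx)$, which is non-degenerate since the images $\chf{A_y}(\CHx)=\{e_y\}\otimes\CH$ span $\CHx$. The partition $\bigsqcup_{y\in Y'} A_y$ manifestly satisfies the first three conditions of \cref{def:discrete}, so by \cref{rmk:discrete module}~(ii) we may extend $\fkA$ so as to satisfy condition~\cref{def:discrete:extension} as well, yielding a discrete module. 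Finally, $\chf{A_y}$ has rank $\rank(\CH)=\kappa$ for every $y\in Y'$, so the partition also witnesses $\kappa$-ampleness (see \cref{def:ample}).

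There is no genuine obstacle here: both directions amount to translating between discrete partitions and discrete $\kappa$-ample modules of rank $\kappa$. The mildly delicate point is verifying that the module built in the second direction really has all four discreteness conditions; this is handled by taking the canonical extension of $\fkA$ provided by \cref{rmk:discrete module}~(ii) (equivalently, \cref{rmk:partitions extend the modules}).
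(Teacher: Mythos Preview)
Your proof is correct and follows essentially the same route as the paper's: for \cref{lem:existence of kappa-ample modules:1}~$\Rightarrow$~\cref{lem:existence of kappa-ample modules:2} both use \cref{cor:discrete and faithful} and a rank count to bound $|I|$, and for \cref{lem:existence of kappa-ample modules:2}~$\Rightarrow$~\cref{lem:existence of kappa-ample modules:1} both pull back the uniform rank-$\kappa$ module along a representative of the coarse equivalence via the controlled partition by preimages. The only cosmetic difference is that the paper directly takes the boolean algebra of all preimages $f^{-1}(J)$ rather than first generating by the $A_y$ and then extending, but this amounts to the same thing.
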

\begin{proof}
  Suppose there is an $\crse X$-module $\CHx$ as in \cref{lem:existence of kappa-ample modules:1}. Choosing a gauge large enough, \cref{cor:discrete and faithful} yields a discrete partition $X=\bigsqcup_{i\in I}A_i$ so that $\chf{A_i}$ has rank $\kappa$ for every $i\in I$. As explained in \cref{rmk:discrete modules are discrete}, we may equip $I$ with a coarse structure so that $\crse{X}$ and $\crse{ I}$ are coarsely equivalent. Since $\CHx=\bigoplus_{i\in I}\CH_{A_i}$ has rank $\kappa$, the set $I$ has, at most, $\kappa$ elements.

  Suppose we are given a coarse equivalence $\crse{f\colon X\to I}$ for some set with $\abs{I}=\kappa' \leq \kappa$ as in \cref{lem:existence of kappa-ample modules:2}. Fix a representative $f\colon X\to I$ of $\crse f$. The sets $(f^{-1}(i))_{i \in I}$ form a controlled partition of $\crse X$. Fix a Hilbert space $\CH$ of rank $\kappa$ to obtain a $\kappa$\=/ample separable $\crse I$-module $\ell^2(I;\CH)$.
  Putting $\CHx \coloneqq \ell^2(I;\CH)$ and $\chf{\bullet}$ being defined on the Boolean algebra consisting of arbitrary preimages $f^{-1}(J)$, where $J\subseteq I$, yields a discrete $\kappa$\=/ample $\crse X$-module of rank $\kappa$.
\end{proof}

In light of \cref{lem:existence of kappa-ample modules}, it is convenient to make the following definition.
\begin{definition} \label{def:coarse cardinality}
  The \emph{coarse cardinality of $\crse X$} is the least cardinal $\kappa$ such that $\crse X$ is coarsely equivalent to a coarse space $\crse Y$ with $\abs{Y}=\kappa$.
\end{definition}

We can then restate \cref{lem:existence of kappa-ample modules} as the following.
\begin{corollary}\label{cor: roe-like algebras vs coarse cardinality}
  The rank-$\kappa$ Roe-like \cstar{}algebras of $\crse X$ are defined if and only if $\crse X$ has coarse cardinality at most $\kappa$.
\end{corollary}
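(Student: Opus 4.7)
The statement is essentially a tautological consequence of the preceding \cref{lem:existence of kappa-ample modules} once we recall the relevant definitions, so the ``proof'' is little more than a short chain of equivalences.

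The plan is as follows. By \cref{def:roe-algebras}, the rank-$\kappa$ Roe-like \cstar{}algebras $\roecstar[\kappa]{\crse X}$, $\cpcstar[\kappa]{\crse X}$, and $\qlcstar[\kappa]{\crse X}$ are defined precisely when $\crse X$ admits a discrete $\kappa$-ample $\crse X$-module of rank $\kappa$. By \cref{lem:existence of kappa-ample modules}, the existence of such a module is in turn equivalent to $\crse X$ being coarsely equivalent to a coarse space $\crse Y$ with $\abs{Y}\leq \kappa$. Finally, by \cref{def:coarse cardinality}, the latter is exactly the statement that the coarse cardinality of $\crse X$ is at most $\kappa$. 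Stringing these equivalences together yields the claim.

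There is no obstacle here of substance: all the real work has already been carried out in the proof of \cref{lem:existence of kappa-ample modules}, which handles both directions. The forward direction there uses \cref{cor:discrete and faithful} to extract from a $\kappa$-ample discrete module a discrete partition $X=\bigsqcup_{i\in I}A_i$ with $\chf{A_i}$ of rank $\kappa$, whence $\abs{I}\leq \kappa$ because $\CHx=\bigoplus_{i\in I}\CH_{A_i}$ has rank $\kappa$; the backward direction builds the module $\ell^2(I;\CH)$ out of a coarse equivalence $\crse{f\colon X\to I}$ with $\abs{I}\leq\kappa$. The corollary is then simply the reformulation of this equivalence in terms of the terminology introduced in \cref{def:coarse cardinality,def:roe-algebras}.
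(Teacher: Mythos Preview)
Your proposal is correct and matches the paper's approach exactly: the paper does not even write out a proof, simply stating that the corollary is a restatement of \cref{lem:existence of kappa-ample modules} in the language of \cref{def:coarse cardinality}. Your unpacking of the chain of equivalences through \cref{def:roe-algebras}, \cref{lem:existence of kappa-ample modules}, and \cref{def:coarse cardinality} is precisely what is intended.
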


\begin{remark}
  If one only wishes to work with \emph{proper} coarse maps (as was for instance the case in \cite{roe_lectures_2003}), one may of course weaken the ``rank $\kappa$'' condition in \cref{thm:maps-among-roe-algs} to ``local rank $\kappa$'' (see \cref{rem:covering isos}~\cref{rem:covering isos:coe-covered-loc-fin}). In particular, it would also make sense consider rank-$\kappa$ Roe-like algebras of spaces that have coarsely \emph{local cardinality} at most $\kappa$.
\end{remark}

\begin{remark}
  It should be acknowledged that \cref{def:coarse cardinality} only makes sense under the Axiom of Choice, which we always assumed.
\end{remark}

\cref{cor: roe-like algebras vs coarse cardinality} shows that if one wishes to restrict to work with separable Hilbert spaces (\emph{e.g.}\ for K-theoretic considerations), then one must restrict to the setting of coarsely countable coarse spaces.
For the record, we observe that there are countable connected coarse spaces that are not metrizable.
\begin{example}
  Consider the countable set $\ZZ^2$. For every line $l\subset \RR^2$ passing through the origin, let $A_{l} \subset \ZZ^2$ be the set of integer points within distance $1$ from $l$. Observe that if $l$ and $l'$ are distinct lines then $A_{l}\cap A_{l'}$ is finite. Let $\CE$ be the coarse structure generated by all the sets $A_{l}\times A_{l}$. Clearly, $(X,\CE)$ is countable and coarsely connected, and we claim that it is not metrizable.

  By \cref{prop:metrizable coarse structure iff ctably gen}, it suffices to show that $\CE$ does not have a countable cofinal set of controlled entourages.
  Assume it does, and let $\{E_n\}_{n \in \NN}$ be such a family. 
  Observe that the composition $(A_i\times A_i)\circ(A_j\times A_j) = A_i\times A_j$ is contained in $(A_i\cup A_j)\times(A_i\cup A_j)$.
  It follows that for each $E_n$ there are $l_{1,n},\ldots ,l_{k_n,n}$ such that
  \[
    E_n\subseteq (A_{l_{1,n}}\cup\cdots\cup A_{l_{n_k,n}})\times (A_{l_{1,n}}\cup\cdots\cup A_{l_{n_k,n}}).
  \]
  Since there are uncountably many lines, we can choose $l$ that is different from every $\{l_{i,j}\}_{i,j}$. It follows that $A_l\times A_l$ has finite intersection with $E_n$ for every $n \in \NN$. Since $A_l$ is infinite, this contradicts the cofinality of $\{E_n\}_{n \in \NN}$.
\end{example}

%%%%%%%%%%%%%%%%%%%%%%%%%%%%%%%%%%%%%%%%%%%%%%%%%%%%%%%%%%%%%%%%%%%%%%%
% K-THEORY OF ROE ALGEBRAS
%%%%%%%%%%%%%%%%%%%%%%%%%%%%%%%%%%%%%%%%%%%%%%%%%%%%%%%%%%%%%%%%%%%%%%%
\subsection{K-theory of Roe algebras} \label{sec:k-theory}
Roe algebras were originally defined for their K-theory~\cites{roe_index_1996,roe_lectures_2003}, as the K-theory of the Roe algebra of a non-compact Riemannian manifold is a natural place to define higher indexes of certain pseudo-differential operators. It is therefore due to verify that the K-theory of the Roe algebras of general coarse spaces is well behaved as well. As we shall sketch below, the classical proof (see \emph{e.g.} \cite{willett_higher_2020}*{Theorem 5.1.15}) goes through without difficulty.
\begin{proposition}\label{prop:covering isometries are unique in K-theory}
  Let $\crse{f\colon X\to Y}$ be a proper coarse map, $\CHx$ an $\crse X$-module and $\CHy$ a locally admissible $\crse Y$-module. If $t_0,t_1\colon\CHx\to\CHy$ are isometries covering $\crse f$, then the maps $\Ad(t_0)_*$ and $\Ad(t_1)_*$ induced in $K$-theory coincide.
\end{proposition}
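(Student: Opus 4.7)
The plan is to implement the classical rotation/swap trick used for ordinary Roe algebras (as in~\cite{willett_higher_2020}*{Theorem~5.1.15}), adapted to the present setup by means of the coarse-geometric tools built in earlier sections. The key point is that \cref{lem: operators with same coarse support are close} makes all the products $t_it_j^*$ (for $i,j\in\{0,1\}$) belong to $\cpstar{\CHy}$, which by \cref{cor:cpc is multiplier of roec} sits inside $\multiplieralg{\roecstar{\CHy}}$. From there, the proof becomes a standard manipulation at the level of $2\times 2$ matrices.

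First, I would pass to the direct sum module $\CHy\oplus\CHy$: this is again a locally admissible $\crse Y$-module (its Boolean algebra of measurable sets is the one of $\CHy$, and $\chf{A}$ acts diagonally), and one checks directly from the definitions of propagation and local compactness that
\[
  \cpstar{\CHy\oplus\CHy}=M_2\bigparen{\cpstar{\CHy}},\qquad
  \roecstar{\CHy\oplus\CHy}=M_2\bigparen{\roecstar{\CHy}}.
\]
Next, I would introduce the two isometries
\(
  S\coloneqq\binom{t_0}{0}\colon\CHx\to\CHy\oplus\CHy
\)
and
\(
  T\coloneqq\binom{0}{t_1}\colon\CHx\to\CHy\oplus\CHy,
\)
and record that the corner embeddings $\iota_1,\iota_2\colon\roecstar{\CHy}\to M_2(\roecstar{\CHy})$ into the $(1,1)$ and $(2,2)$ entries satisfy
\[
  \Ad(S)=\iota_1\circ\Ad(t_0)\qquad\text{and}\qquad \Ad(T)=\iota_2\circ\Ad(t_1).
\]
By the standard stability argument (Murray--von Neumann equivalence of the two diagonal rank-one projections), $(\iota_1)_*=(\iota_2)_*$ and both are isomorphisms on K-theory. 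Hence it suffices to prove $\Ad(S)_*=\Ad(T)_*$ as maps $K_*(\roecstar{\CHx})\to K_*(M_2(\roecstar{\CHy}))$.

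The core of the argument is then to exhibit a unitary $U\in M_2(\cpstar{\CHy})$ with $US=T$. I would take
\[
  U\coloneqq \begin{pmatrix} 1-t_0t_0^* & t_0t_1^* \\ t_1t_0^* & 1-t_1t_1^* \end{pmatrix}.
\]
The identities $t_0^*t_0=t_1^*t_1=1$ give $U^*=U$ and $U^2=1$ (routine $2\times 2$ computation), and a direct multiplication shows $US=T$. The four matrix entries all have controlled propagation: the operators $t_0t_0^*,t_1t_1^*,t_0t_1^*,t_1t_0^*$ lie in $\cpstar{\CHy}$ by \cref{lem: operators with same coarse support are close}, and $1\in\cpstar{\CHy}$ trivially. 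Thus $U\in\cpstar{\CHy\oplus\CHy}\subseteq\multiplieralg{\roecstar{\CHy\oplus\CHy}}$, so $\Ad(U)$ is an inner automorphism of $M_2(\roecstar{\CHy})$ and acts trivially on K-theory.

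Combining everything: from $US=T$ one has $\Ad(T)=\Ad(U)\circ\Ad(S)$ as \Star{}homomorphisms $\roecstar{\CHx}\to M_2(\roecstar{\CHy})$, so $\Ad(T)_*=\Ad(S)_*$; the stability identification $(\iota_1)_*=(\iota_2)_*$ then yields $\Ad(t_0)_*=\Ad(t_1)_*$. The only real obstacle is the careful bookkeeping needed to justify the identifications $\roecstar{\CHy\oplus\CHy}=M_2(\roecstar{\CHy})$ and $\cpstar{\CHy\oplus\CHy}=M_2(\cpstar{\CHy})$ (so that $U$ is genuinely a multiplier of the target Roe algebra) and to check that the propriety of $\crse f$ is what makes $\Ad(t_i)$ land inside $\roecstar{\CHy}$ in the first place; both are direct consequences of the machinery already developed in the paper.
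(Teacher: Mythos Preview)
Your proof is correct and follows essentially the same approach as the paper: both pass to $2\times 2$ matrices, use the very same unitary involution $U=\begin{pmatrix} 1-t_0t_0^* & t_0t_1^* \\ t_1t_0^* & 1-t_1t_1^* \end{pmatrix}$, invoke \cref{lem: operators with same coarse support are close} and \cref{cor:cpc is multiplier of roec} to see that $U$ is a multiplier, and then appeal to the triviality of inner automorphisms on $K$-theory. The only cosmetic difference is that you phrase the intertwining as $US=T$ whereas the paper writes $\alpha_1=\Ad(u)\circ\alpha_0$, and the paper spells out the rotation homotopy explicitly.
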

\begin{proof}
  Define $\alpha_0$ and $\alpha_1$ by
  \[
  \begin{tikzcd}[row sep =0]
    \roecstar \CHx \arrow{r}{\alpha_0} & M_2(\roecstar \CHy), \\
    x  \arrow[|->, r] & \begin{pmatrix}
               t_0xt_0^* & 0 \\
               0 & 0
             \end{pmatrix},
  \end{tikzcd}
  \quad
  \begin{tikzcd}[row sep =0]
    \roecstar \CHx \arrow{r}{\alpha_1} & M_2(\roecstar \CHy), \\
    x  \arrow[|->, r] & \begin{pmatrix}
               0 & 0 \\
               0 & t_1xt_1^*
             \end{pmatrix}.
  \end{tikzcd}
  \]
  Identifying $K_*(M_2(\roecstar \CHy)) \cong K_*(\roecstar\CHy)$, it is enough to verify that $(\alpha_0)_*=(\alpha_1)_*$.
  Since $t_0$ and $t_1$ are isometries, the operator
  \begin{equation}\label{eq:unitary K-theory}
    u\coloneqq
    \begin{pmatrix}
      1-t_0t_0^* & t_0t_1^* \\
      t_1t_0^*  & 1- t_1t_1^*
    \end{pmatrix}
    \in M_2(\CB(\CHy))
  \end{equation}
  is a unitary involution such that $\alpha_1(x) = \Ad(u)(\alpha_0(x))$ for every $x \in \roecstar \CHx$. By \cref{lem: operators with same coarse support are close},  the operator $t_it_j^*$ has controlled propagation for $i,j=0,1$.
  Since $\CHy$ is locally admissible, we know by \cref{cor:cpc is multiplier of roec} that $\cpcstar \CHy$ is contained in the multiplier algebra of $\roecstar\CHy$. It follows that $u$ is a multiplier of $M_2(\roecstar\CHy)$.
  It is then well-known that $\Ad(u)_*$ defines the identity map in $K$-theory (this is seen by passing once more to $2$-by-$2$ matrices and observing that
  \[
    u_\theta\coloneqq
    \begin{pmatrix}
      u & 0 \\
      0 & 1
    \end{pmatrix}
    \begin{pmatrix}
      \cos(\theta) & -\sin(\theta) \\
      \sin(\theta) & \cos(\theta)
    \end{pmatrix}
    \begin{pmatrix}
      1 &0\\
      0 & u^*
    \end{pmatrix}
    \begin{pmatrix}
      \cos(\theta) & \sin(\theta) \\
      -\sin(\theta) & \cos(\theta)
    \end{pmatrix}
  \]
  is a homotopy between the identity and the conjugation by the matrix having $u$ and $u^*$ on the diagonal).
\end{proof}

\begin{remark}
  The proof of \cref{prop:covering isometries are unique in K-theory} works, in fact, with weaker hypotheses. For the operator $u$ given in \eqref{eq:unitary K-theory} to be a unitary it is enough to ask that $t_0^*t_0=t_1^*t_1$. The last condition can be further relaxed \emph{e.g.} seeking for unitaries of the form
  \[
    u\coloneqq
    \begin{pmatrix}
      1-t_0t_0^* & t_0wt_1^* \\
      t_1w^*t_0^*  & 1- t_1t_1^*
    \end{pmatrix}
  \]
  where $w$ is some partial isometry with controlled propagation.
\end{remark}

An important consequence of \cref{prop:covering isometries are unique in K-theory} is that the choices involved in the construction of the \Star{}homomorphisms covering a coarse map $\crse{f\colon X\to Y}$ in \cref{cor:coarse functions preserve roe-like algebras} do not influence the resulting homomorphism at the level of $K$-theory. In turn, this fixes the lack of functoriality of the Roe algebra construction (see \cref{rmk:roe alg are not functorial}). The formal statement is as follows.
\begin{theorem} \label{thm:k-theory-functor}
  Assigning to each coarse space $\crse X$ of coarse cardinality at most $\kappa$ the $K$-theory groups $K_*(\roecstar[\kappa]{\crse X})$ and to each proper coarse map $\crse{f\colon X\to Y}$ the homomorphism
  \[
    (\phi_{\kappa,\crse f})_*\colon K_*(\roecstar[\kappa]{\crse X})\to K_*(\roecstar[\kappa']{\crse Y})
  \]
  gives a functor from the category of coarse spaces of cardinality at most $\kappa$ and proper coarse maps to the category of $\ZZ/2\ZZ$-graded abelian groups. Moreover, this functor is well-defined up to natural isomorphism.
\end{theorem}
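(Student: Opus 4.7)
The whole argument is built around \cref{prop:covering isometries are unique in K-theory}: any two isometries covering the same proper coarse map induce the same map on $K$-theory of Roe algebras. Once this key input is available, the rest is mostly bookkeeping, so I do not expect any serious obstacle.

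First, I would fix once and for all a choice of $\kappa$\=/ample discrete $\crse X$-module $\CHx$ of rank $\kappa$ for every coarse space $\crse X$ of coarse cardinality at most $\kappa$ (existence is guaranteed by \cref{cor: roe-like algebras vs coarse cardinality}), and set $F(\crse X) \coloneqq K_\ast(\roecstar{\CHx})$. For a proper coarse map $\crse{f\colon X\to Y}$, \cref{thm:covering isometries exist} produces a covering isometry $t\colon\CHx\to\CHy$; since $\crse f$ is proper, $t$ is proper, and \cref{cor:coarse functions preserve roe-like algebras} gives a $\Star$\=/homomorphism $\Ad(t)\colon\roecstar{\CHx}\to\roecstar{\CHy}$. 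Define $F(\crse f)\coloneqq\Ad(t)_\ast$. By \cref{prop:covering isometries are unique in K-theory}, the induced map on $K$\=/theory is independent of the choice of $t$, so $F(\crse f)$ is well-defined.

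Next I would verify the two functoriality axioms. The identity coarse map $\cid_{\crse X}$ is trivially covered by the isometry $1_{\CHx}$, so $F(\cid_{\crse X})$ is the identity on $K_\ast(\roecstar{\CHx})$. For composition, given proper coarse maps $\crse{f\colon X\to Y}$ and $\crse{g\colon Y\to Z}$, pick covering isometries $t_1$ and $t_2$. The composition $t_2t_1$ is again an isometry, and \cref{cor:coarse support of composition}~\cref{cor:coarse support of composition:comp} gives $\csupp(t_2t_1)\crse\subseteq\csupp(t_2)\crse\circ\csupp(t_1)=\crse g\crse\circ\crse f$; since $t_2t_1$ is an isometry on the faithful module $\CHx$, \cref{cor:isometries cover} upgrades this to $\csupp(t_2t_1)=\crse g\crse\circ\crse f$, so $t_2t_1$ covers the composed map and is proper. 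As $\Ad(t_2t_1)=\Ad(t_2)\circ\Ad(t_1)$ at the level of $\Star$\=/homomorphisms, a final application of \cref{prop:covering isometries are unique in K-theory} yields $F(\crse g\crse\circ\crse f)=F(\crse g)\circ F(\crse f)$.

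Finally, for naturality up to isomorphism, suppose $F$ and $F'$ are built from two different choices of modules $\CHx$ and $\CH'_{\crse X}$. By \cref{thm:covering isometries exist}~\cref{thm:covering isometries exist:sur:uni} there is a unitary $u_{\crse X}\colon\CHx\to\CH'_{\crse X}$ covering $\cid_{\crse X}$, which is automatically proper; define the candidate natural isomorphism $\eta_{\crse X}\coloneqq\Ad(u_{\crse X})_\ast\colon F(\crse X)\to F'(\crse X)$. For a proper coarse map $\crse{f\colon X\to Y}$ with covering isometries $t$ and $t'$ for the two module choices, both $u_{\crse Y}t$ and $t'u_{\crse X}$ are proper controlled isometries $\CHx\to\CH'_{\crse Y}$ covering $\crse f$, so by \cref{prop:covering isometries are unique in K-theory} they induce the same map on $K$\=/theory, which is exactly the naturality square $\eta_{\crse Y}\circ F(\crse f)=F'(\crse f)\circ\eta_{\crse X}$. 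The only point requiring care is to confirm all the intermediate operators remain properly controlled and faithful enough to invoke \cref{prop:covering isometries are unique in K-theory}, but this is ensured by our choice of $\kappa$\=/ample discrete modules throughout.
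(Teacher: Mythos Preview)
Your proposal is correct and follows essentially the same approach as the paper's proof: fix $\kappa$-ample discrete modules, use covering isometries from \cref{thm:covering isometries exist}, and invoke \cref{prop:covering isometries are unique in K-theory} to show independence of choices for both functoriality and naturality. You are slightly more explicit in places---verifying the identity axiom via $1_{\CHx}$ and spelling out why $t_2t_1$ covers $\crse g\crse\circ\crse f$ through \cref{cor:isometries cover}---but these are elaborations of the same argument rather than a different route.
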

\begin{proof}
  \cref{lem:existence of kappa-ample modules} shows that for any such coarse space $\crse X$ we may choose a $\kappa$\=/ample discrete coarse geometric Hilbert $\CHx$ module of rank $\kappa$, so $K_*(\roecstar[\kappa]{\crse X})$ is well defined. Fix such choices.
  \cref{thm:maps-among-roe-algs} applied to the $\kappa$\=/ample discrete coarse geometric modules $\CHx$ and $\CHy$ yields a spatially implemented \Star{}homomorphism $\phi_{\kappa,\crse f}\colon\roecstar[\kappa]{\crse X}\to\roecstar[\kappa']{\crse Y}$ covering the proper coarse map $\crse{f\colon X\to Y}$. In turn, this defines the required map in $K$-theory.
  Functoriality is now routine to check. Indeed, let $\crse{g\colon Y\to Z}$ be a second proper coarse map. The composition $\phi_{\kappa,\crse g}\circ\phi_{\kappa,\crse f}$ is a spatially implemented embedding that covers $\crse{g\circ f}$. At the level of $K$-theory, \cref{prop:covering isometries are unique in K-theory} shows that
  \[
    \paren{\phi_{\kappa,\crse g}}_\ast\circ \paren{\phi_{\kappa,\crse f}}_* = \paren{\phi_{\kappa,\crse{g\circ f}}}_*
  \]
  regardless of the choice of isometries.

  The naturality is also clear: suppose we make other choices of module $\CH'_{\crse X}$. Applying \cref{thm:covering isometries exist} to the identity map $\cid_{\crse X}$ one obtains a \Star{}isomorphism $\Phi_{\crse X}\colon\roecstar{\CHx}\to\roecstar{\CH'_{\crse X}}$ covering $\cid_{\crse X}$, and hence an isomorphism
  \[
    (\Phi_{\crse X})_*\colon K_*(\roecstar{\CHx})\to K_*(\roecstar{\CH'_{\crse X}}).
  \]
  As above, if $\phi'_{\kappa,\crse f}\colon\roecstar{\CH'_{\crse X}}\to \roecstar{\CH'_{\crse Y}}$ is an embedding covering the proper coarse map $\crse{f\colon X\to Y}$, then \cref{prop:covering isometries are unique in K-theory} shows that
  \[
    \paren{\phi'_{\kappa,\crse{f}}}_* = \paren{\Phi_{\crse Y}}_*\circ \paren{\phi_{\kappa,\crse f}}_\ast\circ \paren{\Phi_{\crse X}^{-1}}_*,
  \]
  which shows that $\Phi$ defines a natural isomorphism.
\end{proof}

\appendix

\section{Comparison with the Roe algebras of Bunke--Engel} \label{sec:differences with Bunke-Engel}
In the appendix we overview some of the main differences between our approach to Roe-like algebras and that of Bunke--Engel \cite{bunke2020homotopy}.

\subsection{Working category}
Some very important stylistic differences follow from the choice of category of interest. In our definition of the coarse category \Cat{Coarse} we do identify close maps (\emph{i.e.}\ morphisms are equivalence classes of controlled functions). On the contrary, for Bunke--Engel morphisms are controlled functions, without further identifications.\footnote{\, In \cite{coarse_groups}, this is called the pre-coarse category \Cat{PreCoarse}.}
In a sense, the \emph{moral} difference is that their category allows to distinguish between different points in a coarse space.
This is a double-edged sword, for, on the one side, it makes it easier to give certain definitions in terms of fixed representatives, while, on the other side, one is then forced to work with those fixed representatives, and changing them requires extra effort. Working in \Cat{Coarse} allows for slicker arguments.

Additionally, \cite{bunke2020homotopy} actually works on a category of \emph{bornological} coarse spaces \cite{bunke2020homotopy}*{Definition 2.11}. That is, the theory they develop gives some freedom to enlarge the family of bounded sets by including a bornology in the definitions. Doing this changes the definition of locality and, most importantly, properness (for this latter reason something similar is also done in \cite{coarse_groups}*{Appendix~C.2}). This is important in their setup, because only proper maps are sufficiently well-behaved for the topological investigations they wish to undertake. However, the notion of properness directly arising from the coarse structure is too stringent, as it precludes, for instance, the existence of product objects.

On the contrary, in our study of Roe-like algebras we generally do not need the properness assumption. We therefore decided against developing this additional aspect, even though we expect that everything here developed goes through without difficulty to bornological coarse spaces as well.

\subsection{Broad goals}
A further reason for the discrepancies between this work and~\cite{bunke2020homotopy} is to be found in the broad goals of the research. Both works are interested in the Roe algebras of proper metric spaces, but we are just as much concerned with the \cstar{}algebras $\cpcstar\variable, \qlcstar\variable$ and $\uroecstar\variable$ of general coarse spaces as studied in~\cites{braga_rigid_unif_roe_2022,bbfvw_2023_embeddings_vna,braga_farah_rig_2021,spakula_maximal_2013}, among many others.
One main goal of this work was to build a framework to deal with all these constructions simultaneously.
Moreover, as explained in the introduction, we are rather interested in various operator-algebraic properties of these \cstar{}algebras, and we developed this language with an eye towards the theory of rigidity of Roe-like algebras~\cite{rigid}.

On the contrary,~\cite{bunke2020homotopy} focuses more on the $K$-theoretical aspects of Roe algebras. Its main goal is to axiomatize the coarse homology theories and construct a spectrum-valued $K$-homology functor in the most general setting possible.
Their main applications go in the direction of the coarse Baum--Connes conjecture \cites{bunke-engel-2020-ass-maps,bunke-engel-2020-coarse-cohom}.
The technical differences this entails, at the level of the modules and algebras, will be discussed shortly.

\subsection{Modules} \label{sec:app-modules}
A large part of this work was finding a reasonable ``minimal'' set of properties needed to define a useful notion of $\crse X$-module.
As explained, the hard constraint was to find a common framework that allows to easily work with uniform Roe algebras of discrete (coarse) spaces or Roe algebras of metric spaces (say, Riemannian manifolds) \emph{at the same time}.

In \cite{bunke2020homotopy} these problems have been sidestepped by defining \emph{$X$-controlled modules} as non-degenerate representations of the algebra of \emph{all} $\CCC$-valued bounded functions of $X$ (see~\cite{bunke2020homotopy}*{Definition~8.1}). It makes sense to do as much in their context, as in their category one is always allowed to distinguish points.
One drawback of this approach is that classical geometric modules~\cite{willett_higher_2020} are generally not $X$-controlled modules. This requires taking some extra steps when dealing with manifolds (see \cite{bunke2020homotopy}*{Example~8.23}).

Besides the above philosophical differences, there are also a few technical/terminological discrepancies between this text and~\cite{bunke2020homotopy}.
\begin{itemize}
  \item An $X$-controlled module is \emph{determined on points} \cite{bunke2020homotopy}*{Definition~8.3} if and only if it is a discrete $\crse X$-module with respect to the trivial partition $X=\bigsqcup_{x\in X}\{x\}$.
  \item An $X$-controlled module is \emph{ample} \cite{bunke2020homotopy}*{Definition~8.13} if and only if it is discrete and $\aleph_0$\=/ample in our terminology.
\end{itemize}

In fact, Bunke--Engel~\cite{bunke2020homotopy} only work with locally-separable modules, and often even locally finite rank ones (cf.~\cite{bunke2020homotopy}*{Definition~8.2}). This is due to the aforementioned explicit goal of constructing a spectrum valued coarse $K$-homology theory (see \cite{bunke2020homotopy}*{Remark~8.36 and Corollary~8.63}). For instance, when they introduce \emph{Roe categories} in~\cite{bunke2020homotopy}*{Definitions~8.74 and 8.76}, they require the modules to be locally finite rank precisely because of the ``unitary'' hypothesis of \cite{bunke2020homotopy}*{Corollary~8.63}.
This constraint is explained in~\cite{bunke2020homotopy}*{Remark~8.77}. There it is also stated that ``\textit{For non-unital C*-algebras it would be necessary to redefine the notion of equivalence using a generalization to C*-categories of the notion of a multiplier
algebra of a C*-algebra.
But we were not able to fix all details of an argument which extends Corollary 8.63 from the unital to the non-unital case}''.
In view of this, it would be interesting to consider non-locally finite rank Roe categories as in \cite{bunke2020homotopy} knowing that the multiplier algebra of $\roecstar{\variable}$ is, in fact, often equal to $\cpcstar{\variable}$, see \cite{rigid}.

\subsection{Operators}
Given two $X$-controlled modules $\CH$ and $\CH'$, Bunke--Engel define the support $\supp(t)$ of an operator $t\colon \CH\to \CH'$ \cite{bunke2020homotopy}*{Definition~8.8} as
\begin{equation}\label{eq:appendix:support}
  \bigcap\bigbraces{R\bigmid R\subseteq X\times X,\ \text{s.t. }\forall Y\subseteq X,\  t(\chf Y(\CH))\subseteq \chf{R(Y)}(\CH')}.
\end{equation}
Such a definition is only useful for their notion of $X$-controlled modules, and one cannot easily use it in modules that are defined as $L^2$-spaces with respect to non-atomic measures: in such a setting $\chf Y$ is only defined for measurable sets, and even if we were to restrict to measurable sets so that \eqref{eq:appendix:support} makes sense, the resulting intersection would always end up being empty. Moreover, without further non-degeneracy assumptions, the support defined by \eqref{eq:appendix:support} may behave rather erratically. In most cases, one would only use this definition for $X$-modules that are \emph{determined on points} (see above).

One substantial notational difference is that our quasi-local operators are called \emph{weakly quasi-local} in \cite{bunke2020homotopy}*{Definition~8.29}. Their definition of quasi-locality is a slightly stronger notion, which does not agree with the classical definition of quasi-locality for non-geodesic metric spaces.
Moreover,~\cite{bunke2020homotopy} also has a notion of \emph{locally finite operator} \cite{bunke2020homotopy}*{Definition~8.24}, which is---at least formally---a stronger version than what we call locally finite rank.

\subsection{Roe-like algebras of modules}
Here our notation diverges quite dramatically. Given an $X$\=/module $H$, they define \cite{bunke2020homotopy}*{Definition~8.34}
\begin{itemize}
  \item $C^*(\CH)$ is generated by locally finite operators of controlled propagation;
  \item $C_{\rm lc}^*(\CH)$ is generated by locally compact operators of controlled propagation;
  \item $C^*_{\rm ql}(\CH)$ is generated by locally finite operators that are quasi-local;
  \item $C^*_{\rm lc,ql}(\CH)$ is generated by locally compact operators that are quasi-local.
\end{itemize}
Thus, their $C_{\rm lc}^*(\CH)$ is our $\roecstar\CH$ and none of the others appear in this work. For technical reasons, Bunke--Engel prefer to work with $C^*(\CH)$: this is not a great limitation, as it often coincides with the Roe algebra~\cite{bunke2020homotopy}*{Remark~8.36}.

It is an interesting feature that, even if a classical geometric module $\CH$ is \emph{not} an $X$-controlled module in the sense of~\cite{bunke2020homotopy}*{Definition~8.1}, one can usually construct a companion $X$-controlled module using the same underlying Hilbert space $\CH$. The Roe algebra defined using this new $X$-controlled module structure coincides with the classical Roe algebra \cite{bunke2020homotopy}*{Remark 8.43}.

\subsection{Roe algebras of spaces}
In this regard, the approach of~\cite{bunke2020homotopy} is completely different from ours. We content ourselves with fixing a cardinal $\kappa$ and defining the Roe algebra of rank $\kappa$ for those coarse spaces where this can be done. Instead,~\cite{bunke2020homotopy} constructs a \cstar{}category by considering the class of all locally finite $X$-controlled modules at the same time. Morphisms of this \cstar{}category are obtained by bounded operators of controlled propagation~\cite{bunke2020homotopy}*{Definition~8.74}.
If $f\colon X\to Y$ is a proper controlled map and $\CH$ is a (locally finite) $X$-controlled module, then one may define a (locally finite) $Y$-controlled module $f_*(\CH)$ (using the same Hilbert space). Since they are considering \emph{all} controlled modules at the same time, this allows~\cite{bunke2020homotopy} to show that this construction is functorial.
One may then use $K$-theory of \cstar{}categories to define the coarse $K$-homologies of coarse spaces $K\CX_*(X)$ and $K\CX_{\rm ql,{*}}(X)$. \cite{bunke2020homotopy}*{Theorem 8.88} shows that if $\CH$ is an ample $X$-controlled module then  $K\CX_*(X)$ and $K\CX_{\rm ql,{*}}(X)$ agree with $K_*(C^*(\CH))$ and $K_*(C_{\rm ql}^*(\CH))$ respectively.

\bibliographystyle{amsalpha}
\bibliography{BibRigidity.bib}

\end{document}